\DeclareMathOperator{\Flow}{Flow}
\DeclareMathOperator{\Diff}{Diff}
\DeclareMathOperator{\Symp}{Symp}
\DeclareMathOperator{\Int}{Int}
\DeclareMathOperator{\Span}{Span}
\DeclareMathOperator{\Id}{Id}
\DeclareMathOperator{\tb}{tb}
\newcommand{\Rthree}{(\mathbb{R}^{3},\xi_{std})}
\newcommand{\Sthree}{(S^{3},\xi_{std})}
\newcommand{\Mxi}{(M,\xi)}
\newcommand{\AOB}{(\Sigma,\Phi)}
\newcommand{\Lie}{\mathcal{L}}
\newcommand{\Sstd}{(S^{2n+1},\xi_{std})}
\newcommand{\Ldom}{(W,\lambda)}
\newcommand{\Sdom}{(\Sigma,\beta)}
\newcommand{\disk}{\mathbb{D}}
\newcommand{\MOB}{M_{(\Sigma,\Phi)}}
\newcommand{\MCL}{\mathcal{L}}
\newcommand{\SympGroup}{\Symp((\Sigma,d\beta),\partial\Sigma)}
\newcommand{\Cinfty}{C^{\infty}}
\newcommand{\Mxink}[2]{(P_{#1, #2}, \zeta_{#1, #2})}
\newcommand{\half}{\frac{1}{2}}
\newcommand{\be}{\begin{enumerate}}
\newcommand{\ee}{\end{enumerate}}
\numberwithin{equation}{subsection}
\newtheorem{thm}{Theorem}[section]
\newtheorem{prop}[thm]{Proposition}
\newtheorem{defn}[thm]{Definition}
\newtheorem{lemma}[thm]{Lemma}
\newtheorem{cor}[thm]{Corollary}
\newtheorem{conj}[thm]{Conjecture}
\newtheorem{q}[thm]{Question}
\newtheorem{rmk}[thm]{Remark}
\newtheorem{ex}[thm]{Example}
\begin{document}

\title{Liouville hypersurfaces and connect sum cobordisms}

\author{Russell Avdek}
\email{russell.avdek@gmail.com}
\date{\today}

\begin{abstract}
The purpose of this paper is to introduce \emph{Liouville hypersurfaces} in contact manifolds, which generalize ribbons of Legendrian graphs and pages of supporting open books.  Liouville hypersurfaces are used to define a gluing operation for contact manifolds called the \emph{Liouville connect sum}.  Performing this operation on a contact manifold $\Mxi$ gives an exact -- and in many cases, Weinstein -- cobordism whose concave boundary is $\Mxi$ and whose convex boundary is the surgered manifold.  These cobordisms are used to establish the existence of ``fillability'' and ``non-vanishing contact homology'' monoids in symplectomorphism groups of Liouville domains, study the symplectic fillability of a family of contact manifolds which fiber over the circle, associate cobordisms to certain branched coverings of contact manifolds, and construct exact symplectic cobordisms that do not admit Weinstein structures. 

The Liouville connect sum generalizes the Weinstein handle attachment and is used to extend the definition of contact $(1/k)$-surgery along Legendrian knots in contact 3-manifolds to contact $(1/k)$-surgery along Legendrian spheres in contact manifolds of arbitrary dimension.  We use contact surgery to construct exotic contact structures on $5$- and $13$-dimensional spheres after establishing that $S^{2}$ and $S^{6}$ are the only spheres along which generalized Dehn twists smoothly square to the identity mapping.  The exoticity of these contact structures implies that Dehn twists along $S^{2}$ and $S^{6}$ do not symplectically square to the identity, generalizing a theorem of Seidel. A similar argument shows that the $(2n+1)$-dimensional contact manifold determined by an open book whose page is $(T^{*}S^{n}, -\lambda_{can})$ and whose monodromy is any negative power of a symplectic Dehn twist is not exactly fillable.
\end{abstract}

\maketitle
\setcounter{tocdepth}{1}
\tableofcontents
\newpage

\section{Introduction}

\subsection{Preliminaries}\label{Sec:Preliminaries}

A \emph{contact manifold} is a pair $\Mxi$ where $M$ is an oriented $(2n+1)$-dimensional manifold and $\xi$ is a globally cooriented $(2n)$-plane field on $M$ such that there is a 1-form $\alpha\in\Omega^{1}(M)$ satisfying
\begin{equation*}
\ker(\alpha)=\xi\quad\text{and}\quad \alpha\wedge(d\alpha)^{n}> 0
\end{equation*}
with respect to the orientation on $M$.  We also say that $\xi$ is a \emph{contact structure} on $M$.  A 1-form $\alpha$ satisfying the above equation is a \emph{contact form for $\Mxi$}.

An oriented, codimension-1 submanifold $M$ of a symplectic manifold $(W,\omega)$ is a \emph{contact hypersurface} \cite{Weinstein:Conjecture} if there is a neighborhood $N(M)$ of $M$ such that $\omega=d\lambda$ for some $\lambda\in\Omega^{1}(N(M))$ and the vector field $X$ determined by $\omega(X,\ast)=\lambda$ is positively transverse to $M$.  This implies that $\lambda|_{TM}$ is a contact form on $M$.  In this paper, we will be primarily concerned with the following class of symplectic manifolds whose boundaries are contact hypersurfaces:

\begin{defn}\label{Def:Ldom}
A \emph{Liouville domain} is a pair $\Sdom$ where
\be
\item $\Sigma$ is a smooth, compact manifold with boundary,
\item $\beta\in\Omega^{1}(\Sigma)$ is such that $d\beta$ is a symplectic form on $\Sigma$, and
\item the unique vector field $X_{\beta}$ satisfying $d\beta(X_{\beta},\ast) = \beta$ points out of $\partial \Sigma$ transversely.
\ee
The vector field $X_{\beta}$ on $\Sigma$ described above is called the \emph{Liouville vector field} for $\Sdom$.
\end{defn}

We say that two Liouville 1-forms $\beta$ and $\beta'$ are \emph{homotopic} if there is a smooth family $\beta_{t}$, $t\in[0,1]$, of Liouville 1-forms on $\Sigma$ with $\beta_{0}=\beta$ and $\beta_{1}=\beta'$, such that $\beta_{t} = \beta$ for all $t$ on some neighborhood of $\partial \Sigma$.

\begin{ex}\label{Ex:Sstd}
Denote by $\disk^{2n+2}$ the unit disk in $\mathbb{R}^{2n+2}$.  The \emph{standard 1-form} on $\disk^{2n+2}$ is
\begin{equation*}
\lambda_{std} = \frac{1}{2}\sum_{1}^{n+1} (x_{j}dy_{j} - y_{j}dx_{j})
\end{equation*}
in terms of coordinates $(x_{1},\dots,x_{n+1},y_{1},\dots,y_{n+1})$.  The \emph{standard contact sphere}, denoted $\Sstd$, is the boundary of $\disk^{2n+2}$ with $\xi_{std}=\ker(\lambda_{std}|_{TS^{3}})$.
\end{ex}

\subsection{Liouville submanifolds of contact manifolds}\label{Sec:Hypersurface}

We would like to define a class of codimension $1$ submanifolds of contact manifolds formally analogous to contact hypersurfaces in symplectic manifolds.  One natural candidate definition would be that of a \emph{convex hypersurface} introduced by Giroux in \cite[\S 1.3]{Giroux:Convex} and reviewed in Section \ref{Sec:Convex}.  In this paper we study the following more restricted class of hypersurfaces in contact manifolds:

\begin{defn}\label{Def:Hypersurface}
Let $\Mxi$ be a $(2n+1)$-dimensional contact manifold and let $\Sdom$ be a $2k$-dimensional Liouville domain. A \emph{Liouville embedding} $i:\Sdom\rightarrow \Mxi$ is an embedding $i:\Sigma\rightarrow M$ such that there exists a contact form $\alpha$ for $\Mxi$ for which $i^{*}\alpha=\beta$. The image of a Liouville embedding will be called a \emph{Liouville submanifold} and will be denoted by $\Sdom\subset\Mxi$.  When $k=n$, we say that $\Sdom\subset\Mxi$ is a \emph{Liouville hypersurface} in $\Mxi$.
\end{defn}

\begin{rmk}
In Section \ref{Sec:HighCodimension} it is shown that every Liouville submanifold in a contact manifold $\Mxi$ can be realized as the zero section of a symplectic disk bundle whose total space is a Liouville hypersurface in $\Mxi$.
\end{rmk}

Definition \ref{Def:Hypersurface} implies that the boundary $\partial\Sigma$ of a Liouville hypersurface $\Sdom\subset\Mxi$ is a codimension 2 contact submanifold of $\Mxi$ when oriented as the boundary of $\Sigma$.  For example, when $\Mxi$ is 3-dimensional, the boundary of a Liouville hypersurface is a (positive) transverse link.  Intuitively, we think of  Liouville hypersurfaces in a contact manifold $\Mxi$ as positive regions of convex hypersurfaces in $\Mxi$.  This will be made more precise in Proposition \ref{Prop:PositiveRegion}.

\begin{figure}[h]
	\begin{overpic}[scale=.55, angle=-90]{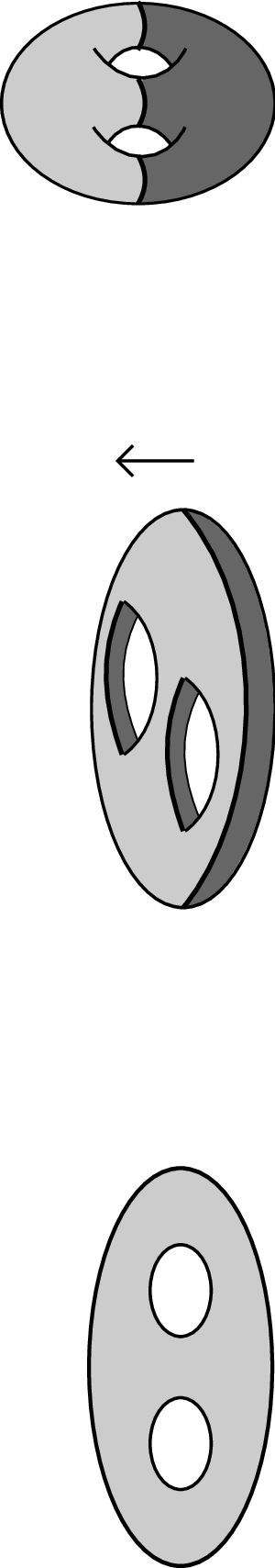}
        \put(12,-3.5){$\Sigma$}
        \put(11.5,-6.5){(1)}
        \put(53,-6.5){(2)}
        \put(91.5,-6.5){(3)}
        \put(52,-3.5){$\mathcal{N}(\Sigma)$}
        \put(90,-3.5){$\partial \mathcal{N}(\Sigma)$}
        \put(70,2){$\partial_{z}$}
    \end{overpic}
    \vspace{8.5mm}
	\caption{Moving from left to right we have (1) a Liouville hypersurface $\Sdom$ represented by a pair of pants, (2) a neighborhood $\mathcal{N}(\Sigma)$ of $\Sigma$ represented by a handlebody, and (3) $\partial \mathcal{N}(\Sigma)$ depicted as an abstract surface.  In schematic figures Liouville domains and hypersurfaces will be represented by pairs-of-pants unless otherwise stated.  Whenever we draw convex hypersurfaces, we lightly shade the positive regions and heavily shade the negative regions.  See Section \ref{Sec:Convex} for further explanation.}
    \label{Fig:StandardNbhd}
\end{figure}

Every Liouville hypersurface $\Sdom\subset \Mxi$ admits a neighborhood of the form
\begin{equation*}
N(\Sigma)=[-\epsilon,\epsilon]\times\Sigma \quad\text{on which}\quad \alpha = dz + \beta
\end{equation*}
where $z$ is a coordinate on $[-\epsilon,\epsilon]$.  After rounding the edges $(\partial[-\epsilon,\epsilon])\times\partial\Sigma$ of $[-\epsilon,\epsilon]\times\Sigma$, we obtain a neighborhood $\mathcal{N}(\Sigma)$ of $\Sigma$ for which $\partial \mathcal{N}(\Sigma)$ is a smooth convex surface in $\Mxi$ with contact vector field 
\begin{equation}\label{Eq:Vbeta}
V_{\beta} = z\partial_{z} + X_{\beta}
\end{equation}
and dividing set $\lbrace 0 \rbrace \times \partial\Sigma$. See Figure \ref{Fig:StandardNbhd}. Details of the edge rounding described appear in Section \ref{Sec:StandardNeighborhood}.

\begin{ex}[Neighborhoods of isotropic submanifolds as Liouville hypersurfaces]\label{Ex:IsotropicNeighborhood}
If 
\begin{equation*}
\Sdom=(\disk^{2n},\lambda_{std})\subset\Mxi
\end{equation*}
 is a Liouville submanifold of a $(2n+1)$-dimensional contact manifold, then the interior of $\mathcal{N}(\Sigma)$ is a Darboux ball.  If $L\subset \Mxi$ is an isotropic sphere with trivial normal bundle and $\alpha$ is a contact form for $\Mxi$, then we can find a compact hypersurface $\Sigma$ with non-empty boundary in $\Mxi$ which deformation retracts onto $L$ and is diffeomorphic to a tubular neighborhood of the zero section of the bundle $\mathbb{R}^{2m}\oplus T^{*}L\rightarrow L$ for which $\alpha|_{T\Sigma}=\lambda_{std}-\lambda_{can}$.  Here $m+ \dim (L)=n$ and $\lambda_{can}$ is the canonical 1-form on $T^{*}L$ described in Example \ref{Ex:CotangentBundle}, below.

Similar statements hold without the assumptions that the normal bundle of $L$ is trivial or that $L$ is a sphere, by the results in Section \ref{Sec:HighCodimension}.  The case $\Sdom=(\disk^{2n},\lambda_{std})$ corresponds to the case where $L$ is a single point.  See Examples \ref{Ex:Sstd} and \ref{Ex:CotangentBundle}.
\end{ex}

\subsection{The Liouville connect sum and associated cobordisms}\label{Sec:ConnectSum}

Convex hypersurfaces provide a simple method of constructing contact manifolds by cut-and-paste.  However, examples are hard to find in high ($>3$) dimensional contact manifolds and it is notoriously difficult to determine how geometric properties of contact structures -- such as symplectic fillability, or tightness -- behave under convex gluing.  See, for example, \cite{Honda:Gluing, Wand:Tightness}.

Using Liouville hypersurfaces, we introduce a special type of convex gluing for contact manifolds called the \emph{Liouville connect sum}.  Theorem \ref{Thm:Cobordism} shows that this gluing operation determines an exact symplectic cobordism whose negative boundary is $\Mxi$ and whose positive boundary is the surgered manifold $\#_{\Sdom}\Mxi$, allowing us to relate symplectic filling properties of $\#_{\Sdom}\Mxi$ to those of $\Mxi$.

\subsection{Outline of the main construction}\label{Sec:SurgeryOutline}

In this section we define the Liouville connect sum and state Theorem \ref{Thm:Cobordism} from which most of our other results will be derived.

Fix a $(2n)$-dimensional Liouville domain $\Sdom$ and a (possibly disconnected) $(2n+1)$-dimensional contact manifold $\Mxi$.  Let $i_{1}$ and $i_{2}$ be Liouville embeddings of $\Sdom$ into $\Mxi$ whose images, which we will denote by $\Sigma_{1}$ and $\Sigma_{2}$, are disjoint.  Let $\alpha$ be a contact form for $\Mxi$ satisfying 
\begin{equation*}
i_{1}^{\ast}\alpha=i_{2}^{\ast}\alpha=\beta.
\end{equation*}

Consider neighborhoods $\mathcal{N}(\Sigma_{1}),\mathcal{N}(\Sigma_{2})\subset M$ as described in Section \ref{Sec:Hypersurface}.  Taking coordinates $(z,x)$ on each such neighborhood with $x\in \Sigma$, we may consider the mapping
\begin{equation*}\label{Eq:ConnectSum}
\Upsilon:\partial \mathcal{N}(\Sigma_{1})\rightarrow \partial \mathcal{N}(\Sigma_{2}),\quad \Upsilon(z,x)=(-z,x).
\end{equation*}
The map $\Upsilon$ sends
\be
\item the positive region of $\partial \mathcal{N}(\Sigma_{2})$ to the negative region of $\partial \mathcal{N}(\Sigma_{1})$,
\item the negative region of $\partial \mathcal{N}(\Sigma_{1})$ to the positive region of $\partial \mathcal{N}(\Sigma_{2})$, and
\item the dividing set of $\partial \mathcal{N}(\Sigma_{1})$ to the dividing set of $\partial \mathcal{N}(\Sigma_{2})$
\ee
in such a way that we may perform a convex gluing.  In other words, the map $\Upsilon$ naturally determines a contact structure $\#_{(\Sdom,(i_{1},i_{2}))}\xi$ on the manifold
\begin{equation*}
\#_{(\Sigma,(i_{1},i_{2}))} M:= \Bigl( M \setminus \Int\bigl(\mathcal{N}(\Sigma_{1})\cup \mathcal{N}(\Sigma_{2})\bigr)\Bigr) /\sim
\end{equation*}
where $p\sim \Upsilon(p)$ for $p\in \partial \mathcal{N}(\Sigma_{1})$. A careful construction of the neighborhood $\mathcal{N}(\Sigma)$ as well as the normalizations of the contact forms required to perform the convex gluing used to define the Liouville connect sum will be described in Section \ref{Sec:StandardNeighborhood}.

\begin{defn}\label{Def:LiouvilleSum}
In the above notation, we say that the contact manifold
\begin{equation*}
\#_{(\Sdom,(i_{1},i_{2}))} \Mxi := (\#_{(\Sigma,(i_{1},i_{2}))} M, \#_{(\Sdom,(i_{1},i_{2}))}\xi)
\end{equation*}
is the \emph{Liouville connect sum} of $\Mxi$ associated to the Liouville embeddings $i_{1}, i_{2}$.
\end{defn}

When the embeddings $i_{1}$ and $i_{2}$ of Definition \ref{Def:LiouvilleSum} are understood, we will use the short-hand notation $\#_{\Sdom}\Mxi$ for $\#_{(\Sdom,(i_{1},i_{2}))} \Mxi$.  It should be noted that the Liouville connect sum in general depends on the embeddings $i_{1}$ and $i_{2}$, not just the images of $\Sigma$ under these mappings.

\begin{ex}[Weinstein surgery as a Liouville connect sum]\label{Ex:WeinsteinSum}
Consider the disjoint union $\Mxi\sqcup (S^{2n+1},\xi_{std})$ of some arbitrary $(2n+1)$-dimensional contact manifold and the standard $(2n+1)$-sphere.  Let $L$ be an isotropic $k$-sphere in $\Mxi$ with trivial normal bundle.  By considering $S^{2n+1}\subset\mathbb{R}^{2n+2}$ as in Example \ref{Ex:Sstd}, we may define $L'\subset (S^{2n+1},\xi_{std})=\partial (\disk^{2n+2},\lambda_{std})$ to be the isotropic $k$-sphere $S^{2n+1}\cap \Span(x_{1},\dots,x_{k+1})$.  Then we can find Liouville hypersurfaces $(\Sigma_{1},\lambda_{std}-\lambda_{can})\subset\Mxi$ and $(\Sigma_{2},\lambda_{std}-\lambda_{can})\subset (S^{2n+1},\xi_{std})$ which deformation retract onto $L$ and $L'$, respectively, as described in Example \ref{Ex:IsotropicNeighborhood}.  Now let $\Sdom$ be an additional copy of a neighborhood of the zero section of the bundle $\mathbb{R}^{2m}\oplus T^{*}S^{k}\rightarrow S^{k}$ with $\beta=\lambda_{std}-\lambda_{can}$ and define Liouville embeddings $i_{1}:\Sdom\rightarrow \Mxi$ and $i_{2}:\Sdom\rightarrow (S^{2n+1},\xi_{std})$ for which $i_{j}(\Sdom)=(\Sigma_{j},\lambda_{std}-\lambda_{can})$, $j=1,2$.

Applying a Liouville connect sum, we have that $\#_{\Sdom}\bigr{(}\Mxi\sqcup (S^{2n+1},\xi_{std})\bigr{)}$ is the same contact manifold as is described by a Weinstein handle attachment along $L\subset\Mxi$ (with respect to some framing of the symplectic normal bundle of $L$).  See Section \ref{Sec:WHandle}.
\end{ex}

The main result of this paper is the following theorem, whose proof appears in Section \ref{Sec:Cobordism}.

\begin{thm}\label{Thm:Cobordism}
Let $\Mxi$ be a closed, possibly disconnected, $(2n+1)$-dimensional contact manifold.  Suppose that there are two Liouville embeddings $i_{1},i_{2}:\Sdom\rightarrow\Mxi$ with disjoint images.  Then there is an exact symplectic cobordism $\Ldom$ whose negative boundary is $\Mxi$ and whose positive boundary is $\#_{\Sdom}\Mxi$.  Moreover, if $\Sdom$ admits a Weinstein structure, then so does the cobordism $\Ldom$.
\end{thm}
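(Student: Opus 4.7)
I would build $(W,\lambda)$ by attaching a generalized Weinstein-type handle $H$ to the trivial symplectization $([0,1]\times M, e^s \alpha)$ along neighborhoods of $\{1\}\times\Sigma_i$ $(i=1,2)$, where $H$ is modeled on the product $\Sigma\times H_0$ of the abstract Liouville domain $\Sigma$ with a standard two-dimensional Weinstein $1$-handle $H_0\subset\mathbb{R}^2$. Example \ref{Ex:WeinsteinSum} is the special case in which $\Sigma$ is a tubular neighborhood of the zero section in $\mathbb{R}^{2m}\oplus T^*S^k$, and the product handle $\Sigma\times H_0$ smooths out to a usual Weinstein handle.

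First, I would use the collar coordinates $(z,x)\in[-\epsilon,\epsilon]\times\Sigma$ with $\alpha=dz+\beta$ to parametrize neighborhoods of $\{1\}\times\Sigma_i$ in the symplectization by $(s,z,x)\in[0,1]\times[-\epsilon,\epsilon]\times\Sigma$, so that the Liouville form becomes $e^s(dz+\beta)$. Second, I would equip $\Sigma\times\mathbb{R}^2_{(p,q)}$ with the Liouville form $\lambda_H := \beta+\mu$, where $\mu$ is a Liouville form on $\mathbb{R}^2$ whose Liouville vector field $X_\mu$ has a single index-$1$ saddle zero. Letting $H_0\subset\mathbb{R}^2$ be a pair-of-pants-shaped region bounded by two inward-pointing ``attaching arcs'' $\gamma_1^-,\gamma_2^-$ and one outward-pointing ``belt arc'' $\gamma^+$ of $X_\mu$, the handle $H:=\Sigma\times H_0$ is a Liouville domain with corners whose Liouville vector field $X_\beta+X_\mu$ points inward on $\Sigma\times(\gamma_1^-\cup\gamma_2^-)$ and outward on the remaining boundary faces.

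The gluing would then be carried out by identifying collar neighborhoods of $\Sigma\times\gamma_i^-$ in $H$ with the corresponding neighborhoods of $\{1\}\times\Sigma_i$ in $[0,1]\times M$ via diffeomorphisms pulling $\lambda_H$ back to $e^s(dz+\beta)$. This matching is the main technical step and where the greatest care is needed: one must match not merely the contact structures but the Liouville $1$-forms themselves, which requires carefully normalizing the parametrizations of $\gamma_i^-$ and, if necessary, modifying $\lambda_H$ by an exact form near the attaching arcs. After corner smoothing one obtains a compact exact symplectic manifold $(W,\lambda)$ with $\partial_- W=(M,\xi)$. The identification $\partial_+ W\cong \#_{(\Sigma,\beta)}(M,\xi)$ then follows because the complement $M\setminus(N(\Sigma_1)\cup N(\Sigma_2))$ sits inside $\partial_+ W$, the belt $\Sigma\times\gamma^+$ fills in the rest, and the gluing map $\Upsilon(z,x)=(-z,x)$ is recovered automatically from the way $\gamma_1^-$ and $\gamma_2^-$ are joined through the saddle of $X_\mu$ in $H_0$.

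For the Stein refinement: given a plurisubharmonic Morse exhausting function $\varphi_\Sigma$ on $\Sigma$, the two-dimensional handle $H_0$ admits a plurisubharmonic Morse function $\varphi_{H_0}$ with a single index-$1$ critical point such that $X_\mu$ is gradient-like for $\varphi_{H_0}$. The sum $\varphi_\Sigma+\varphi_{H_0}$ is plurisubharmonic on $\Sigma\times H_0$, and can be patched with $e^s$ on the symplectization piece using standard Cieliebak--Eliashberg interpolation techniques to yield a global Stein structure on $W$ after corner smoothing. The main obstacle throughout remains the precise matching of Liouville $1$-forms along the gluing locus; once the underlying Liouville cobordism is in place, the Stein upgrade follows essentially formally.
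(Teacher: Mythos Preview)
Your construction of the exact cobordism is essentially the same as the paper's. The paper's handle $(H_\Sigma,\omega_\beta)=([-1,1]\times\mathcal{N}(\Sigma),\,d\theta\wedge dz+d\beta)$ with Liouville form $\lambda=-\theta\,dz-2z\,d\theta+\beta$ is precisely your product $\Sigma\times H_0$: the $\mathbb{R}^2$-factor $(\theta,z)$ carries the form $\mu=-\theta\,dz-2z\,d\theta$ whose Liouville field $-\theta\,\partial_\theta+2z\,\partial_z$ has a single index-$1$ saddle, so $H_0=[-1,1]\times[-\epsilon,\epsilon]$ is exactly a two-dimensional Weinstein $1$-handle. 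The paper carries out the gluing and form-matching you flag as ``the main technical step'' by flowing along the Liouville field to identify collars, and the identification $\partial_+W\cong\#_{\Sdom}\Mxi$ proceeds just as you describe.

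Where you genuinely diverge is in the Stein refinement. The paper does \emph{not} build a global plurisubharmonic function on $W$ directly. Instead it takes a Weinstein handle filtration $(\Sigma_0,\beta_0)\subset\cdots\subset(\Sigma_n,\beta_n)=(\Sigma,\beta)$ of the page and shows that this induces a filtration $(W_0,\lambda_0)\subset\cdots\subset(W_n,\lambda_n)=(W,\lambda)$ in which each $W_k$ is obtained from $W_{k-1}$ by attaching $(2n{+}2)$-dimensional Weinstein $(k{+}1)$-handles along explicit isotropic spheres; the Stein conclusion then follows from Eliashberg's theorem. Your plurisubharmonic-sum argument would also work (and is arguably more direct), but the paper's route has the payoff of producing an explicit Weinstein handle decomposition of the cobordism, which it uses later to draw Kirby diagrams and to analyze the cobordisms associated to open books and branched covers.
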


The proof of Theorem \ref{Thm:Cobordism} consists of attaching a symplectic handle $(\mathcal{H}_{\Sigma},\omega_{\beta})$ to the symplectization of $\Mxi$.  Note that the cobordism $(W,\lambda)$ described above is always Weinstein when dim$(M)=3$ as every 2-dimensional Liouville domain admits a Weinstein structure.  The proof of Theorem \ref{Thm:Cobordism} provides an explicit Weinstein handle decomposition of the cobordism $(W,\lambda)$ in the event that $\Sdom$ admits the structure of a Weinstein domain. The Weinstein decomposition of the handle $\mathcal{H}_{\Sigma}$ is roughly summarized by saying that a Weinstein $k$ handle in $\Sdom$ determines a Weinstein $k+1$ handle in the associated symplectic cobordism.

When the components of $\Mxi$ appear as convex boundary components of a weak symplectic cobordism $(W,\omega)$ (see Definition \ref{Def:Weak}) and $i_{j}:\Sdom\rightarrow \Mxi$, $j=1,2$, are Liouville embeddings, then it is possible to attach a slightly modified version of $(H_{\Sigma},\omega_{\beta})$ to $\partial W$ as above, provided the vanishing of a cohomological obstruction.  This obstruction always vanishes when $\dim(M)=3$.  See Section \ref{Sec:WeakHandle}.

\begin{ex}[Weinstein handle attachment as a connect sum cobordism]\label{Ex:LiouvilleWeinstein}
Consider the Liouville connect sum $\#_{\Sdom}\bigr{(}\Mxi\sqcup (S^{2n+1},\xi_{std})\bigr{)}$ from Example \ref{Ex:WeinsteinSum}.  In this case, the cobordism $\Ldom$ from $\bigr{(}\Mxi\sqcup (S^{2n+1},\xi_{std})\bigr{)}$ to $\#_{\Sdom}\bigr{(}\Mxi\sqcup (S^{2n+1},\xi_{std})\bigr{)}$ described in Theorem \ref{Thm:Cobordism} is the same as the usual cobordism (from $\Mxi$ to $\#_{\Sdom}\bigr{(}\Mxi\sqcup (S^{2n+1},\xi_{std})\bigr{)}$) provided by a Weinstein handle attachment with a standard ball $(\disk^{2n+2},\lambda_{std})$ removed from its interior.  See Sections \ref{Sec:WHandle} and \ref{Sec:Cobordism}.
\end{ex}

\subsection{Applications}\label{Sec:TopologicalApplications}

Now we state some consequences of Theorem \ref{Thm:Cobordism} whose proofs will appear later in the text. Throughout, we freely make use of the definitions and notation of Section \ref{Sec:Symplectic}.

\subsubsection{Open books and fillability monoids}

Our first application of Theorem \ref{Thm:Cobordism} is to the study of contact manifolds determined by open books. Proofs of the following results appear in Sections \ref{Sec:Monoids} and \ref{Sec:HCMonoids}.

\begin{defn}
Let $\Sigma$ be a compact, oriented manifold with non-empty boundary.  Let $\Diff^{+}(\Sigma,\partial \Sigma)$ be the group of orientation preserving diffeomorphisms of $\Sigma$ which restrict to the identity on some collar neighborhood of $\partial \Sigma$.  When $\Sigma$ admits a symplectic form $\omega$, the \emph{symplectomorphism group} of $(\Sigma,\omega)$ will refer to the subgroup 
\begin{equation*}
\Symp((\Sigma,\omega),\partial\Sigma) \subset \Diff^{+}(\Sigma,\partial \Sigma)
\end{equation*}
whose elements preserve $\omega$.
\end{defn}

For each pair $\AOB$ with $\Phi\in \Diff^{+}(\Sigma,\partial \Sigma)$ we can build a smooth manifold $\MOB$ defined by
\begin{equation*}
\begin{gathered}
\MOB = (\Sigma\times[0,1])/\sim\quad \text{where}\\
(\Phi(x),1)\sim(x,0)\quad \forall\ x\in \Sigma,\\
\quad (x,t)\sim(x,t')\quad\forall\ (x,t),(x,t')\in(\partial \Sigma)\times[0,1].
\end{gathered}
\end{equation*}
The manifold $\MOB$ is called the \emph{open book} associated to the pair $\AOB$.  The diffeomorphism class of $\MOB$ depends only on $\Phi$ up to conjugation and isotopy in $\Diff^{+}(\Sigma,\partial\Sigma)$.  Each $\Sigma\times\lbrace t \rbrace\subset \MOB$ is called a \emph{page} of the open book.  The codimension two submanifold $\partial \Sigma$ of $\MOB$ is called the \emph{binding} and is naturally oriented as the boundary of a page.  The diffeomorphism $\Phi$ is called the \emph{monodromy}.

\begin{defn}\label{Def:SupportingBook}
Let $\Mxi$ be a $(2n+1)$-dimensional contact manifold, let $\Sdom$ be a $2n$-dimensional Liouville domain, and let $\Phi\in\SympGroup$.  We say $\Mxi$ is \emph{supported by} the pair $(\Sdom,\Phi)$ if $M=\MOB$ and there is a contact form $\alpha$ for $\Mxi$ such that
\be
\item the restriction of $\alpha$  to the binding $B$ is a contact form,
\item the restriction of $d\alpha$ to each page is symplectic, and
\item the Liouville vector field $X_t$ for $d\alpha$ on each page $\Sigma_t$ points outward along a collar of $\partial\Sigma_t$.
\ee
\end{defn}

\begin{thm}[\cite{Giroux:ContactOB,TW:OB}]\label{Thm:GirCor}
Let $\Sdom$ be a Liouville domain and let $\Phi\in \SympGroup$.  Then
\be
\item $\MOB$ naturally carries a contact structure $\xi_{(\Sdom,\Phi)}$ supported by the pair $(\Sdom,\Phi)$.
\item $(M_{(\Sigma,\Phi)},\xi_{(\Sdom,\Phi)})$ depends only on $\Sdom$ and $\Phi$ up to conjugacy and isotopy in $\SympGroup$.
\item Every 3-dimensional contact manifold is supported by an open book, and
\item Two contact $3$-manifolds $(M_{(\Sigma,\Phi)},\xi_{(\Sdom,\Phi)})$ and $(M_{(\Sigma',\Psi)},\xi_{((\Sigma',\beta'),\Psi)})$ are contact-diffeomorphic if and only if the pairs $\AOB$ and $(\Sigma',\Psi)$ are related by a sequence of positive stabilizations.
\ee
\end{thm}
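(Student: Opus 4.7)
The plan is to treat this as four statements that use progressively more machinery, citing the sources for the hardest steps but indicating where the work goes.

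For (1), I would construct the contact form on $\MOB$ in two pieces. Away from the binding, work on the mapping torus $\Sigma_{\Phi} = (\Sigma\times[0,1])/((\Phi(x),1)\sim (x,0))$. Since $\Phi\in\Symp$ preserves $d\beta$, the 1-form $\Phi^{*}\beta-\beta$ is closed; after a preliminary isotopy within $\Symp$ (supported away from $\partial\Sigma$) and the addition of an exact 1-form, I would build a 1-form of the form $\alpha_{t} = (1-f(t))\beta + f(t)\Phi^{*}\beta + K\,dt$ for a suitable interpolation $f$ and large constant $K$, and check the contact condition directly from $\alpha\wedge(d\alpha)^{n}$; the constant $K$ can be chosen large enough to absorb the error terms coming from the interpolation. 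Near the binding, since $\Phi$ is the identity on a collar, $\alpha_{t}$ already has the standard form $\beta+K\,dt$, which under the identification with a tubular neighborhood of $\partial\Sigma\times\disk^{2}$ (with angular coordinate $\theta=2\pi t$) can be interpolated to a model form $\beta_{\partial\Sigma} + r^{2}d\theta$ compatible with the fibration by pages. The contact form is then visibly supported by the open book in the sense of Definition \ref{Def:SupportingBook}.

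For (2), the two equivalences are separately handled. Isotopy of $\Phi$ in $\Symp$ yields a smooth 1-parameter family of contact forms $\alpha_{s}$ on $\MOB$; by Gray's stability theorem the contact structures are isotopic. Conjugation $\Phi\mapsto \Psi\circ\Phi\circ\Psi^{-1}$ by $\Psi\in\Symp$ induces a diffeomorphism $\MOB\to M_{(\Sigma,\Psi\Phi\Psi^{-1})}$ sending $[x,t]\mapsto[\Psi(x),t]$, which pulls back the constructed contact form to a contact form supported by $(\Sdom,\Phi)$; Gray's theorem again gives a contactomorphism.

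For (3) and (4), I would defer to the cited sources. The standard route to (3) is Giroux's contact cell decomposition: triangulate $M$ so that each edge is Legendrian, each 2-simplex has Legendrian boundary and $tb=-1$, and each 3-simplex lies in a Darboux ball. A regular neighborhood of the 1-skeleton has convex boundary with a multicurve dividing set, its complement is a disjoint union of 3-balls with convex boundary, and after thickening, the dividing set becomes the binding while the pages are assembled from the two pieces. Statement (4) is Giroux's stabilization theorem: the forward direction (a positive stabilization preserves the supported contact structure) is an explicit local computation attaching a standard Weinstein 1-handle to a page without changing $\xi$, while the reverse direction (any two supporting open books have a common positive stabilization) proceeds by showing any two contact cell decompositions admit a common refinement compatible with the open book structure.

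The main obstacle is (4): the forward direction of stabilization-invariance is a bounded calculation, but the converse - uniqueness of the supporting open book up to positive stabilization - requires the refinement theorem for contact cell decompositions and is the deepest ingredient in the Giroux correspondence. I would not attempt to reprove it and instead invoke \cite{Giroux:ContactOB}; (3) is likewise quoted, since the contact cell decomposition argument relies on a substantial amount of convex surface theory in dimension three. Only (1) and (2), the construction and its naturality, are written out in full.
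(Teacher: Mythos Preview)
Your proposal is a reasonable sketch of the standard arguments, but note that the paper does not actually prove this theorem: it is stated as a cited result from \cite{Giroux:ContactOB,TW:OB} with no accompanying proof. So there is nothing to compare against beyond the references themselves. Your outline of (1) is the Thurston--Winkelnkemper construction, your (2) is the usual Gray-stability argument, and your summary of (3) and (4) correctly identifies Giroux's contact cell decomposition and stabilization theorem as the substantive content; deferring those to the cited source is exactly what the paper does as well.
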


We recommend the exposition \cite{Etnyre:OBIntro} for further details on the above theorem. For simplicity, we will denote the contact manifold $(M_{(\Sigma,\Phi)},\xi_{(\Sdom,\Phi)})$ described in Theorem \ref{Thm:GirCor}(1) by $\Mxi_{(\Sdom,\Phi)}$.

\begin{rmk}
Giroux \cite{Giroux:ContactOB} has also outlined a program for characterizing high dimensional contact manifolds in terms of open books with Weinstein pages.  When $\dim(\Sigma)=2$, a symplectic form on $\Sigma$ is simply a volume form and every such $\Sigma$ admits the structure of a Liouville domain $\Sdom$.  In this case, after having specified such a 1-form $\beta$ on $\Sigma$, every $\Phi\in\Diff^{+}(\Sigma,\partial\Sigma)$ is isotopic to an element of $\SympGroup$.  Furthermore, any two Liouville 1-forms on a compact oriented surface with boundary are homotopic in the sense of Section \ref{Sec:Symplectic}.  Therefore the study of monodromies of open books determining contact 3-manifolds reduces to the study of mapping class groups of compact, oriented surfaces with non-empty boundary.
\end{rmk}

\begin{defn}
Let $\Sdom$ be a 2n-dimensional Liouville domain.  A property $\mathcal{P}$ of contact (2n+1)-manifolds is a \emph{monoid property} for $\SympGroup$ if the collection of $\Phi\in\SympGroup$ for which $\Mxi_{(\Sdom,\Phi)}$ satisfies $\mathcal{P}$ is a monoid in $\SympGroup$.
\end{defn}

For the following results, we abbreviate $\Symp = \SympGroup$ when $\Sdom$ is understood.

\begin{thm}\label{Thm:Monoids}
Let $\Sdom$ be a Liouville domain.
\be
\item ``Symplectically fillable'' and ``exactly symplectically fillable'' are monoid properties for $\Symp$.
\item If $\dim(\Sigma)=2$, then ``weakly fillable'' and ``Weinstein fillable'' are monoid properties for $\Symp$.
\item Moreover, if $\Sdom$ is of any even dimension and admits the structure of Weinstein domain, then ``Weinstein fillable'' is a monoid property for $\Symp$.
\ee
\end{thm}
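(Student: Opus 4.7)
The core plan is to identify $\Mxi_{(\Sdom,\Phi\circ\Psi)}$ with the Liouville connect sum $\#_\Sdom\bigl(\Mxi_{(\Sdom,\Phi)}\sqcup\Mxi_{(\Sdom,\Psi)}\bigr)$, taken along one page in each summand, and then to use the cobordism of Theorem \ref{Thm:Cobordism} to stack fillings. To verify the identification, I would note that each complement $M_{(\Sigma,\Phi)}\setminus\mathcal{N}(\Sigma)$ is topologically a mapping torus of $\Phi$ with an open interval's worth of pages excised, whose boundary is the convex hypersurface $\partial\mathcal{N}(\Sigma)$ split by its dividing set into positive and negative halves. The convex gluing $\Upsilon(z,x)=(-z,x)$ of Definition \ref{Def:LiouvilleSum} swaps these halves between the two summands, and one verifies that the resulting suspension structure carries the composed monodromy $\Phi\circ\Psi$, with pages mapping to pages and binding to binding. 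On the contact side, the Reeb vector field of the Liouville connect sum form is positively transverse to the glued pages and tangent to the binding, so the uniqueness clause of Theorem \ref{Thm:GirCor}(2) identifies this contact structure with $\xi_{(\Sdom,\Phi\circ\Psi)}$.

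With the identification in hand, Theorem \ref{Thm:Cobordism} produces an exact symplectic cobordism $\Ldom$ from $\Mxi_{(\Sdom,\Phi)}\sqcup\Mxi_{(\Sdom,\Psi)}$ to $\Mxi_{(\Sdom,\Phi\circ\Psi)}$. Given exact (resp. strong) symplectic fillings $W_\Phi$ and $W_\Psi$ of the two factors, gluing $\Ldom$ on top of $W_\Phi\sqcup W_\Psi$ yields an exact (resp. strong) filling of $\Mxi_{(\Sdom,\Phi\circ\Psi)}$, establishing part (1). Part (3) follows by the identical argument, since Theorem \ref{Thm:Cobordism} endows $\Ldom$ with a Stein structure whenever $\Sdom$ has one. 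The Stein claim of part (2) is an instance of part (3) since every Liouville surface is Stein, and the weak fillability claim of part (2) uses the weak version of the handle attachment alluded to at the end of Section \ref{Sec:ConnectSum}, whose cohomological obstruction vanishes automatically when $\dim M=3$; weak fillings can then be glued across the resulting weak cobordism in the usual way.

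The principal obstacle I foresee is the contact (as opposed to merely smooth) identification of the connect sum with the composed open book. Smoothly the manipulation is a routine open book computation, but the contact version requires one to show that the normalized contact forms on $\mathcal{N}(\Sigma)$ demanded by Definition \ref{Def:LiouvilleSum} can be chosen compatibly with contact forms adapted to the supporting open book structures on each summand. My plan is to sidestep an explicit comparison by appealing to the uniqueness clause of Theorem \ref{Thm:GirCor}(2), verifying only that the glued Reeb dynamics have the form prescribed by Definition \ref{Def:SupportingBook} near each new page and its binding.
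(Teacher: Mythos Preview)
Your overall strategy matches the paper's exactly: identify $\Mxi_{(\Sdom,\Phi\circ\Psi)}$ with $\#_{\Sdom}\bigl(\Mxi_{(\Sdom,\Phi)}\sqcup\Mxi_{(\Sdom,\Psi)}\bigr)$ along a page in each summand, then invoke Theorem~\ref{Thm:Cobordism} (and Theorem~\ref{Thm:WeakHandle} for the weak case) to stack fillings. The difference is in how the contact identification is carried out. The paper does not verify Reeb dynamics directly; instead it uses the convex gluing machinery of Section~\ref{Sec:GluingInstructions}. Lemma~\ref{Lemma:Heegaard} records that $\Mxi_{(\Sdom,\Phi\circ\Psi)}$ is obtained by gluing two standard neighborhoods $N_1,N_2$ of $\Sdom$ via the convex gluing instructions $(\Phi,\Psi)$. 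Proposition~\ref{Prop:OBSum} then presents each summand $\Mxi_{(\Sdom,\Phi)}$ and $\Mxi_{(\Sdom,\Psi)}$ in this Heegaard form with instructions $(\Phi,\mbox{id}_\Sigma)$ and $(\Psi,\mbox{id}_\Sigma)$, and observes that the connect sum---removing one neighborhood from each and re-gluing---produces the instructions $(\Phi,\Psi)$. This reduces the contact identification to bookkeeping with the gluing instructions, whose well-definedness was already established in Proposition~\ref{Prop:GluingInvariance}.

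Your proposed route through Reeb dynamics and Definition~\ref{Def:SupportingBook} is workable in principle, but note that the contact form actually used in the connect sum construction is the $\theta$-invariant form $\widehat{\alpha}$ of Section~\ref{Sec:ThetaInvariant}, not a form adapted to the open book, so checking transversality to pages and tangency to the binding is not as immediate as you suggest. The paper's Heegaard/gluing-instructions approach sidesteps this entirely and is the cleaner way to close the gap you yourself flagged.
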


Theorem \ref{Thm:Monoids} was motivated by and generalizes results of Baker, Etnyre, and van Horn-Morris \cite[\S 1.2]{BEV:Monoids} and Baldwin \cite[Theorems 1.1-1.3]{Baldwin:Monoids}.

The question of whether or not ``weakly fillable'' is a monoid property for Liouville domains of dimension greater than two appears to be more subtle.

\begin{thm}\label{Thm:WeakMonoids}
Let $\Sdom$ be a Liouville domain for which $\dim(\Sigma) > 2$, and let $\Phi,\Psi\in\Symp$.  Suppose that $\Mxi_{(\Sdom,\Phi)}$ and $\Mxi_{(\Sdom,\Psi)}$ admit weak symplectic fillings $(W_{1},\omega_{1})$ and $(W_{2},\omega_{2})$, respectively.  Then, if $i_{\Phi}^{*}\omega_{1}=i_{\Psi}^{*}\omega_{2} \in H^{2}(\Sigma ;\mathbb{R})$, the contact manifold $\Mxi_{(\Sdom,\Phi\circ\Psi)}$ is weakly symplectically fillable.  

In particular, if $H^{2}(\Sigma ;\mathbb{R})=0$, then ``weakly symplectically fillable'' is a monoid property for $\Symp$.
\end{thm}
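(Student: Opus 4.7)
The plan is to parallel the proof of Theorem \ref{Thm:Monoids}(1), replacing the exact handle attachment of Theorem \ref{Thm:Cobordism} with its weak-filling variant outlined at the end of Section \ref{Sec:ConnectSum} (to be developed in Section \ref{Sec:WeakHandle}).

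First, form the disjoint union $(W,\omega) = (W_1,\omega_1) \sqcup (W_2,\omega_2)$, which is a weak filling of the (disconnected) contact manifold $\Mxi_{(\Sdom,\Phi)} \sqcup \Mxi_{(\Sdom,\Psi)}$. The page embeddings $i_\Phi$ and $i_\Psi$ defined just before the theorem statement automatically land in distinct boundary components of $W$, so the Liouville connect sum along the pair $(i_\Phi,i_\Psi)$ is defined. The key topological identification is
\[
\#_{(\Sdom,(i_\Phi,i_\Psi))} \bigl( \Mxi_{(\Sdom,\Phi)} \sqcup \Mxi_{(\Sdom,\Psi)} \bigr) \;\cong\; \Mxi_{(\Sdom,\Phi\circ\Psi)},
\]
which can be checked in the mapping-torus model by realizing each open book as $\Sigma\times[0,1]/\!\sim$, taking the attaching pages to be $\Sigma\times\{1/2\}$, and verifying that the gluing map $\Upsilon$ of Section \ref{Sec:ConnectSum} composes the two monodromies on the nose. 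This is the same identification underlying Theorem \ref{Thm:Monoids}(1).

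Next, I would apply the weak variant of Theorem \ref{Thm:Cobordism} to attach the modified handle $(H_\Sigma,\omega_\beta)$ to $(W,\omega)$ along $i_\Phi$ and $i_\Psi$. Since $H_\Sigma$ deformation retracts onto a cell complex built from a copy of $\Sigma$, extending the closed 2-form $\omega$ across $H_\Sigma$ reduces to interpolating between the restrictions $i_\Phi^*\omega_1$ and $i_\Psi^*\omega_2$ on the two attaching copies of $\Sigma$. The obstruction to carrying this out via a Moser-type argument is precisely the class $[i_\Phi^*\omega_1 - i_\Psi^*\omega_2] \in H^2(\Sigma;\mathbb{R})$, which vanishes by hypothesis. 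Capping off $(W,\omega)$ with this weak cobordism and using the topological identification above yields a weak symplectic filling of $\Mxi_{(\Sdom,\Phi\circ\Psi)}$. The ``in particular'' statement is then immediate, since $H^2(\Sigma;\mathbb{R})=0$ forces the obstruction to vanish for all pairs $(\Phi,\Psi)$.

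The step I expect to be the main obstacle is the precise identification of the cohomological obstruction inside Section \ref{Sec:WeakHandle}: one must write down a model closed 2-form on $H_\Sigma$ whose restrictions to the two attaching regions are controlled by $\beta$, and then set up the Moser-type gluing argument so that the difference class $[i_\Phi^*\omega_1 - i_\Psi^*\omega_2]$ is exactly the obstruction that appears. The hypothesis $\dim\Sigma > 2$ seems to serve only to place the result outside the scope of Theorem \ref{Thm:Monoids}(2); the construction itself should be dimension-independent.
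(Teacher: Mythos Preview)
Your proposal is correct and matches the paper's approach essentially verbatim: form the disjoint union of the two weak fillings, invoke Proposition~\ref{Prop:OBSum} for the identification $\#_{\Sdom}\bigl(\Mxi_{(\Sdom,\Phi)}\sqcup\Mxi_{(\Sdom,\Psi)}\bigr)\cong\Mxi_{(\Sdom,\Phi\circ\Psi)}$, and then apply Theorem~\ref{Thm:WeakHandle}, whose cohomological hypothesis is exactly $i_{\Phi}^{*}\omega_{1}=i_{\Psi}^{*}\omega_{2}$ in $H^{2}(\Sigma;\mathbb{R})$. Your diagnosis of where the work lies and of the role of the dimension hypothesis is also accurate.
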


If $\Sdom$ is a Liouville domain, then $\Id_{\Sigma}\in\Symp$ is an element of the ``exactly symplectically fillable'' monoid in $\SympGroup$.  This is a consequence of the fact that the contact manifold $\Mxi_{(\Sdom,\Id_{\Sigma})}$ can be realized as the boundary of the Liouville domain obtained by rounding the corners of $(\Sigma\times\disk^{2},\beta+\lambda_{std})$.  Similarly, if $\Sdom$ admits the structure of a Weinstein domain, then $\Id_{\Sigma}$ is an element of the ``Weinstein fillable'' monoid in $\Symp$.  This is a consequence of the fact that $(\Sigma\times\disk^{2},\beta+\lambda_{std})$ admits the structure of a Weinstein domain after rounding the corners of the product.  For more information on Weinstein domains of this type, see \cite{Cieliebak}.

Given a Liouville domain $\Sdom$ and some $\Phi\in\Symp$, there is a natural Liouville embedding $i_{\Phi}$ of $\Sdom$ into the contact manifold $\Mxi_{(\Sdom,\Phi)}$ whose image is the page of the associated open book.  The naturality of this embedding follows from the fact that the manifold $M_{(\Sigma,\Phi)}$ is defined constructively. The proof of Theorem \ref{Thm:Monoids} consists of constructing $\Mxi_{(\Sdom,\Phi\circ\Psi)}$ from $\Mxi_{(\Sdom,\Phi)}\sqcup\Mxi_{(\Sdom,\Psi)}$ by a Liouville connect sum and then appealing to the existence of the symplectic cobordism $(W,\lambda)$ described in Theorem \ref{Thm:Cobordism}.  The fact that this cobordism is exact provides us with the following easy corollary:

\begin{cor}\label{Cor:HCMonoids}
Let $\Sdom$ be a Liouville domain.  Then ``non-vanishing contact homology with rational coefficients'' is a monoid property for $\Symp$.
\end{cor}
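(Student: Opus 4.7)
The plan is to combine the exact cobordism produced by Theorem \ref{Thm:Cobordism} with the standard functoriality of contact homology under exact symplectic cobordisms, following precisely the pattern established in the proof of Theorem \ref{Thm:Monoids}. Given $\Phi,\Psi\in\Symp$, I would first exhibit disjoint Liouville embeddings $i_{1},i_{2}:\Sdom\rightarrow \Mxi_{(\Sdom,\Phi)}\sqcup\Mxi_{(\Sdom,\Psi)}$ whose images are distinguished pages of the two open books, and identify the associated Liouville connect sum with $\Mxi_{(\Sdom,\Phi\circ\Psi)}$. This identification is exactly the one used in the proof of Theorem \ref{Thm:Monoids}, and it is the only geometric input required. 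Theorem \ref{Thm:Cobordism} then supplies an exact symplectic cobordism $\Ldom$ with negative boundary $\Mxi_{(\Sdom,\Phi)}\sqcup\Mxi_{(\Sdom,\Psi)}$ and positive boundary $\Mxi_{(\Sdom,\Phi\circ\Psi)}$.

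Next I would invoke the fact that an exact symplectic cobordism induces a unital $\mathbb{Q}$-algebra homomorphism on contact homology running from the positive end to the negative end. Applied to $\Ldom$, and combined with the multiplicativity of contact homology under disjoint unions, this yields a unit-preserving map
\begin{equation*}
HC_{*}(\Mxi_{(\Sdom,\Phi\circ\Psi)};\mathbb{Q})\longrightarrow HC_{*}(\Mxi_{(\Sdom,\Phi)};\mathbb{Q})\otimes_{\mathbb{Q}} HC_{*}(\Mxi_{(\Sdom,\Psi)};\mathbb{Q}).
\end{equation*}
Because $\mathbb{Q}$ is a field, $1\otimes 1$ is nonzero in the tensor product whenever both tensor factors have nonvanishing units. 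So if $HC_{*}$ is nonvanishing for both $\Phi$ and $\Psi$, the unit on the left must map to a nonzero element, forcing $HC_{*}(\Mxi_{(\Sdom,\Phi\circ\Psi)};\mathbb{Q})\neq 0$. This establishes closure under composition. The unit of the monoid is $\mbox{id}_{\Sigma}$, which lies in the set because $\Mxi_{(\Sdom,\mbox{id}_{\Sigma})}$ is exactly symplectically fillable (as noted in the discussion preceding the corollary), and an exact filling induces an augmentation of the contact homology DGA, making the unit nonzero.

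The main obstacle is not geometric but foundational: the argument rests entirely on functoriality of contact homology with unit under exact cobordisms, a property whose rigorous formulation depends on the chosen analytic framework for contact homology. I would handle this by citing an appropriate foundational reference rather than reworking it, in keeping with the standard treatment in the literature on symplectic fillings. Once that is granted, the proof reduces to a one-line consequence of Theorem \ref{Thm:Cobordism} together with the tensor-product description of the contact homology of a disjoint union.
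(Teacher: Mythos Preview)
Your proposal is correct and follows essentially the same route as the paper: Proposition \ref{Prop:OBSum} identifies $\Mxi_{(\Sdom,\Phi\circ\Psi)}$ as a Liouville connect sum, Theorem \ref{Thm:Cobordism} supplies the exact cobordism, and then the paper combines the unital cobordism map on contact homology (Theorem \ref{Thm:TightCobordism}) with the graded-tensor K\"unneth formula for disjoint unions (Proposition \ref{Prop:Kunneth}) exactly as you outline. Your acknowledgment of the foundational caveat also matches the paper's own remarks.
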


This result is analogous to a theorem first proved in \cite[Theorem 1.2]{Baldwin:Comultiplication} (see also \cite{BEV:Monoids,Baldwin:Monoids})  regarding the non-vanishing of contact classes in the Heegaard Floer homologies of contact $3$-manifolds.

\subsubsection{Fillability of fibered contact manifolds}

Here we define a family $\Mxi_{(\Sdom,\Phi,\Psi)}$ of $(2n+1)$-dimensional contact manifolds which fiber over the circle $S^{1}$, each determined by a $2n$-dimensional Liouville domain $\Sdom$ and a pair of symplectomorphisms $\Phi,\Psi\in\SympGroup$. When $\dim(\Sigma)=2$, this family of contact manifolds forms a subset of the collection of universally tight surface bundles over $S^{1}$.  The tightness and fillability of contact structures on surface bundles over the circle have been studied extensively.  See, for example, \cite{DG:Surgery, Eliashberg:Torus, Ghiggini:HFFillability, Giroux:T3, Honda:TightClassII,HKM:Fibered,VHM:Thesis,Wendl:Cobordism}.

Consider a tubular neighborhood $[-1,1]\times \partial \mathcal{N}(\Sigma)$ where $\mathcal{N}(\Sigma)$ is the model neighborhood described in Section \ref{Sec:Hypersurface} and $\theta$ is a coordinate on $[-1,1]$.  The manifold $[-1,1]\times \mathcal{N}(\Sigma)$ inherits a $\theta$-invariant contact structure from the contact form $d\theta+\beta$ on $N(\Sigma)$.  By gluing
\be
\item the positive region of $\{1\}\times\partial \mathcal{N}(\Sigma)$ to the negative region of $\{-1\}\times \partial \mathcal{N}(\Sigma)$ using the map $\Phi$ and
\item the positive region of $\{-1\}\times\partial \mathcal{N}(\Sigma)$ to the negative region of $\{1\}\times \partial \mathcal{N}(\Sigma)$ using the map $\Psi$
\ee
we obtain $\Mxi_{(\Sdom,\Phi,\Psi)}$.  See Figure \ref{Fig:SurfaceBundle}. In the simplest case, with $\Phi=\Psi=\Id_{\Sigma}$, $\Mxi_{(\Sdom,\Phi,\Psi)}$ is the boundary of the Liouville domain obtained by rounding the corners of $(\Sigma\times \disk^{*}S^{1},\beta-\lambda_{can})$.

\begin{figure}[h]
	\begin{overpic}[scale=.7]{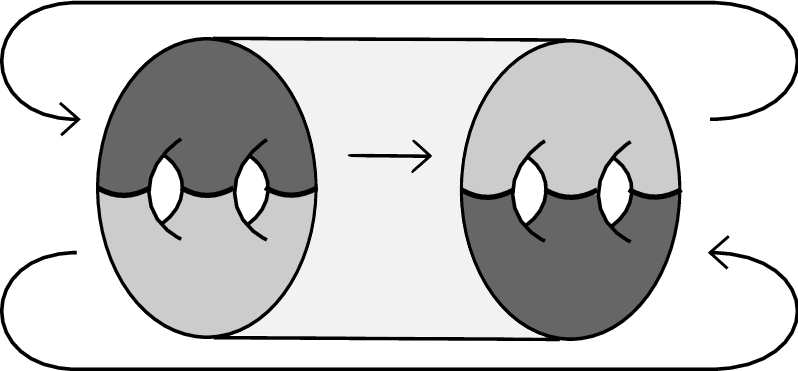}
        \put(-5,37){$\Phi$}
        \put(102,7){$\Psi$}
        \put(48,20){$\partial_{\theta}$}
    \end{overpic}
    \vspace{1.5mm}
	\caption{The contact manifold $\Mxi_{(\Sdom,\Phi,\Psi})$.  It is determined by the convex gluing instructions shown on the boundary of the contact manifold $\bigr{(}[-1,1]\times \partial \mathcal{N}(\Sigma),\xi_{\Sdom}\bigr{)}$.  See Section \ref{Sec:Fibration} for further explanation.}
    \label{Fig:SurfaceBundle}
\end{figure}

\begin{rmk}\label{Rmk:OBConvention}
Our convention that $\Phi$ and $\Psi$ point in opposite direction in Figures \ref{Fig:SurfaceBundle}, \ref{Fig:GluingInstructions}, and \ref{Fig:Heegaard2} as well as in Section \ref{Sec:GluingInstructions} has been chosen so as to fit into the following scenario: If $(M_{1}, \xi_{1})$ and $(M_{2}, \xi_{2})$ are standard neighborhoods of some $\Sdom$, then the contact manifold obtained by the gluing described -- with $\Phi$ sending the positive region of $\partial M_{1}$ to the negative region of $\partial M_{2}$ and $\Psi$ sending the positive region of $\partial M_{2}$ to the negative region of $\partial M_{1}$ -- will be supported by an open book decomposition with page $\Sdom$ and whose monodromy is $\Phi \circ \Psi \in \Symp(\Sdom, \partial \Sigma)$.
\end{rmk}

A slight variation of the proof of Theorem \ref{Thm:Monoids} -- appearing in Section \ref{Sec:Fibration} -- yields the following result which relates the fillability of these contact manifolds to fillability of open book decompositions:

\begin{thm}\label{Thm:Fibration}
Let $\Sdom$ be a $2n$-dimensional Liouville domain.
\be
\item If $\Mxi_{(\Sdom,\Phi\circ\Psi)}$ is symplectically (exactly) fillable, then $\Mxi_{(\Sdom,\Phi,\Psi)}$ is also symplectically (exactly) fillable.
\item If $\Sdom$ admits a Weinstein structure, and $\Mxi_{(\Sdom,\Phi\circ\Psi)}$ is Weinstein fillable then so is $\Mxi_{(\Sdom,\Phi,\Psi)}$.
\item If $\dim(M)=3$ and $\Mxi_{(\Sdom,\Phi\circ\Psi)}$ is weakly fillable, then so is $\Mxi_{(\Sdom,\Phi,\Psi)}$.
\ee
Furthermore, if $\dim (M)>3$, $\Mxi_{(\Sdom,\Phi\circ\Psi)}$ admits a weak filling $(W,\omega)$, and $i_{\Phi\circ\Psi}^{*}\circ(\Id_{\Sigma}-\Phi^{*})\omega\in \Omega^{2}(\Sigma)$ is exact, then $\Mxi_{(\Sdom,\Phi,\Psi)}$ is also weakly fillable.
\end{thm}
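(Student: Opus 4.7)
The plan is to mirror the proof of Theorem \ref{Thm:Monoids}, but applying the Liouville connect sum \emph{internally} to the single manifold $\Mxi_{(\Sdom,\Phi\circ\Psi)}$ rather than across two components of a disjoint union. Concretely, the aim is to produce two disjoint Liouville embeddings $j_{1},j_{2}:\Sdom\rightarrow \Mxi_{(\Sdom,\Phi\circ\Psi)}$ for which
\begin{equation*}
\#_{(\Sdom,(j_{1},j_{2}))}\Mxi_{(\Sdom,\Phi\circ\Psi)} \;\cong\; \Mxi_{(\Sdom,\Phi,\Psi)}.
\end{equation*}
Granting this, Theorem \ref{Thm:Cobordism} supplies an exact symplectic cobordism $\Ldom$ whose negative boundary is $\Mxi_{(\Sdom,\Phi\circ\Psi)}$ and whose positive boundary is $\Mxi_{(\Sdom,\Phi,\Psi)}$, which is moreover Stein whenever $\Sdom$ admits a Stein structure. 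Attaching any symplectic, exact, or Stein filling of $\Mxi_{(\Sdom,\Phi\circ\Psi)}$ to the negative end of $\Ldom$ then produces a filling of $\Mxi_{(\Sdom,\Phi,\Psi)}$ of the same flavor, proving parts (1) and (2).

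The embeddings are the natural ones: for two distinct values $0<t_{1}<t_{2}<1$ on the mapping torus $S^{1}$ underlying $\Mxi_{(\Sdom,\Phi\circ\Psi)}$, let $j_{k}$ embed $\Sdom$ as the page $\Sigma\times\{t_{k}\}$, with a collar neighborhood of the binding removed as in Definition \ref{Def:SupportingBook}(2). The supporting contact form pulls back to $\beta$ on each such page, so the $j_{k}$ are genuine Liouville embeddings. The combinatorial content of the argument is then the verification that removing the standard neighborhoods $\mathcal{N}(j_{k}(\Sigma))$ and re-identifying their convex boundary components via the map $\Upsilon$ of Section \ref{Sec:ConnectSum} reassembles the contact manifold of Section \ref{Sec:FiberDef}: the two arcs of $S^{1}$ cut off by $t_{1}$ and $t_{2}$ become the two copies of $[-1,1]\times\mathcal{N}(\Sigma)$ in Figure \ref{Fig:SurfaceBundle}, and, with $t_{1},t_{2}$ chosen appropriately, the monodromies along these two arcs recover $\Phi$ and $\Psi$, whose composition was the original monodromy $\Phi\circ\Psi$.

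For parts (3) and (4), the same strategy applies using the weak handle attachment variant described in Section \ref{Sec:WeakHandle} in place of the full statement of Theorem \ref{Thm:Cobordism}. When $\dim M = 3$ the cohomological obstruction to that weak attachment is automatic, yielding (3). When $\dim M > 3$, the obstruction is measured by the class $[j_{1}^{*}\omega - j_{2}^{*}\omega]\in H^{2}(\Sigma;\mathbb{R})$. Choosing $t_{1}<t_{2}$ so that the arc between them along $S^{1}$ realizes the monodromy factor $\Phi$, the embedding $j_{2}$ is related to $j_{1}$ precisely by conjugation with $\Phi$, so $j_{2}^{*}\omega = \Phi^{*} j_{1}^{*}\omega$, and the difference is represented by $(\mathrm{id}_{\Sigma}-\Phi^{*})(i_{\Phi\circ\Psi}^{*}\omega)$. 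The exactness hypothesis of (4) is exactly what is required for the obstruction to vanish.

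The main obstacle is the combinatorial identification in the second paragraph: one must keep careful track of the dividing sets, positive and negative regions, and the precise form of the $\Upsilon$ gluing to verify that the resulting contact manifold is contactomorphic to $\Mxi_{(\Sdom,\Phi,\Psi)}$ on the nose, rather than some nearby contact structure differing by a stabilization or an extra Dehn twist coming from ambiguity in conjugating the monodromies. Once this identification is secured, the remaining ingredients --- invoking Theorem \ref{Thm:Cobordism} or its weak analogue, and capping off by the given filling --- are entirely formal.
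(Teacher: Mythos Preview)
Your approach is correct and is essentially the one taken in the paper. The paper packages what you call ``the main obstacle'' as Lemma~\ref{Lemma:Fibered}, proved via an explicit three-piece decomposition (Figure~\ref{Fig:Heegaard2}): one writes $\Mxi_{(\Sdom,\Phi\circ\Psi)}$ as $N_{1}\cup\bigl([-1,1]\times\partial\mathcal{N}(\Sigma)\bigr)\cup N_{2}$, where $N_{1},N_{2}$ are standard neighborhoods glued to the collar with instructions $(\mbox{id}_{\Sigma},\Phi)$ and $(\mbox{id}_{\Sigma},\Psi)$ respectively; performing the Liouville connect sum along $N_{1}$ and $N_{2}$ then leaves precisely the single collar $[-1,1]\times\partial\mathcal{N}(\Sigma)$ with its two boundary components identified by $(\Phi,\Psi)$, which is $\Mxi_{(\Sdom,\Phi,\Psi)}$ on the nose by Definition~\ref{Def:FiberDef}. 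Your page embeddings $j_{1},j_{2}$ are exactly the cores of $N_{1},N_{2}$ in this picture. One small slip to correct: the complement of the two page neighborhoods is a \emph{single} copy of $[-1,1]\times\partial\mathcal{N}(\Sigma)$ (note $\partial\mathcal{N}(\Sigma)$, not $\mathcal{N}(\Sigma)$), not two copies --- the two arcs of $S^{1}$ contribute the positive and negative regions of $\partial\mathcal{N}(\Sigma)$, joined along the dividing set near the binding. Your treatment of the weak-filling obstruction matches the paper's exactly.
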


\subsubsection{Fillability of branched covers}

Our next application of Theorem \ref{Thm:Cobordism} concerns branched covers of contact manifolds. Let $\Mxi$ be a contact manifold and suppose that we have a compact, codimension $1$ submanifold $\Sigma \subset M$ whose boundary is a compact codimension $2$ contact submanifold $(\partial \Sigma, \zeta)$ of $\Mxi$. Then for each $k > 0$, the hypersurface $\Sigma$ determines a $k$-fold contact-branched covering $\Mxi_{\Sigma, k}$ of $\Mxi$, with branch locus $\partial \Sigma$.

\begin{thm}\label{Thm:Branched}
Suppose that the hypersurface $\Sigma$ described above is a Liouville hypersurface $\Sdom\subset\Mxi$.  Then for $k\geq 2$ there is an exact symplectic cobordism $\Ldom$ whose concave boundary is $\sqcup^{k}\Mxi$ and whose convex boundary is $\Mxi_{\Sigma,k}$.  If $\Sdom$ admits a Weinstein structure, then so does the cobordism $\Ldom$.  Moreover, if $\Mxi$ is weakly fillable, then so is $\Mxi_{\Sigma,k}$.
\end{thm}

Theorem \ref{Thm:Branched} is similar in flavor to results of Baldwin \cite{Baldwin:Monoids} and of Harvey, Kawamuro, and Plamenevskaya \cite{HKP:Branched} regarding cyclic branched coverings of contact 3-manifolds. These results will summarized in Section \ref{Sec:Branched} along side the proof of Theorem \ref{Thm:Branched}.

\subsubsection{Liouville domains without Weinstein structures}

In Section \ref{Sec:NotWeinstein} we discuss how Theorem \ref{Thm:Cobordism} can be used to construct Liouville domains and exact symplectic cobordisms which do not admit Weinstein structures.  The examples we provide have connected boundary, although their construction relies on the existence of Liouville domains with disconnected boundary -- c.f. \cite{Geiges:Disconnected,MNW12,McDuff}.  The examples appearing in Section \ref{Sec:NotWeinstein} show that the cobordisms described in Theorem \ref{Thm:Cobordism} are not always Weinstein.

\subsubsection{Contact $(1/k)$-surgery and generalized Dehn twists}

In \cite{DG:Surgery}, Ding-Geiges define contact $(1/k)$-surgery along Legendrian knots in contact 3-manifolds, generalizing Weinstein's Legendrian surgery \cite{Weinstein:Handles} in the 3-dimensional case.  Using the Liouville connect sum and generalized Dehn twists \cite{Arnold, Seidel1}, we provide a definition of contact $(1/k)$-surgery along Legendrian $n$-spheres in contact $(2n+1)$-manifolds for arbitrary $n\geq 1$ which coincides with the usual definition for $n=1$.  For $k=-1$, our definition coincides with that Legendrian surgery.  For $n=1$, our definition coincides with the usual notion of contact $(1/k)$-surgery.  

As in those known cases, contact $(1/k)$ surgery may be summarized as follows: We take a neighborhood $[-\epsilon, \epsilon] \times \disk^{*}S^{n}$ of a Legendrian sphere in a contact $(2n+1)$-manifold, remove it, and then glue it back, applying $-k$ symplectic Dehn twists applied to the ``top'' $\{\epsilon \} \times \disk^{*}S^{n}$ of our neighborhood.

In the case $n=1$, our Legendrian sphere is a knot $K$ and the surgery described is smoothly a Dehn surgery with coefficient $(1/k)$ with respect to the framing on $K$ determined by the contact structure. See for example \cite[Section 11]{OzbSt:SteinSurgery}. Likewise, for $n=1$ a generalized Dehn twist coincides with the usual notion of a positive Dehn twist on the annulus.

This construction is described in detail in Section \ref{Sec:ContactSurgery}. There we observe that many properties of contact $(1/k)$-surgery known to hold in the 3-dimensional case easily carry over to contact manifolds of arbitrary dimension.

Denote by $\tau_{n} \in \Symp((\disk^{*}S^{n},-\lambda_{std}), \partial \disk^{*}S^{n})$ a positive symplectic Dehn twist. Our first application of contact surgery concerns contact manifolds constructively described as open books.

\begin{thm}\label{Thm:NegativeMonodromy}
Let $k$ and $n$ be positive integers. Then the contact manifold determined by the open book whose page is $(\disk^{*}S^{n},-\lambda_{std})$ and whose monodromy is $\tau^{-k}$ is not exactly symplectically fillable.
\end{thm}

The above generalizes the well-known case $n=1$, whence the contact manifold described is an overtwisted lens space, and the case $k=1$ originally due to Bourgeois and van Koert \cite{BK:Stabilize}.

A theorem of Seidel asserts that the square of a generalized Dehn twist along $S^{2}$ is smoothly isotopic to the identity mapping.  In Section \ref{Sec:DehnTwist} we make use of his proof \cite[Lemma 6.3]{Seidel2} as well as some classic homotopy theoretic results \cite{Adams, James} to enhance this theorem.

\begin{thm}\label{Thm:SquareClassification}
Considered as an element of $\Diff^{+}(\disk^{*}S^{n},\partial\disk^{*}S^{n})$, $\tau_{n}^{2}$ is isotopic to the identity mapping if and only if $n$ is either $2$ or $6$.
\end{thm}

With the help of Theorem \ref{Thm:SquareClassification}, we construct exotic contact structures on $S^{5}$ and $S^{13}$ by performing contact $\frac{1}{2k}$-surgeries along Legendrian spheres in $(S^{5},\xi_{std})$ and $(S^{13},\xi_{std})$.  The following theorem then immediately follows from this construction.

\begin{thm}\label{Thm:TauSquared}
Considered as elements of $\Symp((\disk^{*}S^{n},-\lambda_{std}),\partial\disk^{*}S^{n})$ $(n=2,6)$, $\tau_{2}^{2}$ and $\tau_{6}^{2}$ are not isotopic to the identity.\footnote{A considerably stronger version of this result which is specific to the case $n=2$ can be found in \cite[Proposition 2.6]{Seidel1}.}
\end{thm}

The case $n=2$ of Theorem \ref{Thm:TauSquared} was originally proved in \cite{Seidel1} using Floer homology.  Our proof is a consequence of the exoticity of the contact spheres described above, which we establish using the Eliashberg-Floer-Gromov-McDuff theorem \cite{Eliashberg:Plumbing, McDuff} asserting that an exact symplectic filling $(W,\lambda)$ of $(S^{2n+1},\xi_{std})$ must be such that $W$ is diffeomorphic to $\disk^{2n+2}$.

More examples of exotic contact structures on spheres of dimension greater than three can be found in \cite{DG:Exotic}, \cite{Eliashberg:Plumbing}, and \cite{U99}.  Non-standard contact structures on $S^{3}$ are completely understood by \cite{Bennequin, Eliashberg:OT, Eliashberg:Ineq, Huang:OT}.  Another contact-geometric proof of the symplectic non-triviality of squares of Dehn twists along $S^{2}$ -- obtained by analyzing the contact manifolds described in \cite{U99} -- can be found in \cite{KN05}.

\subsection{Outline}

The remainder of this paper is organized as follows:

\textbf{Section \ref{Sec:Definitions}.}  This section consists mostly of the establishment of notation and includes a brief overview of Weinstein handle attachment.

\textbf{Section \ref{Sec:StandardNeighborhood}.}  We carry out the technical details concerning neighborhood theorems for Liouville hypersurfaces required to rigorously define the Liouville connect sum.

\textbf{Section \ref{Sec:Cobordism}.}  This section contains the proof of Theorems \ref{Thm:Cobordism}.

\textbf{Section \ref{Sec:HighCodimension}.}  Here we analyze neighborhoods of Liouville submanifolds of codimension greater than one.

\textbf{Section \ref{Sec:LHSEx}.}  In this section we give various examples of Liouville hypersurfaces in contact manifolds.

\textbf{Section \ref{Sec:Liouville}.}  We describe further basic consequences of Definition \ref{Def:Hypersurface}.

\textbf{Section \ref{Sec:Applications}.}  Here we prove some of the corollaries of Theorem \ref{Thm:Cobordism} stated in Section \ref{Sec:TopologicalApplications}.  We also provide an example which shows how the proof of Theorem \ref{Thm:Cobordism} can be used to draw a Kirby diagrams for the cobordism described in Theorem \ref{Thm:Cobordism} in the event that the contact manifold $\Mxi$ is $3$-dimensional.

\textbf{Section \ref{Sec:ContactSurgery}.}  This section defines and outlines some of the basic properties of contact $(1/k)$-surgery.  There we also briefly review known facts about contact $(1/k)$-surgery on contact 3-manifolds and generalized Dehn twists.

\textbf{Section \ref{Sec:TwistApplications}.}  In this section we use open books and contact $(1/k)$-surgery to study the symplectic topology of generalized Dehn twists, proving Theorems \ref{Thm:NegativeMonodromy}, \ref{Thm:SquareClassification}, and \ref{Thm:TauSquared}.

\subsection{Updates}

We briefly address some research developments which have occurred since the first version of this article was posted to the ArXiv in 2012. The following comments are ordered in accordance with the appearances of relevant content in the text below.

Regarding foundational aspects of Liouville domains and their role in contact topology, the reader is referred to Giroux's \cite{Giroux:IdealLiouville} which establishes new notion of completed Liouville domain useful for studying convex surfaces. See also \cite{HH:Bypass, HH:Convex}, in which Honda-Huang explore foundations of convex surface theory in contact manifolds of dimension greater than $3$.

Regarding the applications of Theorem \ref{Thm:Cobordism} appearing in Section \ref{Sec:Applications}: Foundations of contact homology (Corollary \ref{Cor:HCMonoids}) have since been rigorously established by Bao-Honda \cite{BaoHonda:ContactHomology} and Pardon \cite{Pardon:ContactHomology}. In \cite{Gironella:Branches}, Gironella provides an updated and more natural definition of contact-branched cover as considered in Theorem \ref{Thm:Branched}

For content pertaining to contact surgery in Section \ref{Sec:ContactSurgery}: 
Overtwisted contact structures have been defined and classified in all dimensions by Borman-Eliashberg-Murphy in \cite{BEM14}. The equivalences of various notions of overtwistedness -- such as those appearing Conjecture \ref{Conj:Loose} -- have been established by Casals-Murphy-Presas in \cite{CMP19}.

\subsection{Acknowledgments}

I would like to thank my advisor Ko Honda for his invaluable guidance and for helping me to edit earlier versions of this paper.  I also thank the organizers of the 2011 Trimester on Contact and Symplectic Topology at the Universit\'{e} de Nantes as well as the organizers of the Caltech Geometry and Topology Seminar for providing me the opportunity to present the results described here.  Thanks to Patrick Massot, Klaus Niederkr\"{u}ger, and Chris Wendl for interesting discussions especially in regards to weak symplectic fillings and cobordisms. Finally I am indebted to an anonymous referee for suggesting many improvements to this paper including the addition of Theorem \ref{Thm:NegativeMonodromy}.

\section{Notation and definitions}\label{Sec:Definitions}

This section begins by establishing some notation which will be used throughout the paper.  In Section \ref{Sec:Symplectic} we recall some definitions and basic examples from symplectic geometry.  In Section \ref{Sec:WHandle} we provide a brief overview handlebody constructions and decompositions of Weinstein domains and cobordisms.

\subsection{Notation}

Suppose that $L$ is a smooth $n$-dimensional manifold.

A vector bundle $E\rightarrow L$ over $L$ will always assumed to be smooth and have finite rank.  Such a vector bundle will always be considered as a real vector bundle, even if it is equipped with a complex structure.

The cotangent bundle will be written $T^{*}L$.  After equipping $L$ with a Riemannian metric $\langle \ast,\ast\rangle$ we may consider the unit disk and sphere bundles in $T^{*}L$.  These will be denoted by $\disk^{*}L$ and $S^{*}L$, respectively.  In this paper, we will not be interested in the geometry of any particular Riemannian metrics, and so -- with the exception of Section \ref{Sec:DehnTwist} -- we will refer to $\disk^{*}L$ and $S^{*}X$ without explicitly specifying a metric.

For a vector field $V$ on $L$, the diffeomorphism of $L$ determined by the time-$t$ flow of $V$ will be written $\Flow^{t}_{V}$. Lie derivatives with respect to $V$ will be written $\Lie_{V}$.

If $L$ is closed and oriented, then the fundamental class of $L$ in $H_{n}(L, \mathbb{Z})$ will be written $[L]$.

For a contact manifold $\Mxi$, with contact form $\alpha$, the associated \emph{Reeb vector field} will be denoted by $R_{\alpha}$. Recall that $R_{\alpha}$ is uniquely determined by the equations
\begin{equation*}
\alpha(R_{\alpha})=1\quad\text{and}\quad d\alpha(R_{\alpha},\ast)=0.
\end{equation*}

For a smooth function $f\in\Cinfty(W)$ on a symplectic manifold $(W,\omega)$, the \emph{Hamiltonian vector field} $X_{f}$ is defined by the convention that
\begin{equation*}
df(\ast)=\omega(X_{f},\ast).
\end{equation*}

\subsection{Definitions from symplectic geometry}\label{Sec:Symplectic}

We continue with the discussion started in Section \ref{Sec:Preliminaries}.

\begin{ex}\label{Ex:CotangentBundle}
Let $L$ be a closed, smooth $n$-manifold.  The cotangent bundle $T^{*}L$ of $L$ admits a Liouville 1-form $-\lambda_{can}$.  If $(q_{1},\dots,q_{n})$ is a coordinate chart on $L$ then, in the associated coordinate $(q_{i},p_{i})$ on $T^{*}L$ -- the $p_{j}$ being the coefficients of the $dq_{j}$ -- we have $\lambda_{can}=\sum p_{j}dq_{j}$.  The associated Liouville vector field is given by the radial vector field, written $X_{can}=\sum p_{j}\partial_{p_{j}}$ in local coordinates.  Then $(\disk^{*}L, -\lambda_{can})$ is a Liouville domain.  The induced contact structure $\xi_{can}=\ker(-\lambda_{can})$ on $S^{*}L$ is called the \emph{canonical contact structure}.  Similarly, $\lambda_{can}$ is called the \emph{canonical 1-form} and $-d\lambda_{can}$ is called the \emph{canonical symplectic form} on $T^{*}L$.  Note that $\xi_{can}$ is independent of the metric used to define $S^{*}L$ by Gray's stability theorem.
\end{ex}

\begin{ex}\label{Ex:Products}
Let $\Sdom$ and $(\Sigma',\beta')$ be two Liouville domains.  Then $(\Sigma\times\Sigma',\beta+\beta')$ admits the structure of a Liouville domain after rounding the corners $(\partial\Sigma)\times(\partial\Sigma')$ of the product.
\end{ex}

\begin{defn}\label{Def:WeinsteinDom}
A \emph{Weinstein domain} $(\Sigma, \omega, X, f)$ is a Liouville domain $(\Sigma, \omega(X, \ast))$ whose Liouville vector field $X$ is gradient-like for a Morse function $f:\Sigma \rightarrow \mathbb{R}$ for which $\partial \Sigma$ is a regular value.
\end{defn}

The above conditions imply that the Liouville vector field $X$ satisfies $df(X) \geq 0$ with strict inequality along $\partial\Sigma$. Note that $(\disk^{2n+2},\lambda_{std})$, as described in Example \ref{Ex:Sstd} has a Weinstein structure with $f(z)=\| z \|^{2}$.

\begin{defn}\label{Def:Fillable}
Let $\Mxi$ be a contact manifold and suppose that $(M',\xi')$ is another contact manifold with $\dim(M')=\dim(M)$.  A symplectic manifold $(W,\omega)$ with $\partial W = M\sqcup (-M')$ is
\be
\item a \emph{symplectic cobordism with concave boundary $(M',\xi')$ and convex boundary $\Mxi$} if both $M$ and $M'$ are contact-type hypersurfaces in $(W,\omega)$ and the induced contact structures on $M$ and $M'$ are $\xi$ and $\xi'$ respectively.
\item an \emph{exact symplectic cobordism with concave boundary $(M',\xi')$ and convex boundary $\Mxi$} if it is a symplectic cobordism and the $1$-form $\lambda$ used to identify $M$ and $M'$ as contact-type hypersurface (as described in Section \ref{Sec:Preliminaries}) is defined on all of $W$.
\item a \emph{Weinstein cobordism with concave boundary $(M',\xi')$ and convex boundary $\Mxi$} if it is an exact symplectic cobordism for which there exists a Morse function $f:W \rightarrow \mathbb{R}$ such that $M$ and $M'$ are inverse images of regular values of $f$ and $X$ is gradient-like for $f$.
\ee
Similarly, we say that $\Mxi$ is \emph{symplectically (resp. exactly, Weinstein) fillable} if there is a symplectic (resp. exact, Weinstein) cobordism with concave boundary $\Mxi$ and empty concave boundary.
\end{defn}

When a symplectic cobordism $(W,\omega)$ is exact (or Weinstein) with $\lambda\in\Omega^{1}(W)$ satisfying $d\lambda=\omega$ as in item $(2)$ of the above definition, it shall be specified by the pair $(W,\lambda)$ to emphasize exactness.

Two exact symplectic cobordisms $\Ldom$ and $(W,\lambda')$ will be called \emph{homotopic} if there is a smooth $[0,1]$-family $\lambda_{t}$ of $1$-forms on $W$ such that $\lambda_{0}=\lambda$, $\lambda_{1}=\lambda'$, and $(W,\lambda_{t})$ is an exact symplectic cobordism for all $t\in[0,1]$.  Note that if $\Ldom$ and $(W,\lambda')$ are homotopic, then the concave and convex boundaries of $\Ldom$ and $(W,\lambda')$ are pairwise contact-diffeomorphic.

\begin{defn}\label{Def:FiniteSymplectization}
Let $\Mxi$ be a closed contact manifold with contact 1-form $\alpha$ and let $C_{1}, C_{2}$ be a pair of real constants with $C_{1} < C_{2}$.  A \emph{finite symplectization}
\begin{equation*}
([C_{1}, C_{2}]\times M, e^{t}\alpha)
\end{equation*}
is an exact symplectic cobordism whose concave and convex boundaries are both $\Mxi$. Writing $t$ for a coordinate on $[C_{1}, C_{2}]$, it is clear that 
\begin{equation*}
([-C_{1}, C_{2}]\times M, d(e^{t}\cdot\alpha), \partial_t, t)
\end{equation*}
is a Weinstein cobordism without critical points.
\end{defn}

There is one last type of cobordism we will consider in this paper:

\begin{defn}\label{Def:Weak}
Let $\Mxi$ and $(M',\xi')$ be $(2n+1)$-dimensional contact manifolds.  A compact symplectic manifold $(W,\omega)$ is a \emph{weak symplectic cobordism} with convex boundary $\Mxi$ and concave boundary $(M',\xi')$ if
\be
\item $\partial W=M\sqcup (-M')$,
\item both $\alpha\wedge(d\alpha +\omega|_{\xi})^{n}$ and $\alpha\wedge \omega|_{\xi}^{n}$ define positive volume forms on $M$ for every choice of contact 1-form $\alpha$ for $\Mxi$, and
\item both $\alpha'\wedge(d\alpha' +\omega|_{\xi'})^{n}$ and $\alpha'\wedge \omega|_{\xi'}^{n}$ define positive volume forms on $M'$ for every choice of contact 1-form $\alpha'$ for $(M',\xi')$.
\ee
In the event that $M'=\emptyset$, we say that $(W,\omega)$ is a \emph{weak symplectic filling} of $\Mxi$.
\end{defn}

The above definition -- first stated in \cite{MNW12} -- is related to the notion of \emph{$\omega$-dominating cobordism} -- first defined in \cite[\S 3]{EG:Convex}.  We say that a symplectic manifold $(W,\omega)$ with non-empty boundary \emph{dominates} a contact structure $\xi$ on its boundary if the conformal class of $\omega|_{\xi}$ coincides with the conformal class of symplectic structure on $\xi$ determined by a contact form for $(\partial W,\xi)$.  This definition is the same as Definition \ref{Def:Weak} for 3-dimensional contact manifolds. See \cite{MNW12} for further discussion.

As pointed out by McDuff in \cite[Lemma 2.1]{McDuff}, $\omega$-dominating cobordisms between (or fillings of) contact manifolds of dimension greater than or equal to 5 are symplectic cobordisms (fillings) in the sense of Definition \ref{Def:Fillable}(1).  However, there are 4-dimensional weak symplectic -- but not symplectic -- cobordisms and fillings.  See, for example, \cite[Theorem 1]{DG:Surgery}, \cite[\S 3]{Eliashberg:Torus}, \cite[\S 2.D]{Giroux:T3}, and \cite[\S 1.2]{Wendl:Cobordism}.  In \cite{MNW12} a strategy is described for constructing weakly fillable -- but not symplectically fillable -- contact manifolds of all dimensions greater than three, with examples provided in dimension five \cite[Theorem E]{MNW12}.

\subsection{Weinstein handles}\label{Sec:WHandle}

Now we define Weinstein handle attachments and outline their role in the construction of Weinstein domains and cobordisms.  We have included this material as the proof of the second statement of Theorem \ref{Thm:Cobordism}, which is contained in Section \ref{Sec:WHandleDecomposition}, will require an explicit description of the differential forms involved. It should be noted that we could alternatively have chosen to state many of the results of this paper in the language of \emph{Stein manifolds}.  See \cite{SteinToWeinstein, Eliashberg:Stein, EG:Convex, Gompf:Handles}.

\subsubsection{Definition and construction of the handle}

Consider $\mathbb{R}^{2n}$ with its standard Liouville form $\lambda_{std}$ and Liouville vector field $X_{\lambda_{std}}=\frac{1}{2}\sum_{1}^{n}(x_{j}\partial_{x_{j}}+y_{j}\partial_{y_{j}})$ as described in Example \ref{Ex:Sstd}. Let $\disk^{k}\subset\mathbb{R}^{2n}$ be the unit disk in the plane Span$(x_{1},\dots,x_{k})$.  Then $\disk^{k}$ is an isotropic submanifold of $(\mathbb{R}^{2n},d\lambda_{std})$.  Consider a tubular neighborhood $H_{n,k}:=\disk^{k}\times\disk^{2n-k}$ of $\disk^{k}$.  Then
\begin{equation*}
\partial H_{n,k}=\bigr{(}(\partial \disk^{k})\times\disk^{2n-k}\bigr{)}\cup\bigr{(} \disk^{k}\times(\partial \disk^{2n-k})\bigr{)}=\bigr{(} S^{k-1}\times\disk^{2n-k}\bigr{)}\cup\bigr{(}\disk^{k}\times S^{2n-k-1}\bigr{)}.
\end{equation*}

Now consider the function $f_{k}(x,y)=\sum_{1}^{k} x_{j} y_{j}$.  The Hamiltonian vector field of $f_{k}$ with respect to $d\lambda_{std}$ is $X_{f_{k}}=\sum_{1}^{k}(x_{j}\partial_{x_{j}}-y_{j}\partial_{y_{j}})$.  Then $X_{\lambda_{std}} + X_{f_{k}}$ is a symplectic dilation of $(\mathbb{R}^{2n},d\lambda_{std})$ which points into $H_{n,k}$ along $S^{k-1}\times \disk^{2n-k}$ and out of $H_{n,k}$ along $\disk^{k}\times S^{2n-k-1}$.  In other words, the 1-form
\begin{equation} \label{Eq:WForm}
\lambda_{n,k}(\ast):=d\lambda_{std}(X_{\lambda}+X_{f_{k}},\ast) = \sum_{1}^{k}\big{(}\frac{3}{2}x_{j}dy_{j}+\frac{1}{2}y_{j}dx_{j}\big{)}+\frac{1}{2}\sum_{k+1}^{n}\big{(}x_{j}dy_{j}-y_{j}dx_{j}\big{)}
\end{equation}
determines a contact structure on each of the smooth pieces of $\partial H_{n,k}$, such that $S^{k-1}\times \disk^{2n-k}$ is concave and $\disk^{k}\times S^{2n-k-1}$ is convex.

\begin{defn}
$(H_{n,k},\lambda_{n,k})$ is called the \emph{$2n$-dimensional Weinstein $k$-handle}.
\end{defn}

Now suppose that $(W,\omega)$ is a $(2n+2)$-dimensional symplectic with concave boundary $(M',\xi')$ and convex boundary $\Mxi$ and that there is a contact embedding of $(S^{k-1}\times \disk^{2n+2-k},\ker(\lambda_{n+1,k}))$ into $\Mxi$.  By Equation \eqref{Eq:WForm} $L:=S^{k-1}\times\{0\}$ is an isotropic submanifold of $\Mxi$.  On a collar neighborhood $(\frac{1}{2},1]\times M$ of $M$ in $W$, we can write $\omega=d(t\cdot \alpha)$ where $\alpha$ is a contact form for $\Mxi$.  As $\Mxi$ is a convex component of $(W,\omega)$ and $S^{k-1}\times \disk^{2n+2-k}$ is a concave component of $(H_{n+1,k},\lambda_{n+1,k})$ then we can patch together the Liouville forms $t\cdot \alpha$ and $\lambda_{k}$ on the manifold
\begin{equation*}
W\cup_{S^{k-1}\times\disk^{2n+2-k}} H_{n+1,k}
\end{equation*}
to get a new symplectic cobordism whose concave boundary is $(M',\xi')$ and whose convex boundary is a contact manifold $(M'',\xi'')$ obtained from $\Mxi$ via surgery.  This procedure is called \emph{Weinstein handle attachment} along $L$ and is due to Weinstein \cite{Weinstein:Handles}.  When $k=n+1$, the submanifold $L$ is Legendrian and the handle attachment is often called \emph{Legendrian surgery}.

\begin{rmk}  Note that the above discussion excludes the edge-rounding required to make the boundary of the manifold obtained by handle attachment smooth. A more careful description of the gluing map for Weinstein handle attachment can be found in \cite{Weinstein:Handles} or by following the handle attachment construction of Section \ref{Sec:Handle}.
	
The manifold produced by performing Legendrian surgery along an isotropic sphere will in general depend on the chosen parameterization of $L$ as well as a choice of framing on its normal bundle. A description of normal bundles of isotropic submanifolds are described in Example \ref{Ex:IsotropicNeighborhood}. The results of Section \ref{Sec:HighCodimensionNeighborhoods} may be viewed as a generalization. Parameterization choices for $(2n)$-dimensional $n$-handles in relation to contact surgery and symplectic Dehn twists are discussed in Section \ref{Sec:ParametrizationDependenceOfTwists}.
\end{rmk}

\begin{ex}\label{Ex:WeinsteinSphere}
Again consider the Lagrangian disk $L=\Span(x_{1},\dots,x_{n})\cap \disk^{2n}$ in $(\mathbb{R}^{2n},d\lambda_{std})$.  Then $\partial L$ is a Legendrian sphere in $(S^{2n-1},\xi_{std})=\partial (\disk^{2n},\lambda_{std})$.  Suppose that we attach a Weinstein handle $H_{n,n}$ to $\partial \disk^{2n}$ along $\partial L$ producing a new Liouville domain $\Ldom$.  If we write $L'$ for the core disk $\disk^{n}\times \{0\} \subset H_{n,n}$ of the Weinstein handle, we see that $L\cup L'$ is a closed Lagrangian submanifold of $(W,d\lambda)$ which is homeomorphic to the sphere $S^{n}$.  By applying the time $t$ flow of the vector field $-X_{\lambda}$ on $W$ (for $t\in (0,\infty)$ arbitrarily large) and appealing to the Weinstein neighborhood theorem for Lagrangian submanifolds, we see that $(W,\lambda)$ is Liouville homotopic to the cotangent disk bundle $(\disk^{*}(L\cup L'),-\lambda_{can})$.
\end{ex}

\subsubsection{Handle decompositions of Weinstein domains}

Passing through a critical level of a Morse function on a Weinstein cobordism corresponds to attaching a Weinstein handle. The following theorem allows us to use the following working definition: \emph{A Weinstein domain (cobordism) is a Liouville domain (resp. exact symplectic cobordism) built by a finite sequence of Weinstein handles.}

\begin{thm}\label{Thm:WHandle}
Let $\Mxi$ and $(M',\xi')$ be $(2n+1)$-dimensional contact manifolds.
\be
\item Let $\Ldom$ be a Weinstein cobordism with convex end $\Mxi$.  If we attach a Weinstein handle to $W$ along an isotropic sphere $L\subset\Mxi$, then the resulting symplectic cobordism is also Weinstein.
\item A $2n$-dimensional Liouville domain $\Ldom$ is Weinstein if and only if it admits a filtration
\begin{equation*}
\sqcup(\disk^{2n},\lambda_{std})=(W_{0},\lambda_{0})\subset\cdots\subset (W_{n},\lambda_{n})=\Ldom
\end{equation*}
where each $(W_{k},\lambda_{k})$ is a Weinstein domain obtained from $(W_{k-1},\lambda_{k-1})$ by the attachment of a finite number of $2n$-dimensional Weinstein $k$-handles.
\item Similarly, a $(2n+2)$-dimensional symplectic cobordism $(W,\omega)$ from $(M',\xi')$ to $\Mxi$ is Weinstein if and only if it can be obtained from the finite symplectization of $(M',\xi')$ by a finite sequence of Weinstein handle attachments.
\ee
\end{thm}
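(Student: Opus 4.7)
The plan is to prove all three statements as a package, since parts (2) and (3) rely on part (1) in their easy direction, while the difficult direction of (2) and (3) requires Morse theory on plurisubharmonic functions. The backbone of the argument is therefore: (a) a handle-extension lemma making precise that a Weinstein handle attached to the convex end of a Stein cobordism carries a compatible integrable complex structure agreeing with the one already present, and (b) a Morse-theoretic analysis showing that every plurisubharmonic function on a Stein domain gives a handle decomposition with Weinstein handles of index at most $n$.

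To prove (1), start with the Stein cobordism data $(W,\lambda,J,f)$ whose convex end is $\Mxi$, and the Weinstein handle $(H_{n+1,k},\lambda_{n+1,k})$ equipped with its standard complex structure $J_{std}$ from $\mathbb{C}^{n+1}$ and the plurisubharmonic function $g(x,y)=\tfrac14\sum(x_j^2+y_j^2)+\tfrac12 f_k(x,y)$. The main step is a neighborhood theorem for the isotropic attaching sphere $L\subset\Mxi$ in the Stein category: choose a contact form near $L$ matching $\lambda_{n+1,k}$ along the concave boundary $S^{k-1}\times\disk^{2n+2-k}$ of $H_{n+1,k}$, then extend $J$ from a collar of $\Mxi$ (where it is adapted to the symplectization, as in Example \ref{Ex:CompactSymplectization}) to agree with $J_{std}$ inside the handle. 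Integrability forces one to work not with convex combinations but with a deformation through $\omega$-tame almost complex structures followed by a local $C^0$-close approximation by an integrable one; Eliashberg's approximation technique does this by pulling back $J_{std}$ through a local holomorphic chart constructed using the isotropic neighborhood of $L$. Finally, glue the functions $tf$ (with $t$ a bump) and $g$ and smooth the corners along $\partial H_{n+1,k}\cap \partial W$, verifying that the result is strictly plurisubharmonic with $\Mxi$ (resp.\ the surgered manifold) as regular level sets.

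For (2), the easy direction follows by induction: begin with the disjoint union of standard Stein disks and apply (1) at each handle attachment, noting that each step preserves the Stein property. The hard direction starts with a Stein structure $(\Sigma,\beta,J)$ and chooses a small perturbation of the defining plurisubharmonic function $f$ to a Morse function $f'$ that is still strictly plurisubharmonic and has $\partial\Sigma$ as a regular level set. At each critical point of $f'$, the complex Hessian is positive, which forces the real Hessian to have at most $n$ negative eigenvalues, so every critical point has Morse index $\leq n$. Standard Morse theory then gives a handle decomposition of $\Sigma$ built from $\disk^{2n}$ by handles of these indices; the key remaining task is to arrange, using Cerf-type rearrangements through families of plurisubharmonic functions, that handles of index $k$ are attached after those of index $<k$ and that each ascending sphere of a $k$-handle is isotropic in the relevant intermediate contact boundary, so that the handle can be realized as a Weinstein $k$-handle in the sense of (1). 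Part (3) is essentially identical, starting instead from the Stein structure on the compact symplectization of $(M',\xi')$ and proceeding through the same Morse rearrangement; the cobordism hypothesis simply removes the initial $\sqcup\disk^{2n}$ piece.

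The main obstacle is the analytical step in (1): unlike symplectic or contact forms, integrable complex structures cannot be combined by convex interpolation, so one cannot naively patch $J$ and $J_{std}$ across the attaching locus. The correct route is to first fix a homotopy of $J$ through non-integrable $\omega$-compatible almost complex structures to one that matches $J_{std}$ near $L$, and then invoke a deformation-to-integrable result (the version of Eliashberg's theorem used here is that an almost complex structure on a Stein handlebody which is integrable near a closed subset and homotopic to an integrable structure can be deformed to be integrable everywhere, keeping it fixed near the closed subset). Once this analytic extension is in hand, everything else is Morse theory and bookkeeping with plurisubharmonic functions, and statements (2) and (3) follow in a uniform way.
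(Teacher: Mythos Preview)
The paper does not actually prove this theorem: it is stated as a result due to Eliashberg \cite{Eliashberg:Stein} and is quoted without proof, with the reader referred to \cite{Gompf:Handles}, \cite{Gompf:Stein}, and \cite{OzbSt:SteinSurgery} for further discussion. So there is no ``paper's own proof'' to compare against.

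That said, your outline is essentially Eliashberg's argument and is correct in broad strokes. The observation that a strictly plurisubharmonic Morse function has all critical points of index at most $n$ (because the negative space of the real Hessian at a critical point must be totally real) is the heart of the ``hard'' direction of (2) and (3), and you have identified it. Your treatment of (1) correctly isolates the genuine difficulty---integrable complex structures do not interpolate---and correctly names Eliashberg's deformation-to-integrable theorem as the tool that resolves it. One point worth sharpening: in the hard direction of (2), it is not merely a Cerf-type rearrangement that makes the attaching spheres isotropic; rather, the stable manifold of each critical point of a strictly plurisubharmonic Morse function with respect to the gradient-like Liouville flow is automatically isotropic (since $\lambda$ vanishes along it), so the attaching spheres are isotropic from the outset once the function is Morse. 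The rearrangement is only needed to order the handles by index.
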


This observation will be particularly useful in Section \ref{Sec:Applications}.  For further discussion of the topology of Weinstein manifolds with an emphasis on the 4-dimensional case, see \cite{Gompf:Handles, Gompf:Stein, OzbSt:SteinSurgery}. For a more general treatment, see \cite{SteinToWeinstein}.

\subsection{Convex hypersurfaces}\label{Sec:Convex}

Throughout this section, $\Mxi$ will be a fixed $(2n+1)$-dimensional contact manifold.  For simplicity, we only consider closed convex hypersurfaces in this paper.

\begin{defn}\label{Def:Convex}
A \emph{convex hypersurface} in $\Mxi$ is a pair $(S,X)$ consisting of
\be
\item a closed, oriented $2n$-dimensional submanifold $S\subset M$ and
\item a vector field $X$, defined on a neighborhood of $S$, which is positively transverse to $S$ and whose flow preserves $\xi$.
\ee
\end{defn}

Let $(S,X)$ be a convex hypersurface in $\Mxi$.  The vector field $X$ and contact structure $\xi$ provide a decomposition of $S$ into three pieces:
\be\label{Con:ConvexData}
\item the \emph{positive region} $S^{+}$ consisting of all points in $S$ for which $X$ is positively transverse to $\xi$,
\item the \emph{dividing set} $\Gamma_{S}$ consisting of all points in $S$ for which $X\subset \xi$, and
\item the \emph{negative region} $S^{-}$ consisting of all points in $S$ for which $X$ is negatively transverse to $\xi$.
\ee

It is easy to see that if we define $(S',X')=(-S,-X)$, then as oriented manifolds
\begin{equation*}
(S')^{+}=-S^{-},\quad \Gamma_{S'}=\Gamma_{S}, \quad (S')^{-}=-S^{+}.
\end{equation*}
Condition (2) of Definition \ref{Def:Convex} is equivalent to saying that for each contact form $\alpha$ for $\xi$ we have $\Lie_{X}\alpha=G\alpha$ for some smooth function $G$ defined in a tubular neighborhood of $S$.  Now suppose that we identify a neighborhood of $S$ with $N(S):=[-1,1]\times S$, and $X=\partial_{\theta}$ where $\theta$ is a coordinate on $[-1,1]$.  Then we can write $\alpha=f\cdot d\theta+\beta$ for some function $f\in\Cinfty(N(S),\mathbb{R})$ and $\beta\in\Cinfty([-1,1],\Omega^{1}(S))$.  The following proposition allows us to normalize the Lie derivative of $\alpha$ with respect to the vector field $\partial_{\theta}$ on $N(S)$.

\begin{prop}\label{Prop:Normalize}
In the above notation, let $H\in\Cinfty(N(S),\mathbb{R})$ be a smooth function defined in a tubular neighborhood $N(S)$ of a convex hypersurface $(S,\partial_{\theta})$.  Then we can choose a contact form $\alpha$ for $\Mxi$ such that on a neighborhood of $S$,
\begin{equation*}
\Lie_{\partial_{\theta}}(\alpha)=H\alpha.
\end{equation*}
\end{prop}

\begin{proof}
Let $\alpha'$ be a contact form on $N(S)$ satisfying $\Lie_{\partial_{\theta}}\alpha'=G\alpha'$.  We will find a function $F$ so that $\alpha=e^{F}\alpha'$ is as desired.  We have that
\begin{equation*}
\Lie_{\partial_{\theta}}(e^{F}\alpha')=e^{F}\big{(}\frac{\partial F}{\partial \theta}\alpha' + \Lie_{\partial_{\theta}}\alpha'\big{)}=e^{F}\big{(}\frac{\partial F}{\partial \theta}+G\big{)}\alpha'.
\end{equation*}
Therefore, we can find the function $F$ solving the equation $H=\frac{\partial F}{\partial \theta}+G$ by defining
\begin{equation*}
F(z,x)=\int_{0}^{\theta}\bigr{(}H(t,x)-G(t,x)\bigr{)}dt
\end{equation*}
for $\theta\in[-1,1]$ and $x\in S$.
\end{proof}

Taking the function $H$ in Proposition \ref{Prop:Normalize} to be zero, we are guaranteed the existence of a contact form $\alpha$ for $\Mxi$ such that 
\begin{equation}\label{Eq:NSAlpha}
\alpha|_{N(S)}=f\cdot d\theta+\beta    
\end{equation}
where $\frac{\partial f}{\partial \theta} = 0$ and $\Lie_{\partial_{\theta}}\beta=0$.  With respect to this $\theta$-invariant contact form, we can compute
\begin{equation}\label{Eq:Alphadalpha}
\alpha\wedge (d\alpha)^{k} = (f d\theta \wedge d\beta + \beta \wedge d\beta + k\beta\wedge df \wedge d\theta)\wedge (d\beta)^{k-1}
\end{equation}
for $k \geq 1$. By analyzing this equation, we are led to the following proposition, the 3-dimensional case of which was first observed in \cite{Giroux:Convex}.

\begin{prop}
In the above notation, $\Gamma_{S}$ is a closed, non-empty, codimension-1 submanifold of $S$.  When oriented as the boundary of $S^{+}$, it is a non-empty codimension-2 contact submanifold of $\Mxi$. Possibly after multiplying $\alpha$ by a non-vanishing function, the restriction of $d\alpha$ to $S^{+}$ ($S^{-}$) with a collar neighborhood of its boundary removed is symplectic with respect to its positive (resp. negative) orientation inherited from the inclusion in $S$.
\end{prop}

\begin{proof}
For $k = n$, Equation \eqref{Eq:Alphadalpha} shows that the contact condition is
\begin{equation*}
    \alpha \wedge (d\alpha)^{n} = d\theta \wedge (f d\beta + n\beta\wedge df)\wedge (d\beta)^{n-1} > 0
\end{equation*}
with respect to the orientation on $[-1, 1]\times S$. This implies that $0$ must be a regular value of the function $f$ and we see that $\alpha \wedge (d\alpha)^{n-1}$ must then be a volume form for $\Gamma = \{ f = \theta = 0 \}$. Hence $\Gamma$ is a contact submanifold of $M$.

For the statement regarding Liouville forms, possible after multiplying $\alpha$ by a function $S \rightarrow (0, \infty)$, we may assume that the function $f$ in Equation \eqref{Eq:NSAlpha} is constant on the complement of a neighborhood of $\Gamma$ in $S$. Where $f$ is constant, the contact condition becomes $d\theta \wedge (d\beta)^{n}$ so that $(d\beta)^{n} \neq 0$ where $f$ is constant.

If $\Gamma = \emptyset$, then $S$ would equal its positive or negative region. In this case, with this special contact form $\alpha$, we would have $\int_{S}(d\alpha)^{n} \neq 0$ which is impossible by Stokes theorem and our assumption that $S$ is closed.
\end{proof}

For further discussion, see \cite{Giroux:IdealLiouville} in which Giroux defines \emph{ideal Liouville domains}. These objects elegantly encode the geometry of positive and negative regions of convex hypersurfaces.

\subsection{Convex gluing}\label{Sec:ConvexGluing}

Suppose that $(M_{1}, \xi_{1})$ and $(M_{2},\xi_{2})$ are two contact manifolds with convex boundary, where the contact vector fields $X_{1}$ and $X_{2}$ defined on collar neighborhoods of $\partial M_{1}$ and $\partial M_{2}$ point out of $M$ and $M'$, respectively.  If we can identify the convex boundary components of $\Mxi$ and $(M',\xi')$ using the collar neighborhoods of the $M_{j}$, these collar neighborhoods may be identified to obtain a larger contact manifold by \emph{convex gluing}.

\begin{ex}[Convex gluing of cotangent bundles]\label{Ex:SmoothGluing}
Let $L$ and $L'$ be two smooth, compact $(n+1)$-dimensional manifolds with non-empty boundaries which are identified via some orientation reversing diffeomorphism $\Phi:\partial L\rightarrow \partial L'$.  Consider the associated contact manifolds $\Mxi=(S^{*}L,\xi_{can})$ and $(M',\xi')=(S^{*}L',\xi_{can})$. Suppose that $\partial L$ and $\partial L'$ each have collar neighborhoods $[-1,1]\times\partial L$ and $[-1,1]\times\partial L'$ on which we identify $\partial L=\{1\}\times \partial L$ and $\partial L'=\{1\}\times\partial L'$.  Assume that we have fixed Riemannian metrics on $L$ and $L'$ which restrict to product metrics on each collar neighborhood and are such that the map $\Phi$ is an isometry with respect to the induced metrics on $\partial L$ and $\partial L'$.

Let $t$ be a coordinate on $[-1,1]$.  Then $\partial_{t}$ lifts to a vector field on $S^{*}L$ which points out of the boundary of $S^{*}L$. By our choice of metrics, the vector field $\widetilde{\partial_{t}}$ is tangent to $S^{*}L$, and when restricted to $M$, is convex where it is defined.  The dividing set may be identified with $S^{*}(\partial L)$, and the positive and negative regions can each be shown to be diffeomorphic to a tubular neighborhood of the zero-section of the cotangent bundle of $\partial M$.  Define a similar convex vector field on a neighborhood of $\partial S^{*}L'$ which points out of the boundary of $M'$.

By the restrictions imposed upon metrics used to define $S^{*}L$ and $S^{*}L'$, the mapping $\Phi$ provides a diffeomorphism $\widehat{\Phi}:\partial(S^{*}X)\rightarrow \partial(S^{*}L')$.  Then we can use $\widehat{\Phi}$ to perform a convex gluing using the map $(t,x)\mapsto (-t,\Phi(x))$ from $[-1,1]\times \partial M$ to $[-1,1]\times \partial M'$. Under this identification
\begin{equation*}
\Mxi \cup (M',\xi')=(S^{*}(L\cup_{\Phi} L'),\xi_{can}).
\end{equation*}
\end{ex}

\section{Neighborhood constructions and the Liouville connect sum}\label{Sec:StandardNeighborhood}

In this section we provide a rigorous account of the construction of neighborhoods of Liouville hypersurfaces and the Liouville connect sum. We also discuss convex gluings which modify the Liouville connect sum which will later be useful for the constructions of contact manifolds in Section \ref{Sec:Applications}. We begin by collecting some prerequisite results regarding neighborhoods of Liouville hypersurfaces.

We note that our constructions and the analyses of contact forms involved are inspired by and consequently quite similar to the construction of a contact manifolds from open book decompositions with symplectic monodromy. See Remark \ref{Rmk:OBConvention}.

\subsection{Neighborhood theorems and deformations of contact forms}\label{Sec:Neighborhood}

\begin{lemma}\label{Lemma:ReebTransversality}
Suppose that $\Sigma$ is a submanifold of a contact manifold $\Mxi$ and that $\Mxi$ is equipped with a contact form $\alpha$ for which $d\alpha|_{T\Sigma}$ is a symplectic form on $\Sigma$. Then the Reeb vector field $R$ for $\alpha$ is transverse to $\Sigma$.
\end{lemma}

\begin{proof}
If $R$ was tangent to $\Sigma$ at some point $x \in \Sigma$, then $d\alpha(R, \ast)$ would be zero on $T_{x}\Sigma$ in violation of our hypothesis that $d\alpha|_{T\Sigma}$ is non-degenerate.
\end{proof}

\begin{lemma}\label{Lemma:SquareNeighborhood}
Suppose that $\Sdom$ is a Liouville hypersurface contained in the interior of a contact manifold $\Mxi$ and that $\alpha$ is a contact form for $\Mxi$ for which $\alpha|_{T\Sigma}=\beta$.  Then for a sufficiently small positive constant $\epsilon$, there is a neighborhood of $\Sigma$ of the form
\begin{equation*}
N(\Sigma)=[-\epsilon,\epsilon]\times\Sigma\quad\text{satisfying}\quad \alpha|_{N(\Sigma)}=dz+\beta.
\end{equation*}
Here $z$ is a coordinate on $[-\epsilon,\epsilon]$, and $\Sigma=\{0\}\times\Sigma$.
\end{lemma}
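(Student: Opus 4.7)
The plan is to build the coordinates using the Reeb flow of $\alpha$. The key preliminary observation is that the Reeb vector field $R_{\alpha}$ must be transverse to $\Sigma$ everywhere. Suppose for contradiction that $R_{\alpha}(p)\in T_{p}\Sigma$ at some $p\in\Sigma$. Since $\alpha|_{T\Sigma}=\beta$ and $d\alpha|_{T\Sigma}=d\beta$, the defining conditions $\alpha(R_{\alpha})=1$ and $d\alpha(R_{\alpha},\ast)=0$ would force $\beta(R_{\alpha}(p))=1$ while $\iota_{R_{\alpha}(p)}d\beta=0$ on $T_{p}\Sigma$. But $d\beta$ is symplectic on $T_{p}\Sigma$, so its contraction map is injective; hence $R_{\alpha}(p)=0$, contradicting $\alpha(R_{\alpha})=1$. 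This rules out any tangency.

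Next, I would use the flow $\phi_{z}$ of $R_{\alpha}$ to produce the claimed product neighborhood. Define $F:[-\epsilon,\epsilon]\times\Sigma\rightarrow M$ by $F(z,x)=\phi_{z}(x)$. Because $\Sigma$ is compact and $R_{\alpha}$ is transverse to $\Sigma$, standard ODE / tubular neighborhood arguments guarantee that for $\epsilon>0$ sufficiently small, $F$ is an embedding onto an open neighborhood $N(\Sigma)$ of $\Sigma=F(\{0\}\times\Sigma)$. This realizes $N(\Sigma)$ as $[-\epsilon,\epsilon]\times\Sigma$ with $z$ the $[-\epsilon,\epsilon]$-coordinate.

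Finally, I would identify $F^{*}\alpha$ explicitly by exploiting the Reeb-invariance of $\alpha$. Cartan's formula gives
\begin{equation*}
\Lie_{R_{\alpha}}\alpha=d\iota_{R_{\alpha}}\alpha+\iota_{R_{\alpha}}d\alpha=d(1)+0=0,
\end{equation*}
so $F^{*}\alpha$ is $z$-invariant. Decompose $F^{*}\alpha=f(z,x)\,dz+\gamma_{z}$ where $\gamma_{z}$ is a $z$-dependent 1-form on $\Sigma$ with $\iota_{\partial_{z}}\gamma_{z}=0$. Evaluating on $\partial_{z}$ yields $f(z,x)=\alpha(R_{\alpha})=1$, and $z$-invariance together with the initial condition $\gamma_{0}=i^{*}\alpha=\beta$ forces $\gamma_{z}\equiv\beta$. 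Therefore $F^{*}\alpha=dz+\beta$ on $[-\epsilon,\epsilon]\times\Sigma$, as desired.

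The only real obstacle is the transversality step; once $R_{\alpha}$ is known to be transverse to $\Sigma$, the rest is a direct computation using that the Reeb flow preserves $\alpha$. Note that in the argument we only used the hypothesis $\alpha|_{T\Sigma}=\beta$ together with the non-degeneracy of $d\beta$ on $\Sigma$, both of which are built into the definition of a Liouville hypersurface.
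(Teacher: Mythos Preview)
Your proof is correct and follows essentially the same approach as the paper: both establish transversality of $R_{\alpha}$ to $\Sigma$ from the non-degeneracy of $d\beta$, flow along $R_{\alpha}$ to build the product neighborhood, and invoke $R_{\alpha}$-invariance of $\alpha$ to identify the pullback as $dz+\beta$. You simply spell out in more detail what the paper compresses into a few sentences.
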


\begin{proof}
Define a map $[-\epsilon,\epsilon]\times\Sigma\rightarrow M$ by
\begin{equation*}
(z,x)\mapsto \Flow^{z}_{R_{\alpha}}(x).
\end{equation*}
For $\epsilon>0$ sufficiently small, this will be an embedding by Lemma \ref{Lemma:ReebTransversality} and our presumed compactness for $\Sigma$. As $\alpha$ is $R_{\alpha}$-invariant, it pulls back to $[-\epsilon,\epsilon]\times\Sigma$ as desired.
\end{proof}

The remainder of Section \ref{Sec:Neighborhood} describes how contact forms can be modified on neighborhoods of Liouville hypersurfaces as described in the previous lemma.

\begin{lemma}\label{Lemma:df}
Suppose that $\beta$ and $\beta'$ are two Liouville forms on a compact manifold $\Sigma$ which agree on a collar neighborhood of $\partial \Sigma$ and satisfy $d\beta=d\beta'$.  Then there is an isotopy $\phi_{t}$, $t\in[0,1]$, of $\Sigma$ such that
\be
\item $\phi_{0}=\Id_{\Sigma}$ and $\phi_{1}^{*}\beta-\beta'=df$ for some smooth function $f$ on $\Sigma$ which vanishes on a collar neighborhood of $\partial\Sigma$,
\item there is a collar neighborhood of $\partial\Sigma$ on which $\phi_{t}$ is the identity mapping for all $t\in[0,1]$, and
\item $\phi^{*}_{t}d\beta=d\beta$ for all $t\in[0,1]$.
\ee
\end{lemma}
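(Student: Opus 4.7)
The plan is to apply Moser's trick to the straight-line path $\beta_t := (1-t)\beta' + t\beta$, $t \in [0,1]$, exploiting the fact that $d\beta_t = d\beta$ is a constant symplectic form (since $d\beta = d\beta'$). Because $\beta$ and $\beta'$ coincide on a collar neighborhood of $\partial \Sigma$, so does $\beta_t$; in particular $\beta_t$ remains Liouville for all $t$, and the closed $1$-form $\beta - \beta'$ vanishes identically on that collar. The strategy is to produce a time-dependent vector field $V_t$ supported in the interior of $\Sigma$ whose flow $\phi_t$ is both a $d\beta$-symplectomorphism and has the property that $\phi_t^\ast \beta_t - \beta'$ is exact, with a primitive that vanishes near $\partial \Sigma$.

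Following Moser, I would define $V_t$ by the pointwise equation
\begin{equation*}
\iota_{V_t} d\beta = \beta' - \beta,
\end{equation*}
which has a unique smooth solution because $d\beta$ is non-degenerate. Since $\beta' - \beta$ vanishes on the collar of $\partial \Sigma$, $V_t$ vanishes there as well, so its flow $\phi_t$ is defined for all $t \in [0,1]$, restricts to the identity on the collar, and gives condition (2). To verify (3), compute
\begin{equation*}
\tfrac{d}{dt}\phi_t^\ast d\beta = \phi_t^\ast \mathcal{L}_{V_t} d\beta = \phi_t^\ast d(\iota_{V_t} d\beta) = \phi_t^\ast d(\beta' - \beta) = 0,
\end{equation*}
so $\phi_t^\ast d\beta \equiv d\beta$.

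For (1), the key computation is
\begin{equation*}
\tfrac{d}{dt}\phi_t^\ast \beta_t = \phi_t^\ast \bigl( \mathcal{L}_{V_t}\beta_t + \dot{\beta}_t \bigr) = \phi_t^\ast \bigl( d(\iota_{V_t}\beta_t) + \iota_{V_t} d\beta + (\beta - \beta') \bigr) = d\bigl( \phi_t^\ast(\beta_t(V_t)) \bigr),
\end{equation*}
where the middle two terms cancel by the definition of $V_t$. Integrating from $0$ to $1$ and using $\phi_0 = \mbox{id}_\Sigma$, $\beta_0 = \beta'$, $\beta_1 = \beta$, one obtains
\begin{equation*}
\phi_1^\ast \beta - \beta' = d f, \qquad f := \int_0^1 \phi_t^\ast\bigl(\beta_t(V_t)\bigr)\, dt.
\end{equation*}
Since $V_t$ vanishes on the collar of $\partial \Sigma$, the integrand vanishes there, so $f$ vanishes on a (possibly smaller) collar of $\partial \Sigma$, completing (1).

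I do not expect any genuine obstacle: the Moser argument is classical, and the only mild subtlety is ensuring that the primitive $f$ produced by Moser's trick actually vanishes on the collar (as opposed to merely being locally constant there). This is handled automatically by the explicit integral formula together with the vanishing of $V_t$ near $\partial \Sigma$, which in turn follows from the hypothesis that $\beta$ and $\beta'$ already agree on a collar. The main conceptual point, which is worth emphasizing in the write-up, is that the possible cohomological obstruction $[\beta - \beta'] \in H^1(\Sigma)$ does not appear: the isotopy $\phi_t$ absorbs it because $\phi_t^\ast \beta_t$ and $\beta_t$ are cohomologous for any diffeomorphism isotopic to the identity.
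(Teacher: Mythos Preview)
Your proposal is correct and is essentially the same argument as the paper's. Both define the (time-independent) vector field by $\iota_V d\beta = \pm(\beta-\beta')$, use its flow as $\phi_t$, and obtain $f$ as the time integral of the pulled-back contraction; the only cosmetic difference is that you package the computation via the Moser path $\beta_t=(1-t)\beta'+t\beta$, whereas the paper computes $\tfrac{\partial}{\partial t}\phi_t^*\beta$ directly after first noting that $\beta-\beta'$ is flow-invariant (since $(\beta-\beta')(V)=d\beta(V,V)=0$).
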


See \cite[Corollary 5]{Giroux:IdealLiouville} for a similar result.

\begin{proof}
Define a vector field $V$ on $\Sigma$ as the unique solution to the equation $d\beta(V,\ast)=\beta'-\beta$.  Then $\MCL_{V}(d\beta)=0$ and $\MCL_{V}(\beta'-\beta)=0$
so that $\Flow^{t}_{V}$ preserves $d\beta$ and $\beta'-\beta$.  Moreover, $\Flow^{t}_{V}$ is equal to the identity on a collar neighborhood of $\partial\Sigma$.  Now we calculate
\begin{equation*}
\begin{aligned}
\frac{\partial}{\partial t}\big( (\Flow^{t}_{V})^{*}(\beta)\big) &= (\Flow^{t}_{V})^{*}(\MCL_{V}\beta)\\
&= (\Flow^{t}_{V})^{*}\big{(}d\beta(V,\ast)+d(\beta(V))\big{)}\\
&= (\Flow^{t}_{V})^{*}\big{(}\beta'-\beta+d(\beta(V))\big{)}\\
&=\beta'-\beta + dg_{t}
\end{aligned}
\end{equation*}
where $g_{t} = (\Flow^{t}_{V})^{*}(\beta(V))$. By our hypotheses that $\beta - \beta'$ is supported on the complement of a collar neighborhood of $\partial \Sigma$, there is a collar neighborhood of $\partial \Sigma$ on which $V$, and hence $\beta(V)$ and $g_t$ vanish for all $t$.

Then $(\Flow^{1}_{V})^{*}\beta=\beta' +df$ where $f=\int_{0}^{1}g_{t}dt$.  Defining $\phi_{t}=\Flow^{t}_{V}$ completes the proof.
\end{proof}

\begin{lemma}\label{Lemma:PosNegRegionPerturbation}
Let $I$ be a connected $1$ manifold parameterized by a variable $\theta$ and let $\beta_{\theta}$ be an $I$-family of $1$ forms on a $(2n)$-dimensional manifold $\Sigma$. Then the $1$ form
\begin{equation*}
\alpha = d\theta + \beta_{\theta}
\end{equation*}
is contact on $I \times \Sigma$ if and only if
\begin{equation*}
d\beta_{\theta} + \frac{\partial \beta_{\theta}}{\partial \theta}\wedge \beta_{\theta}
\end{equation*}
is a non-degenerate $2$-form on each $\{ \theta \}\times \Sigma$.
\end{lemma}

\begin{proof}
We compute
\begin{equation*}
\begin{gathered}
d\alpha = d\theta \wedge \frac{\partial \beta_{\theta}}{\partial \theta} + d_{\Sigma}\beta_{\theta},\quad (d\alpha)^{n} = (d_{\Sigma}\beta_{\theta})^{n-1}\wedge ( n d\theta \wedge \frac{\partial \beta_{\theta}}{\partial \theta} + d_{\Sigma}\beta_{\theta})
\end{gathered}
\end{equation*}
where $d_{\Sigma}\beta_{\theta}$ is the restriction of $d\beta_{\theta}$ to tangent spaces of the $\{ \theta \}\times \Sigma$. Then
\begin{equation*}
\begin{aligned}
\alpha \wedge (d\alpha)^{n} &= (d\theta + \beta_{\theta})\wedge (d_{\Sigma}\beta_{\theta})^{n-1}\wedge ( n d\theta \wedge \frac{\partial \beta_{\theta}}{\partial_{\theta}} + d_{\Sigma}\beta_{\theta})\\
&= d\theta \wedge (d_{\Sigma}\beta_{\theta})^{n-1}\wedge (d_{\Sigma}\beta + n\frac{\partial \beta_{\theta}}{\partial \theta}\wedge \beta_{\theta}).
\end{aligned}
\end{equation*}
If follows that $\alpha$ is contact if and only if 
\begin{equation*}
(d_{\Sigma}\beta_{\theta})^{n-1}\wedge (d_{\Sigma}\beta + n\frac{\partial \beta_{\theta}}{\partial \theta}\wedge \beta_{\theta}) = (d_{\Sigma}\beta_{\theta} + \frac{\partial \beta_{\theta}}{\partial \theta}\wedge \beta_{\theta})^{n}
\end{equation*}
is a volume form on each $\{ \theta \} \times \Sigma$. The above computations shows that satisfaction of the contact condition is equivalent to the non-degeneracy condition appearing in the statement of the lemma.
\end{proof}

\begin{lemma}\label{Lemma:GammaPerturbation}
Suppose that $\alpha'$ is a contact form on a $(2n-1)$-dimensional manifold $M'$. Let $I_{\theta}$ and $I_{s}$ be connected $1$-manifolds parameterized by variables $\theta$ and $s$ respectively. Then for functions
\begin{equation*}
f(s), g(\theta, s), h(s),
\end{equation*}
the $1$ form
\begin{equation*}
\alpha = f(s)d\theta + g(\theta, s)ds + h(s)\alpha'
\end{equation*}
is a contact form on the $(2n+1)$-dimensional manifold $M = I_{\theta} \times I_{s} \times M'$ if and only if
\begin{equation*}
h^{n-1}\Big(f\frac{\partial h}{\partial s} - \big(\frac{\partial f}{\partial s} - \frac{\partial g}{\partial \theta}\big) h \Big) > 0
\end{equation*}
for all $s \in I_{s}$.
\end{lemma}

\begin{proof}
We need to compute the contact condition for $\alpha$ as it is defined above:
\begin{equation*}
\begin{aligned}
d\alpha &= ds \wedge \Big(\big(\frac{\partial f}{\partial s} - \frac{\partial g}{\partial \theta}\big)d\theta + \frac{\partial h}{\partial s}\alpha'\Big) + hd\alpha' \\
(d\alpha)^{n} &= n h^{n-1} ds \wedge \Big(\big(\frac{\partial f}{\partial s} - \frac{\partial g}{\partial \theta}\big)d\theta + \frac{\partial h}{\partial s}\alpha'\Big)\wedge(d\alpha')^{n-1} \\
\alpha \wedge (d\alpha)^{n} &= n h^{n-1}\Big(f \frac{\partial h}{\partial s} - \big(\frac{\partial f}{\partial s} - \frac{\partial g}{\partial \theta}\big) h \Big)d\theta \wedge ds \wedge \alpha' \wedge (d\alpha')^{n-1}.
\end{aligned}
\end{equation*}
\end{proof}

\subsection{Construction of $\mathcal{N}(\Sigma)$}\label{Sec:Corners}

We now give a rigorous description of the edge-rounding on the neighborhood
\begin{equation*}
N(\Sigma)=[-\epsilon,\epsilon]\times\Sigma
\end{equation*}
provided by Lemma \ref{Lemma:SquareNeighborhood}, producing a model neighborhood $\mathcal{N}(\Sigma)$ of $\Sigma$ with smooth, convex boundary as described in Section \ref{Sec:ConnectSum}.

Due to the Weinstein neighborhood theorem for contact type hypersurfaces in symplectic manifolds , we can decompose $\Sigma$ into two parts $\Sigma=\widehat{\Sigma}\cup C$.  Here $\widehat{\Sigma}$ is diffeomorphic to $\Sigma$ and is disjoint from $\partial\Sigma$. The manifold $C$ is a collar neighborhood of $\partial\Sigma$ of the form 
\begin{equation*}
C = [\frac{1}{2},1]\times\partial\Sigma
\end{equation*}
where $\{1\}\times\partial\Sigma=\partial\Sigma$.  Taking a coordinate $t$ on $[\frac{1}{2},1]$ we can assume that
\begin{equation*}
\beta|_{C}=t\cdot\alpha'
\end{equation*}
for some contact form $\alpha'$ on $\partial\Sigma$.  This induces a decomposition of $[-\epsilon,\epsilon]\times\Sigma$ into two pieces
\begin{equation*}
[-\epsilon,\epsilon]\times\Sigma=\big([-\epsilon,\epsilon]\times \widehat{\Sigma}\big) \cup\big([-\epsilon,\epsilon]\times C \big).
\end{equation*}

\begin{figure}[h]
	\begin{overpic}[scale=.7]{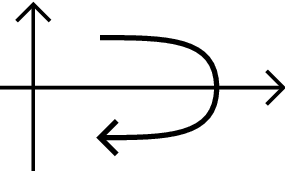}
        \put(-2,50){$z$}
        \put(103,25){$t$}
        \put(49,2){$\gamma$}
    \end{overpic}
    \vspace{1.5mm}
	\caption{The curve $(z,t)=(z(s),t(s))$ whose image is denoted by $\gamma$.}
    \label{Fig:RoundingCurve}
\end{figure}

To smooth the corners of $[-\epsilon,\epsilon]\times\Sigma$ we can then focus our attention on $[-\epsilon,\epsilon]\times[\half,1]\times\partial\Sigma$.  Let 
\begin{equation*}
(z,t)=(z,t)(s):[-1,1]\rightarrow[-\epsilon,\epsilon]\times[\half,1]
\end{equation*}
be a smooth curve satisfying the following conditions:
\be
\item $(z,t)(-1)=(\epsilon,\half)$, $\partial_{s}(z,t)(-1)=(0,1)$, and $(\partial_{s})^{k}(z,t)(-1)=(0,0)$ for all $k>1$.
\item $(z,t)(1)=(-\epsilon,\half)$, $\partial_{s}(z,t)(1)=(0,-1)$, and $(\partial_{s})^{k}(z,t)(1)=(0,0)$ for all $k>1$.
\item $(z,t)(-s)=(-z,t)(s)$ for all $s\in[-1,1]$.
\item The one form $zdt-tdz$ evaluated at $\partial_{s}(z,t)$ is always positive.
\ee
Write $\gamma$ for the curve $(z,t)$ in $[-\epsilon,\epsilon]\times[\half,1]$.  See Figure \ref{Fig:RoundingCurve}.

\begin{defn}
In the above notation, let $\mathcal{N}(\Sigma)$ be the region in $[-\epsilon,\epsilon]\times \Sigma$ containing $[-\epsilon, \epsilon]\times\widehat{\Sigma}$ and bounded by
\begin{equation*}
\big{(}\{-\epsilon,\epsilon\}\times \widehat{\Sigma}\big{)}\cup\big{(} \gamma\times\partial\Sigma\big{)}.
\end{equation*}
Here, $\gamma\times\partial\Sigma$ is considered as a subset of $[-\epsilon,\epsilon]\times C$.  We call a neighborhood $\mathcal{N}(\Sigma)$ of $\Sigma$ constructed in this fashion a \em{standard neighborhood} of the Liouville hypersurface $\Sdom\subset\Mxi$.
\end{defn}

Provided a Liouville vector field $X_{\beta}$ for $(\Sigma, \beta)$, the vector field $V_{\beta}$ of Equation \eqref{Eq:Vbeta} is defined on $\mathcal{N}(S)$ is positively transverse to $\partial \mathcal{N}(S)$ and satisfies $\Lie_{V_{\beta}}(\alpha) = \alpha$. Thus $(\partial \mathcal{N}(S), V_{\beta})$ is a convex surface in $\Mxi$ as described in Section \ref{Sec:Convex}.

\subsection{Contact forms on $[-\delta,\delta]\times\partial \mathcal{N}(\Sigma)$}\label{Sec:ThetaInvariant}

In order to perform a convex gluing along two copies of $\partial \mathcal{N}(\Sigma)$ using (a modification of) the map $\Upsilon$ described in Section \ref{Sec:SurgeryOutline}, we must analyze contact forms in a neighborhood of this hypersurface. The cost we pay for having such a simple description for the map $\Upsilon$ is that the modification of contact forms will necessarily be non-trivial.

The hypersurface $\partial \mathcal{N}(\Sigma)$ is smooth and transverse to the vector field $V_{\beta}=z\partial_{z}+X_{\beta}$ whose flow is defined for time $T \in (-\infty, \delta)$ for some $\delta > 0$. By the construction of the neighborhood $\mathcal{N}(\Sigma)$ of $\Sigma$ in the previous section, we can write
\begin{equation*}
\alpha|_{\partial \mathcal{N}(\Sigma)}=
\begin{cases}
\beta & \quad\text{on}\quad \{-\epsilon,\epsilon\}\times \widehat{\Sigma}\\
\frac{\partial z}{\partial s}ds+t(s)\cdot\alpha' & \quad\text{on}\quad \gamma\times\partial\Sigma.
\end{cases}
\end{equation*}
Identify a tubular neighborhood of $\partial \mathcal{N}(\Sigma)$ with $[-\delta,\Theta]\times \partial \mathcal{N}(\Sigma)$ where $\partial_{\theta}=-V_{\beta}$.  Here $\theta$ is a coordinate on $[-\delta,\Theta]$ with $\Theta$ an arbitrarily large positive constant.  Our choice of sign for $V_{\beta}$ is chosen so that $\partial_{\theta}$ provides an outward pointing normal vector if we cut $\Flow^{\Theta}_{-V_{\beta}}(\mathcal{N}(\Sigma))$ out of $M$. Then
\begin{equation*}
\alpha(\partial_{\theta})=-z, \quad \Lie_{\partial_{\theta}}\alpha=-\alpha.
\end{equation*}
Therefore
\begin{equation*}
e^{\theta}\alpha =
\begin{cases}
\mp \epsilon d\theta +\beta & \quad\text{on}\quad [-\delta,\Theta]\times\{\pm\epsilon\}\times \widehat{\Sigma}\\
-z(s)d\theta + \frac{\partial z(s)}{\partial s}ds + t(s)\alpha' & \quad\text{on}\quad [-\delta,\Theta]\times\gamma\times\partial\Sigma.
\end{cases}
\end{equation*}
where $\partial \mathcal{N}(\Sigma)$ is identified with the level set $\{0\}\times\partial \mathcal{N}(\Sigma)\subset[-\delta,\Theta]\times\partial \mathcal{N}(\Sigma)$ of the function $\theta$.  The contact form $\alpha$ can then be normalized over $[-\delta, \Theta] \times \partial \mathcal{N}(\Sigma)$ to obtain a $\theta$-invariant contact form 
\begin{equation*}
\alpha_{0} = e^{\theta}\alpha.
\end{equation*}

The $\frac{\partial z(s)}{\partial s}ds$ term in the above equation will be inconvenient for convex gluing (using the simple expression $\widehat{\Upsilon}$ appearing below) so we describe how it can be removed on some \begin{equation*}
I \times \partial \mathcal{N}(\Sigma) \subset [-\delta, 4\Theta] \times \partial \mathcal{N}(\Sigma),\quad \Theta > 0
\end{equation*} 
using a Moser argument. Pick a small $\delta > 0$ and consider functions $G(\theta): [-\delta, 4\Theta] \rightarrow \mathbb{R}$ satisfying
\be
\item $G = 1$ on $[-\delta, 0] \cup [3\Theta, 4\Theta]$,
\item $G = 0$ on $[\Theta-\delta, 2\Theta]$.
\ee
For any positive constant $C > 0$, we can choose $\Theta$ to be large enough so that $\frac{\partial G}{\partial \theta}$ is bounded in absolute value point-wise by $C$. See Figure \ref{Fig:GluingCutoff}.

\begin{figure}[h]
\begin{overpic}[scale=.6]{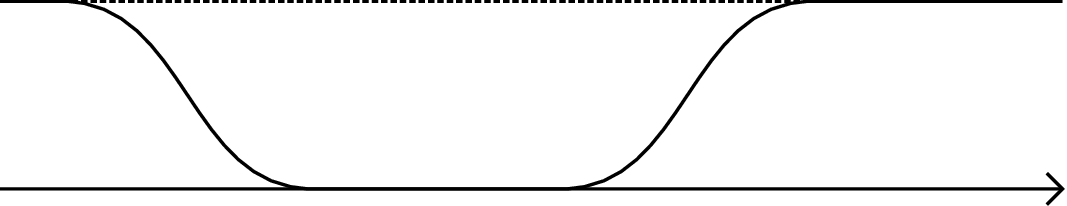}
\put(7, -4){$0$}
\put(28, -4){$\Theta$}
\put(52, -4){$2\Theta$}
\put(75, -4){$3\Theta$}
\put(97, -4){$4\Theta$}
\end{overpic}
\vspace{2.5mm}
\caption{A schematic for the function $G$. The value $1$ is depicted as a dashed arc.}
\label{Fig:GluingCutoff}
\end{figure}

Using the function $G$, we can define a $1$ form $\widehat{\alpha}$ on $[-\delta, 4\Theta]\times \partial \mathcal{N}(\Sigma)$ by deforming $\alpha_{0}$ as follows
\begin{equation}\label{Eq:GPerturbedAlpha}
\alpha_{G} =
\begin{cases}
\mp \epsilon d\theta +\beta & \quad\text{on}\quad [-\delta,4\Theta]\times\{\pm\epsilon\}\times \widehat{\Sigma}\\
-z(s)d\theta + G(\theta)\frac{\partial z(s)}{\partial s}ds + t(s)\alpha' & \quad\text{on}\quad [-\delta,4\Theta]\times\gamma\times\partial\Sigma.
\end{cases}
\end{equation}

Denote by $\widehat{\alpha}$ a choice of $\alpha_{G}$ for which $\widehat{\alpha}$ is contact. As described above, such a choice is possible by
\be
\item making $\Theta$ large,
\item bounding $|\frac{\partial G}{\partial \theta}|$ point-wise,
\item and then applying Lemma \ref{Lemma:GammaPerturbation}.
\ee
For such a choice of $G$, we have that
\begin{equation*}
T \alpha_{0} + (1 - T)\widehat{\alpha}
\end{equation*}
is constant in $T$ outside of a compact set and is contact for all $T \in [0, 1]$. Hence the contact structures determined by $\alpha$ and $\widehat{\alpha}$ is isotopic by Moser's argument. 

Simplifying notation, we henceforth write $\theta$ for what we have previously expressed as $\theta + 2\Theta - \delta$. In our new notation,
\begin{equation}\label{Eq:GluingAlpha}
\widehat{\alpha} =
\begin{cases}
\mp \epsilon d\theta +\beta & \quad\text{on}\quad [-\Theta, \delta]\times\{\pm\epsilon\}\times \widehat{\Sigma}\\
-z(s)d\theta + t(s)\alpha' & \quad\text{on}\quad [-\Theta, \delta]\times\gamma\times\partial\Sigma.
\end{cases}
\end{equation}
It will be of importance in Section \ref{Sec:GluingInstructions} below that $\Theta$ may be chosen taken to be arbitrarily large.

\subsection{Convex gluing for the Liouville connect sum}

Now we show that the map $\Upsilon$ from Equation \eqref{Eq:ConnectSum} determines a convex gluing.  Again consider two disjoint Liouville embeddings $i_{1},i_{2}:\Sdom\rightarrow\Mxi$.  The construction of Section \ref{Sec:Corners} above provides two disjoint neighborhoods $\mathcal{N}(i_{1}(\Sigma))$ and $\mathcal{N}(i_{2}(\Sigma))$ of $i_{1}(\Sigma)$ and $i_{2}(\Sigma)$, respectively.  The construction in that section also provides us with collar neighborhoods 
\begin{equation*}
[-\Theta,\delta]\times\partial \mathcal{N}(i_{1}(\Sigma)), \quad [-\Theta,\delta]\times \partial \mathcal{N}(i_{2}(\Sigma))
\end{equation*}
and a contact form $\widehat{\alpha}$ which is, in each of the collar neighborhoods, $\theta$-invariant.  Then by the conditions defining the curve $\gamma$ (used to smooth the corners of $[-\epsilon,\epsilon]\times\Sigma$) and the explicit formula for $\hat{\alpha}$ in Equation \eqref{Eq:GluingAlpha}, the map
\begin{equation}\label{Eq:Upsilon}
\begin{gathered}
\widehat{\Upsilon}: [-\delta,\delta]\times\partial \mathcal{N}(i_{1}(\Sigma))\rightarrow[-\delta,\delta]\times \partial \mathcal{N}(i_{2}(\Sigma)),\\
\widehat{\Upsilon}(\theta,x)=(-\theta,\Upsilon(x))
\end{gathered}
\end{equation}
satisfies
\begin{equation*}
\widehat{\Upsilon}^{*}\hat{\alpha}|_{[-\delta,\delta]\times \partial \mathcal{N}(i_{2}(\Sigma))}=\widehat{\alpha}|_{[-\delta,\delta]\times\partial \mathcal{N}(i_{1}(\Sigma))}.
\end{equation*}
Hence this map can be used to perform the desired convex gluing which defines the Liouville connect sum as described in Section \ref{Sec:ConnectSum}.

\subsection{Modifications of $\widehat{\Upsilon}$}\label{Sec:GluingInstructions}

Now we discuss how the gluing map $\widehat{\Upsilon}$ defined in the previous section can be modified using a pair of symplectomorphisms
\begin{equation*}
\Phi, \Psi \in \Symp(\Sdom, \partial \Sigma)
\end{equation*}
This will be useful for the constructions of contact manifolds in Section \ref{Sec:Applications}. The construction of this section is summarized in Figure \ref{Fig:GluingInstructions}. See Remark \ref{Rmk:OBConvention} regarding our choice of gluing convention. Our goals are to show that this schematic description determines a convex gluing -- after an appropriate perturbation -- and that the contact manifold determined by such a gluing depends only on the isotopy classes of $\Phi$ and $\Psi$ within $\Symp(\Sdom, \partial \Sigma)$.

\begin{figure}[h]
\begin{overpic}[scale=.55]{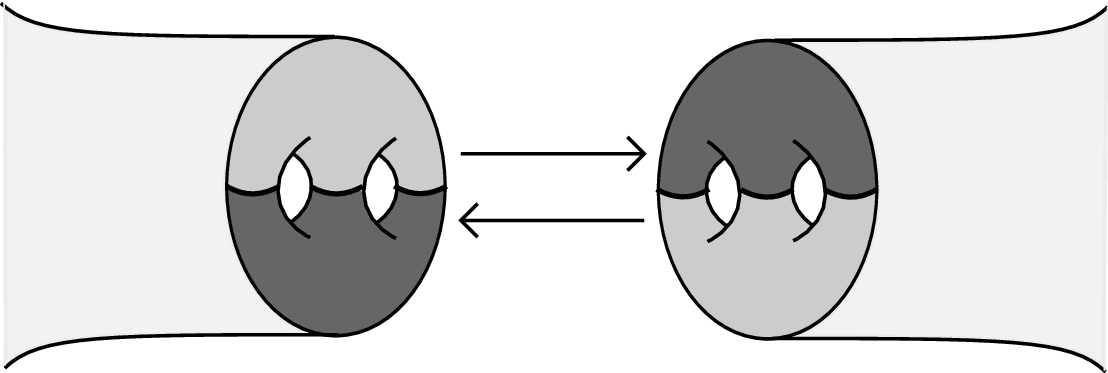}
	\put(30,-3.5){$S_{1}$}
	\put(68,-3.5){$S_{2}$}
	\put(48,22){$\Phi$}
	\put(48,8){$\Psi$}
\end{overpic}
\vspace{4.5mm}
\caption{This figure gives a schematic description of a convex gluing performed using convex gluing instructions $(\Phi,\Psi)$ (without the correction isotopies $\phi_{t}$ and $\psi_{t}$).  Our convention is such that the maps $\Phi$ and $\Psi$ each send the positive region of one convex boundary component of $\Mxi$ to the negative region of the other convex boundary component.}
\label{Fig:GluingInstructions}
\end{figure}

Suppose, as in the previous section, that we have a contact manifold $\Mxi$, disjoint collar neighborhoods $[-\Theta,\delta]\times \partial \mathcal{N}(i_{j}(\Sigma)), j=1,2$ of two convex boundary components of $M$, and a fixed identification $\widehat{\Upsilon}$ between the $[-\delta, \delta]\times \mathcal{N}(i_{j}(\Sigma))$ as described in Equation \eqref{Eq:Upsilon}.  For notational simplicity, we write
\begin{equation*}
S_{1}=\partial\mathcal{N}(i_{1}(\Sigma)),\quad S_{2}= \partial\mathcal{N}(i_{2}(\Sigma))
\end{equation*}
throughout the remainder of this section. We assume that we have a contact form $\widehat{\alpha}$ as described in Equation \eqref{Eq:GluingAlpha} with the parameter $\Theta > $ arbitrarily large and $\delta > 0$ arbitrarily small.\footnote{In the setup of the previous subsection, we are allowed to have $\Theta$ as large as we like using the fact that the flow of the vector field $-V_{\beta}$ is defined for all positive time. In an abstract setup -- a contact manifold with this type of convex boundary -- we may extend a neighborhood of the boundary by attaching an arbitrarily large collar.} Note that using the outward pointing vector field $\partial_{\theta}$ as in the previous subsection that $\{ -\epsilon \}\times \widehat{\Sigma}$ is contained in the positive region of $\partial M$ and that $\{ \epsilon \} \times \widehat{\Sigma}$ is contained in the negative region of $\partial M$.

Let $\Phi$ and $\Psi$ be symplectomorphisms as described above which we assume to coincide with the identity outside of $\Int(\widehat{\Sigma})$. Writing $x$ for points in $S_{1}$, we define a diffeomorphism 
\begin{equation}\label{Eq:TwistedUpsilonDef}
\begin{gathered}
\widehat{\Upsilon}_{(\Phi, \Psi)}: [-\delta, \delta] \times S_{1} \rightarrow [-\delta, \delta] \times S_{2}\\
\widehat{\Upsilon}_{(\Phi, \Psi)}(\theta, x) = \begin{cases}
(-\theta, \Phi(x)) \in \{ \epsilon \}\times \widehat{\Sigma} & \quad x \in \{ -\epsilon \} \times \Sigma \\
(-\theta, \Psi^{-1}(x)) \in \{ -\epsilon \}\times \widehat{\Sigma} & \quad x \in \{ \epsilon \} \times \Sigma \\
\widehat{\Upsilon}(\theta, x) & \quad \text{on}\quad [-\delta, \delta]\times \gamma \times \partial\Sigma.
\end{cases}
\end{gathered}
\end{equation}

Along $[-\delta, \delta]\times \gamma \times \partial\Sigma$, we have $\widehat{\Upsilon}_{(\Phi, \Psi)}^{*}\widehat{\alpha} = \widehat{\alpha}$. However along $[-\delta, \delta]\times \{ \pm \epsilon \}\times \widehat{\Sigma}$ we are only guaranteed to have
\begin{equation}\label{Eq:BetaPMDef}
\widehat{\Upsilon}_{(\Phi, \Psi)}^{\ast}\widehat{\alpha} = \mp \epsilon d\theta + \beta_{\pm}
\end{equation}
for $\beta_{\pm}\in \Omega^{1}(\{\pm \epsilon\} \times \widehat{\Sigma})$ for which $d\beta_{\pm} = d\beta$ and $(\beta_{\pm} - \beta)$ is supported on $\Int(\widehat{\Sigma})$. In order for this to be a contact gluing, we must connect the $\beta_{\pm}$ to $\beta$. We choose a function 
\begin{equation*}
H(\theta): [-\Theta, \delta] \rightarrow [0, 1]
\end{equation*}
for which $H = 1$ along $[0, \delta]$ and $H = 0$ along $[-\Theta, -\Theta + \delta]$. With a fixed choice of $H$, define
\begin{equation*}
\beta_{\theta, \pm} = H(\theta)\beta_{\pm} + (1 - H(\theta))\beta.
\end{equation*}

Using the fact that $\Theta$ can be chosen arbitrarily large, we can ensure that $\frac{\partial H}{\partial \theta}$ -- and so $\frac{\partial \beta_{\theta, \pm}}{\partial \theta}$ -- is small enough so that the $1$-form
\begin{equation}\label{Eq:AlphaGluingInstructions}
\widehat{\alpha}_{\Phi, \Psi} =
\begin{cases}
\mp \epsilon d\theta +\beta_{\theta, \pm} & \quad\text{on}\quad [-\Theta, \delta]\times\{\pm\epsilon\}\times \widehat{\Sigma}\\
-z(s)d\theta + t(s)\alpha' & \quad\text{on}\quad [-\Theta, \delta]\times\gamma\times\partial\Sigma.
\end{cases}
\end{equation}
is contact over the collar neighborhood of $S_{1}$ and extends $\widehat{\Upsilon}_{(\Phi, \Psi)}^{*}\widehat{\alpha}$. Here we are applying Lemma \ref{Lemma:PosNegRegionPerturbation}. We can therefore use this contact form to define our gluing operation.

Note that a $1$-parameter family $H_{T}$ of such cut-off functions will produce a $1$-parameter family of contact forms on the surgered contact manifold. Because the support of $H_{0} - H_{T}$ is compact for all $T \in [0, 1]$, the $1$-parameter family of contact forms produced will determine isotopic contact forms by Moser's argument. Hence our definition is independent of the choice of $H$. 

\begin{defn}
We say that the contact manifold obtained from $\Mxi$ by the above construction is \emph{described by convex gluing instructions} $(\Phi,\Psi)$.
\end{defn}

The following lemma -- which will be important in applications -- shows that the construction only depends on the $\Phi$ and $\Psi$ up to deformation in $\SympGroup$.

\begin{lemma}\label{Lemma:GluingInvariance}
Suppose that $\Phi_{T}$ and $\Psi_{T}$ are paths in $\Symp(\Sdom, \partial\Sigma)$ parameterized by $T \in [0, 1]$. Then the contact manifolds determined by the gluing instructions $(\Phi_{0}, \Psi_{0})$ and $(\Phi_{1}, \Psi_{1})$ are contactomorphic.
\end{lemma}

\begin{proof}
We simply update the above construction where required to incorporate $1$-parameter families and will use similar notation: Consider the $T \in [0, 1]$ family of diffeomorphisms 
\begin{equation*}
\widehat{\Upsilon}_{(\Phi_{T}, \Psi_{T})}: [-\delta, \delta] \times S_{1} \rightarrow [-\delta, \delta] \times S_{2}
\end{equation*}
as described in Equation \eqref{Eq:TwistedUpsilonDef}. Define a manifold $W = [0, 1] \times M / \sim$ using the relation
\begin{equation*}
\big( T, (\theta, x)\big) \sim \big(T,\widehat{\Upsilon}_{(\Phi_{T}, \Psi_{T})}(\theta, x)\big)\ \text{for}\ (\theta, x) \in [-\delta, \delta] \times S_{1}.
\end{equation*}
By the definition of $\sim$, each $M_{T_{0}} = \{ T = T_{0} \} \subset W$ is the smooth manifold determined by the convex gluing instructions $\widehat{\Upsilon}_{(\Phi_{T_{0}}, \Psi_{T_{0}})}$.

Following Equation \eqref{Eq:BetaPMDef}, we define $1$-forms
\begin{equation*}
\beta_{-, T} = \Phi_{T}^{\ast}\beta,\quad  \beta_{+, T} = \Psi_{T}^{\ast}\beta
\end{equation*}
on $\Sigma$. Along the subsets
\begin{equation*}
[0, 1] \times [-\Theta, \delta] \times \{ \pm \epsilon \} \times \Sigma \subset [0, 1] \times [-\Theta, \delta] \times S_{1}
\end{equation*}
of $W$ we define $1$-forms $\beta_{\theta, \pm, T}$ by the formula
\begin{equation*}
\beta_{\theta, \pm, T} = H(\theta)\beta_{\pm, T} + (1 - H(\theta))\beta
\end{equation*}
where $H$ is described as above. The $1$-form $\widehat{\alpha}_{\Phi_{T}, \Psi_{T}} \in \Omega^{1}(W)$ determined by the formula
\begin{equation*}
\widehat{\alpha}_{\Phi_{T}, \Psi_{T}} =
\begin{cases}
\mp \epsilon d\theta +\beta_{\theta, \pm, T} & \quad\text{on}\quad \{ T\} \times [-\Theta, \delta]\times\{\pm\epsilon\}\times \widehat{\Sigma}\\
-z(s)d\theta + t(s)\alpha' & \quad\text{on}\quad \{ T\}\times [-\Theta, \delta]\times\gamma\times\partial\Sigma.
\end{cases}
\end{equation*}
is then well defined and restricts to $\alpha_{\Phi_{T_{0}}, \Psi_{T_{0}}}$ on each $M_{T_{0}}$ as defined in Equation \eqref{Eq:AlphaGluingInstructions}. By the compactness of $[0, 1]$, we may choose $H$ so that $\frac{\partial \beta_{\theta, \pm, T}}{\partial \theta}$ is small enough to ensure that the restriction of $\alpha_{\Phi_{T}, \Psi_{T}}$ to each $M_{T_{0}}$ is a contact form. Moreover, our construction is such that each
\begin{equation*}
\xi_{T_{0}} = \ker(\alpha_{\Phi_{T}, \Psi_{T}}|_{M_{T_{0}}}) \subset TM_{T_{0}}
\end{equation*}
is the contact structure determined by the gluing instructions $(\Phi_{T_{0}}, \Psi_{T_{0}})$ on $M_{T_{0}}$.

Noting that $dt$ is well defined on $W$, let $Z$ be a vector field on $W$ such that $dt(Z) = 1$ and $Z = \partial_{t}$ away from the gluing region where $W$ is obviously a product manifold. Then the restriction of the time $T$ flow, $\Flow^{T}_{Z}$, of $Z$ to $M_{0}$ is well defined for $T\in [0, 1]$ and maps $M_{0}$ diffeomorphically to $M_{T}$. Moreover
\begin{equation*}
(\Flow^{T}_{Z})^{\ast} \alpha_{\Phi_{T}, \Psi_{T}} \in \Omega^{1}(M_{0})
\end{equation*}
is a $1$-parameter family of contact forms. As the $(\Flow^{T}_{Z})^{\ast} \alpha_{\Phi_{T}, \Psi_{T}}$ all coincide away from the gluing region, we may apply Moser's argument to conclude that each 
\begin{equation*}
(M_{0}, \ker\big((\Flow^{T}_{Z})^{\ast} \alpha_{\Phi_{T}, \Psi_{T}}\big),\quad T \in [0, 1]
\end{equation*}
is a contact structure homotopic to the one determined by the convex gluing instructions $(\Phi_{0}, \Psi_{0})$. Observing that $(M_{0}, \ker((\Flow^{1}_{Z})^{\ast} \alpha_{\Phi_{1}, \Psi_{1}}))$ equivalent to $(M_{1}, \ker(\alpha_{\Phi_{1}, \Psi_{1}}))$, the proof is complete.
\end{proof}

\section{Symplectic cobordisms associated to Liouville connect sums}\label{Sec:Cobordism}

This section is devoted to the proof of Theorem \ref{Thm:Cobordism}.  In Section \ref{Sec:Handle} we prove the first part of the theorem, establishing the existence of the cobordism $\Ldom$.  Then in Section \ref{Sec:WHandleDecomposition}, we prove the second statement of Theorem \ref{Thm:Cobordism} by showing that when the Liouville hypersurface $\Sdom$ is Weinstein, the cobordism $\Ldom$ admits a Weinstein handle decomposition.  Contact manifolds appearing as the convex boundaries of weak symplectic cobordisms are dealt with in Section \ref{Sec:WeakHandle}.

Throughout this section we will be building upon the notation and analysis appearing in Section \ref{Sec:StandardNeighborhood}.

\subsection{Symplectic handle attachment}\label{Sec:Handle}

We construct a handle $\mathcal{H}_{\Sigma}$ from a standard neighborhood $\mathcal{N}(\Sigma)$ of a Liouville hypersurface $\Sdom\subset\Mxi$ as described in Section \ref{Sec:Neighborhood}, and then attach $\mathcal{H}_{\Sigma}$ to the convex boundary of the finite symplectization $([-\epsilon, 0]\times M, e^{t}\alpha)$ of $\Mxi$.\\

\noindent\textbf{Step 1: Setup.}  Let $\Mxi$ be a $(2n+1)$-dimensional contact manifold with a fixed contact form $\alpha$ and let $\Sdom$ be a $2n$-dimensional Liouville domain.  Let $i_{1}$ and $i_{2}$ be embeddings of $\Sigma$ into $M$ whose images are disjoint and satisfy $i_{1}^{*}\alpha=i_{2}^{*}\alpha=\beta$.  By the results of Section \ref{Sec:Neighborhood} there exist tubular neighborhoods $\mathcal{N}(i_{j}(\Sigma))$ of $i_{j}(\Sigma)$, $j=1,2$, with smooth convex boundary.

We fix an additional copy of a standard neighborhood $\mathcal{N}(\Sigma)$ from which $\mathcal{H}_{\Sigma}$ will be constructed.  The contact form $\alpha$ is assumed to take the form $\alpha=dz+\beta$ in each of these neighborhoods.\\

\noindent\textbf{Step 2: Model geometry on the handle.}  Our construction of $\mathcal{H}_{\Sigma}$ begins with the description of a simplified handle $H_{\Sigma}$. Consider the symplectic manifold
\begin{equation*}
(H_{\Sigma},\omega_{\beta})=([-1,1]\times \mathcal{N}(\Sigma),d\theta\wedge dz+d\beta),
\end{equation*}
where $\theta$ is the coordinate on $[-1,1]$.  The vector field $V_{\beta}=z\partial_{z}+X_{\beta}$ -- defined in Equation \eqref{Eq:Vbeta} and studied in Section \ref{Sec:Dilations} -- on $\mathcal{N}(\Sigma)$ satisfying $\Lie_{V_{\beta}}\alpha=\alpha$, where $X_{\beta}$ is the Liouville vector field for $\Sdom$. Viewing $V_{\beta}$ as a $\theta$-invariant vector field on $H_{\Sigma}$, 
\begin{equation*}
\omega_{\beta}(V_{\beta},\ast)=-z d\theta+\beta, \quad \MCL_{V_{\beta}}\omega_{\beta}=\omega_{\beta}.
\end{equation*}

\begin{figure}[h]
\begin{overpic}[scale=.7]{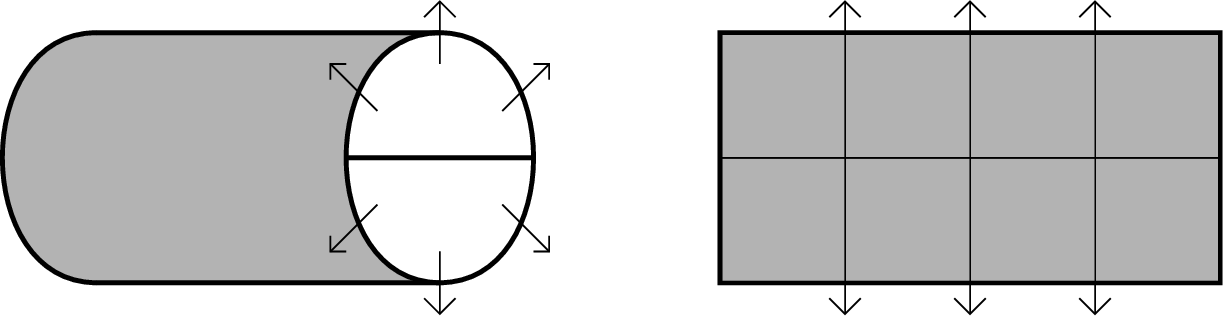}
\put(50, 0){\vector(1, 0){10}}
\put(62, -.5){$\theta$}
\put(50, 0){\vector(0, 1){10}}
\put(47, 12){$\mathcal{N}(\Sigma)$}
\put(35, 15){$\Sigma$}
\end{overpic}
\caption{The handle $H_{\Sigma}$ and the vector field $V_{\beta}$. On the left we see $\Sigma$ sitting inside of the boundary of the handle. On the right is a simplified schematic picture which we'll adopt for the remainder of the proof.}
\label{Fig:flat_handle}
\end{figure}

It follows that $V_{\beta}$ is a symplectic dilation of $(H_{\Sigma},\omega_{\beta})$.  This vector field points transversely out of $\partial H_{\Sigma}$ along $[-1,1]\times\partial \mathcal{N}(\Sigma)$ and is tangent to $\{\pm 1\}\times \mathcal{N}(\Sigma)$.  Therefore, $-z d\theta+\beta$ is a $\theta$-invariant contact 1-form on $[-1,1]\times\partial \mathcal{N}(\Sigma)$ inducing the $\theta$-invariant contact structure on $[-1,1]\times\partial \mathcal{N}(\Sigma)$ described in Proposition \ref{Prop:Normalize}. The handle $H_{\Sigma}$ along with the vector field $V_{\beta}$ is depicted in Figure \ref{Fig:flat_handle}.\\

\noindent\textbf{Step 3: Concavity of the negative end of the handle.} In order to be able to attach $\{\pm 1\}\times \mathcal{N}(\Sigma)$ to the ``top'' $\{ 0 \}\times M$ of the finite symplectization of $\Mxi$ in a way so that $e^{t}\alpha$ on $[-\epsilon, 0]\times M$ extends over $H_{\Sigma}$, we must modify $V_{\beta}$ so that the $\{\pm 1 \}\times \mathcal{N}(\Sigma)$ is concave in the sense of Definition \ref{Def:Fillable}.

\begin{figure}[h]
\begin{overpic}[scale=.7]{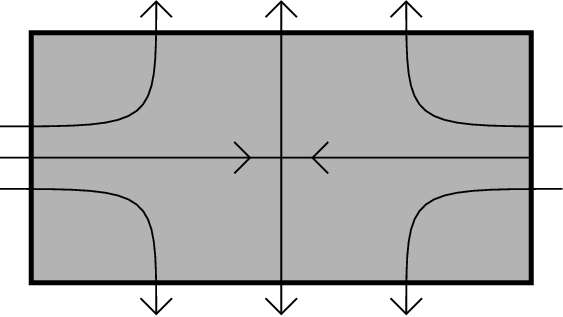}
\end{overpic}
\caption{The handle $H_{\Sigma}$ and the vector field $Z$.}
\label{Fig:flat_handle_perturbed_vf}
\end{figure}

To achieve concavity, write $X_{z\theta}= \theta \partial_{\theta} - z\partial_{z}$ for the Hamiltonian vector field of the function $z\theta$ on $H_{\Sigma}$ with respect to the symplectic form $\omega_{\beta}$. Then 
\begin{equation*}
Z =V_{\beta} - X_{z\theta} = - \theta\partial_{\theta} + 2z\partial_{z} + X_{\beta}
\end{equation*}
is a symplectic dilation of $\omega_{\beta}$.  Moreover, $Z$ points into $H_{\Sigma}$ along $\{\pm 1\}\times \mathcal{N}(\Sigma)$ and out of $H_{\Sigma}$ along $[-1,1]\times\partial \mathcal{N}(\Sigma)$. See Figure \ref{Fig:flat_handle_perturbed_vf}. 
The time $T$ flow of $Z$ is easily computed  as
\begin{equation}\label{Eq:ZFlow}
\Flow^{T}_{Z}(\theta, z, x) = (e^{-T}\theta, e^{2T}z, \Flow^{T}_{X_{\beta}}(x))
\end{equation}
where $x \in \Sigma$. Define
\begin{equation}\label{Eq:Lambda}
\lambda=\omega_{\beta}(Z,\ast)=-\theta dz-2zd\theta+\beta.
\end{equation}
Then $d\lambda=\omega_{\beta}$ so that $Z$ is the Liouville vector field of $\lambda$ and $\lambda|_{\{\pm 1\}\times \mathcal{N}(\Sigma)}=\mp dz+\beta$.\\

\noindent\textbf{Step 4: Flattening the negative ends of the handle.} Another modification of the handle $H_{\Sigma}$ is required so that after the negative end of our handle is attached to the top of a finite symplectization, the convex boundary of the space obtained will be smooth. Here we modify $H_{\Sigma}$ to that the concave and convex boundaries share an overlap. The result will be our finished handle $\mathcal{H}_{\Sigma}$.

\begin{figure}[h]
\begin{overpic}[scale=.8]{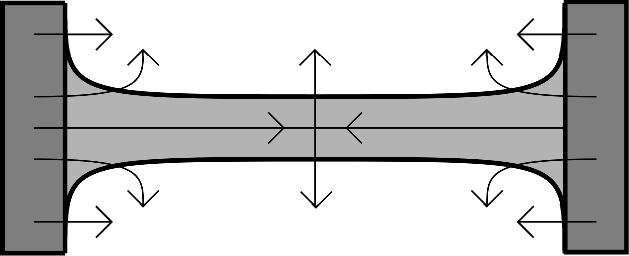}
\end{overpic}
\caption{The handle $\mathcal{H}_{\Sigma}$ and the vector field $Z$. The collar regions $(-\frac{\epsilon}{2}, 0]\times \mathcal{N}(\Sigma)$ are shaded dark gray.}
\label{Fig:smooth_handle}
\end{figure}

Noting that $Z$ points into $H_{\Sigma}$ along the set $\{ \pm 1\} \times \mathcal{N}(\Sigma) \subset \partial H_{\Sigma}$, we extend $H_{\Sigma}$ by collar regions $(-\frac{\epsilon}{2}, 0] \times \mathcal{N}(\Sigma)$ whose $(-\frac{\epsilon}{2}, 0]$-factor parameterizes the flow of the vector field $Z$. One such collar region is appended to each end $\{ \pm 1\} \times \mathcal{N}(\Sigma)$ of the concave boundary of $H_{\Sigma}$, resulting in an extended handle $H_{\Sigma}'$.

We now modify the subset $[-1, 1]\times \partial \mathcal{N}(\Sigma) \subset \partial H_{\Sigma} \cap \partial H_{\Sigma}'$ along which $Z$ points out of $H_{\Sigma}'$. Identify a collar neighborhood of $[-1, 1] \times \partial \mathcal{N}(\Sigma)$ in $H_{\Sigma}'$ with $(-\epsilon, 0] \times [-1, 1] \times \partial \mathcal{N}(\Sigma)$, so that the $(-\epsilon, 0]$-factor parameterized by a variable $\sigma$ for which
\begin{equation*}
\partial_{\sigma} = Z,\quad \{ 0 \} \times [-1, 1] \times \partial \mathcal{N}(\Sigma) = [-1, 1] \times \partial \mathcal{N}(\Sigma).
\end{equation*}
Let $h = h(\theta): [-1, 1] \rightarrow (-\epsilon, 0]$ be a function satisfying the following conditions:
\be
\item There is a small, positive constant $\delta < \epsilon$ such that $h(\theta) = \frac{\epsilon}{2}$ for $\theta \in [-1 + 2\delta, 1 - 2\delta]$.
\item Along $[-1, -1 + \delta]$, $h(\theta) = \log(-\theta)$ and along $[1 - \delta, 1]$, $h(\theta) = \log(\theta)$.
\ee
We define the handle $\mathcal{H}_{\Sigma}$ to be the complement of the set
\begin{equation*}
\{ \sigma > h(\theta) \} \subset (-\epsilon, 0] \times [-1, 1] \times \partial \mathcal{N}(\Sigma)
\end{equation*}
in $H_{\Sigma}'$. Clearly $Z$ points out of $\partial \mathcal{H}_{\Sigma}$ along the newly modified subset $\{ \sigma = h(\theta) \}$ of $\partial \mathcal{H}_{\Sigma}$.

To complete this step of the construction, we must establish that a neighborhood of the boundary of $\{ \sigma = h(\theta) \}$ is contained in $\{ \pm 1 \} \times \mathcal{N}(\Sigma)$. View the set $\{ \sigma = h(\theta) \}$ as the image of $[-1, 1]\times \partial \mathcal{N}(\Sigma)$ in $[-1, 1] \times \mathcal{N}(\Sigma)$ under the mapping
\begin{equation*}
\Xi(\theta, z, x) = \Flow^{h(\theta)}(\theta, z, x) = (e^{-h(\theta)}\theta, e^{2h(\theta)}z, \Flow^{h(\sigma)}_{X_{\beta}}(x)).
\end{equation*}
The second equality above follows from Equation \eqref{Eq:ZFlow}. By our restrictions on the function $h$, near $\theta = \pm 1$ we have the more explicit formula
\begin{equation*}
\Xi(\theta, z, x) = (e^{-\log(\pm \theta)}\theta, e^{2\log(\pm \theta)}z, \Flow^{\log(\pm\theta)}_{X_{\beta}}(x)) = (\pm 1, \theta^{2}z, \Flow^{\log(\pm\theta)}_{X_{\beta}}(x)),
\end{equation*}
establishing the desired result. The handle $\mathcal{H}_{\Sigma}$ is shown in Figure \ref{Fig:smooth_handle}.\\

\noindent\textbf{Step 5: Attaching the handle.} Consider a finite symplectization $([-\epsilon, 0] \times M, e^{t}\alpha)$ of the contact manifold $M$. According to our setup, this space contains subsets of the form 
\begin{equation*}
[-\epsilon, 0]\times \mathcal{N}(i_{j}(\Sigma)),\quad e^{t}\alpha = e^{t}(dz + \beta)
\end{equation*}
which are finite symplectizations of the neighborhoods of our Liouville hypersurfaces.

We may identify each connected component of the collar of $\mathcal{H}_{\Sigma}$ with the $(-\frac{\epsilon}{2}, 0]\times \mathcal{N}(i_{j}(\Sigma))$, determining an attachment of the handle to $[-\epsilon, 0] \times M$. By the construction of $\lambda$ on $\mathcal{H}_{\Sigma}$, the Liouville forms $\lambda$ and $e^{t}\alpha$ agree according to this identification.

\begin{figure}[h]
	\begin{overpic}[scale=.7]{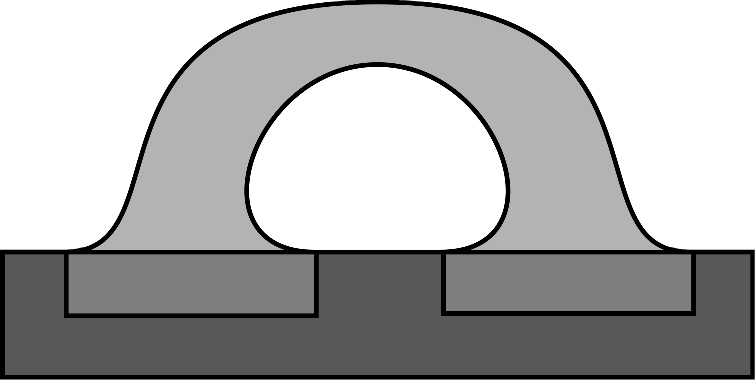}
		\put(14, 40){$\mathcal{H}_{\Sigma}$}
		\put(-25, 5){$[-\epsilon, 0]\times M$}
		\put(44, 22){\vector(-1, -1){10}}
		\put(56, 22){\vector(1, -1){10}}
		\put(36, 24){$(-\frac{\epsilon}{2}, 0]\times \mathcal{N}(\Sigma)$}
	\end{overpic}
	\caption{Attaching the smoothed handle $\mathcal{H}_{\Sigma}$ to a finite symplectization of $(M, \alpha)$.}
	\label{Fig:smooth_handle_attachment}
\end{figure}

By the fact that the convex boundary $\{ \sigma = h(\theta) \} \subset \partial \mathcal{H}_{\Sigma}$ is tangent to $\{ \pm 1\} \times \mathcal{N}(\Sigma)$ near its boundary, the convex boundary of the manifold obtained by the handle attachment is smooth. The handle attachment is depicted in Figure \ref{Fig:smooth_handle_attachment}.\\

\noindent\textbf{Step 6: Identifying the boundary as the connect sum.} To complete our construction, we must show that the symplectic cobordism obtained by the gluing construction described above recovers the Liouville connect sum along its convex boundary.

We study the contact form $\lambda$ restricted to the convex boundary of the flat handle $H_{\Sigma}$, obtaining a change of coordinates between 
\be
\item a collar neighborhood of the boundary $\partial \mathcal{N}(i_{j}(\Sigma))$ of the surgery locus $\mathcal{N}(i_{j}(\Sigma))$ in $M$ and
\item $[\pm 1, \pm 1 \mp \delta) \times \partial \mathcal{N}(\Sigma) \subset \partial H_{\Sigma}$
\ee
by following flow lines of the vector field $Z$.

Following Section \ref{Sec:ThetaInvariant}, we write 
\begin{equation*}
[-1, 1]\times \partial \mathcal{N}(\Sigma) = \bigg( [-1, 1]\times \{ \pm \epsilon \}\times \widehat{\Sigma} \bigg) \cup \bigg( [-1, 1]\times \gamma \times \partial\Sigma \bigg).
\end{equation*}
Using Equation \eqref{Eq:Lambda} and our description of the curve $\gamma$ in Section \ref{Sec:Corners},
\begin{equation*}
\lambda = \begin{cases}
-2\epsilon d\theta + \beta & \text{along}\quad [-1, 1]\times \{ \pm \epsilon \}\times \widehat{\Sigma} \\
-2z(s)d\theta - \theta\frac{\partial z}{\partial s}ds + t(s)\alpha' & \text{along}\quad [-1, 1]\times \gamma \times \partial\Sigma.
\end{cases}
\end{equation*}
After applying a transformation which divides $\theta$ by $2$, we obtain a case of the perturbed contact form $\alpha_{G}$ described in Equation \eqref{Eq:GPerturbedAlpha} used to describe the Liouville connect sum in Section \ref{Sec:ThetaInvariant}.

This completes the construction of the symplectic handle attachment described in Theorem \ref{Thm:Cobordism}.

\subsection{Weinstein handle decomposition of the connect-sum cobordism}\label{Sec:WHandleDecomposition}

In this subsection, we show that when $\Sdom$ is Weinstein, then so is the symplectic cobordism constructed by the attachment of the handle $\mathcal{H}_{\Sigma}$ described above.

Again, let $\Mxi$ be a $(2n+1)$-dimensional contact manifold with contact form $\alpha$.  In this section, we continue use of the notation appearing in the previous section.  Suppose that the Liouville domain $\Sdom$ is Weinstein.  Then there is a decomposition
\begin{equation}\label{Eq:Filter}
\sqcup(\disk^{2n},\lambda_{std})=(\Sigma_{0},\beta_{0})\subset\cdots\subset (\Sigma_{n},\beta_{n})=\Sdom
\end{equation}
of $\Sdom$ as described in Theorem \ref{Thm:WHandle}(3).  Consider the symplectic cobordism $\Ldom$ from $\Mxi$ to $\#_{\Sdom}\Mxi$ described in the previous section.  We will use the filtration described in Equation \eqref{Eq:Filter} to filter $\Ldom$ as
\begin{equation}\label{Eq:CobordismFilter}
([0, -\epsilon]\times M, e^{t}\alpha)=(W_{-1},\lambda_{-1})\subset(W_{0},\lambda_{0})\subset\cdots\subset (W_{n},\lambda_{n})=\Ldom
\end{equation}
where each $(W_{k},\lambda_{k})$ is obtained from $(W_{k-1},\lambda_{k-1})$ by attaching some number of $(2n+2)$-dimension Weinstein $(k+1)$-handles.

Consider the restrictions of the embeddings $i_{1}$ and $i_{2}$ to $(\Sigma_{j},\beta_{j})$.  Let $N_{j,1}$ and $N_{j,2}$ denote neighborhoods of $i_{1}(\Sigma_{j})$ and $i_{2}(\Sigma_{j})$ given by
\begin{equation*}
[-\epsilon,\epsilon]\times \Sigma_{k}\subset[-\epsilon,\epsilon]\times\Sigma\quad\text{with}\quad \alpha =dz+\beta_{k}.
\end{equation*}
Define $(W_{k},\lambda_{k})$ to be the cobordism associated to the Liouville connect sum of $\Mxi$ along $N_{k,1}$ and $N_{k,2}$.  The filtration of Equation \eqref{Eq:Filter} induces a filtration of handles 
\begin{equation*}
(\mathcal{H}_{\Sigma_{k}},\lambda_{k}|_{\mathcal{H}_{\Sigma_{k}}}=-\theta dz-2zd \theta+\beta_{k}).
\end{equation*}
Therefore we have
\begin{equation}\label{Eq:HandleFilter}
(\mathcal{H}_{\Sigma_{0}},\lambda_{0})\subset\cdots\subset(\mathcal{H}_{\Sigma_{n}},\lambda_{n}).
\end{equation}
As each of the $(W_{k},\lambda_{k})$ is given by the attachment of $(\mathcal{H}_{\Sigma_{k}},\lambda_{k})$ to the finite symplectization of $\Mxi$, then Equation \eqref{Eq:HandleFilter} induces the filtration of Equation \eqref{Eq:CobordismFilter}.

Note that $(W_{0},\lambda_{0})$ is obtained from $([-\epsilon, 0]\times\Mxi,e^{t}\alpha)$ by attachment of some number of $(2n+2)$-dimensional Weinstein $1$-handles.  This may be seen by comparing the first part of the proof of Theorem \ref{Thm:Cobordism} and the exposition in Section \ref{Sec:WHandle}.

Now consider the cobordism $(W_{k-1},\lambda_{k-1})$.  Let $\Lambda_{j}$, $j=1,\dots,m$ be the core $k$-disks of the $2n$-dimensional Weinstein $k$ handles that are attached to the boundary of $(\Sigma_{k-1},\beta_{k-1})$ to obtain $(\Sigma_{k},\beta_{k})$.  Consider, for each $j$, two copies $\Lambda_{j,1}$ and $\Lambda_{j,2}$ of $\Lambda_{j}$ living in the boundary of $(W_{k-1},\lambda_{k-1})$ as
\begin{equation*}
\begin{aligned}
\Lambda_{j,i} &= \{ z=0\}\times\Lambda_{j}\\
& \subset  \big{(} N_{k,i}\setminus \Int(N_{k-1,i}) \big{)} \\
& \subset  \big{(}\{ t=1\}\times (M\setminus \Int(N_{k-1,1}\cup N_{k-1,2}))\big{)}\subset \partial W_{k-1}
\end{aligned}
\end{equation*}
for $i=1,2$.  Define the $(k+1)$-dimensional disks
\begin{equation*}
\widetilde{\Lambda}_{j}=[-1,1]\times \Lambda_{j}\subset \left( H_{\Sigma_{k}}\setminus\Int(H_{\Sigma_{k-1}}) \right)\subset\left( W_{k}\setminus\Int(W_{k-1})\right).
\end{equation*}
The 1-form $\beta$ vanishes on $\Lambda=\{0\}\times\Lambda\subset[-\epsilon,\epsilon]\times \Sigma_{k}$ by the explicit description of the Weinstein handle given in Equation \eqref{Eq:WForm}.  Consequently, the Liouville 1-form $\lambda=-\theta dz-2zd \theta+\beta$ vanishes on $\widetilde{\Lambda}_{j}$.  The boundary of $\widetilde{\Lambda_{j}}$ is the piecewise smooth $k$-sphere
\begin{equation*}
\partial \widetilde{\Lambda}_{j}=\Lambda_{j,1}\cup([-1,1]\times\partial\Lambda_{j})\cup\Lambda_{j,2}.
\end{equation*}

After smoothing the corners of $W_{k-1}$, $\widetilde{\Lambda}_{j}\cap W_{k-1}$ will be a smooth isotropic sphere $S^{k}_{j}$, by the vanishing of the form $\lambda$ along $\widetilde{\Lambda}$.  Noting that $(W_{k},\lambda_{k})$ is obtained from $(W_{k-1},\lambda_{k-1})$ by attaching Weinstein handles along each of the $S^{k}_{j}$, the proof of Theorem \ref{Thm:Cobordism} is complete.

\subsection{Attaching handles to weak symplectic cobordisms and fillings}\label{Sec:WeakHandle}

In this section we describe when it is possible to attach a modified version of the symplectic handle $(\mathcal{H}_{\Sigma},\omega_{\Sigma})$ to the positive boundary of a weak symplectic cobordism $(W,\omega)$.  The main result of this section is the following:

\begin{thm}\label{Thm:WeakHandle}
Let $\Mxi$ be a $(2n+1)$-dimensional contact manifold, which is the convex boundary of a weak symplectic cobordism $(W,\omega)$ with concave boundary $(M',\xi')$.  Let $\Sdom$ be a 2n-dimensional Liouville domain and let $i_{j}:\Sdom\rightarrow \Mxi$, $j=1,2$, be Liouville embeddings with disjoint images.  Assume that \begin{equation*}
[i_{1}^{*}\omega]=[i_{2}^{*}\omega] \in H^{2}(\Sigma, \mathbb{R}).
\end{equation*}
Then there is a weak symplectic cobordism whose negative boundary is $(M',\xi')$ and whose positive boundary is the manifold $\#_{\Sdom}\Mxi$ obtained from $\Mxi$ by a Liouville connect sum.
\end{thm}

To prove Theorem \ref{Thm:WeakHandle}, we need the following lemma, summarizing some results of \cite{MNW12}:

\begin{lemma}\label{Lemma:WeakDeformation}
Suppose that $\Mxi$ is the positive boundary of weak symplectic cobordism $(W,\omega)$ and $\alpha$ is a contact 1-form for $\Mxi$.  Then $W$ can be extended to a non-compact symplectic manifold $(W',\omega')$ for which
\be
\item $W'\setminus W$ is diffeomorphic to $(0,\infty)\times M$,
\item the symplectic form $\omega'$ coincides with $\omega$ on $W$,
\item $\omega'|_{(t_{0},\infty)\times M)}= \omega|_{TM}+d(e^{t}\alpha)$ for a sufficiently large constant $t_{0}>0$, where $t$ is a coordinate on $(0,\infty)$, and
\item each of the level sets $(\{ t\}\times M,\xi)$ is weakly filled for $t>t_{0}$.
\ee
Furthermore, if $\omega''$ is a $2$-form on $M$ with $[\omega''] = [\omega|_{TM}] \in H^{2}(M)$ then $\omega'$ may be chosen so that is coincides with $\omega'' + d(e^{t}\alpha)$ on $(t_{0}, \infty)\times M$ for $t_{0} > 0$ sufficiently large.
\end{lemma}

\begin{proof}[Proof of Theorem \ref{Thm:WeakHandle}]
We continue to make use of the notation described in Section \ref{Sec:Handle}, modifying the construction described there as needed.

Suppose that $\alpha$ is a contact form for $\Mxi$ such that $i_{j}^{*}\alpha=\beta$ for $j=1,2$.  Consider the embeddings 
\begin{equation*}
i_{-}|_{\{\theta=-1\}},i_{+}|_{\{\theta=1\}}:\mathcal{N}(\Sigma)\rightarrow M
\end{equation*}
defined in the handle attachment of Section \ref{Sec:Handle}.  By our hypothesis, 
\begin{equation*}
[i_{-}|_{\{\theta=-1\}}^{*}\omega] = [i_{+}|_{\{\theta=1\}}^{\ast}\omega] \in H^{2}(\mathcal{N}(\Sigma)).
\end{equation*}
By deforming $\omega$ within its class $[\omega]\in H^{2}$ and applying Lemma \ref{Lemma:WeakDeformation} we may choose a collar $((0, \infty)\times M, \omega')$ over the convex boundary of $(W, \omega)$ for which
\begin{equation}\label{Eq:WeakCollar}
\begin{gathered}
\omega' = \omega'' + d(e^{t}\alpha),\\
i_{-}|_{\{\theta=-1\}}^{*}\omega'' = i_{+}|_{\{\theta=1\}}^{\ast}\omega'' \in \Omega^{2}(\mathcal{N}(\Sigma)).
\end{gathered}
\end{equation}
along some $(t_{0}, \infty)\times M$. 

Let $(W_{0}, \omega')$ be the compact symplectic manifold obtained by removing the open collar $(2t_{0},\infty)\times M$ from $W'$. Attach the handle $\mathcal{H}_{\Sigma}$ to $\partial W_{0}$ as described in the symplectic handle attachment of Section \ref{Sec:Handle}. By construction, $d(t\alpha)$ extends over the handle. To complete our proof, we must show that $\omega''$ -- and hence $\omega'$ -- can be smoothly extended over the handle so that the extension determines a weak filling of the contact manifold determined by the union of $(W_{0}, \omega')$ with the handle.

We can extend the $2$-form $\omega''$ over the handle $\mathcal{H}_{\Sigma}$ as a $\theta$-invariant form determined by its value along the gluing locus. The two-form $\omega' = \omega'' + d(e^{2t_{0} +t}\lambda)$ with $t > 0$ obtained is conformally equivalent to 
\begin{equation*}
e^{-2t_{0}}\omega'' + d(e^{t}\alpha)
\end{equation*}
and so is guaranteed to be symplectic on the contact hyperplanes of the convex boundary of $\mathcal{H}_{\Sigma}$ for $t_{0}$ sufficiently large by the compactness of $\mathcal{H}_{\Sigma}$.
\end{proof}

\section{A neighborhood theorem for Liouville submanifolds of high codimension}\label{Sec:HighCodimension}

The purpose of this section is to show that every Liouville submanifold $\Sdom$ of a contact manifold $\Mxi$ whose codimension is greater than one embeds into a Liouville hypersurface $(\widehat{\Sigma},\widehat{\beta})\subset\Mxi$ which smoothly retracts onto $\Sigma$.

The proof consists of two parts.  The first part of this proof, established in Theorem \ref{Thm:LiouvilleDiskBundle}, is an existence result asserting that the total space of every symplectic disk bundle over a Liouville domain admits the structure of a Liouville domain in a natural way.  It is easy to construct an exact symplectic form on the total space of such a bundle using, for example, Thurston's technique \cite[Theorem 6.3]{MS:SymplecticIntro}.  That being said, the content of Theorem \ref{Thm:LiouvilleDiskBundle} is that the Liouville vector field can be made transverse to the boundary of the total space of this disk bundle and so our construction will rely on a different technique.  The second part follows the standard Gray-Moser-Weinstein argument used to establish neighborhood theorems in contact and symplectic geometry, showing that we can isotop a submanifold $\widehat{\Sigma}$ of $\Mxi$ containing $\Sigma$ so that $\alpha|_{T\widehat{\Sigma}}$ coincides with the model Liouville 1-form provided by Theorem \ref{Thm:LiouvilleDiskBundle} where $\alpha$ is a fixed contact form for $\Mxi$.

\subsection{Existence results for $1$-forms on disk bundles}

\begin{lemma}\label{Lemma:Unitary}
Let $\pi:E\rightarrow M$ be a rank $2m$ vector bundle over a compact manifold $M$ equipped with a smooth section $\omega$ of $E^{*}\wedge E^{*}\rightarrow X$ which is symplectic on each fiber of $E$.  Then there is a 1-form $\lambda\in\Omega^{1}(E)$ on the total space $E$ such that
\be
\item on each fiber $E_{x}$, $x\in M$, there is a linear coordinate system $p_{j},q_{j}, j=1,\dots, m$ for which \begin{equation*}
    \lambda|_{TE_{x}}=\frac{1}{2}\sum_{1}^{m} (p_{j}dq_{j}-q_{j}dp_{j}),
    \end{equation*}
\item $d\lambda(\partial_{t}(v_{x}+tw_{x}),\partial_{t}(v_{x}+tw'_{x}))=\omega(w_{x},w'_{x})$ for all $v_{x},w_{x},w'_{x}\in E_{x}$, $x\in M$,
\item $\lambda$ and $d\lambda$ are both annihilated by vectors tangent to the zero section of $E$, and
\item $\lambda=\frac{1}{2}d\lambda(R_{E},\ast)$ where $R_{E}$ is the radial vector field on $E$ when restricted to the tangent spaces of fibers of $E$.
\ee
\end{lemma}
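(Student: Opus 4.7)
The plan is to construct $\lambda$ from a symplectic connection on $E$ and verify the four properties by short computations in a local trivialization. The only conceptual idea required is that a symplectic connection supplies a canonical way of radially extending the fiberwise Liouville forms to a single global $1$-form on the total space.

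First I would fix a linear connection $\nabla$ on $E$ which preserves $\omega$; such connections form a non-empty affine space of $\mathfrak{sp}(\omega)$-valued $1$-forms on each trivialized patch, and global existence follows by partition of unity. The connection determines a horizontal distribution $\mathcal{H}\subset TE$ complementing the vertical distribution $\mathcal{V}$, and hence a connection map $K:TE\rightarrow E$ sending $W\in T_{v}E$ to the element of $E_{\pi(v)}$ representing its vertical part. I would then set
\[
\lambda_{v}(W) := \tfrac{1}{2}\omega_{\pi(v)}(v,K(W)).
\]
In a local trivialization $E|_{U}\cong U\times\mathbb{R}^{2m}$ coming from a symplectic frame, with connection $1$-form $A\in\Omega^{1}(U,\mathfrak{sp}(2m))$, the formula becomes
\[
\lambda = \tfrac{1}{2}\omega_{0}(v,dv)+\tfrac{1}{2}\omega_{0}(v,Av),
\]
where $\omega_{0}$ is the constant symplectic form on $\mathbb{R}^{2m}$.

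Property (1) is then immediate: restricting to a fiber kills the second summand, and choosing Darboux coordinates for $\omega_{x}$ recovers the explicit half-Liouville formula. Property (2) is a pure fiber calculation: extend $w,w'\in E_{x}$ to constant vertical vector fields $V,V'$ in the trivialization so that $[V,V']=0$ and $\lambda(V')=\tfrac{1}{2}\omega_{x}(v,w')$ is linear in $v$; Cartan's formula then gives $d\lambda(V,V')=\omega_{x}(w,w')$. Property (4) I would establish via scaling symmetry: the radial flow $\phi_{t}(v)=e^{t}v$ satisfies $\phi_{t}^{*}\lambda = e^{2t}\lambda$, so $\Lie_{R_{E}}\lambda = 2\lambda$; combined with $\iota_{R_{E}}\lambda = 0$ (which follows from $\omega_{0}(v,v)=0$ and $\pi_{*}R_{E}=0$), Cartan's formula yields $\iota_{R_{E}}d\lambda = 2\lambda$, equivalent to $\lambda=\tfrac{1}{2}d\lambda(R_{E},\ast)$.

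The only point I expect to require actual checking --- the main obstacle, such as it is --- is property (3). That $\lambda$ vanishes on the zero section is visible since both summands of the local formula are quadratic in the fiber variable $v$. For $d\lambda$, one observes that $d\bigl(\tfrac{1}{2}\omega_{0}(v,dv)\bigr)$ equals the fiberwise symplectic form $\omega_{0}$, which annihilates any pair of vectors tangent to $M\subset E$, while $\tfrac{1}{2}\omega_{0}(v,Av)$ is a $1$-form on $E$ with coefficients vanishing quadratically in $v$, so its exterior derivative vanishes identically along the zero section. Hence $d\lambda$ is annihilated by vectors tangent to the zero section, completing the verification.
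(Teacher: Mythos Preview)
Your construction is correct and is essentially the same as the paper's: both define $\lambda_{v}(W)=\tfrac{1}{2}\omega(v,K(W))$ for $K$ the connection map of an $\omega$-preserving connection, then verify (1)--(4) in a local symplectic frame. The only cosmetic differences are that the paper equips $E$ with a compatible unitary structure and takes $\nabla$ unitary (this is not needed for the lemma itself but is used in the subsequent Theorem~\ref{Thm:LiouvilleDiskBundle} to control the disk bundle via the metric), and that you verify (4) via the scaling relation $\phi_{t}^{*}\lambda=e^{2t}\lambda$ rather than by direct local computation.
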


\begin{defn}
A vector bundle $E$ equipped with a fiber-wise bilinear form $\omega$ as described in the statement of the above lemma will be referred to as a \emph{symplectic vector bundle} and will be denoted by the pair $(E,\omega)$.
\end{defn}

\begin{proof}[Proof of Lemma \ref{Lemma:Unitary}]
Fix a complex structure $J$ and bundle metric $\langle\ast,\ast\rangle$ on $E$ so that $\omega(\ast,J\ast)=\langle \ast,\ast\rangle$.  Such a complex structure and bundle metric always exist as can be seen in \cite[\S 2.6]{MS:SymplecticIntro}. Let $\{ U_i \}$ be a finite covering of $M$ where each $U_x$ consists of a ball centered about a point $x_i \in M$. We write $\{ h_i \}$ for a collection of functions on $M$ determining a partition of unity for the cover with each $h_i$ supported in $U_i$.

Pick a unitary trivialization $p_j, q_j$ of $E$ within each $U_i$, providing an identification
\begin{equation*}
\begin{gathered}
\phi_i: E|_{U_i} \rightarrow U_i \times \mathbb{R}^{2m}\\
\omega(p_j, q_k) = \delta_{j,i},\quad \omega(p_i, p_j) = \omega(q_i, q_j) = 0.
\end{gathered}
\end{equation*}
We define $\lambda = \sum_{i} \lambda_{i}$  where the $\lambda_i$ is defined over each $E|_{U_i}$ as
\begin{equation*}
    \lambda_i = \frac{h_i}{2} \sum_{j=1}^{m}(p_{j}dq_{j} - q_{j}dp_{j}) = \frac{h_i}{2}(p dq - q dp)
\end{equation*}
in short-hand notation. Then
\begin{equation*}
    d\lambda_i = h_{i} \big( \sum_{j=1}^{m}dp_{j}\wedge dq_{j} \big) + \frac{dh_{i}}{2}\wedge \big(\sum_{j=1}^{m}(p_{j}dq_{j} - q_{j}dp_{j})\big).
\end{equation*}
Let $i_1 \neq i_2$ be distinct indices of our covering. Then the unitary-matrix-valued transition function $A = A_{j, k}(x)$ for the local trivializations provided above allow us to write
\begin{equation*}
    \lambda_{i_2} = \frac{h_{i_2}}{2} ((A p) d(A q) - (A q) d(A p)) = \frac{h_{i_2}}{2} (p dq - q dp + ((A p)q - (A q) p) dA).
\end{equation*}
From this we see that within each $U_i$ we can write
\begin{equation}\label{Eqn:LambdaGlobal}
    \lambda = \frac{1}{2}(p dq - q dp) + f_i d\theta_i
\end{equation}
where $\theta_i$ is a $1$-form whose kernel contains the $\partial_{p_{j}}, \partial_{q_{j}}$ and $f_i$ is a function which vanishes up to second order on the $p_j, q_j$ at $p_j = q_j = 0$.
\end{proof}

\begin{thm}\label{Thm:LiouvilleDiskBundle}
A sufficiently small neighborhood $\disk$ of the zero section of a rank $2m$ symplectic vector bundle $\pi:(E,\omega)\rightarrow\Sigma$ over a Liouville domain $\Sdom$ carries the structure of a Liouville domain $(\widehat{\Sigma},\widehat{\beta})$, where $\widehat{\Sigma}$ is obtained by rounding the corners $\partial(\pi^{-1}(\partial\Sigma))$ of $\disk$.  The 1-form $\widehat{\beta}$ is such that
\be
\item on each fiber $\disk_{x}$, $x\in \Sigma$, there is a coordinate system $p_{j},q_{j}, j=1,\dots,m$ for which
    \begin{equation*}
    \widehat{\beta}|_{T\disk_{x}}=\frac{1}{2}\sum_{1}^{m}( p_{j}dq_{j}-q_{j}dp_{j}),
    \end{equation*}
\item $d\widehat{\beta}(\partial_{t}(v_{x}+tw_{x}),\partial_{t}(v_{x}+tw'_{x}))=\omega(w_{x},w'_{x})$ for all $v_{x},w_{x},w'_{x}\in \disk_{x}$, $x\in \Sigma$, and
\item $\widehat{\beta}$ coincides with $\beta$ when restricted to the tangent space of the zero-section of $\disk$.
\ee
\end{thm}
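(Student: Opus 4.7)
The plan is to set $\widehat{\beta} := \pi^*\beta + \lambda$, where $\lambda \in \Omega^1(E)$ is the 1-form produced by Lemma \ref{Lemma:Unitary} applied to $(E,\omega)$. Properties (1)--(3) of the theorem will then follow almost immediately from that lemma: in the local fiber coordinates $p_j,q_j$ from its proof, $\pi^*\beta$ contributes no $dp_j$ or $dq_j$ terms, so $\widehat{\beta}|_{TE_x}$ coincides with the Darboux expression of Lemma \ref{Lemma:Unitary}(1); property (2) of the theorem is exactly property (2) of the lemma; and along the zero section $\lambda$ pulls back to $0$ by property (3) of the lemma, so $\widehat{\beta}|_{TZ} = \beta$.

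The real work is to identify the Liouville vector field of $\widehat{\beta}$ and verify that it exits the boundary after the corner rounding of Section \ref{Sec:Corners}. I will use the ansatz
\begin{equation*}
Y := \widetilde{X_\beta} + \tfrac{1}{2} R_E,
\end{equation*}
where $\widetilde{X_\beta}$ denotes the horizontal lift of $X_\beta$ via the unitary connection $\nabla$ chosen in the proof of Lemma \ref{Lemma:Unitary}, and $R_E$ is the radial vector field on the fibers. Combining $\iota_{R_E}\pi^*d\beta = 0$, the identity $\iota_{R_E}d\lambda = 2\lambda$ supplied by Lemma \ref{Lemma:Unitary}(4), and $\iota_{\widetilde{X_\beta}}\pi^*d\beta = \pi^*\beta$, a short computation yields
\begin{equation*}
\iota_Y\, d\widehat{\beta} = \widehat{\beta} + \iota_{\widetilde{X_\beta}}\, d\lambda.
\end{equation*}
The main obstacle is therefore to control the error 1-form $\iota_{\widetilde{X_\beta}}\, d\lambda$. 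I plan to handle this using the explicit formula \ref{Eq:LocalSymplectic} (or its coordinate-free version \ref{Eq:CoordinateFree}): every summand of $d\lambda$ beyond the fiberwise Darboux piece $\sum dp_j\wedge dq_j$ carries a coefficient linear or quadratic in the fiber coordinates $\binom{p}{q}$ and therefore vanishes along $Z$, while the Darboux piece itself is annihilated by the purely horizontal vector $\widetilde{X_\beta}$. Hence the error vanishes on the zero section, and the true Liouville field $X_{\widehat{\beta}}$ is a $C^0$-small perturbation of $Y$ on a sufficiently thin disk subbundle $\disk \subset E$.

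Given this, the remaining steps are routine. First, the same vanishing shows that at any zero-section point the horizontal/vertical splitting $T_xE = T_x\Sigma \oplus E_x$ diagonalizes $d\widehat{\beta}$ as $d\beta \oplus \omega_x$, so $d\widehat{\beta}$ is symplectic on a neighborhood of $Z$; shrinking $\disk$ if necessary ensures $d\widehat{\beta}$ is symplectic on all of $\disk$. Second, before corner-rounding, $\disk$ has two boundary strata: on $\{|v|=\epsilon\}\cap \pi^{-1}(\mathrm{Int}\,\Sigma)$ the dominant term $\tfrac{1}{2}R_E$ of $X_{\widehat{\beta}}$ is radially outward, and on $\pi^{-1}(\partial\Sigma)\cap \disk$ the horizontal piece $\widetilde{X_\beta}$ projects under $\pi$ to the outward-transverse $X_\beta$. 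Since $X_{\widehat{\beta}}$ lies in the open cone spanned by the two outward normals at every corner point, the corner-rounding procedure of Section \ref{Sec:Corners} produces a smooth boundary $\partial\widehat{\Sigma}$ along which $X_{\widehat{\beta}}$ remains outward-transverse, yielding the desired Liouville domain $(\widehat{\Sigma}, \widehat{\beta})$.
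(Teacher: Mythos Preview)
Your approach coincides with the paper's: set $\widehat{\beta}=\pi^{*}\beta+\lambda$, check properties (1)--(3) from Lemma \ref{Lemma:Unitary}, and write $X_{\widehat{\beta}}=\tfrac{1}{2}R_{E}+\widetilde{X_{\beta}}+Z$ where $Z$ is the correction determined by $\iota_{Z}d\widehat{\beta}=-\iota_{\widetilde{X_{\beta}}}d\lambda$. Your treatment of the boundary piece $\pi^{-1}(\partial\Sigma)$ is correct and matches the paper's.

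There is however a genuine gap in your transversality argument along the fiberwise boundary $\{|v|=\epsilon\}$. You assert that $\tfrac{1}{2}R_{E}$ is the ``dominant term'' of $X_{\widehat{\beta}}$ in the radial direction, but this is not justified by the $C^{0}$-smallness you established. Two separate issues arise. First, $\widetilde{X_{\beta}}$ is of fixed (nonzero) size independent of $\epsilon$, so it is certainly not dominated by $\tfrac{1}{2}R_{E}$; you need the fact that $\nabla$ is \emph{unitary} to conclude $\widetilde{X_{\beta}}$ is tangent to the sphere bundles, i.e.\ $d\rho(\widetilde{X_{\beta}})=0$, and you never invoke this. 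Second, even granting that, the estimate ``$Z$ vanishes along the zero section'' gives only $|Z|=O(\epsilon)$ on $\{|v|=\epsilon\}$, hence $|d\rho(Z)|=O(\epsilon^{2})$; but the outward component $d\rho(\tfrac{1}{2}R_{E})=\rho$ is itself $O(\epsilon^{2})$, so smallness alone cannot rule out cancellation.

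The paper closes this gap by proving more: the error $1$-form $\iota_{\widetilde{X_{\beta}}}d\lambda$ annihilates \emph{all vertical vectors} (immediate from Equation \eqref{Eq:CoordinateFree}, since $\pi_{V}(\widetilde{X_{\beta}})=0$ kills the first term and $T\pi(Y)=0$ kills the curvature term). Combined with $\iota_{Z}\pi^{*}d\beta|_{V}=0$ and nondegeneracy of $d\lambda$ on $V$, this forces $Z$ itself to be \emph{horizontal}. Unitarity of $\nabla$ then gives $d\rho(\widetilde{X_{\beta}})=d\rho(Z)=0$ exactly, whence $d\rho(X_{\widehat{\beta}})=\rho>0$ on $\{|v|=\epsilon\}$ for \emph{every} $\epsilon>0$, not merely small ones. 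Your argument can be repaired by inserting this observation.
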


The proof of this theorem is a continuation of the construction described in the proof of Lemma \ref{Lemma:Unitary} and makes use of the unitary structure $(\omega,J,\langle \ast, \ast\rangle)$ and the system of local trivializations over the $U_i$ on the vector bundle $E$ described there.

\begin{proof}
We define $\widehat{\beta}=\lambda+\pi^{*}\beta$ where $\lambda$ is as described in Lemma \ref{Lemma:Unitary}.  By the properties of $\lambda$ listed in the statement of Lemma \ref{Lemma:Unitary} it follows that on a sufficiently small neighborhood 
\begin{equation*}
\disk_{\epsilon}:=\{v\in E\ :\ \langle v,v\rangle\leq\epsilon\}
\end{equation*}
of the zero-section of $E$ the 2-form $d\widehat{\beta}$ is symplectic.  As $\pi^{*}\beta$ and $d\pi^{*}\beta=\pi^{*}d\beta$ are annihilated by tangent vectors in the vertical subspaces of $TE$, $\widehat{\beta}$ satisfies the properties listed in the statement of the theorem.  Therefore, all that remains to be shown is that the vector field $X_{\widehat{\beta}}$ determined by the equation $d\widehat{\beta}(X_{\widehat{\beta}},\ast)=\widehat{\beta}$ is positively transverse to the boundary of $\disk_{\epsilon}$ for a sufficiently small constant $\epsilon>0$.

Let $X_{\beta}$ be the Liouville vector field of $\Sdom$. Over each coordinate patch $U_i$ in $\Sigma$, we can use the trivialization $\phi$ to write
\begin{equation*}
    X_{\widehat{\beta}} = \frac{R_{E}}{2} + X_{\beta} + Z_i
\end{equation*}
where $R_{E}$ is the radial vector field for the bundle $R_{E} = p\partial_{p} + q\partial_{q}$. The vector field $Z_i$ is our error term which will be analyzed to complete the proof. Following Equation \eqref{Eqn:LambdaGlobal}, we have
\begin{equation*}
    d\widehat{\beta} = d\beta + dp \wedge dq + df_{i}\wedge d\theta_{i}.
\end{equation*}
Using the above calculation and Equation \ref{Eqn:LambdaGlobal}, we see that $Z_i$ is the solution to the equation
\begin{equation*}
\begin{aligned}
d\widehat{\beta}(Z_i, \ast) &= \widehat{\beta} - d\widehat{\beta}(\frac{R_{E}}{2} + X_{\beta}, \ast)\\
&= (f_i - df_{i}(X_{\beta}) - \frac{df_i(R_{E})}{2})d\theta_{i} + d\theta_i(X_{\beta})df_i.
\end{aligned}
\end{equation*}

From this calculation, the fact that the $f_i$ vanishes up to second order along the zero-section of $E$, and compactness of $\Sigma$ we conclude that for some $\epsilon>0$ the vector field $X_{\widehat{\beta}}$ is positively transverse to the boundary of $\disk_{\epsilon}$ along the boundary of $\pi^{-1}(\Int(\Sigma))$.

To finish our proof we will show that for $\epsilon$ sufficiently small, $X_{\widehat{\beta}}$ is transverse to the boundary of $\disk_{\epsilon}$ along $\pi^{-1}(\partial\Sigma)$.  To see this, observe that for any $\epsilon>0$
\be
\item $\frac{1}{2}R_{E}$ is tangent to the boundary of $\disk_{\epsilon}$ along $\pi^{-1}(\partial\Sigma)$,
\item the vector field $\widetilde{X_{\beta}}$ is positively transverse to the boundary of $\disk_{\epsilon}$ along $\pi^{-1}(\partial\Sigma)$, and
\item the vector field $Z$ vanishes along the zero-section of $\disk_{\epsilon}$.
\ee

We conclude that for $\epsilon$ sufficiently small, $X_{\widehat{\beta}}$ is transverse to $\partial\disk_{\epsilon}$ along $\pi^{-1}(\partial\Sigma)$.  After having fixed such an $\epsilon$, we can round the corners of $\disk_{\epsilon}$ to obtain a manifold with smooth boundary $\widehat{\Sigma}\subset\disk_{\epsilon}$ for which $X_{\widehat{\beta}}$ is positively transverse to $\partial \widehat{\Sigma}$ so that $\widehat{\beta}$ is a Liouville 1-form on $\widehat{\Sigma}$.
\end{proof}

The same construction can be easily applied to symplectic vector bundles over contact manifolds.

\begin{thm}\label{Thm:ContactDiskBundle}
Let $\Mxi$ be a compact contact manifold with contact form $\alpha$.  Then a sufficiently small neighborhood $\disk$ of the zero section of a rank $2m$ symplectic vector bundle $\pi:(E,\omega)\rightarrow M$ naturally carries the structure of a contact manifold $(\disk,\xi_{E})$.  The contact structure $\xi_{E}$ can be described as $\ker(\widehat{\alpha})$ for a 1-form $\widehat{\alpha}$ such that
\be
\item on each fiber $\disk_{x}$, $x\in M$, there is a coordinate system $p_{j},q_{j}, j=1,\dots,m$ for which
    \begin{equation*}
    \widehat{\alpha}|_{T\disk_{x}}=\frac{1}{2}\sum_{1}^{m} (p_{j}dq_{j}-q_{j}dp_{j}),
    \end{equation*}
\item $d\widehat{\alpha}(\partial_{t}(v_{x}+tw_{x}),\partial_{t}(v_{x}+tw'_{x}))=\omega(w_{x},w'_{x})$ for all $v_{x},w_{x},w'_{x}\in \disk_{x}$, $x\in M$, and
\item $\widehat{\alpha}$ coincides with $\alpha$ when restricted to the tangent space of the zero-section of $\disk$.
\ee
\end{thm}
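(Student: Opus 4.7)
The plan is to mirror the construction in Theorem \ref{Thm:LiouvilleDiskBundle} with $\pi^*\beta$ replaced by $\pi^*\alpha$, and to replace the transversality check on the Liouville vector field by a direct check of the contact condition. First I would fix a compatible almost complex structure $J$, bundle metric $\langle\ast,\ast\rangle$, and unitary connection $\nabla$ on $(E,\omega)$, and build the 1-form $\lambda\in\Omega^{1}(E)$ of Lemma \ref{Lemma:Unitary}. Then I would set
\begin{equation*}
\widehat{\alpha}=\lambda+\pi^{*}\alpha.
\end{equation*}
Properties (1)--(3) of the theorem statement are essentially inherited from Lemma \ref{Lemma:Unitary}: (1) is exactly the fiberwise normal form for $\lambda$, (2) follows because $\pi^{*}\alpha$ annihilates vertical tangent vectors so $d(\pi^{*}\alpha)=\pi^{*}d\alpha$ also annihilates vertical pairs, and (3) follows because $\lambda$ and $d\lambda$ vanish on tangent vectors to the zero section (property (3) of Lemma \ref{Lemma:Unitary}).

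The substantive step is verifying that $\widehat{\alpha}\wedge(d\widehat{\alpha})^{n+m}$ is a positive volume form on a neighborhood of the zero section, where $\dim(M)=2n+1$. I would expand $d\widehat{\alpha}=d\lambda+\pi^{*}d\alpha$ binomially and track ranks. Since $d\alpha$ has rank $2n$ on $M$, the factor $(\pi^{*}d\alpha)^{n+m-k}$ vanishes unless $k\geq m$; since $d\lambda$ only pairs vertical directions non-degenerately (Lemma \ref{Lemma:Unitary}(2)) and the vertical distribution has rank $2m$, only the $k=m$ summand can contribute a vertical volume form. Collecting what survives, along the zero section one gets
\begin{equation*}
\widehat{\alpha}\wedge(d\widehat{\alpha})^{n+m}\Big|_{\text{zero section}}=\tbinom{n+m}{m}\,\pi^{*}\bigl(\alpha\wedge(d\alpha)^{n}\bigr)\wedge(d\lambda)^{m},
\end{equation*}
which is positive because it is the wedge of the contact volume form on $M$ with the fiberwise symplectic volume. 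By continuity it stays positive on a sufficiently small tubular neighborhood $\disk$ of the zero section, which is the neighborhood claimed in the theorem.

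Finally, I would comment on naturality, just as in Theorem \ref{Thm:LiouvilleDiskBundle}: the space of compatible triples $(J,\langle\ast,\ast\rangle,\nabla)$ is contractible, so any smooth path of such choices produces a smooth path $\widehat{\alpha}_{t}$ of contact forms (after possibly shrinking $\disk$), and Gray's stability theorem then shows the resulting contact structures are all isotopic through contact structures supported on a fixed smaller neighborhood. I expect the main obstacle to be the rank bookkeeping in the binomial expansion; everything else is a straightforward transcription of the Liouville argument, since the local transversality-of-the-Liouville-vector-field check there is exactly what the contact condition check replaces here.
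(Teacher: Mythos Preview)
Your proposal is correct and follows essentially the same approach as the paper: the paper also defines $\widehat{\alpha}=\lambda+\pi^{*}\alpha$ and observes (in one sentence) that it is contact near the zero section by appealing to the coordinate-free expression for $d\lambda$ in Equation~\ref{Eq:CoordinateFree}. Your binomial expansion simply spells out what that appeal amounts to, and your naturality remark parallels the discussion following Theorem~\ref{Thm:LiouvilleDiskBundle}; the paper's own proof here is two sentences and leaves exactly these details to the reader.
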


To prove this theorem, we may define $\widehat{\alpha}=\lambda+\pi^{*}\alpha$.  The 1-form $\widehat{\alpha}$ is contact on a sufficiently small tubular neighborhood of the zero-section of $E$ by Equation \eqref{Eqn:LambdaGlobal}.  It is well known that the contact structure on a tubular neighborhood of a contact submanifold is uniquely determined by its symplectic normal bundle.  However, the author is unsure as to whether or not a proof of the existence of a contact structure on the total space of a symplectic disk bundle over a given contact manifold has been written anywhere.

\subsection{Liouville submanifolds of codimension $>1$}\label{Sec:HighCodimensionNeighborhoods}

\begin{thm} \label{Thm:HighCodimension}
Let $\Mxi$ be a $(2n+1)$-dimensional contact manifold with contact form $\alpha$ and suppose that $\Sigma$ is a compact $2k$-dimensional submanifold of the interior of $M$ such that $k<n$ and $\alpha|_{T\Sigma}$ is a Liouville 1-form on $\Sigma$.  Then $\Sigma$ admits a neighborhood of the form $[-\epsilon,\epsilon]\times\widehat{\Sigma}$ on which $\alpha=dz+\widehat{\beta}$ for a Liouville 1-form $\widehat{\beta}$ on $\widehat{\Sigma}$.  The manifold $\widehat{\Sigma}$ is obtained by rounding the corners $\pi^{-1}(\partial\Sigma)$ of a disk bundle $\pi:\disk\rightarrow\Sigma$.  Moreover, there is a coordinate system $(p_{j},q_{j})$ on each fiber of the disk bundle on which
\begin{equation*}
\alpha|_{\pi^{-1}(x)}=\frac{1}{2}\sum_{1}^{n-k} (p_{j}dq_{j}-q_{j}dp_{j}).
\end{equation*}
\end{thm}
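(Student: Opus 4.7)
\noindent\emph{Step 1 (Reeb thickening).} The plan is to build the promised neighborhood by combining Theorem \ref{Thm:LiouvilleDiskBundle}, which manufactures the model Liouville hypersurface from a symplectic normal bundle, with a Weinstein--Moser argument. Write $\beta := \alpha|_{T\Sigma}$. Non-degeneracy of $d\beta$ prevents $R_\alpha$ from being tangent to $\Sigma$ at any point, since $R_\alpha|_p\in T_p\Sigma$ would force $\iota_{R_\alpha}(d\alpha|_{T_p\Sigma})=0$. Exactly as in the proof of Lemma \ref{Lemma:SquareNeighborhood}, the time-$z$ Reeb flow then produces an embedding $\iota:[-\epsilon_0,\epsilon_0]\times\Sigma\hookrightarrow M$ with $\iota^*\alpha=dz+\beta$; its image $\Sigma'$ is a $(2k{+}1)$-dimensional contact submanifold of $\Mxi$.

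\medskip

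\noindent\emph{Step 2 (the model and an initial embedding).} Let $N\to\Sigma'$ denote the symplectic complement of $\xi\cap T\Sigma'$ inside $(\xi|_{\Sigma'},d\alpha|_\xi)$. Reeb-invariance of $\xi$, $T\Sigma'$, and $d\alpha$ forces $N$ itself to be Reeb-invariant, so $N\cong[-\epsilon_0,\epsilon_0]\times N_0$ for a rank-$2(n-k)$ symplectic bundle $N_0\to\Sigma$. Applying Theorem \ref{Thm:LiouvilleDiskBundle} to $(N_0,d\alpha|_{N_0})$ produces the Liouville domain $(\widehat\Sigma,\widehat\beta)$ with $\widehat\beta=\lambda_0+\pi^*\beta$, where $\lambda_0$ is the fiberwise 1-form of Lemma \ref{Lemma:Unitary}. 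After fixing a compatible complex structure, bundle metric, and unitary connection on $N_0$, define an initial embedding $\Phi:[-\epsilon_0,\epsilon_0]\times\disk\hookrightarrow M$ by composing the fiberwise exponential into $\xi$ out of $\Sigma$ with the time-$z$ Reeb flow.

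\medskip

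\noindent\emph{Step 3 (1-jet matching and the Moser finish).} Set $\alpha_0 := dz+\widehat\beta$. One checks directly that $\Phi^*\alpha$ and $\alpha_0$ agree along $\{0\}\times\Sigma$ and that the $2$-forms $\Phi^*d\alpha$ and $d\alpha_0$ coincide on every $T_p([-\epsilon_0,\epsilon_0]\times\disk)$ with $p\in\{0\}\times\Sigma$: on pure horizontal pairs both compute $d\beta$; on pure vertical pairs both compute $d\alpha|_{N_0}=\omega$ by Theorem \ref{Thm:LiouvilleDiskBundle}(2); and on mixed pairs both vanish, using $\iota_{R_\alpha}d\alpha=0$ together with the definition of the symplectic normal bundle. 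Moser's trick, applied to the family $\alpha_t=(1-t)\alpha_0+t\Phi^*\alpha$—which is contact on a sufficiently small neighborhood because $\alpha_t-\alpha_0$ vanishes to second order along the zero section—then produces an isotopy fixing $\{0\}\times\Sigma$ after which $\Phi^*\alpha$ pulls back to $\alpha_0$, yielding the advertised product neighborhood together with the fiberwise Darboux coordinates from Lemma \ref{Lemma:Unitary}(1). The main obstacle is precisely this $1$-jet matching, and most subtly the vanishing of the mixed horizontal-vertical term of $\Phi^*d\alpha$ along $\Sigma$; this reduces, after a short calculation, to the fact that $T\Sigma$ projects isomorphically onto $\xi\cap T\Sigma'$ along $\partial_z$, so that the symplectic-complement definition of $N_0$ closes the argument.
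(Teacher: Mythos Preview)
Your overall strategy is close to the paper's, but Step~3 contains a real gap. Gray stability applied to the contact family $\alpha_t$ yields an isotopy $\psi_t$ with $\psi_1^*(\Phi^*\alpha)=g\,\alpha_0$ for some positive function $g$ (determined by integrating $\dot\alpha_t(R_{\alpha_t})$ along the flow), not $\psi_1^*(\Phi^*\alpha)=\alpha_0$ on the nose. The theorem, however, is a statement about the contact \emph{form}: it asserts that $\alpha=dz+\widehat\beta$ in suitable coordinates, not merely that $\xi=\ker(dz+\widehat\beta)$. After your Moser step you are left with $g\cdot(dz+\widehat\beta)$, and $g$ need not be $z$-independent or identically $1$, so the Reeb field is no longer $\partial_z$ and you cannot read off a product neighborhood $[-\epsilon,\epsilon]\times\widehat\Sigma$ with the required form. (The side claim that $\alpha_t-\alpha_0$ ``vanishes to second order'' is also imprecise---$d(\alpha_t-\alpha_0)=0$ along $\Sigma$ is weaker than vanishing of all first partial derivatives---though the conclusion that $\alpha_t$ is contact near $\Sigma$ survives, since the contact condition only involves $\alpha_t$ and $d\alpha_t$.)

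The paper circumvents this by working one dimension down. It first exponentiates $\xi_\Sigma^\perp$ to obtain a $2n$-dimensional disk bundle $\disk_\epsilon\subset M$ transverse to $R_\alpha$, then flows by $R_\alpha$ to obtain $[-\delta,\delta]\times\disk_\epsilon$ with $\alpha=dz+\alpha|_{T\disk_\epsilon}$. The \emph{symplectic} Moser lemma \cite[Lemma~3.14]{MS:SymplecticIntro} is applied on $\disk_\epsilon$ to arrange $d\alpha|_{T\disk_\epsilon}=d\widehat\beta$; the remaining discrepancy $\alpha|_{T\disk_\epsilon}-\widehat\beta$ is then closed, hence exact ($=df$ with $f|_\Sigma=0$ since the forms agree along $\Sigma$ and $\disk_\epsilon$ retracts onto it), and is killed by replacing $\disk_\epsilon$ with the graph of $-f$ inside $[-\delta,\delta]\times\disk_\epsilon$. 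This two-step procedure---symplectic Moser on the hypersurface plus graph trick in the Reeb direction---is exactly what delivers equality of contact forms rather than contact structures. Your argument can be repaired by restricting Step~3 to the slice $\{z=0\}$ and proceeding as the paper does; as written, it stops one step short.
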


\begin{proof}
By Lemma \ref{Lemma:ReebTransversality}, the Reeb vector field $R_{\alpha}$ for $\alpha$ is nowhere tangent to $\Sigma$. Thus we may decompose the vector bundle $\xi|_{\Sigma}$ into a direct sum $\xi|_{\Sigma}=\xi_{\Sigma}\oplus\xi_{\Sigma}^{\bot}$ where
\begin{equation*}
\begin{aligned}
\xi_{\Sigma}&:=\{v-\alpha(v) R_{\alpha}:v\in T\Sigma\}\ \text{and} \\
\xi_{\Sigma}^{\bot}&:=\{v\in\xi|_{\Sigma}:d\alpha(v,w)=0 \ \forall w \in \xi_{\Sigma}\}.
\end{aligned}
\end{equation*}
Observe that $\xi_{\Sigma}$ is isomorphic to $T\Sigma$ and that $d\alpha$ is fiber-wise symplectic on each of $\xi_{\Sigma}$ and $\xi_{\Sigma}^{\bot}$.  This follows from the computation
\begin{equation*}
d\alpha(v-\alpha(v) R_{\alpha},w-\alpha(w) R_{\alpha})=d\beta(v,w)
\end{equation*}
for each pair of vectors $v,w\in T\Sigma$, coupled with the fact that $\xi_{\Sigma}^{\bot}$ is by definition the symplectic complement of $\xi_{\Sigma}$ with respect to $d\alpha$.

Fix a Riemannian metric $\langle \ast,\ast\rangle$ on $M$ and denote by $\exp:TM\rightarrow M$ the associated exponential map, sending each tangent vector $v_{x}\in T_{x}M$ to $\gamma(1)$ where $\gamma(t)$ is the unique geodesic in $M$ satisfying $\gamma(0)=x$ and $\partial_{t}\gamma(0)=v_{x}$.  To be completely rigorous, we should either restrict the domain of $\exp$ or assume that the metric $\langle\ast,\ast\rangle$ is complete so that the mapping is defined.  However, this will not be an issue as we will be applying $\exp$ to vectors of arbitrarily small length along $TM|_{\Sigma}$ and have assumed that $\Sigma$ is contained in the interior of $M$.

Denote by $\disk_{\epsilon}$ the collection of vectors in $\xi_{\Sigma}^{\bot}$ of length less than or equal to $\epsilon$ for an arbitrarily small constant $\epsilon>0$.  As $\exp$ has the property that for each $v_{x}\in TM$, $\partial_{t}(\exp(tv_{x}))=v_{x}$, that we can choose the constant $\epsilon$ to be small enough so that $\exp(\disk_{\epsilon})$ is embedded and symplectic with respect to the 2-form $d\alpha$.  For simplicity, we shall henceforth use the symbol $\disk_{\epsilon}$ to denote the image $\exp(\disk_{\epsilon})\subset M$.  These assumptions guarantee that $R_{\alpha}$ is transverse to $\disk_{\epsilon}$.  By appealing to this transversality and using the assumption that $\epsilon$ is chosen to be small enough so that $\disk_{\epsilon}$ is contained in the interior of $M$, we obtain a neighborhood $[-\delta,\delta]\times\disk_{\epsilon}$ of $\disk_{\epsilon}$ via the mapping
\begin{equation*}
(z,x)\mapsto \Flow^{z}_{R_{\alpha}}(x)\quad\text{for}\quad x\in\disk_{\epsilon}.
\end{equation*}
Indeed, choosing the constant $\delta>0$ to be sufficiently small, we may assume that this mapping is an embedding.  Note that $\alpha_{[-\delta,\delta]\times\disk_{\epsilon}}=dz+\alpha|_{T\disk_{\epsilon}}$, where we consider $\alpha|_{T\disk_{\epsilon}}$ to be a $z$-invariant 1-form which evaluates to zero on $\partial_{z}$.

Possibly after further shrinking $\epsilon$ there is an isotopy $\Phi_{t}:\disk_{\epsilon}\rightarrow\disk_{\epsilon}$, $t\in[0,1]$, such that $\Phi_{0}=\Id_{\disk_{\epsilon}}$, $\Phi_{t}|_{\Sigma}=\Id_{\Sigma}$ for all $t$, and $d\alpha|_{T\disk_{\epsilon}}$ is equal to the symplectic 2-form $\Phi_{1}^{*}d\widehat{\beta}$ determined by the symplectic vector bundle $(\xi_{\Sigma}^{\bot},d\alpha|_{\xi_{\Sigma}^{\bot}})$ as described in Theorem \ref{Thm:LiouvilleDiskBundle}.  This is a consequence of the fact that the symplectic forms $d\alpha|_{T\disk_{\epsilon}}$ and $\widehat{\beta}$ agree on $T\disk|_{\Sigma}$.  See \cite[Lemma 3.14]{MS:SymplecticIntro}.  Therefore $\alpha-\Phi_{1}^{*}\widehat{\beta}$ is closed.  We also observe that $\widehat{\beta}$ and $\alpha$ agree on $T\disk_{\epsilon}|_{\Sigma}$, implying that $\alpha|_{T\disk_{\epsilon}}-\Phi^{*}_{1}\widehat{\beta}$ is exact.  This allows us to find a function $f\in\Cinfty(\disk_{\epsilon},\mathbb{R})$ satisfying $\alpha|_{T\disk_{\epsilon}}-\Phi_{1}^{*}\widehat{\beta}=df$ and $f|_{\Sigma}=0$.

If necessary, further shrink $\epsilon$ so that $|f|<\delta$ on $\disk_{\epsilon}$.  To complete the proof, isotop $\disk_{\epsilon}$ to the graph of the function $-f$ in $[-\delta,\delta]\times\disk_{\epsilon}$.  Then $\alpha|_{T\disk_{\epsilon}}=\widehat{\beta}$.  By the properties of $\widehat{\beta}$ listed in Theorem \ref{Thm:LiouvilleDiskBundle}, we see that after rounding the corners $\partial(\pi^{-1}(\partial\Sigma))$ of $\disk_{\epsilon}$ we obtain a Liouville hypersurface $(\widehat{\Sigma},\widehat{\beta})\subset\Mxi$ as desired.
\end{proof}

\section{Examples of Liouville hypersurfaces}\label{Sec:LHSEx}

In this section we give some simple examples of Liouville hypersurfaces in contact manifolds.

\subsection{Legendrian graphs}
Let $L\subset\Mxi$ be a Legendrian submanifold of the $(2n+1)$-dimensional contact manifold $\Mxi$.  Then $L$ admits a tubular neighborhood
\begin{equation*}
N(L)=[-\epsilon,\epsilon]\times \disk^{*}L\quad\text{where}\quad \xi|_{N(L)}=\ker(dz-\lambda_{can})
\end{equation*}
where $z$ is a coordinate on $[-\epsilon,\epsilon]$.  Then $\{0\}\times\disk^{*}L$ is a Liouville hypersurface in $\Mxi$.  More generally, we can construct interesting Liouville hypersurfaces by considering \emph{Legendrian graphs} in $\Mxi$.

\begin{defn}\label{Def:Ribbon}
Let $\Mxi$ be a $(2n+1)$-dimensional contact manifold.
\be
\item A \emph{Legendrian graph} in $\Mxi$ is a pair $(L,\phi)$ where $L$ is a compact $(n)$-manifold and a Legendrian immersion $\phi:L\rightarrow M$ with only double point singularities, possibly occurring along $\partial L$.  At each double point $x=\phi(p)=\phi(q)$, $p\neq q$, we require that $\xi_{x}=(T_{p}L)\oplus(T_{q}L)$.

\item A \emph{ribbon} of a Legendrian graph $(L,\phi)$ is a smooth, compact $2n$-dimensional submanifold $R(\phi(L))$ of $M$ such that
    \be
    \item $\phi(L)\subset R(\phi(L))$ ,
    \item $R(\phi(L))$ deformation retracts onto $\phi(L)$, and
    \item for $x\in R(\phi(L))$, $\xi_{x}=T_{x}R(\phi(L))$ if and only if $x\in\phi(L)\subset R(\phi(L))$.
    \ee
\ee
\end{defn}

If $(L,\phi)$ is a Legendrian graph in $\Mxi$ then $\phi(L)$ admits a neighborhood of the form
\begin{equation*}
N(\phi(L))=[\epsilon,\epsilon]\times R(\phi(L))\quad\text{such that}\quad \xi|_{N(\phi(L))}=\ker(dz+\beta)
\end{equation*}
where $\beta$ is a Liouville 1-form on the ribbon $R(\phi(L))$.  A ribbon of a Legendrian graph $(L,\phi)$ is Liouville diffeomorphic to a plumbing of the cotangent bundle of $L$ at the double points of the immersion $\phi$.  For examples of Liouville domains constructed from plumbings of cotangent bundles, see \cite[\S 7]{Eliashberg:Plumbing}.  The above definition can easily be extended to \emph{isotropic graphs}.

\subsection{Inclusions}
By the results of Section \ref{Sec:HighCodimension}, the Liouville hypersurface property is well behaved with respect to inclusion mappings.  Suppose that $\Sdom$ is a Liouville hypersurface in a contact manifold $(C,\zeta)$ of dimension $(2k+1)$ and we realize $(C,\zeta)$ as a contact submanifold of a $(2n+1)$-dimensional contact manifold $\Mxi$.  Then $\Sdom$ is a Liouville submanifold of $\Mxi$ and we can apply Theorem \ref{Thm:HighCodimension}.

\subsection{Liouville hypersurfaces in unit cotangent bundles}

Let $L$ be a smooth manifold. We consider the unit cotangent bundle $(S^{\ast}L, \xi_{can})$ and write $\pi$ for the projection map $S^{*}L\rightarrow L$.

Suppose that $L$ is oriented and that $M$ is an oriented, codimension-one submanifold of $L$.  As noted in Example \ref{Ex:SmoothGluing}, $S=\pi^{-1}(M)$ is a convex hypersurface in $(S^{\ast}L, \xi_{can})$ whose dividing set is $\Gamma=S^{*}M$.  The orientation on $M$ allows us to specify one of the components of $\pi^{-1}(M)\setminus\Gamma$ as the positive region $S^{+}$ by taking the vector field $\partial_{t}$ from Example \ref{Ex:SmoothGluing} to be oriented so that it is positively transverse to $M$.  In this situation $(S^{+},-\lambda_{can})$ is the cotangent bundle $(T^{*}M,-\lambda_{can})$ of $M$.

\subsection{Counterexamples: Cabling and overtwisted submanifolds}
Now we give examples of null-homologous contact embeddings which do not bound Liouville hypersurfaces.  Let $\Mxi$ be a 5-dimensional contact manifold and let $T^{2}\subset M$ be a Legendrian torus.  Identify $S^{\ast}T^{2}=T^{3}\subset M$ as the boundary of the ribbon of $T^{2}$.  After fixing a trivialization of the normal bundle of $T^{3}$, we can identify a tubular neighborhood $N$ of $T^{3}$ with
\begin{equation*}
N=T^{3}\times\disk^{2},\quad \xi|_{N}=\ker\big{(}\sin(2\pi z)dx+\cos(2\pi z)dy + r^{2}d\theta\big{)}
\end{equation*}
where we consider coordinates $(x,y,z)$ on $T^{3}=([0,1]/0\sim 1)^{3}$ and polar coordinates on $\disk^{2}$.

For $R\in(0,1)$, consider the map $\Psi_{n,R}:T^{3}\rightarrow M$ given by
\begin{equation*}
\begin{gathered}
\Psi_{n,R}(x,y,z)=\big{(}(x,y,nz),Re^{2\pi iz}\big{)}\in N=T^{3}\times\disk^{2},\\
\quad \Psi_{n,R}^{*}\alpha=\sin(2\pi nz)dx+\cos(2\pi nz)dy+2\pi R^{2}dz
\end{gathered}
\end{equation*}
where $\alpha$ is the contact form on $N$ used above to describe $\xi|_{N}$.  The map $\Psi_{n,R}$ is a contact embedding whose image is null-homologous.  By applying Moser's trick to the family of contact forms $\Psi^{*}_{n,R}\alpha$ as $R$ goes to zero, it can be seen that the contact structure on $T^{3}$ determined by $\Psi_{n,R}$ for any $R\in(0,1)$ is the well known $(T^{3},\xi_{n})$, where
\begin{equation*}
\xi_{n} = \ker(\sin(2\pi nz)dx+\cos(2\pi nz)dy).
\end{equation*}
According to \cite{Eliashberg:Torus}, for $n>1$ these 3-tori are not symplectically fillable and so cannot bound a Liouville hypersurface in $\Mxi$.  However, as noted in \cite{Giroux:T3}, these tori are weakly symplectically fillable.

This example can be generalized to find ``cables'' of arbitrary codimension-2 submanifolds in a manifold of any dimension.  Suppose that $C$ is a closed, codimension-2 submanifold of a manifold $M$ whose normal bundle is trivial.  Fix an identification of a tubular neighborhood $N(C)$ of $C$ with $C\times \disk^{2}$.

Suppose that we have a surjective representation $\rho:\pi_{1}(C)\rightarrow \mathbb{Z}/q\mathbb{Z}$ of the fundamental group of $C$ into a finite cyclic group.  Denote by $\pi:\widetilde{C} \rightarrow C$ the universal cover and define $E_{\rho}\rightarrow C$ to be the complex line bundle over $C$ determined by
\begin{equation*}
\begin{gathered}
E_{\rho}=(\widetilde{C} \times \mathbb{C})/\sim,\\
(x,v)\sim (\gamma\cdot x, e^{\frac{\rho(\gamma)}{q}2\pi i}\cdot v),\quad \gamma\in \pi_{1}(C).
\end{gathered}
\end{equation*}
Here $\gamma\cdot x$ denotes the action of $\gamma\in \pi_{1}(C)$ on $x\in \widetilde{C}$ by deck transformation.  Let $D_{\rho}$ be the unit disk bundle in $E_{\rho}$ and suppose that we have a trivialization $\Phi:D_{\rho}\rightarrow C\times \disk^{2}$. Such a trivialization exists if $c_{1}(E_{\rho}) = 0$, which is guaranteed when $H^{2}(C, \mathbb{Z})$ is torsion-free, as $E_{\rho}$ has a flat connection with $S^{1}=U(1)$ holonomy by its construction.  This gives rise to an immersion of $\widetilde{C}$ into $M$ by
\be
\item identifying $\widetilde{C}$ with the section $ \widetilde{C}\times \{1\}\subset \widetilde{C}\times\mathbb{C}$,
\item immersing $\widetilde{C} \times \{1\}$ into $D_{\rho}$ using the projection $\widetilde{C} \times\mathbb{C}\rightarrow E_{\rho}$, and finally
\item applying the embedding $\Phi$ into $M$ using the identification $C \times \disk^{2}= N(C)$.
\ee
The image of this immersion will be a $q$-fold covering $\widetilde{C}_{q}$ of $C$ corresponding to the representation $\rho$. The image of the fundamental class of $\widetilde{C}_{\rho}$ is $[\widetilde{C}_{\rho}] = q[C] \in H_{\dim(M)-2}(M,\mathbb{Z})$.

In the simplest case, of an oriented unknot $C$ in $S^{3}$ with the obvious surjective representation $\rho:\pi_{1}(S^{1})\rightarrow \mathbb{Z}/q\mathbb{Z}$, this construction yields a $(p,q)$-torus knot.  The integer $p$ is determined by the trivialization of $N(C)$. As in the case of $T^{3}$ above, we can see that if $C$ is a contact submanifold $(C,\zeta)$ of a contact manifold $\Mxi$ this embedding is a contact embedding of $(\widetilde{C}_{\rho},\pi^{*}\zeta)$ into $\Mxi$.

This procedure can be used to find null-homologous embeddings of overtwisted contact $3$-manifolds into contact $5$-manifolds.  For example if $(C,\zeta)$ is exactly symplectically fillable with filling $\Sdom$, by taking a Liouville embedding of $\Sdom$ into $\Mxi$, we have a null-homologous contact embedding of $(C,\zeta)$ into $\Mxi$ as the boundary of $\Sdom\subset\Mxi$.  If $(C,\zeta)$ has a finite-cyclic, overtwisted cover then apply the above construction. Examples of closed, Weinstein fillable contact 3-manifolds which have finite-cyclic, overtwisted covers can be found in the work of Gompf \cite{Gompf:Handles} and Honda \cite{Honda:TightClassI}.

The cabling construction described above can be used in other contexts to produce interesting codimension-2 submanifolds of a given manifold.  For example, let $\Sigma$ be a closed, connected, 2-dimensional symplectic submanifold of a 4-dimensional symplectic manifold $(W,\omega)$.  Assume that $\Sigma$ has genus $g(\Sigma)\geq 1$ and self-intersection number $[\Sigma]\cdot[\Sigma]=0$.  This implies that the symplectic normal bundle to $\Sigma$ is trivial so that we can identify a neighborhood of $\Sigma$ in $W$ with $N(\Sigma)=\Sigma\times \disk^{2}$ where $\omega|_{N(\Sigma)}=d\lambda_{std}+\sigma$ for some symplectic form $\sigma$ on $\Sigma$.  Applying the cabling construction to a surjective representation $\rho:\pi_{1}(\Sigma)\rightarrow\mathbb{Z}/q\mathbb{Z}$ produces a closed, connected, embedded, symplectic surface $\widetilde{\Sigma}_{\rho}\subset W$ with genus $g(\widetilde{\Sigma}_{\rho})=q(g-1)+1$ and fundamental homology class $[\widetilde{\Sigma}_{\rho}]=q\cdot[\Sigma]\in H_{2}(W,\mathbb{Z})$.

\section{More basic consequences of Definition \ref{Def:Hypersurface}}\label{Sec:Liouville}

In this section we outline more of the basics of Liouville hypersurfaces in contact manifolds.  Special results for contact 3-manifolds are listed in Section \ref{Sec:DimThree}.  We begin with a discussion concerning a special family of contact vector fields called \emph{contact dilations}.

\subsection{Contact dilations}\label{Sec:Dilations}

Every standard neighborhood $\mathcal{N}(\Sigma)$ of a Liouville hypersurface $\Sdom$ in a contact manifold $\Mxi$ admits a special contact vector field which points out of $\partial \mathcal{N}(\Sigma)$.  Let $X_{\beta}$ be the Liouville vector field for $\Sdom$.  Then the vector field $V_{\beta}=z\partial_{z}+X_{\beta}$ of Equation \eqref{Eq:Vbeta} satisfies
\begin{equation}\label{Eq:Dilation}
\Lie_{V_{\beta}}\alpha = \alpha,
\end{equation}
and points transversely out of $\mathcal{N}(\Sigma)$ along its boundary so that $(S=\partial \mathcal{N}(\Sigma),V_{\beta})$ is a convex surface in $\Mxi$.

\begin{defn}\label{Def:Dilation}
A vector field for which there exists a contact form $\alpha$ such that the Lie derivative condition of Equation \eqref{Eq:Dilation} is satisfied will be referred to as a \emph{contact dilation}.
\end{defn}

Note that if a compact contact manifold $\Mxi$ admits a contact dilation which is defined on all of $M$, then the boundary of $M$ is necessarily non-empty.

\begin{prop}\label{Prop:PositiveRegion}
A hypersurface $\Sigma\subset M$ is Liouville if and only if there is a closed convex hypersurface $(S,X)$ in $\Mxi$ for which $\Sigma$ is the complement of a collar neighborhood of $\partial S^{+}$ in $S^{+}$.
\end{prop}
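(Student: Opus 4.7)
The forward direction follows from the constructions of Sections~\ref{Sec:Neighborhood}--\ref{Sec:ThetaInvariant} together with Equation~\ref{Eq:Dilation}. Given a Liouville hypersurface $\Sdom\subset\Mxi$, the standard neighborhood $\mathcal{N}(\Sigma)$ is equipped with the contact dilation $V_{\beta}=z\partial_{z}+X_{\beta}$, which points transversely out of $\mathcal{N}(\Sigma)$ along its boundary and preserves $\xi$. Hence $(S,X):=(\partial\mathcal{N}(\Sigma),V_{\beta})$ is a convex hypersurface. Using the $\theta$-invariant contact form $\widehat{\alpha}$ of Equation~\ref{Eq:ThetaInvariant}, one computes $\widehat{\alpha}(V_{\beta})=\pm\epsilon$ on $\{\pm\epsilon\}\times\widehat{\Sigma}$ and $\widehat{\alpha}(V_{\beta})=z_{0}(s)$ along $\gamma\times\partial\Sigma$. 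Therefore $S^{+}$ consists of $\{\epsilon\}\times\widehat{\Sigma}$ together with the upper half $\{z_{0}>0\}\subset\gamma\times\partial\Sigma$, which serves as a collar of $\Gamma_{S}=\{z_{0}=0\}\times\partial\Sigma$ in $S^{+}$. Removing this collar returns $\{\epsilon\}\times\widehat{\Sigma}$, which is ambient-isotopic to $\Sigma$.

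For the reverse direction, suppose $(S,X)$ is convex. Apply Proposition~\ref{Prop:Normalize} with $H=0$ to choose a contact form $\alpha$ for $\Mxi$ satisfying $\Lie_{X}\alpha=0$ on a neighborhood $N(S)=[-1,1]\times S$ in which $X=\partial_{\theta}$. In these coordinates $\alpha=f\,d\theta+\beta$ with $f$ and $\beta$ both $\theta$-invariant, so $S^{+}=\{f>0\}$ and $\Gamma_{S}=\{f=0\}$. On $\mbox{Int}(S^{+})$, consider the rescaled contact form $\alpha/f=d\theta+\beta'$ where $\beta':=\beta/f$. Since $\beta'$ is $\theta$-invariant, $d(\alpha/f)=d\beta'$ carries no $d\theta$ component, and the contact condition $(\alpha/f)\wedge(d\alpha/f)^{n}=f^{-(n+1)}\alpha\wedge(d\alpha)^{n}>0$ reduces to $d\theta\wedge(d\beta')^{n}>0$, showing that $d\beta'$ is symplectic on $\mbox{Int}(S^{+})$. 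Fix a regular value $c>0$ of $f$ (Sard) and set $\Sigma=\{f\geq c\}$; for $c$ sufficiently small, $\Sigma$ is the complement in $S^{+}$ of a collar neighborhood of $\partial S^{+}$. To realize $\beta'|_{\Sigma}$ as the restriction of a global contact form, choose a smooth positive function $g:M\to\mathbb{R}_{+}$ with $g=1/f$ on a neighborhood of $\Sigma$; then $g\alpha$ is a contact form for $\xi$ and $(g\alpha)|_{T\Sigma}=\beta'$, exhibiting $(\Sigma,\beta')$ as a Liouville hypersurface in $\Mxi$.

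The main technical obstacle is verifying that the Liouville vector field $X_{\beta'}$ points outward along $\partial\Sigma=\{f=c\}$. As a guiding model, take $\Mxi=(\mathbb{R}^{3},\ker(dz+x\,dy))$ and $(S,X)=(\{y=0\},\partial_{y})$: then $f=x$, $\beta=dz$, $\beta'=dz/x$, and a direct calculation gives $X_{\beta'}=-x\partial_{x}$, which flows toward $\Gamma_{S}=\{x=0\}$ and is outward-pointing on $\{x=c\}$ for every $c>0$. In general, working in a collar coordinate $t=f$ near $\Gamma_{S}$ (valid since $df\neq 0$ transversely along $\Gamma_{S}$) and using the standard convex-surface normal form whereby $\beta|_{\Gamma_{S}}$ restricts to a contact form on $\Gamma_{S}$, a direct computation shows that the leading singular behavior of $X_{\beta'}$ as $f\to 0^{+}$ is $-f\,\partial_{f}$, which dominates any smooth corrections for $c$ sufficiently small. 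This yields the required outward transversality on $\{f=c\}$ and completes the argument.
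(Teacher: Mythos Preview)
Your proposal is correct and follows essentially the same route as the paper. For the ``only if'' direction you both take $S=\partial\mathcal{N}(\Sigma)$ with the contact dilation $V_{\beta}$ and identify $\Sigma$ (up to Liouville isotopy) with $\{\epsilon\}\times\widehat{\Sigma}\subset S^{+}$; for the ``if'' direction the paper simply defers to the normalization discussed in Section~\ref{Sec:Convex} (citing \cite{CGHH10}), while you spell out that normalization explicitly via the rescaling $\alpha\mapsto\alpha/f$ and the resulting Liouville form $\beta'=\beta/f$ on $\{f\geq c\}$. Your model computation in $(\mathbb{R}^{3},\ker(dz+x\,dy))$ and the leading-order analysis $X_{\beta'}\sim -f\,\partial_{f}$ near $\Gamma_{S}$ correctly capture why the Liouville vector field points outward for small $c$; this is exactly the content behind the paper's one-line citation.
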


\begin{proof}
The ``if'' statement is a consequence of our ability to normalize contact forms in tubular neighborhoods in convex hypersurfaces as mentioned in Section \ref{Sec:Convex}.  For the ``only if'' statement, let $\Sigma\subset M$ be a Liouville hypersurface.  Then, in the notation of Section \ref{Sec:Neighborhood}, $\Sigma$ is isotopic through a family of Liouville hypersurfaces to  $\{\epsilon\}\times \widehat{\Sigma}\subset(\partial \mathcal{N}(\Sigma))^{+}$ for a standard neighborhood $\mathcal{N}(\Sigma)$ of $\Sigma$.
\end{proof}

Liouville hypersurfaces provide a simple means of partially characterizing contact dilations.

\begin{prop}
Let $\Mxi$ be a compact $(2n+1)$-dimensional contact manifold with contact 1-form $\alpha$. Suppose that $\Mxi$ has a contact dilation vector field $V$ for the contact form $\alpha$ which points out of $\partial M$. Then $\Mxi$ is contact-diffeomorphic to a neighborhood of a Liouville hypersurface.
\end{prop}

\begin{proof}
Let $z:M\rightarrow \mathbb{R}$ be the function $z=\alpha(V)$ and let $R_{\alpha}$ be the Reeb field for $\alpha$.  Then
\begin{equation}\label{Eq:DilationProof}
\begin{gathered}
\alpha=\Lie_{V}\alpha =d\alpha(V,\ast) + d(\alpha(V)) = d\alpha(V,\ast) + dz,\quad\text{and}\\
1=\alpha(R_{\alpha}) = d\alpha(V,R_{\alpha}) + dz(R_{\alpha}) = dz(R_{\alpha}).
\end{gathered}
\end{equation}
Therefore $dz$ is never zero and the vector field $R_{\alpha}$ is transverse to every level set of the function $z$.

As $V$ is positively transverse to $\partial M$, $\Mxi$ has convex boundary.  By the definition of the function $z$, the associated dividing set on $\partial M$ is
\begin{equation*}
\Gamma_{\partial M}= \partial M \cap \{ z=0\}.
\end{equation*}
Moreover, by the preceding paragraph $\Sigma :=\{z=0\}$ is a Liouville hypersurface in M with boundary equal to $\Gamma_{\partial M}$.  As $\Sigma$ is transverse to $\partial M$ in $M$, $\Sigma$ admits a tubular neighborhood $[-\epsilon,\epsilon]\times\Sigma$ on which $\alpha=dz+\beta$ by the results in Section \ref{Sec:Neighborhood}.  Here $\beta=\alpha|_{T\Sigma}$.  Indeed, the transversality of $R_{\alpha}$ with $\Sigma$ implies that $d\alpha$ is symplectic on $\Sigma$.  Moreover, $V$ is tangent to $\Sigma$, and when considered as a vector field on $\Sigma$ is the Liouville vector field for $\Sdom$.

Again, from the definition of $z$ and Equation \eqref{Eq:DilationProof}, $V$ points transversely out of $\mathcal{N}(\Sigma)$ and is non-vanishing on $M\setminus \mathcal{N}(\Sigma)$.  Therefore we can identify $M\setminus \mathcal{N}(\Sigma)$ as being contained in $[0,\infty)\times\partial \mathcal{N}(\Sigma)$ by placing a coordinate $s$ on $[0,\infty)$ such that $\partial_{s}=V$.  Because of the transversality of $V$ with $\partial M$, we have that $\partial M$ is the graph of a function $\partial \mathcal{N}(\Sigma)\rightarrow [0,\infty)$ contained in $[0,\infty)\times\partial \mathcal{N}(\Sigma)$.
\end{proof}

The above proposition is false without the assumption that the vector field $V$ is positively transverse to $\partial M$.  For example, if $S$ is a closed convex hypersurface in a contact manifold $\Mxi$, then Proposition \ref{Prop:Normalize} indicates that $S$ admits a tubular neighborhood of the form $N(S)=[-1,1]\times S$ on which there is a contact form $\alpha$ admitting a contact dilation pointing into $N(S)$ along $\{-1\}\times S$ and out of $N(S)$ along $\{1\}\times S$.

\subsection{Special properties when $\dim(M)=3$}\label{Sec:DimThree}

In this section we state some results specific to Liouville surfaces in 3-dimensional contact manifolds. We assume that the reader is familiar with the basics of transverse knots, self-linking numbers, and characteristic foliations.  For more information, see \cite{Etnyre:KnotNotes}.

\begin{prop}
Suppose that $\Mxi$ is a 3-dimensional contact manifold.
\be
\item $\Sigma\subset M$ is a Liouville surface if and only if it is isotopic through a 1-parameter family of Liouville surfaces to a ribbon (Definition \ref{Def:Ribbon}) of some Legendrian graph.
\item $\Mxi$ is tight if and only if every one of its Liouville surfaces is genus minimizing among embedded surfaces with the same boundary and in the same boundary-relative homology class.
\item $\Mxi$ is tight if and only if the boundary of every Liouville surface in $\Mxi$ is transversely non-destabilizeable.
\ee
\end{prop}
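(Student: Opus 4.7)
The plan hinges on the identity
\[
sl(\partial\Sigma,\Sigma) \;=\; -\chi(\Sigma)
\]
valid for every Liouville surface $\Sigma\subset\Mxi$. To establish it, I observe that $\beta(X_\beta) = d\beta(X_\beta,X_\beta) = 0$, so the Liouville vector field $X_\beta$ is a section of the bundle $\xi|_\Sigma$ which is non-vanishing along $\partial\Sigma$ and has total signed zero count $\chi(\Sigma)$ by Poincar\'e--Hopf on $T\Sigma$. At each zero of $X_\beta$ the subspace $T\Sigma$ coincides with $\xi$ as an oriented plane, so the signs of the zeros agree whether $X_\beta$ is viewed as a tangent or as a contact section; the resulting relative Euler class gives $sl(\partial\Sigma,\Sigma) = -\chi(\Sigma)$.

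For (1), one direction is the definition of a ribbon. For the converse, after a Liouville homotopy I may assume $\beta$ is Morse-type so that $X_\beta$ has only non-degenerate source and saddle zeros. Define the \emph{skeleton} $\mathrm{Sk}(\Sigma)$ to consist of these zeros together with the stable separatrices of the saddles. Each separatrix is an integral curve of $X_\beta$, and since $X_\beta\in\xi\cap T\Sigma$, it is Legendrian; thus $\mathrm{Sk}(\Sigma)$ is a Legendrian graph in $\Mxi$. The backward flow $\Sigma_t := \phi_{-t}(\Sigma)$ of $-X_\beta$ is a 1-parameter isotopy through Liouville surfaces, and for $t$ sufficiently large $\Sigma_t$ is a ribbon of $\mathrm{Sk}(\Sigma)$ in the sense of Definition \ref{Def:Ribbon}.

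For (2), when $\Mxi$ is tight, Bennequin's inequality gives $-\chi(\Sigma) = sl(\partial\Sigma)\le -\chi(\Sigma')$ for any surface $\Sigma'$ with $\partial\Sigma'=\partial\Sigma$ in the same boundary-relative homology class, which is precisely genus-minimality of $\Sigma$. For the converse, an overtwisted disk $D\subset M$ yields a transverse unknot $K$ with $sl(K)=+1$, obtained as the positive transverse push-off of the Legendrian $\partial D$ (which has $tb=0$, $r=\pm 1$). I realize $K$ as the boundary of a Liouville surface with $\chi=-1$ by building a Legendrian graph from $\partial D$ together with an extra Legendrian edge glued in so as to produce a single double point, arranged so that the boundary of the ribbon has one transverse component homotopic to $K$. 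This Liouville surface has $\chi=-1$ while $K$ smoothly bounds a disk of $\chi=1$, so it fails to be genus-minimizing.

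For (3), if $\Mxi$ is tight and $\partial\Sigma$ were a transverse stabilization of some $L'$, then $sl(L')=sl(\partial\Sigma)+2=-\chi(\Sigma)+2$; transporting $\Sigma$ along the smooth isotopy from $\partial\Sigma$ to $L'$ yields a Seifert surface $\Sigma'$ for $L'$ with $\chi(\Sigma')=\chi(\Sigma)$, contradicting Bennequin's $sl(L')\le -\chi(\Sigma')$. Conversely, in an overtwisted $\Mxi$ the standard Liouville disk in a Darboux ball has boundary a transverse unknot $K_0$ with $sl(K_0)=-1$; by Eliashberg's classification of transverse knots in overtwisted manifolds, $K_0$ is a transverse stabilization of a topologically isotopic unknot of higher self-linking coming from an overtwisted disk, so $K_0$ is destabilizable. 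The main obstacle is the construction in the converse of (2): producing the explicit Legendrian graph near $D$ whose ribbon has single-component transverse boundary isotopic to $K$ with the prescribed Euler characteristic requires a careful local model and precise control of the double-point data of the added edge.
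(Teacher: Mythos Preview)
Your derivation of $sl(\partial\Sigma,\Sigma)=-\chi(\Sigma)$ via Poincar\'e--Hopf on the Liouville vector field is essentially the paper's argument in different clothing: the paper observes that the characteristic foliation of a Liouville surface has no negative singularities (since $d\alpha|_{T\Sigma}>0$), so Eliashberg's formula $sl+\chi=e_-^\Sigma-h_-^\Sigma$ gives the identity. Your $X_\beta$ directs this foliation, and ``all zeros are positive'' is the same statement.

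For (1) you take a genuinely different route. The paper pushes $\Sigma$ to the positive region of the convex surface $\partial\mathcal{N}(\Sigma)$ and invokes Honda's Legendrian realization principle to produce a Legendrian $1$-skeleton there. Your skeleton-of-the-Liouville-flow argument is more intrinsic, but you should be aware that the skeleton you obtain is a $1$-complex, not automatically a Legendrian graph in the sense of Definition~\ref{Def:Ribbon}: that definition requires an immersion of a compact $1$-manifold with \emph{transverse} double points. At a saddle the two stable separatrices are tangent (they form one smooth arc), which is fine, but at a source the incoming separatrices can have arbitrary valence and tangent directions. You need an extra perturbation or handle-slide argument to arrange valence $\leq 4$ with transverse branches at every source; this is routine but should be said.

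For (3) your argument matches the paper's (the paper uses a $+1$ shift in $sl$ under destabilization rather than your $+2$, but either convention yields the contradiction).

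The genuine gap is the converse of (2), which you flag yourself. The paper does not attempt a direct construction: it simply combines (1) with \cite[Theorem~8]{BCV:Ribbons}, which already proves that in an overtwisted $3$-manifold some Legendrian ribbon fails to be genus-minimizing. Your proposed construction --- attaching a Legendrian edge to $\partial D$ so that the resulting ribbon has $\chi=-1$ and connected boundary smoothly isotopic to an unknot with $sl=+1$ --- is plausible but not carried out, and the details (ensuring the ribbon boundary is connected, smoothly unknotted, and has the right self-linking) are exactly the content of the cited result. Either cite \cite{BCV:Ribbons} as the paper does, or supply a complete local model (for instance, realizing a $tb=1$ Legendrian unknot near the overtwisted disk and working with an explicit graph whose ribbon is a once-punctured torus with unknotted boundary).
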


Item (2) appears in \cite[Theorem 8]{BCV:Ribbons} where additional results regarding ribbons of Legendrian graphs in overtwisted contact manifolds can also be found. In the second item we use the generalized definition of Seifert genus for links.  A surface $\Sigma$ bounding a link $L$ is of \emph{minimal genus} if it realizes the maximal Euler characteristic among all surfaces bounding $L$ with no sphere components.  Note that (2) can be used to compute genera of certain topological knots and links as in \cite[Theorem 2]{Gabai:Murasugi}.

The necessity of taking into account relative homology classes in item (2) can be seen in the following simple example:  Let $\Sigma^{-}$ be a torus with a disk removed and let $\Sigma^{+}$ be a genus 2 surface with a disk removed.  Denote by $\Sigma^{+}\cup\Sigma^{-}$ the closed genus three surface obtained by identifying the boundary components of $\Sigma^{+}$ and $\Sigma^{-}$.  Let $\Mxi$ be the $S^{1}$-invariant contact structure on $S^{1}\times (\Sigma^{+}\cup\Sigma ^{-})$  whose (oriented) dividing set on each slice $\{\theta\}\times(\Sigma^{+}\cup\Sigma^{-})$ is $\partial \Sigma^{+}$.  Then each $\lbrace \theta \rbrace \times \Sigma^{+}$ is a Liouville surface which does not realize the Seifert genus of its boundary.  $\Mxi$ is universally tight by Giroux's criterion and so is tight.

\begin{proof}
(1) For the first item, we see that $\Sigma$ is isotopic (as in the statement above) to $(\partial \mathcal{N}(\Sigma))^{+}$.  Using the Legendrian realization principle \cite[\S 3.3.1]{Honda:TightClassI} it is easy to construct a Legendrian graph onto which $(\partial\mathcal{N}(\Sigma))^{+}$ deformation retracts.

(2) This is immediate from (1) and \cite[Theorem 8]{BCV:Ribbons}.

(3) Suppose that $\Mxi$ is overtwisted.  Then the standard transverse unknot given by the boundary of a Liouville disk $(\disk^{2},\lambda_{std})\subset\Mxi$ is transversely destabilizeable.  See \cite[Theorem 3.2]{Etnyre:KnotNotes}. Therefore overtwistedness of $\Mxi$ implies the existence of Liouville surfaces with transversely destabilizeable boundaries. To complete the proof, we will show that tightness of $\Mxi$ would imply that the boundaries of all Liouville surfaces are non-destabilizeable.

So suppose that $\Mxi$ is tight and that $\Sigma$ is a Liouville surface in $\Mxi$ bounding a transverse link $T=\sqcup T_{j}$.  Then the \emph{Thurston-Bennequin inequality} applies.  This asserts that for any transverse link $T$ bounding an embedded, oriented surface $\Sigma\subset M$, the inequality
\begin{equation}\label{Eq:BEBound}
s\ell(T,\Sigma)\leq -\chi(\Sigma)
\end{equation}
is satisfied.  Eliashberg's proof \cite{Eliashberg:Ineq} of this inequality relies on the inequality
\begin{equation}\label{Eq:Singularities}
s\ell(T,\Sigma)+\chi(\Sigma) = e^{\Sigma}_{-} - h^{\Sigma}_{-}\leq 0
\end{equation}
where $e^{\Sigma}_{-}$ and $h_{-}^{\Sigma}$ are the number of negative elliptic and negative hyperbolic singularities of a generic characteristic foliation on the surface $\Sigma$, respectively.  As the Liouville condition is an open condition, we may assume that the characteristic foliation of $\Sigma$ is generic -- i.e. its singularities are isolated and of Morse type -- and apply Equation \eqref{Eq:Singularities}.  In this case the numbers $e_{-}^{\Sigma}$ and $h_{-}^{\Sigma}$ are both zero as by definition there is a contact form $\alpha$ for $\Mxi$ for which $d\alpha|_{T\Sigma}$ is symplectic. Hence, $\Sigma$ satisfies the equality $s\ell(T,\Sigma)=-\chi(\Sigma)$.

Now suppose that some component $T_{j}$ of $T$ is a transverse stabilization of a transverse knot $T'_{j}$ in the complement of $M\setminus (T\setminus T_{j})$.  Then we would be able to find another embedded surface $S$, which is smoothly isotopic to $\Sigma$ bounding $T'=(T\setminus T_{j})\cup T'_{j}$.  Therefore, we would have
\begin{equation*}
s\ell(T',S)=s\ell(T,\Sigma)+1=-\chi(\Sigma)+1
\end{equation*}
contradicting Equation \eqref{Eq:BEBound}.
\end{proof}

It would be interesting to know what transverse knots bound Liouville surfaces.

\begin{q}
Is it possible to characterize the transverse links in $\Sthree$ which bound Liouville surfaces in terms of braid theory?
\end{q}

Implicit in the above question is the fact, due to Bennequin \cite{Bennequin}, that every transverse link in $\Sthree$ can be represented as a transverse braid.  See \cite[\S 2.4]{Etnyre:KnotNotes} and the references therein.

It would also be interesting to know whether or not a Liouville surface bounding a given transverse link is unique.

\begin{q}
Does there exist a transverse link $T$ in $\Sthree$ and two Liouville surfaces $\Sigma,\Sigma'\subset\Sthree$ for which $\partial\Sigma=\partial\Sigma'=T$ and such that $\Sigma$ is not isotopic to $\Sigma'$ through a family of Liouville surfaces?
\end{q}

An answer to the above question in the affirmative would analogous to the non-uniqueness of minimal genus Seifert surfaces of topological knots in $\mathbb{R}^{3}$.  See, for example, \cite[\S 5.A]{Rolfsen}.

\section{Applications of Theorem \ref{Thm:Cobordism}}\label{Sec:Applications}

In this section we provide proofs of most of the applications of Theorem \ref{Thm:Cobordism} stated in Section \ref{Sec:TopologicalApplications}.  The proof of each theorem will provide an example of a Liouville connect sum.

\subsection{Open books and mapping class monoids}\label{Sec:Monoids}

The purpose of this section is to prove Theorem \ref{Thm:Monoids}.

The relationship between symplectomorphism groups of Liouville domains and contact manifolds established in Theorem \ref{Thm:GirCor} has attracted a great deal of interest, especially in dimension three.  As an example, Baker-Etnyre-van Horn-Morris \cite[\S 1.2]{BEV:Monoids} and Baldwin \cite[Theorems 1.1 - 1.3]{Baldwin:Monoids} have shown that for a compact oriented surface $\Sigma$ with $\partial \Sigma\ne\emptyset$, the contact manifolds supported by open books with page $\Sigma$ which are fillable (in any of the senses of Definitions \ref{Def:Fillable} and \ref{Def:Weak}) constitute a monoid of $\SympGroup$.  The Liouville connect sum and Theorem \ref{Thm:Cobordism} provide a natural generalization of this result to open books whose pages are Liouville domains of any even dimension, as stated in Theorem \ref{Thm:Monoids}.

\begin{figure}[h]
	\begin{overpic}[scale=.7]{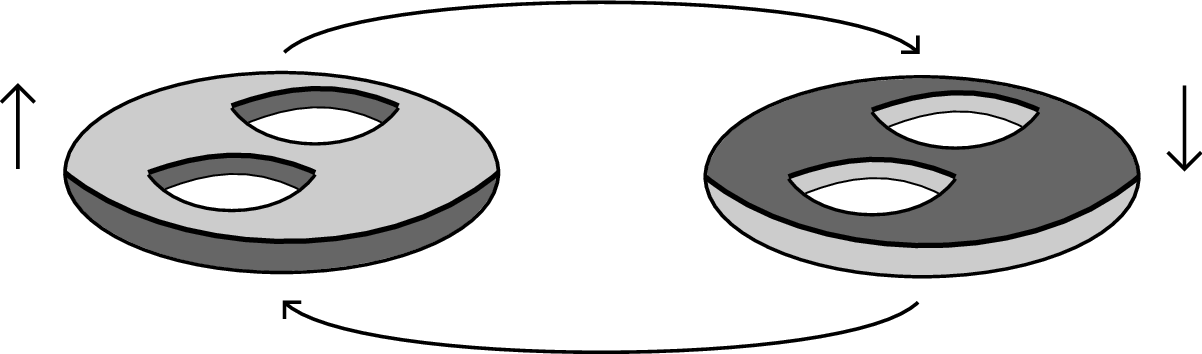}
        \put(0.5,12){$\partial_{z}$}
        \put(97.5,12){$\partial_{z}$}
        \put(49,25){$\Phi$}
        \put(49,2){$\Psi$}
    \end{overpic}
    \vspace{1.5mm}
	\caption{A Heegaard decomposition of a contact manifold $\Mxi_{(\Sdom,\Phi\circ\Psi)}$ determined by an open book decomposition.  The maps $\Phi$ and $\Psi$ provide instructions for performing a convex gluing as described in Section \ref{Sec:GluingInstructions}.}
    \label{Fig:Heegaard}
\end{figure}

The first step in the proof of Theorem \ref{Thm:Monoids} is the following lemma, which allows us to translate open book descriptions of contact manifolds into Heegaard decompositions. See Figure \ref{Fig:Heegaard}.\footnote{
Our use of the expression ``Heegaard decomposition'' is, of course, informal when speaking of contact manifolds whose dimensions are greater than three.}

\begin{lemma}\label{Lemma:Heegaard}
Let $(N_{j},\xi_{\Sdom})$ ($j=1,2$) be two standard neighborhoods of a Liouville domain $\Sdom$ and let $\Phi,\Psi\in\SympGroup$.  Define the contact manifold $\Mxi$ by the convex gluing instructions $(\Phi,\Psi):\partial N_{1}\rightarrow\partial N_{2}$ so that $(\Phi,\Psi)$ maps
\be
\item $(\partial N_{1})^{+}$ to $(\partial N_{2})^{-}$ via $\Phi$ and
\item $(\partial N_{2})^{+}$ to $(\partial N_{1})^{-}$ via $\Psi$.
\ee
See Figure \ref{Fig:Heegaard}.  Then $\Mxi$ is diffeomorphic to the contact manifold $\Mxi_{(\Sdom,\Phi\circ\Psi)}$ determined by the pair $(\Sdom,\Phi\circ\Psi)$.
\end{lemma}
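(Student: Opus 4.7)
My plan is to build an explicit contact diffeomorphism from $\Mxi$ to $\Mxi_{(\Sdom,\Phi\circ\Psi)}$ and then invoke the uniqueness statement in Theorem \ref{Thm:GirCor} to conclude. The key observation is that each standard neighborhood $N_{j}=[-\epsilon,\epsilon]\times \Sigma$ (with corners rounded) is naturally foliated by ``pages'' $\{z\}\times \widehat{\Sigma}$, and the Reeb vector field of the contact form $\alpha=dz+\beta$ is simply $\partial_{z}$, which is positively transverse to every page. Thus each standard neighborhood already looks like a half of a mapping torus with pages controlled by the interval $[-\epsilon,\epsilon]$, together with a collar of the binding coming from $\gamma\times\partial \Sigma$.

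First, I would establish the underlying smooth identification. Concatenate the $z$-coordinates of $N_{1}$ and $N_{2}$ to form a single circular parameter $s\in \mathbb{R}/4\epsilon\mathbb{Z}$. Under the convex gluing instructions $(\Phi,\Psi)$, composed with the orientation-reversing map $\widehat{\Upsilon}$ from Section \ref{Sec:ConvexGluing}, the page $\{\epsilon\}\times \widehat{\Sigma}\subset \partial N_{1}$ is identified with $\{-\epsilon\}\times \widehat{\Sigma}\subset \partial N_{2}$ via $\Phi$, and similarly for $\Psi$ at the other interface. Tracking a point $x\in \widehat{\Sigma}$ once around the circle of pages yields the composition $\Phi\circ \Psi$ (choosing orientations so that conventions agree). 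Meanwhile, the pieces $\gamma\times \partial \Sigma\subset \partial N_{j}$ are glued by the identity, so the collars of $\partial \Sigma$ assemble into a tubular neighborhood of a codimension-2 submanifold --- the binding. The complement of this neighborhood is manifestly the mapping torus of $\Phi\circ \Psi$, so the smooth manifold $M$ is diffeomorphic to $M_{(\Sigma,\Phi\circ\Psi)}$ via an identification sending pages to pages and binding to binding.

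To upgrade to a contact diffeomorphism, I would exhibit $\Mxi$ as supported by the open book $(\Sdom,\Phi\circ\Psi)$ in the sense of Definition \ref{Def:SupportingBook}, then apply Theorem \ref{Thm:GirCor}(2). On the interior of each page $\{z\}\times \widehat{\Sigma}$, the contact form is $dz+\beta$, so its Reeb field $\partial_{z}$ is positively transverse to the page, and $d\beta$ restricts to the Liouville symplectic form --- giving conditions (1) and (2) of Definition \ref{Def:SupportingBook}. Near the binding, the normal form from Section \ref{Sec:ThetaInvariant} for $\widehat{\alpha}$ along $\gamma\times \partial \Sigma$ shows that $\alpha=-z_{0}\,d\theta+t_{0}\alpha'$ at the interface, and its Reeb vector field is tangent to the binding $\partial \Sigma$, which is condition (3). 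The main obstacle will be checking that after all the isotopies and convex-gluing corrections of Proposition \ref{Prop:GluingInvariance}, the resulting contact form still satisfies the supporting conditions globally and that the pages in $N_{1}$ and $N_{2}$ patch smoothly across the gluing interfaces to form a genuine open book decomposition; this reduces, via Gray's stability, to verifying the statement for a specific model choice of correction isotopies, where the pages fit together tautologically.
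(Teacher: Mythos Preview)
Your proposal is correct and is essentially a fleshed-out version of what the paper does: the paper's proof is a one-line reference to the Thurston--Winkelnkemper/Giroux construction (Theorem~\ref{Thm:GirCor}(1)), and you are simply writing out what that modification amounts to. The only minor difference in framing is that the paper points to the \emph{existence} proof (build the contact form on the open book directly), whereas you verify the supporting-open-book conditions of Definition~\ref{Def:SupportingBook} and then invoke the \emph{uniqueness} statement Theorem~\ref{Thm:GirCor}(2); these are two sides of the same argument, since the supporting contact form one checks is precisely the one the Thurston--Winkelnkemper construction produces.
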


\begin{proof}
This is a slight modification of the proof of Theorem \ref{Thm:GirCor}(1).  See \cite[\S 3]{Etnyre:OBIntro}.
\end{proof}

\begin{prop}\label{Prop:OBSum}
Let $\Sdom$ be a Liouville domain and let $\Phi,\Psi\in\Symp((\Sigma,d\beta),\partial\Sigma)$.  Then $\Mxi_{(\Sdom,\Phi\circ\Psi)}$ can be obtained from $\Mxi_{(\Sdom,\Phi)}\sqcup\Mxi_{(\Sdom,\Psi)}$ by a Liouville connect sum.
\end{prop}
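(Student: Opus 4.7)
The plan is to realize both the Liouville connect sum $\#_\Sdom\bigl(\Mxi_{(\Sdom,\Phi)} \sqcup \Mxi_{(\Sdom,\Psi)}\bigr)$ and the target contact manifold $\Mxi_{(\Sdom,\Phi\circ\Psi)}$ as contact manifolds built from two standard neighborhoods of $\Sdom$ by convex gluing instructions, and then to apply Lemma \ref{Lemma:Heegaard} twice: once to set up the decomposition on the source side, and once to recognize the result.

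First I would use Lemma \ref{Lemma:Heegaard} to realize $\Mxi_{(\Sdom,\Phi)}$ as $N_1^\Phi \cup N_2^\Phi$, where the $N_i^\Phi$ are two standard neighborhoods of $\Sdom$ glued along their boundaries by convex gluing instructions $(\Phi,\mbox{id}_\Sigma)$. Under this description, the natural Liouville embedding $i_\Phi$ (as discussed preceding Theorem \ref{Thm:WeakMonoids}) places its image $\Sigma_\Phi$ as the core page $\{z=0\}$ inside $N_1^\Phi$, so the standard neighborhood $\mathcal{N}(\Sigma_\Phi)$ from Section \ref{Sec:Corners} may be identified with $N_1^\Phi$ itself. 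Symmetrically, realize $\Mxi_{(\Sdom,\Psi)}$ as $N_1^\Psi \cup N_2^\Psi$ glued by convex gluing instructions $(\mbox{id}_\Sigma,\Psi)$, with the page $\Sigma_\Psi \subset N_1^\Psi$.

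By Definition \ref{Def:LiouvilleSum}, the Liouville connect sum along these Liouville hypersurfaces cuts out $N_1^\Phi$ and $N_1^\Psi$ and identifies $\partial N_1^\Phi$ with $\partial N_1^\Psi$ via $\Upsilon$. The resulting contact manifold is therefore $N_2^\Phi \cup N_2^\Psi$, with the induced gluing of $\partial N_2^\Phi$ to $\partial N_2^\Psi$ given by the composite
\begin{equation*}
\partial N_2^\Phi \;\xrightarrow{\;(\Phi,\mbox{id}_\Sigma)^{-1}\;}\; \partial N_1^\Phi \;\xrightarrow{\;\Upsilon\;}\; \partial N_1^\Psi \;\xrightarrow{\;(\mbox{id}_\Sigma,\Psi)\;}\; \partial N_2^\Psi.
\end{equation*}
Since $\Upsilon$ itself corresponds to the convex gluing instructions $(\mbox{id}_\Sigma,\mbox{id}_\Sigma)$ (identifying positive and negative regions of the two boundaries by the identity on $\Sigma$), a careful tracking of positive and negative regions through this composite shows that the total gluing between $N_2^\Phi$ and $N_2^\Psi$ is described, up to isotopy in $\Symp$, by convex gluing instructions $(\Phi,\Psi)$. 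A second application of Lemma \ref{Lemma:Heegaard} then identifies $N_2^\Phi \cup_{(\Phi,\Psi)} N_2^\Psi$ with $\Mxi_{(\Sdom,\Phi\circ\Psi)}$, as desired.

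The main obstacle is the bookkeeping in the third paragraph: one must carefully track which side of each dividing set is being matched to which as the three boundary identifications are composed, and verify that the composition corresponds to $(\Phi,\Psi)$ rather than some permutation or inverse. Proposition \ref{Prop:GluingInvariance} plays the essential role here, since it guarantees that the contact manifold produced by convex gluing depends only on the isotopy class of the gluing instructions in $\Symp$; this flexibility is exactly what permits the replacement of the a priori complicated boundary identification by the clean pair $(\Phi,\Psi)$. A schematic figure in the style of Figure \ref{Fig:Heegaard}, showing the four handlebodies $N_1^\Phi, N_2^\Phi, N_1^\Psi, N_2^\Psi$ before and after deleting $N_1^\Phi$ and $N_1^\Psi$, would make this computation transparent.
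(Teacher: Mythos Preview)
Your proposal is correct and follows essentially the same approach as the paper: decompose each of $\Mxi_{(\Sdom,\Phi)}$ and $\Mxi_{(\Sdom,\Psi)}$ into two standard neighborhoods via Lemma~\ref{Lemma:Heegaard}, remove one neighborhood from each, and identify the remaining pair by convex gluing instructions $(\Phi,\Psi)$, then apply Lemma~\ref{Lemma:Heegaard} again. The only cosmetic differences are your choice of $(\mbox{id}_\Sigma,\Psi)$ rather than $(\Psi,\mbox{id}_\Sigma)$ for the second piece and your removal of the $N_1$'s rather than the $N_2$'s, together with your more explicit invocation of Proposition~\ref{Prop:GluingInvariance} for the bookkeeping step that the paper leaves implicit.
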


\begin{proof}
Let $N_{1},N_{2},N_{1}'$ and $N_{2}'$ be copies of a standard neighborhood of $\Sigma$, each endowed with the contact structure determined by the contact form $dz+\beta$ as described in Section \ref{Sec:Neighborhood}.  By Lemma \ref{Lemma:Heegaard} we can construct $\Mxi_{(\Sdom,\Phi)}$ by identifying $\partial N_{1}$ and $\partial N_{2}$ using the convex gluing instructions $(\Phi,\Id_{\Sigma})$.  Similarly, we can construct $\Mxi_{(\Sdom,\Psi)}$ by identifying $\partial N_{1}'$ and $\partial N_{2}'$ using the convex gluing instructions $(\Psi,\Id_{\Sigma})$.

Now perform a Liouville connect sum on $\Mxi:=\Mxi_{(\Sdom,\Phi)}\sqcup\Mxi_{(\Sdom,\Psi)}$ by removing $N_{2}$ and $N_{2}'$ and then identifying $\partial N_{1}$ and $\partial N_{1}'$.  Then the resulting contact manifold $\#_{\Sdom}\Mxi$ can be described by identifying $\partial N_{1}$ and $\partial N_{1}'$ using the convex gluing instructions $(\Phi,\Psi)$.  By Lemma \ref{Lemma:Heegaard}, we have $\#_{\Sdom}\Mxi=\Mxi_{(\Sdom,\Phi\circ\Psi)}$.
\end{proof}

\begin{proof}[Proof of Theorems \ref{Thm:Monoids} and \ref{Thm:WeakMonoids}]
Items (1) and (3) of Theorem \ref{Thm:Monoids} are immediate from Proposition \ref{Prop:OBSum}, Theorem \ref{Thm:Cobordism}, and the fact that a composition of symplectic (exact, Weinstein) cobordisms is a symplectic (resp. exact, Weinstein) cobordism.  For Theorem \ref{Thm:Monoids}(2), the cobordism $\Ldom$ from Theorem \ref{Thm:Cobordism} can be obtained by a sequence of Weinstein handle attachments as both $\Mxi_{(\Sdom,\Phi)}$ and $\Mxi_{(\Sdom,\Psi)}$ are 3-dimensional. Regarding the weak symplectic fillings of Theorem \ref{Thm:WeakMonoids}, we note that the hypothesis of the theorem guarantees that Theorem \ref{Thm:WeakHandle} can be applied, providing a weak symplectic filling of $\Mxi_{(\Sdom,\Phi\circ\Psi)}$.
\end{proof}

\begin{rmk}
In the case $\dim(\Sigma)=2$, the proof of Theorem \ref{Thm:Monoids} coincides with the proofs of \cite[Theorem 1.3]{BEV:Monoids} and \cite[Theorem 1.1]{Baldwin:Monoids}.  This can be worked out by analyzing their proofs, the proof of Theorem \ref{Thm:Monoids}, and the proof of the second statement of Theorem \ref{Thm:Cobordism} appearing in Section \ref{Sec:WHandleDecomposition}.  Further intuition can be obtained by reading Section \ref{Sec:Kirby}.
\end{rmk}

\subsection{Mapping class monoids from contact homology}\label{Sec:HCMonoids}

The purpose of this section is to prove Corollary \ref{Cor:HCMonoids} as a consequence of Proposition \ref{Prop:OBSum}. The corollary follows from some basic properties of contact homology ($HC_{\ast}$), a symplectic field theory (SFT) invariant of closed contact manifolds \cite{SFTIntro}. We recommend the exposition \cite{Bourgeois:Notes} for a basic introduction. Rigorous proofs of its well definition and functoriality for exact symplect cobordisms are due to Bao-Honda \cite{BaoHonda:ContactHomology} and Pardon \cite{Pardon:ContactHomology}. The following theorem -- following the exposition of \cite[Section 1]{Pardon:ContactHomology} -- summarizes the basic properties of $HC_{\ast}$ which we will need for Corollary \ref{Cor:HCMonoids}.

\begin{thm}\label{Thm:HCSummary}
To each closed contact manifold $\Mxi$, we may associate a $\mathbb{Z}/2\mathbb{Z}$-graded $\mathbb{Q}$-algebra $HC_{\ast}\Mxi$ satisfying the following properties:
\be
\item For a pair of contact manifolds of the same dimension, $(M_{1}, \xi_{1})$ and $(M_{2}, \xi_{2})$,
\begin{equation*}
HC_{\ast}(M_{1} \sqcup M_{2}, \xi_{1}\sqcup \xi_{2}) \simeq HC_{\ast}(M_{1}, \xi_{1}) \otimes HC_{\ast}(M_{2},\xi_{2}).
\end{equation*}
\item Every symplectic cobordism $\Ldom$ with convex boundary $\Mxi$ and concave boundary $(M', \xi')$ induces a unital algebra homomorphism
\begin{equation*}
\Phi_{\Ldom}:HC_{*}(M,\xi)\rightarrow HC_{*}(M',\xi').
\end{equation*}
\ee
\end{thm}

In the last statement above, we note that if $HC_{*}(M',\xi') \neq 0$, then $HC_{*}(M,\xi)\neq 0$. For if $1 \neq 0$ in $HC_{*}(M',\xi')$ and $0 = 1 \in HC_{*}(M',\xi')$, then we have the contradictory statement $0 = \Phi_{\Ldom}(0) = 1 \in HC_{*}(M,\xi)$ from the fact that $\Phi_{\Ldom}$ preserves multaplicative units. According to a theorem of Eliashberg  and Yau \cite{Yau}, the contact homology of an overtwisted contact manifold is zero.

\begin{proof}[Proof of Corollary \ref{Cor:HCMonoids}]
Suppose as in the statement of the corollary that $\Sdom$ is a Liouville domain and $\Phi,\Psi\in\SympGroup$ are such that the contact manifolds $\Mxi_{(\Sdom,\Phi)}$ and $\Mxi_{(\Sdom,\Psi)}$ both have non-vanishing contact homology with coefficient ring $\mathbb{Q}$.  By the above theorem, the disjoint union $\Mxi_{(\Sdom,\Phi)}\sqcup\Mxi_{(\Sdom,\Psi)}$ also has non-vanishing contact homology.  Applying the exact symplectic cobordism provided by Proposition \ref{Prop:OBSum}, the contact homology of $\Mxi_{(\Sdom,\Phi\circ\Psi)}$ must also be non-zero.
\end{proof}

\subsection{Contact manifolds which fiber over the circle}\label{Sec:Fibration}

In this section we discuss symplectic fillability of the contact manifolds $\Mxi_{(\Sdom,\Phi,\Psi)}$ described in the discussion preceding the statement of Theorem \ref{Thm:Fibration}.

The contact manifolds $\Mxi_{(\Sdom,\Phi,\Psi)}$ can also be described using supporting open books and the contact fiber sum \cite[\S 3]{Geiges:Fiber}.  Perform a contact fiber sum of the contact manifolds $\Mxi_{(\Sdom,\Phi)}$ and $\Mxi_{(\Sdom,\Psi)}$ along the bindings of their associated open books, using the pages of the open books to frame the relevant normal bundles.  The resulting contact manifold will be $\Mxi_{(\Sdom,\Phi,\Psi)}$.  Using this description, if follows from Theorem \ref{Thm:GirCor} the the contact isotopy class of $\Mxi_{(\Sdom,\Phi,\Psi)}$ depends only on the isotopy classes of $\Phi$ and $\Psi$ in $\SympGroup$. In dimension three, this is an example of the blown-up, summed open book construction described in \cite{Wendl:Cobordism}.

To prove Theorem \ref{Thm:Fibration} we follow the same strategy as the proof of Theorem \ref{Thm:Monoids}, considering Heegaard splitting-type decompositions of contact manifolds determined by open books.  This time, instead of Liouville connect summing pages of \emph{distinct} open books, we apply the Liouville connect sum to the interiors of two pages of the \emph{same} open book. Again, we refer to Remark \ref{Rmk:OBConvention} regarding our choice of convention for gluing instructions.

\begin{figure}[h]
	\begin{overpic}[scale=.7]{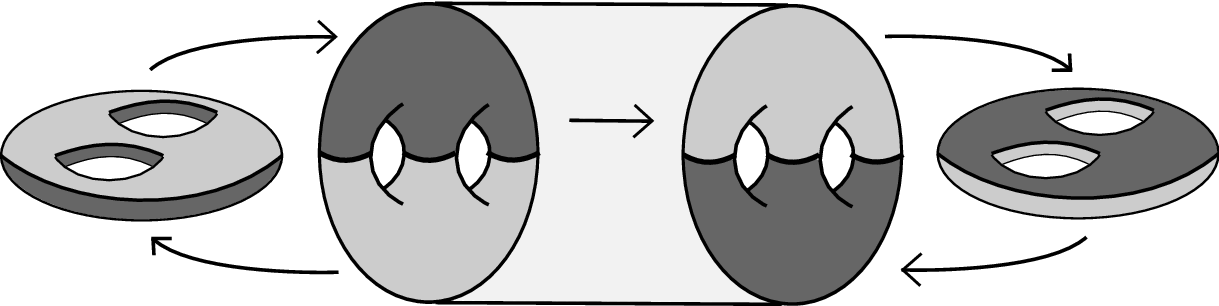}
        \put(14,22){$\Phi$}
        \put(49,12){$\partial_{\theta}$}
        \put(85,22){$\Id_{\Sigma}$}
        \put(85,0){$\Psi$}
        \put(14,0){$\Id_{\Sigma}$}
        \put(-4,10){$N_{1}$}
        \put(101,10){$N_{2}$}
        \put(41.5,-3){$[-1,1]\times \partial \mathcal{N}(\Sigma)$}
    \end{overpic}
    \vspace{4.5mm}
	\caption{A decomposition of the contact manifold $\Mxi_{(\Sdom,\Phi\circ\Psi)}$ into three pieces with convex gluing instructions.}
    \label{Fig:Heegaard2}
\end{figure}

\begin{lemma}\label{Lemma:Fibered}
Let $\Sdom$ be a Liouville domain and let $\Phi,\Psi\in \SympGroup$.  The contact manifold $\Mxi_{(\Sdom,\Phi,\Psi)}$ can be obtained from a contact manifold supported by an open book determined by the pair $(\Sdom,\Phi\circ\Psi)$ by a Liouville connect sum.
\end{lemma}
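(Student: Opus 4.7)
The plan is to exhibit $\Mxi_{(\Sdom,\Phi\circ\Psi)}$ in a three-piece form matching Figure \ref{Fig:Heegaard2}: two standard neighborhoods $N_{1}',N_{2}'$ of pages, separated by the $\theta$-invariant cylinder $C = [-1,1]\times\partial\mathcal{N}(\Sigma)$, with convex gluing instructions $(\Phi,\mbox{id}_{\Sigma})$ and $(\mbox{id}_{\Sigma},\Psi)$ at the two interfaces. Once this is in place, a Liouville connect sum along the core pages of $N_{1}'$ and $N_{2}'$ will remove these two pieces, leaving $C$ with its ends identified -- and the identification will be forced to match Definition \ref{Def:FiberDef}.

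First I would invoke Lemma \ref{Lemma:Heegaard} to obtain the two-piece decomposition $\Mxi_{(\Sdom,\Phi\circ\Psi)} = N_{1}\cup_{(\Phi,\Psi)} N_{2}$. Using the $\theta$-invariance of the contact form in a collar of $\partial\mathcal{N}(\Sigma)$ established in Section \ref{Sec:ThetaInvariant}, I would insert an invariant slab $C$ between $N_{1}$ and $N_{2}$. Such an insertion does not change the ambient contact manifold but redistributes the convex gluing data onto two new interfaces. The required factorization of the instructions $(\Phi,\Psi)$ as the pair $(\Phi,\mbox{id}_{\Sigma})$ and $(\mbox{id}_{\Sigma},\Psi)$ acting on either side of $C$ follows from the composition law for convex gluing instructions implicit in Equation \ref{Eq:ModifiedGluing}, while Proposition \ref{Prop:GluingInvariance} guarantees that the resulting three-piece presentation still describes $\Mxi_{(\Sdom,\Phi\circ\Psi)}$ up to contact diffeomorphism.

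Next I would take the pages at the cores of $N_{1}'$ and $N_{2}'$ as two disjoint Liouville hypersurfaces, defined by Liouville embeddings $i_{j}:\Sdom\rightarrow\Mxi_{(\Sdom,\Phi\circ\Psi)}$ whose associated standard neighborhoods coincide with $N_{j}'$. By Definition \ref{Def:LiouvilleSum}, performing the Liouville connect sum along $i_{1},i_{2}$ removes the interiors of $N_{1}'$ and $N_{2}'$ and identifies their convex boundaries via the map $\Upsilon(z,x)=(-z,x)$. In the three-piece presentation this removal leaves exactly $C$ with its two free convex boundary components, and the connect sum identifies them.

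The final step is to verify that the resulting self-gluing of $C$ matches Definition \ref{Def:FiberDef} for $\Mxi_{(\Sdom,\Phi,\Psi)}$. The effective identification on the ends of $C$ arises as the composition of (i) the convex gluing $(\Phi,\mbox{id}_{\Sigma})$ at the left interface of $C$, (ii) the map $\Upsilon$ from the Liouville connect sum identifying $\partial N_{1}'$ with $\partial N_{2}'$, and (iii) the convex gluing $(\mbox{id}_{\Sigma},\Psi)$ at the right interface of $C$. Up to the symplectic isotopies handled by Proposition \ref{Prop:GluingInvariance}, this composition collapses to the convex gluing instructions $(\Phi,\Psi)$, which is precisely the identification used to construct $\Mxi_{(\Sdom,\Phi,\Psi)}$. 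I expect this last composition to be the main technical obstacle: the positive/negative-region swap performed by $\Upsilon$, together with the correction isotopies required when modifying a convex gluing instruction (Lemmas \ref{Lemma:df} and \ref{Lemma:Killingdf}), must be tracked carefully to confirm that $\Phi$ and $\Psi$ end up paired correctly in the final identification.
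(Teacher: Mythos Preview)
Your proposal is correct and follows essentially the same route as the paper: present $\Mxi_{(\Sdom,\Phi\circ\Psi)}$ as three pieces $N_{1}\cup C\cup N_{2}$ with the gluing instructions split across the two interfaces, then perform the Liouville connect sum by excising $N_{1}$ and $N_{2}$ and check that the induced self-gluing of $C$ reproduces Definition~\ref{Def:FiberDef}. The only cosmetic difference is that the paper writes down the three-piece decomposition directly (with instructions $(\mbox{id}_{\Sigma},\Phi)$ and $(\mbox{id}_{\Sigma},\Psi)$, citing Lemma~\ref{Lemma:Heegaard} for its validity) rather than starting from the two-piece Heegaard splitting and inserting the invariant slab, and it does not dwell on the composition-of-gluings verification you flag as the main technical point.
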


\begin{proof}
We decompose the contact manifold $\Mxi_{(\Sdom,\Phi\circ\Psi)}$ into three pieces $[-1,1]\times \partial \mathcal{N}(\Sigma),N_{1}$ and $N_{2}$. This decomposition will be a slight modification of the Heegaard splitting-type decomposition used in Lemma \ref{Lemma:Heegaard}.

Take $N_{1}$ and $N_{2}$ to be standard neighborhoods of the Liouville domain $\Sdom$.  Attach $\{ -1\}\times \partial \mathcal{N}(\Sigma)$ to $\partial N_{1}$ using the convex gluing instructions $(\Id_{\Sigma},\Phi)$.  Similarly, attach $\{1\}\times\partial \mathcal{N}(\Sigma)$ to $\partial N_{2}$ using the convex gluing instructions $(\Id_{\Sigma},\Psi)$.  The resulting contact manifold is $\Mxi_{(\Sdom,\Phi\circ\Psi)}$ as can be seen from Lemma \ref{Lemma:Heegaard}.  See Figure \ref{Fig:Heegaard2}.

Now perform a Liouville connect sum on $\Mxi_{(\Sdom,\Phi\circ\Psi)}$ along the standard neighborhoods of Liouville hypersurfaces $N_{1}$ and $N_{2}$.  This may be done by removing $N_{1}$ and $N_{2}$ from $\Mxi_{(\Sdom,\Phi\circ\Psi)}$ and gluing together the new convex boundary components.  By the identifications described in the previous paragraph, the resulting contact manifold is exactly $\Mxi_{(\Sdom,\Phi,\Psi)}$.
\end{proof}

Together with Theorem \ref{Thm:Cobordism}, the above lemma immediately proves Theorem \ref{Thm:Fibration}(1-2).  As for the statement regarding weak symplectic fillings, we must show that the cohomological condition described in the statement of Theorem \ref{Thm:Fibration} coincides with the one described in the statement of Theorem \ref{Thm:WeakHandle}.  We observe that the Liouville embeddings required to perform the necessary symplectic handle attachment must agree with the submanifolds $N_{1}$ and $N_{2}$ in the above proof.  Note that if we isotop $N_{2}$ through the region $[-1,1]\times\mathcal{N}(\Sigma)$ and into $N_{1}$ counterclockwise through the diagram shown in Figure \ref{Fig:Heegaard2}, we see that $\omega|_{N_{2}}=(\Phi^{-1})^{*}\omega|_{N_{1}}$ giving the cohomological obstruction described in Theorem \ref{Thm:Fibration} whose vanishing is required by Theorem \ref{Thm:WeakHandle}.

\subsection{Fillability of branched covers}\label{Sec:Branched}

In this section we apply Theorem \ref{Thm:Cobordism} to study branched covers. We begin by reviewing some known results on branched coverings of contact manifolds. The following theorem is a consequence of Gironella's \cite{Gironella:Branches}, refining results of Geiges \cite{Geiges:Fiber} and Gonzalo \cite{Gonzalo}.

\begin{thm}\label{Thm:BranchedCover}
Let $\Mxi$ be a $(2n+1)$-dimensional contact manifold and let $(C,\zeta)\subset\Mxi$ be a closed, connected, codimension two contact submanifold with trivial normal bundle.  Let $\pi:\widetilde{M}\rightarrow M$ be a branched cover of $M$ with branch locus $C \subset M$.  Then $\widetilde{M}$ naturally carries a contact structure $\xi_{\pi}$ for which the associated unbranched covering
\begin{equation*}
\pi:\big(\widetilde{M}\setminus\pi^{-1}(N(C)),\xi_{\pi}\big)\rightarrow \big(M\setminus N(C),\xi\big)
\end{equation*}
satisfies $T\pi(\xi_{\pi})=\xi$ where $N(C)$ is an arbitrarily small tubular neighborhood of $C$.
\end{thm}

For a precise statement regarding the naturality of the contact structure $\xi_{\pi}$ described above, we refer to \cite[Proposition A]{Gironella:Branches}. The following summarizes known results regarding branched coverings of contact 3-manifolds. Note that a $k$-fold cyclic branched cover $\Sthree_{C, k}$ over a transverse knot $C \subset \Sthree$ is uniquely determined by the branch locus and branch index.

\begin{thm}\label{Thm:BCEx}
Let $\Mxi$ be a 3-dimensional contact manifold containing the transverse link $C$.
\be
\item $\Mxi$ can be described as a (not necessarily cyclic) branched cover over a transverse link in $\Sthree$ \cite{Giroux:ContactOB,MM:FiberedBraids}.
\item If $C$ is a knot which realizes its Bennequin bound (Equation \eqref{Eq:BEBound}), then there is a Weinstein cobordism with concave boundary $\Mxi$ and convex boundary $\Mxi_{C,k}$ \cite{Baldwin:Monoids}.
\item If $C$ is a knot, for any $k > 0$ the contact distribution on $\Sthree_{C,k}$ satisfies $c_{1}(\xi_{C,k})=0$ and its homotopy class depends only on the self-linking number and topological type of $C$.  If $C$ is destabilizeable, then a cyclic branched cover over $C$ is overtwisted.  If $C$ can be represented as a quasipositive braid, then a cyclic branched cover over $C$ is Weinstein fillable \cite{HKP:Branched}.
\ee
\end{thm}

Now we describe the branched coverings appearing in Theorem \ref{Thm:Branched}. Our discussion follows the construction of cyclic branched covers, branched over null-homologous links in 3-manifolds described in \cite[Chapter 5, Section C]{Rolfsen} with slightly modified notation. As in the discussion preceding the statement of Theorem \ref{Thm:Branched}, we assume that $M$ is a manifold containing a compact, codimension $1$, properly embedded submanifold $\Sigma \subset M$ with non-empty boundary. 

The manifold $M_{\Sigma, k}$ is constructed as follows:
\be
\item Identify a neighborhood of $\Sigma$ with $[0, 1] \times\Sigma$.
\item Write $N_{j}$ for the set $[\frac{(j-1)}{k}, \frac{j}{k}]\times\Sigma\subset[0,1]\times\Sigma$ for $j=1,\dots,k-1$.
\item Consider $(k-1)$ additional copies of $M$, labeled $M_{j}$.  Define $N'_{j}=[0,\frac{1}{k}]\times\Sigma\subset M_{j}$.
\item Define $M^{0}=M$.  Inductively define $M^{j}=(M^{j-1}\setminus N_{j})\cup_{\Phi_{j}}(M_{j}\setminus N'_{j})$ where $\Phi_{j}:\partial N'_{j}\rightarrow \partial N_{j}$ is given by
\begin{equation*}
\Phi_{j}(z,x)=\big{(}\frac{j}{k}-z,x\big{)}\quad\text{for}\quad z\in[0,\frac{1}{k}],\ x\in\Sigma.
\end{equation*}
\ee
The map $\Phi_{j}$ above sends the bottom $\{ 0 \}\times \Sigma \subset M_{j}$ of each $N'_{j} \subset M_{j}$ to the top $\{\frac{j}{k} \}\times \Sigma \subset M$ of each $N_{j}$ and the top of each $N'_{j}$ to the bottom of each $N_{j}$.

The above construction provides a piece-wise linear description of $M_{\Sigma, k}$. To make this construction smooth, we can round the edges of each $\partial N_{j}$ to obtain some $\mathcal{N}_{j}$ and perform the gluing using maps 
\begin{equation*}
\widehat{\Upsilon}: (-\epsilon, \epsilon)\times \partial \mathcal{N}_{j}' \rightarrow (-\epsilon, \epsilon)\times \partial \mathcal{N}_{j}
\end{equation*}
described in Equation \eqref{Eq:Upsilon}. 

The proof of Theorem \ref{Thm:Branched} is then immediate from the construction described as each such gluing used to smoothly define $M_{\Sigma, k}$ may be realized as a Liouville connect sum provided that $\Sigma$ is a Liouville hypersurface $\Sdom$ and each $N_{k}$ is as described in Lemma \ref{Lemma:SquareNeighborhood}. To see that this construction recovers the branched cyclic coverings over transverse knots appearing in \cite{Baldwin:Monoids, HKP:Branched} we follow \cite{Rolfsen} in which case any -- not necessarily Liouville -- Seifert surface $\Sigma$ of a transverse knot may be used to construct the branched cover. The branched covering
\begin{equation*}
M_{\Sigma, k} \rightarrow M
\end{equation*}
as described in \cite[Chapter 5, Section C]{Rolfsen} has branch locus $\partial \Sigma$ -- a contact submanifold of $\Mxi$ -- and so determines a contact branched covering as described in Theorem \ref{Thm:BranchedCover}.

\subsection{Kirby diagrams from the proof of Theorem \ref{Thm:Cobordism}}\label{Sec:Kirby}

Now we will give an example of a Weinstein cobordism associated to a branched cover as described in Theorem \ref{Thm:Branched}.  By combining the proof of this theorem with the exposition in Section \ref{Sec:WHandleDecomposition}, we will be able to give a Kirby diagram description of the cobordism as in \cite{Gompf:Handles}.  This example should serve as a guide as to how to use the proof of Theorem \ref{Thm:Cobordism} -- in the Weinstein case -- to explicitly describe cobordisms associated to the Liouville connect sum in terms of Weinstein handle attachment.  For a similar construction, see \cite{HKP:Branched} where an algorithm is described which produces a contact surgery diagram of a cyclic branched cover, branched over a transverse braid in $\Sthree$.

Throughout this section figures will be drawn in the front projection $\mathbb{R}^{3}\rightarrow \{0\}\times \mathbb{R}^{2}$ of 
\begin{equation*}
(\mathbb{R}^{3},\xi_{std}=\ker(dz-ydx)).
\end{equation*}
Here $\Rthree$ is identified with the complement of a point in $\Sthree$.

\begin{figure}[h]
\begin{overpic}[scale=.7]{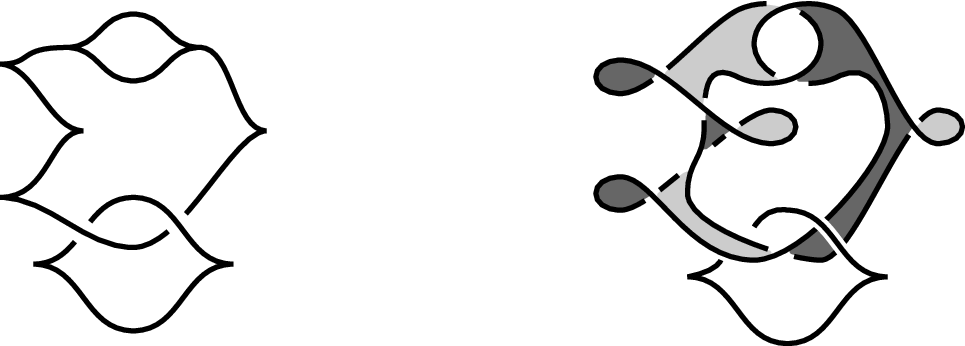}
    \put(25.5,8){$-1$}
    \put(94,6.5){$-1$}
\end{overpic}
\vspace{4.5mm}
\caption{On the left is a Legendrian graph in the contact manifold $(L(2,1),\xi_{std})$.  The ambient contact manifold is presented as the result of a Legendrian (or, equivalently, a contact $-1$) surgery on a Legendrian unknot with $\tb=-1$.  We will omit the surgery coefficient associated to this unknot in subsequent diagrams.  On the right hand side of the figure is the ribbon $\Sigma$ of the graph.  The boundary of $\Sigma$ is the transverse knot $T$.}
\label{Fig:WhiteheadDouble}
\end{figure}

Consider the Legendrian graph in Figure \ref{Fig:WhiteheadDouble}.  Using \cite[\S 4]{Avdek} we can draw its ribbon $\Sigma$ in the front projection.  The boundary $C$ of this ribbon is a Whitehead double of a homologically non-trivial knot in the contact lens space $(L(2,1),\xi_{std})=(S^{*}S^{2},\xi_{can})$ -- which can be described by a Legendrian surgery along an unknot with $\tb=-1$.

\begin{figure}[h]
	\begin{overpic}[scale=.7]{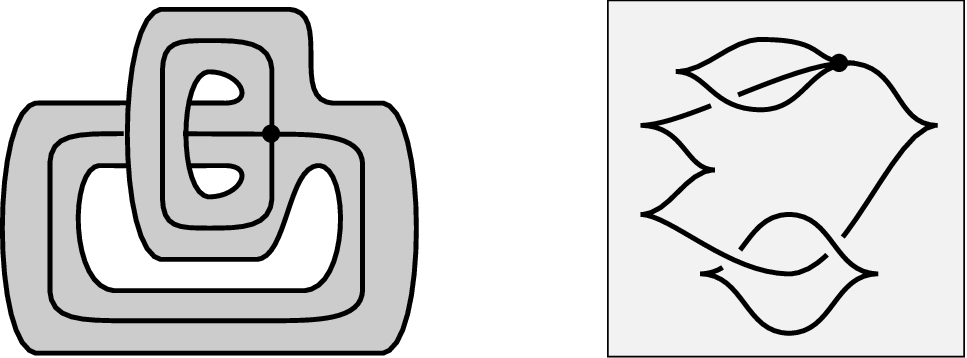}
    \put(50,29.5){$a$}
    \put(49,30){\vector(-1,0){18.5}}
    \put(52,30){\vector(1,0){17.5}}
    \put(50,14){$b$}
    \put(49,15){\vector(-1,0){9.5}}
    \put(52,15){\vector(1,0){14}}
    \put(93,7.5){$-1$}
    \end{overpic}
    \vspace{4.5mm}
	\caption{A Weinstein handle decomposition of the surface $\Sigma$ gives rise to an isotropic graph in the ambient contact manifold.}
    \label{Fig:SteinDecomp}
\end{figure}

By Theorem \ref{Thm:Branched}, there is a Weinstein cobordism with concave boundary $\sqcup^{k}(L(2,1),\xi_{std})$ and whose convex boundary is the $k$-fold cyclic branched cover $\Sthree_{C,k}$.  We will provide a Kirby diagram for this cobordism in the case $q=k$ and then describe a completed diagram for the case $q=k$.  According to the proof of Theorem \ref{Thm:Branched}, we can describe $\Sthree_{C,2}$ by taking two copies of $(L(2,1),\xi_{std})$ each containing a copy of $\Sigma$ and then perform a Liouville connect sum to the disjoint union of two copies of $L(2,1)$ by identifying the copies of $\Sigma$.

In our situation, the Liouville surface $\Sigma$ is a genus 1 surface with a single non-empty boundary component.  Therefore $\Sigma$ admits a Weinstein handle decomposition as a pair of 2-dimensional 1-handles attached to a single disk as depicted in the left-hand side of Figure \ref{Fig:SteinDecomp}.  There, the 0-handle is marked with a black dot which we will call $p$.  We label the curves of the core disks of the 2-dimensional 1-handles $a$ and $b$.  The left-hand side of Figure \ref{Fig:SteinDecomp} shows the curves $a$ and $b$ embedded in $(L(2,1),\xi_{std})$.  Consider $a$ and $b$ to be oriented counterclockwise in the figure.

\begin{figure}[h]
	\begin{overpic}[scale=.7]{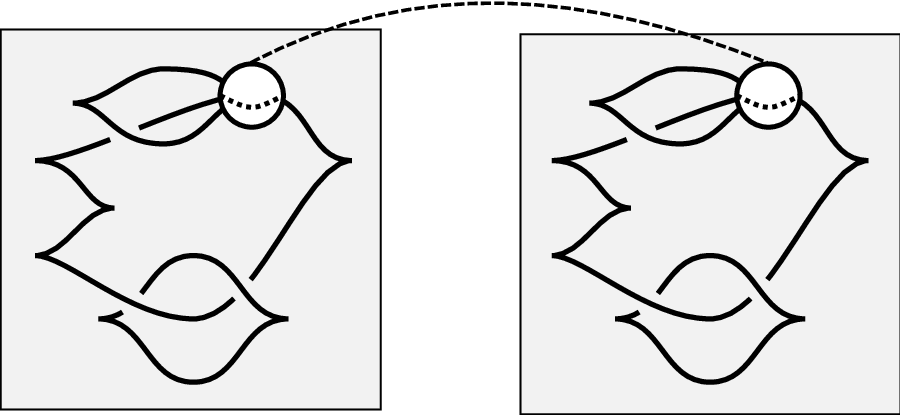}
    \put(33,10){$-1$}
    \put(91,10){$-1$}
    \end{overpic}
    \vspace{4.5mm}
	\caption{A Kirby diagram for the 2-fold cyclic branched cover of $(L(2,1),\xi_{std})$ over $T$.  The boxes represent the fact that the concave end of the cobordism is disconnected. Weinstein 1-handles are represented by spheres connected by dashed lines.  The Legendrian knots in the diagram are the attaching loci of the Weinstein 2-handles, and are drawn as thick, black lines.  A description of how Legendrian arcs are identified when passing through the 1-handles is described in the text.}
    \label{Fig:2CyclicCover}
\end{figure}

Now we consider two disjoint copies of $(L(2,1),\xi_{std})$, each containing the surface $\Sigma$, and so the graph $a\cup b\cup p$.  We will call one of the surfaces $\Sigma_{1}$ and the other $\Sigma_{2}$ and fix a diffeomorphism between them induced from the identification of the two copies of $(L(2,1),\xi_{std})$.  Similarly we will label the graphs $a\cup b\cup p$ in each of the copies of $L(2,1)$ by $a_{j}\cup b_{j}\cup p_{j}$, $j=1,2$.

The proof of Theorem \ref{Thm:Branched} tells us that we can describe the double branched cover of $(L(2,1),\xi_{std})$ over $T$ by Liouville connect summing the two copies of $L(2,1)$ along $\Sigma_{1}$ and $\Sigma_{2}$, using the identification $\Sigma_{1}\cong\Sigma_{2}$ described in the previous paragraph.  Theorem \ref{Thm:Cobordism} then tells us that this double branched cover can be realized as the convex boundary component of a symplectic cobordism $\Ldom$ whose concave boundary is $\sqcup^{2} (L(2,1),\xi_{std})$.  The proof of Theorem \ref{Thm:Cobordism}(2) described in Section \ref{Sec:WHandleDecomposition} provides a handle decomposition of this cobordism as follows:
\be
\item Each 2-dimensional 0-handle of the surface $\Sigma$ gives rise to a 4-dimensional 1-handle in $\Ldom$.  We attach a 4-dimensional 1-handle to the finite symplectization of $\sqcup^{2}(L(2,1),\xi_{std})$ along 3-dimensional disks centered about the points $p_{1}$ and $p_{2}$.
\item Each 2-dimensional 1-handle of $\Sigma$ gives rise to a 4-dimensional 1-handle in $\Ldom$.  The proof shows that we are to attach one of these two handles along the Legendrian knot $a_{1}\cup(-a_{2})$ and another along $b_{1}\cup(-b_{1})$.  These knots are indeed closed by identifying $\partial (a_{1})$ with $\partial(-a_{2})$ and $\partial (b_{1})$ with $\partial (-b_{2})$ using the 1-handle attachment along $p_{1}$ and $p_{2}$.
\ee

Figure \ref{Fig:2CyclicCover} shows the completed diagram.  Performing the Weinstein handle attachments described above provides a Weinstein cobordism with concave boundary $\sqcup^{2}(L(2,1),\xi_{std})$ and convex boundary $(L(2,1),\xi_{std})_{C,2}$.

\begin{figure}[h]
	\begin{overpic}[scale=.7]{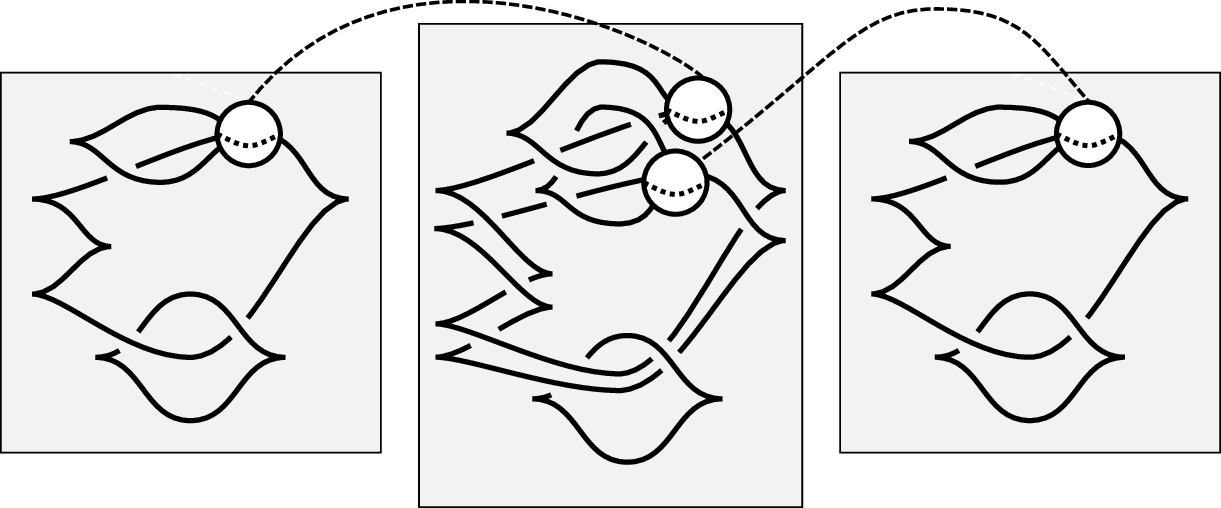}
    \put(24.2,11.5){$-1$}
    \put(93,11.5){$-1$}
    \put(60.5,8){$-1$}
    \end{overpic}
    \vspace{4.5mm}
	\caption{A Kirby diagram for the 3-fold cyclic branched cover of $(L(2,1),\xi_{std})$ branched over $T$.}
    \label{Fig:3CyclicCover}
\end{figure}

Now we will briefly describe the Weinstein cobordism with concave boundary $\sqcup^{3}(L(2,1),\xi_{std})$ and convex boundary the triple branched cover $(L(2,1),\xi_{std})_{C,3}$ over the transverse knot $T$.  This time we start with 3 copies of $(L(2,1),\xi_{std})$.  One of the copies contains one copy $\Sigma_{1}$ of $\Sigma$, another contains two copies $\Sigma_{2}$ and $\Sigma_{2}'= \Flow^{-\epsilon}_{\partial_{z}}(\Sigma_{2})$ of $\Sigma$, and the last contains a single copy $\Sigma_{3}$ of $\Sigma$.  Here, $\epsilon$ is an arbitrarily small positive constant.  The proof of Theorem \ref{Thm:Branched} indicates that we can describe the branched cover by performing two Liouville connect sums; the first identifying $\Sigma_{1}$ with $\Sigma_{2}$, while the second identifies $\Sigma_{2}'$ with $\Sigma_{3}$.  Again, by following the proof of Theorem \ref{Thm:Cobordism}(2) we obtain a Weinstein handle decomposition of the associated cobordism as in the case of the double-branched cover, described above.  The completed diagram is shown in Figure \ref{Fig:3CyclicCover}.

\subsection{Exact cobordisms which are not Weinstein}\label{Sec:NotWeinstein}

In this section we construct high-dimensional Liouville domains which do not admit Weinstein structures.  The examples below serve to illustrate that the cobordism described in Theorem \ref{Thm:Cobordism} is not always Weinstein.

By Theorem \ref{Thm:WHandle}, a connected Weinstein domain $\Sdom$ of dimension greater than 2 must have connected boundary.  The first examples of connected 4-dimensional Liouville domains whose boundaries are disconnected (and therefore, are not Weinstein) were discovered by McDuff \cite{McDuff}.  The examples were obtained by modifying the contact form $-\lambda_{can}$ on the unit cotangent disk bundle $\disk^{*}S_{g}$ of a closed, oriented, genus $g>1$ surface away from a neighborhood $N(S_{g})$ of its zero section, creating a Liouville 1-form on $[-1,1]\times S^{*}S_{g}\cong \disk^{*}S_{g}\setminus \Int(N(S_{g}))$.  In \cite{Geiges:Disconnected}, Geiges generalized this construction, listing a set of conditions associated to a fixed odd-dimensional manifold $M$ which guarantee the existence of a Liouville 1-form on the product $[-1,1]\times M$ and providing examples in the case $\dim (M)=5$.  Examples of Liouville 1-forms of manifolds of the form $[-1,1]\times M$ -- with $\dim (M)$ an arbitrary positive odd integer -- are described by Massot, Niederkr\"{u}ger, and Wendl in \cite[Theorem C]{MNW12}.

The cobordism associated to a Liouville connect sum, performed on the convex boundary of a symplectic cobordism $(W,\omega)$, either preserves or decreases the number of convex boundary components of $(W,\omega)$.  To establish that certain cobordisms constructed using Theorem \ref{Thm:Cobordism} are not Weinstein, we can use singular homology instead of numbers of boundary components.

\begin{lemma}\label{Lemma:Homology}
Suppose that $\Ldom$ is a connected $(2n+2)$-dimensional, Weinstein cobordism with concave boundary $\Mxi$.  Then the inclusion map of $M$ into $W$ induces isomorphisms
\begin{equation*}
H_{k}(W;\mathbb{Z})\cong H_{k}(M;\mathbb{Z}) \quad \forall\; k>n+1.
\end{equation*}
In particular, if $\Ldom$ is a $(2n+2)$-dimensional Weinstein domain, then $W$ has the homotopy type of an $(n+1)$-dimensional CW complex and so $H_{k}(W;\mathbb{Z})=0$ for all $k>n+1$.
\end{lemma}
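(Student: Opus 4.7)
The plan is to reduce the statement to a routine handle-theoretic calculation using the Weinstein handle decomposition of a Stein cobordism provided by Theorem \ref{Thm:WHandle}(3). First I would observe that, up to Liouville homotopy, the cobordism $\Ldom$ is obtained from the compact symplectization $([1/2,1]\times M, t\cdot\alpha)$ by attaching a finite sequence of $(2n+2)$-dimensional Weinstein handles. The crucial dimensional observation is that the core disk of a Weinstein $k$-handle is isotropic in a $(2n+2)$-dimensional symplectic manifold, so $k\leq n+1$; equivalently, half-dimensionality of isotropic submanifolds forces every handle appearing in the decomposition to have index at most $n+1$.

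Next I would analyze the effect of these handle attachments on singular homology. Attaching a Weinstein $k$-handle to a manifold $W'$ is smoothly the same as attaching a standard $k$-handle, which is homotopy equivalent to attaching a single $k$-cell along its attaching sphere. By the cellular long exact sequence, such a cell attachment can only affect $H_{k-1}$ and $H_k$. Since every handle appearing in the decomposition of $W$ has index $k\leq n+1$, no handle attachment alters $H_j$ for $j>n+1$. Combining this with the obvious deformation retraction $[1/2,1]\times M\simeq M$, the inclusion $M\hookrightarrow W$ induces isomorphisms $H_j(M;\mathbb{Z})\xrightarrow{\cong} H_j(W;\mathbb{Z})$ for all $j>n+1$, which is the first claim.

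For the ``in particular'' statement, I would instead invoke the more refined filtration of Theorem \ref{Thm:WHandle}(2): a Stein domain admits a filtration
\begin{equation*}
\sqcup(\disk^{2n+2},\lambda_{std})=(W_{0},\lambda_{0})\subset\cdots\subset (W_{n+1},\lambda_{n+1})=\Ldom
\end{equation*}
in which $(W_{k},\lambda_{k})$ is obtained from $(W_{k-1},\lambda_{k-1})$ by attaching finitely many $(2n+2)$-dimensional Weinstein $k$-handles. The base $\sqcup\disk^{2n+2}$ is homotopy equivalent to a finite discrete set, i.e.\ a $0$-dimensional CW complex, and each subsequent stage attaches cells of dimension at most $n+1$. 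Hence $W$ has the homotopy type of a CW complex of dimension at most $n+1$, from which $H_k(W;\mathbb{Z})=0$ for all $k>n+1$ follows immediately.

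There is no real obstacle in the argument; the entire content is packaged into Theorem \ref{Thm:WHandle}. The only mild subtlety to attend to is that the definition of ``Stein'' adopted in Section \ref{Sec:Symplectic} permits homotoping the Liouville form before passing to a handle decomposition, but this homotopy is through exact symplectic cobordisms with fixed concave and convex boundaries, so the diffeomorphism type of $W$ rel\ $M$ — and hence the relative homology — is unchanged, and the application of Theorem \ref{Thm:WHandle} is legitimate.
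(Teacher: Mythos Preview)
Your argument is correct and follows essentially the same approach as the paper's proof, which invokes Theorem~\ref{Thm:WHandle} and then analyzes the homology of a single Weinstein handle attachment. The only cosmetic difference is that the paper phrases the homological step via a Mayer--Vietoris sequence while you phrase it via the cellular long exact sequence of a cell attachment; these are equivalent here, and your version is in fact more explicit.
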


\begin{proof}
This follows immediately from Theorem \ref{Thm:WHandle} and the application of a Mayer-Vietoris sequence associated to a Weinstein handle attachment.
\end{proof}

\begin{thm}
Let $\Mxi$ be a $(2n+1)$-dimensional contact manifold where $n>1$ and let $T$ be a closed $(2n-1)$-dimensional manifold.  Suppose that $[-1,1]\times T$ has a Liouville 1-form $\beta$ and that there are disjoint Liouville embeddings $i_{1},i_{2}:([-1,1]\times T,\beta)\rightarrow \Mxi$.  Suppose further that
\begin{equation*}
i_{1} [ T ]=i_{2} [ T ] \quad\text{in}\quad H_{2n-1}(M,\mathbb{Z}).
\end{equation*}
Then the exact cobordism $\Ldom$ of Theorem \ref{Thm:Cobordism} associated to the Liouville connect sum of $\Mxi$ along the $i_{j}([-1,1]\times T)$ ($j=1,2$) is not Weinstein.
\end{thm}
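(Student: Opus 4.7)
The plan is to obtain a contradiction via Lemma \ref{Lemma:Homology} by exhibiting a class in $H_{2n}(W;\mathbb{Z})$ not in the image of the inclusion-induced map $H_{2n}(M;\mathbb{Z}) \to H_{2n}(W;\mathbb{Z})$. Since $n>1$ one has $2n > n+1$, so if $\Ldom$ were Stein the lemma would force this map to be an isomorphism. Producing an extra class in degree $2n$ therefore suffices.

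By the construction of Section \ref{Sec:Handle}, $W$ is built from the compact symplectization of $\Mxi$ by attaching the handle $H_\Sigma = [-1,1]_\theta \times \mathcal{N}(\Sigma)$ along its two convex attaching regions $\{\pm 1\}_\theta \times \mathcal{N}(\Sigma)$, identified via the map $\Upsilon$ with neighborhoods of $\Sigma_j = i_j([-1,1]\times T) \subset M$ for $j=1,2$. The standard neighborhood $\mathcal{N}(\Sigma)$ deformation retracts onto $\Sigma = [-1,1]\times T$, which retracts onto $T$, so $H_\Sigma \simeq T$ and each component of the attaching region is also homotopy equivalent to $T$. I would then decompose $W = U \cup V$, with $U$ an open neighborhood of the symplectization and $V$ an open neighborhood of $H_\Sigma$, so that $U \simeq M$, $V \simeq T$, and $U \cap V \simeq T \sqcup T$. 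Here $T$ is closed and orientable -- the $2n$-form $(d\beta)^{n}$ provides a volume form on $\Sigma$, which descends to an orientation of $T$ -- and has dimension $2n-1$, hence $H_{2n}(T;\mathbb{Z}) = 0 = H_{2n}(T \sqcup T;\mathbb{Z})$.

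The Mayer--Vietoris sequence at degree $2n$ then reduces to
\begin{equation*}
0 \longrightarrow H_{2n}(M;\mathbb{Z}) \longrightarrow H_{2n}(W;\mathbb{Z}) \longrightarrow K \longrightarrow 0,
\end{equation*}
where $K$ is the kernel of the map $\Phi: H_{2n-1}(T\sqcup T;\mathbb{Z}) \to H_{2n-1}(M;\mathbb{Z}) \oplus H_{2n-1}(T;\mathbb{Z})$. Writing $[T_1], [T_2]$ for the fundamental classes of the two components of $T \sqcup T$, the inclusions send each copy of $T$ in $U \cap V$ into $V$ via the identity under the natural retraction, and into $U = M$ via $i_1$ and $i_2$ respectively, so $\Phi([T_1]) = (i_1[T], [T])$ and $\Phi([T_2]) = (i_2[T], [T])$. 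The hypothesis $i_1[T] = i_2[T]$ then yields $\Phi([T_1]-[T_2]) = 0$, so $[T_1]-[T_2]$ generates a $\mathbb{Z}$-summand of $K$. Consequently $H_{2n}(M;\mathbb{Z}) \to H_{2n}(W;\mathbb{Z})$ fails to be surjective, contradicting Lemma \ref{Lemma:Homology}. The main technical point to watch is that the edge-rounding performed in Step 4 of the proof of Theorem \ref{Thm:Cobordism} alters $W$ only by a diffeomorphism and so does not affect the homotopy-theoretic computation; otherwise the argument is mechanical.
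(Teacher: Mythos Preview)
Your argument is correct and follows essentially the same route as the paper: both apply a Mayer--Vietoris sequence to the pair consisting of the compact symplectization and the attached handle $H_{\Sigma}$, and both conclude that $H_{2n}(W;\mathbb{Z})$ is strictly larger than $H_{2n}(M;\mathbb{Z})$, contradicting Lemma~\ref{Lemma:Homology}. The paper is terser, merely asserting $H_{2n}(W;\mathbb{Z})\cong H_{2n}(M;\mathbb{Z})\oplus\mathbb{Z}$ and then describing the extra generator geometrically as a closed $2n$-cycle built from a chain in $M$ bounding $i_{1}(T)\cup i_{2}(T)$ glued to the cylinder $[-1,1]\times T$ running through the handle; your version spells out the connecting-map computation that produces this extra $\mathbb{Z}$ in the kernel $K$, which is exactly the step the paper leaves implicit.
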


\begin{proof}
Again, we apply a Mayer-Vietoris sequence to the handle-attachment pair $([\half,1]\times M,H_{[-1,1]\times T})$.  It follows that $H_{2n}(W;\mathbb{Z})\cong H_{2n}(M;\mathbb{Z})\oplus\mathbb{Z}$, completing the proof by Lemma \ref{Lemma:Homology}.

The extra $\mathbb{Z}$ factor in $H_{2n}(W;\mathbb{Z})$ can be explicitly described as follows:  Let $X$ be an oriented $2n$-dimensional cobordism with boundary $T \cup -T$ such that there is a map $i_{X}:X\rightarrow M$ with $i_{X}(\partial X)=i_{1}(T)\cup i_{2}(T)$.  Let $Y=[-1,1]\times T$ and let $i_{Y}: Y\rightarrow H_{[-1,1]\times T}=[-1,1]\times N([-1,1]\times T)$ be the map $(\theta,x)\mapsto (\theta,0,0,x)$.  Here we are using the coordinates on a standard neighborhood as described in the proof of Theorem \ref{Thm:Cobordism}.  Then $i_{X}[X]+i_{Y}[Y]\in H_{2n}(W,\mathbb{Z})$ generates the desired homology class.
\end{proof}

Now we will provide some concrete examples.

\begin{ex}
Let $\Sdom$ be a $2n$-dimensional Liouville domain for which $\Sigma=[-1,1]\times T$ for some closed, smooth $(2n-1)$-dimensional manifold $T$.  Here we require that $n>1$.  As pointed out in the discussion following the statement of Theorem \ref{Thm:Monoids}, $(\disk^{2}\times\Sigma,\lambda_{std}+\beta)$ is a Liouville domain whose boundary is a contact manifold admitting an open book determined by the pair $((\Sigma,\beta),\Id_{\Sigma})$. Here the boundary manifold is diffeomorphic to $S^{2}\times T$. Under this identification the binding of the open book is given by the product of the north and south poles of the sphere with $T$ and each page is given by the product of a longitudinal line with $T$.

For each $\theta\in S^{1}=[0,2\pi]/\sim$ denote by $\Sigma_{\theta}$ the page of this open book corresponding to $\theta$, with a collar neighborhood of its boundary removed.  Then each $\Sigma_{\theta}$ is a Liouville hypersurface in the contact manifold $\partial(\disk^{2}\times\Sigma,\lambda_{std}+\beta)$.

Choose an even natural number $2g$ and let $\sigma$ be a permutation which fixes no element of the set $\{1,\dots,2g \}$ and squares to the identity, $\sigma^{2} = \Id$.  Perform $g$ Liouville connect sums along $\partial(\disk^{2}\times\Sigma,\lambda_{std}+\beta)$, identifying each $\Sigma_{\pi j/g}$ with $\Sigma_{\pi\sigma (j)/g}$. Using the handle attachment described in the proof of Theorem \ref{Thm:Cobordism} we can view this new contact manifold as the boundary of a Liouville domain whose underlying manifold is diffeomorphic to $H_{g}\times T$ for a genus-$g$ handlebody $H_{g}$.
\end{ex}

\section{Contact $(1/k)$-surgeries on contact manifolds of arbitrary dimension}\label{Sec:ContactSurgery}

The purpose of this section is to define contact $(1/k)$-surgery and describe some of its basic properties as stated in the introduction.  For the purpose of motivation, we begin by giving a brief overview of contact surgery on contact 3-manifolds as defined in \cite{DG:Surgery}.

\subsection{The case $\dim(M)=3$}

Suppose that $\Mxi$ is a contact 3-manifold containing a Legendrian knot $L$ and let $k$ be any integer.  Then $L$ admits a tubular neighborhood $N(L)$ in $\Mxi$ of the form $N(L)=[-\epsilon,\epsilon]\times \disk^{*}S^{1}=[-\epsilon,\epsilon]\times S^{1}\times[-1,1]$ on which $\xi=\ker(dz-\lambda_{can})$. To perform \emph{contact $(1/k)$-surgery} on $L\subset\Mxi$, remove $N(L)$ from $\Mxi$ and glue it back using a map which is boundary-relative isotopic to $-k$ Dehn twists along $\{\epsilon\}\times\disk^{*}S^{1}$ and isotopic to the identity on the remainder of the boundary of $N(L)$.  The boundary relative isotopy class of the Dehn twists may be chosen in such a way that the surgered manifold naturally admits a contact structure, which depends only on $\Mxi$, the Legendrian isotopy class of $L$ in $\Mxi$, and the integer $k$ \cite[Proposition 7]{DG:Surgery}.\footnote{Contact $(1/k)$-surgery along a Legendrian $L$ is so-called, as in dimension $3$ it is topologically a Dehn surgery with coefficient $(1/k)$ when computed using the longitudinal framing determined by the contact structure $\xi$.}

\begin{figure}[h]
	\begin{overpic}[scale=.7]{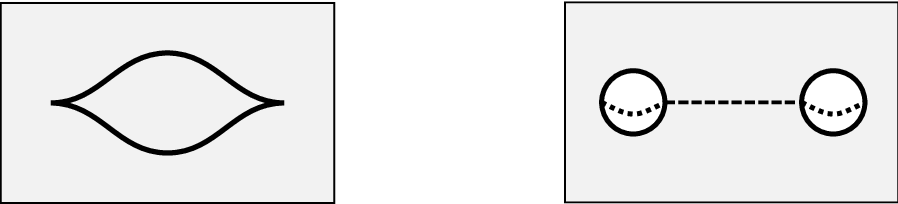}
        \put(48,10){$=$}
        \put(4,13){$+1$}
    \end{overpic}
	\caption{A Kirby diagram depicting the statement of Theorem \ref{Thm:SurgeryOverview}(3).  We emphasize that the diagram on the right only represents a 3-manifold, not a Weinstein 4-manifold.}
    \label{Fig:PlusOneSurgery}
\end{figure}

\begin{thm}\label{Thm:SurgeryOverview}
Let $\Mxi$ be a connected contact 3-manifold.
\be
\item Performing a contact $(-1)$-surgery along a Legendrian knot $L\subset\Mxi$ gives the same contact manifold as is obtained by performing a $4$-dimensional Weinstein $2$-handle attachment along $L$.
\item For any Legendrian knot $L\subset\Mxi$, the contact manifold described by performing a contact $(1/p)$-surgery on $L$, followed by a $(1/q)$-surgery on a push-off of $L$ is equivalent to the contact manifold described by performing a $(1/(p+q))$-surgery on $L\subset\Mxi$.
\item Performing a contact $(+1)$-surgery on a standard Legendrian unknot in $\Mxi$ produces the contact connect sum of $\Mxi$ with $(S^{1}\times S^{2} ,\xi_{std})=\partial(\disk^{*}S^{1}\times\disk^{2},-\lambda_{can}+\lambda_{std})$.  See Figure \ref{Fig:PlusOneSurgery}.
\item Performing a contact $(\half)$-surgery on a standard Legendrian unknot in $\Mxi$ yields $M$ equipped with an overtwisted contact structure.
\item Performing a contact $(+1)$-surgery on a stabilized Legendrian knot in $\Mxi$ produces an overtwisted contact manifold.
\item There is a Legendrian link $L=L^{+}\cup L^{-}$ in $\Sthree$ such that performing $(+1)$-surgery along the components of $L^{+}$ and $(-1)$-surgery along the components of $L^{-}$ yields $\Mxi$.
\ee
\end{thm}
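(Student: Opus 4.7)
The strategy is to reformulate each item using Remark \ref{Rmk:Reformulate}, presenting contact $(1/k)$-surgery as a convex gluing determined by a power of a Dehn twist on the annular ribbon of $L$, and then either cite or sketch the verification using open books together with the Giroux correspondence (Theorem \ref{Thm:GirCor}). This reformulation is especially convenient because the convex gluing instructions $(\tau^{-k},\mbox{id}_{\Sigma})$ compose in the obvious way, and because the ribbon of a standard Legendrian unknot is literally $\disk^{*}S^{1}$, which identifies the relevant neighborhoods with model pieces of $(S^{1}\times S^{2},\xi_{std})$.

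For (1), I would observe that contact $(-1)$-surgery reglues the complement of a standard neighborhood of the ribbon of $L$ via a single positive Dehn twist, which is precisely the boundary effect of attaching a $4$-dimensional Weinstein $2$-handle along $L$; this equivalence is worked out in \cite{Weinstein:Handles,DG:Surgery} and is a special case of Example \ref{Ex:LiouvilleWeinstein}. For (2), after performing the first $(1/p)$-surgery one may isotope the push-off to lie in a standard neighborhood of the reglued ribbon, so the two surgeries can be combined into one: the composition $\tau^{-p}\circ\tau^{-q}$ along the annular ribbon equals $\tau^{-(p+q)}$ up to boundary-relative isotopy. For (3), the ribbon of the standard Legendrian unknot is an annulus, and a standard neighborhood of this annulus is contactomorphic to $(S^{1}\times S^{2},\xi_{std})=\partial(\disk^{*}S^{1}\times\disk^{2},-\lambda_{can}+\lambda_{std})$ with a Darboux ball removed; the $(+1)$-surgery gluing then identifies the resulting manifold with the contact connect sum.

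For (4), I would combine (2) and (3): a contact $(1/2)$-surgery on the standard Legendrian unknot $U$ is the composition of two contact $(+1)$-surgeries on parallel push-offs of $U$. The first yields $\Mxi\,\#\,(S^{1}\times S^{2},\xi_{std})$, and the second, now performed along a Legendrian representative of a generator of $\pi_{1}$ of the $(S^{1}\times S^{2})$-summand, produces a Lutz-twist-type modification in which an overtwisted disk can be exhibited explicitly; this is the most intricate item and I expect it to be the main obstacle, since identifying the resulting contact structure with an overtwisted one requires a direct construction of the disk rather than a formal manipulation. For (5), the overtwisted disk after a $(+1)$-surgery on a stabilized Legendrian can be drawn directly in the front projection: the zig-zag together with the core of the surgery handle bounds a disk whose characteristic foliation contains a limit cycle, following \cite{DG:Surgery}.

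Finally, (6) is the contact surgery presentation theorem of Ding-Geiges \cite{DG:Surgery}. Its proof uses the Giroux correspondence (Theorem \ref{Thm:GirCor}(3)) to express $\Mxi$ as an open book, compares this open book with an open book for $\Sthree$ after a common stabilization, and expresses the difference of monodromies as a product of positive and negative Dehn twists, each of which corresponds via (1) and Remark \ref{Rmk:Reformulate} to a $(\mp 1)$-surgery on an associated Legendrian in $\Sthree$.
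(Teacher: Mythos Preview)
The paper does not prove this theorem at all: immediately after the statement it says ``For proofs of the above statements, we refer the reader to the exposition \cite[\S 11.2]{OzbSt:SteinSurgery} and the references therein.  An alternate proof of item (6) can be found in \cite{Avdek}.  Statements (1-4) in the above theorem can also be viewed as special cases of Theorem \ref{Thm:SurgeryProperties}, below.''  In other words, Theorem \ref{Thm:SurgeryOverview} is a survey statement collecting known 3-dimensional results for motivation, and the paper's own contribution is the higher-dimensional analogue Theorem \ref{Thm:SurgeryProperties}.  So your proposal is doing strictly more than the paper does, and in spirit it matches the arguments given for Theorem \ref{Thm:SurgeryProperties}: your sketches for (1), (2), and (6) are essentially correct, and your (4) and (5) point at the right mechanisms even if you leave the overtwisted disk implicit.

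There is one concrete error worth flagging in your sketch of (3).  The standard neighborhood $\mathcal{N}(\Sigma)$ of the annular ribbon $\Sigma=\disk^{*}S^{1}$ is a solid torus, not $S^{1}\times S^{2}$ with a Darboux ball removed (the latter has $H_{2}\cong\mathbb{Z}$).  The clean way to see (3), and the way the paper actually does it in the generalized version, is via Definition \ref{Def:SurgeryDef}: a $(+1)$-surgery on a \emph{standard} Legendrian unknot in $\Mxi$ is a $(+1)$-surgery on $L_{std}$ inside the Darboux ball $\Sstd\setminus\{\text{pt}\}$, and since $L_{std}$ is the core of a page of the open book $\Mxi_{1,1}=\Sthree$, the surgery removes one Dehn twist from the monodromy to give $\Mxi_{1,0}=\partial(\disk^{*}S^{1}\times\disk^{2})=S^{1}\times S^{2}$; the result is then the contact connect sum $\Mxi\#(S^{1}\times S^{2},\xi_{std})$.
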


For proofs of the above statements, we refer the reader to the exposition \cite[\S 11.2]{OzbSt:SteinSurgery} and the references therein.  An alternate proof of item (6) can be found in \cite{Avdek}.  Statements (1-4) in the above theorem can also be viewed as special cases of Theorem \ref{Thm:SurgeryProperties}, below.

\subsection{Generalized Dehn twists}\label{Sec:DehnTwist}

The essential ingredient in our definition of contact $(1/k)$-surgery is the \emph{generalized Dehn twist}, first discovered in the context of symplectic geometry by Arnol'd in \cite{Arnold} and further popularized in the work of Seidel \cite{Seidel1, Seidel2}.

Identify the cotangent bundle of the $n$-sphere with the set
\begin{equation*}
T^{*}S^{n}=\{(u,v)\in \mathbb{R}^{n+1}\times \mathbb{R}^{n+1}: \|u\|=1, \langle u,v\rangle=0\}.
\end{equation*}
Here $\langle\ast,\ast\rangle$ denotes the standard inner product on $\mathbb{R}^{n+1}$ and we may view $u, v$ as the real and imaginary parts of vectors in $\mathbb{C}^{n+1}$.  We consider $T^{*}S^{n}$ as a symplectic manifold with the canonical symplectic form $-d\lambda_{can}=\sum_{1}^{n+1} du_{i}\wedge dv_{i}$.  In this model situation we can write $-\lambda_{can}=-\sum_{1}^{n+1} v_{i}du_{i}$.  Fix an arbitrarily small positive constant $\epsilon<1$, and let $f:[0,\infty)\rightarrow\mathbb{R}$ be a function such that
\be
\item $f(0)=\pi$ and all derivatives vanish on a neighborhood of $0$,
\item $f$ is non-decreasing, and
\item $f(x)=2\pi$ for all $x\geq \epsilon$.
\ee

Now define the diffeomorphism on the complement of the zero-section of $T^{*}S^{n}$
\begin{equation*}
\widehat{\tau}_{n}:(T^{*}S^{n} \setminus S^{n}) \rightarrow (T^{*}S^{n} \setminus S^{n})
\end{equation*}
determined by the formula
\begin{equation*}
\widehat{\tau}_{n}(u,v)=\big(\cos\circ f(\| v \|)\cdot u + \sin\circ f(\| v \|)\cdot \frac{v}{\| v  \|}, -\|v\|\sin\circ f(\| v \|)\cdot u + \cos\circ f(\| v  \|)\cdot v\big)
\end{equation*}
using coordinates $u, v$ on $\mathbb{C}^{n+1}$.

By our conditions on the function $f$, $\widehat{\tau}_{n}$ extends to a diffeomorphism $\tau_{n}$ of $T^{*}S^{n}$ which extends smoothly over the zero-section. The restriction of $\widehat{\tau}_{n}$ to the zero-section $S^{n}$ is the antipodal map and the restriction to a collar neighborhood of $\partial\disk^{*}S^{n}\subset T^{*}S^{n}$ is the identity map by our assumption that $\epsilon<1$.  Hence we will view $\tau_{n}$ as an element of $\Diff^{+}(\disk^{*}S^{n},\partial\disk^{*}S^{n})$.\footnote{The reader may note that our choices of orientation on $T^{*}S^{n}$ and conditions defining the function $f$ above are the opposite of those often appearing in the literature.  However, the end result is the same.  See \cite[Remark 6.4]{Seidel2}.}

\begin{thm}
The diffeomorphism $\tau_{n}$ preserves $-d\lambda_{can}$ and its isotopy class in $\Symp((\disk^{*}S^{n}, -d\lambda_{can}),\partial\disk^{*}S^{n})$ is independent of the constant $\epsilon<1$ and the choice of function $f$.
\end{thm}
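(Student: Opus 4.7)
The plan is to treat the two assertions separately: first verifying that $\tau_{n}$ is a well-defined symplectomorphism of $(\disk^{*}S^{n},-d\lambda_{can})$ that restricts to the identity near $\partial \disk^{*}S^{n}$, and then producing the required isotopies by a convex interpolation argument on the space of admissible pairs $(f,\epsilon)$.

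First I would record the smoothness and boundary behavior of $\tau_{n}$. Away from the zero section $\{v=0\}$ the formula defining $\tau_{n}$ is manifestly smooth in $(u,v)$. Condition (1) on $f$ forces $f\equiv\pi$ on a neighborhood of $0$, so on the preimage of that neighborhood the formula collapses to the antipodal involution $(u,v)\mapsto(-u,-v)$, giving smoothness across the zero section. Condition (3) gives $\tau_{n}(u,v)=(u,v)$ for $\|v\|\geq\epsilon$, so in particular $\tau_{n}\in\mbox{Diff}^{+}(\disk^{*}S^{n},\partial\disk^{*}S^{n})$.

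Second, to show $\tau_{n}^{*}(-d\lambda_{can})=-d\lambda_{can}$, I would exploit the fact that on $T^{*}S^{n}\setminus S^{n}$ the time-$t$ Hamiltonian flow of $H(u,v)=\|v\|$ with respect to $-d\lambda_{can}$ is the explicit normalized geodesic flow
\begin{equation*}
\sigma_{t}(u,v)=\bigl(\cos(t)\,u+\sin(t)\tfrac{v}{\|v\|},\;-\sin(t)\|v\|\,u+\cos(t)\,v\bigr),
\end{equation*}
so that $\tau_{n}=\sigma_{f(\|v\|)}$ by direct comparison. Fix any $\delta\in(0,\epsilon)$ and set $G(r)=\int_{\delta}^{r}f(s)\,ds$. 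The Hamiltonian vector field of $K(u,v)=G(\|v\|)$ equals $G'(\|v\|)X_{H}=f(\|v\|)X_{H}$, and since $\|v\|$ is preserved along its flow, the time-$1$ map of $X_{K}$ equals $\sigma_{f(\|v\|)}=\tau_{n}$. Hence $\tau_{n}$ is symplectic on $T^{*}S^{n}\setminus S^{n}$; on the neighborhood of the zero section where it coincides with the antipodal map, which preserves $-d\lambda_{can}$, it is also symplectic. Gluing these two descriptions shows $\tau_{n}$ is a global symplectomorphism.

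Finally, given two admissible pairs $(f_{0},\epsilon_{0})$ and $(f_{1},\epsilon_{1})$ with associated twists $\tau_{n}^{f_{j}}$, set $\epsilon=\max(\epsilon_{0},\epsilon_{1})$ and $f_{t}=(1-t)f_{0}+tf_{1}$ for $t\in[0,1]$. Condition (1) is preserved by convex combination, monotonicity of $f_{t}$ follows from monotonicity of each $f_{j}$, and $f_{t}(x)=2\pi$ for $x\geq\epsilon$, so $(f_{t},\epsilon)$ is admissible for every $t$. The associated family $\tau_{n}^{f_{t}}$ is smooth in $t$ and, by the previous paragraph, consists of symplectomorphisms each equal to the identity on $\{\|v\|\geq\epsilon\}$, providing the required isotopy in $\text{Symp}((\disk^{*}S^{n},-d\lambda_{can}),\partial\disk^{*}S^{n})$. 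The main technical point is the non-smoothness of $\|v\|$ along the zero section, which obstructs a direct global Hamiltonian realization of $\tau_{n}$; this is exactly why condition (1) demands that $f$ be locally constant near $0$, allowing the neighborhood-of-the-zero-section regime and the Hamiltonian regime to be analyzed separately and then patched.
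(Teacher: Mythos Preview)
Your argument is correct and follows the standard approach: realizing $\tau_{n}$ away from the zero section as the time-one map of an autonomous Hamiltonian built from a primitive of $f$, handling the zero section separately via the antipodal involution, and then interpolating linearly in the space of admissible functions. The paper itself does not supply a proof but simply refers to \cite{Seidel1,Seidel2}, and what you have written is exactly the argument found there.
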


A proof may be found in \cite{Seidel1,Seidel2}.

\begin{defn}
We call any symplectomorphism which is boundary-relative symplectically isotopic to $\tau_{n}\in\Symp((\disk^{*}_{\epsilon},-d\lambda_{can}),\partial \disk^{*}_{\epsilon}S^{n})$ a \emph{generalized Dehn twist}.
\end{defn}

It is easy to see that the mapping $\tau_{1}$ coincides with the usual notion of a right-handed Dehn twist on an annulus when $n=1$.

\begin{ex}[\cite{Arnold}]\label{Ex:Arnold}
Consider the function $f:\mathbb{C}^{n+1}\rightarrow\mathbb{C}$ given by
\begin{equation*}
(z_{1},\dots,z_{n+1})\mapsto \sum_{1}^{n+1} z_{j}^{2}.
\end{equation*}
This function induces an open book decomposition of $S^{2n+1}=\partial\disk^{2n+2}$ whose binding is $f^{-1}(0)\cap S^{2n+1}$.  In this case, each page is diffeomorphic to $\disk^{*}S^{n}$ and the monodromy is given by a generalized Dehn twist.  This open book is compatible with the standard contact structure $\xi_{std}$ on $S^{2n+1}$.
\end{ex}

\subsection{Contact $(1/k)$-surgery}

We give two equivalent definitions of contact $(1/k)$-surgery.  The first will make it clear that our definition extends the 3-dimensional one described above, while the second will make it easier to prove some of its basic properties.  Afterwards we briefly discuss a subtlety in this definition, which is irrelevant when performing surgery on $3$-dimensional contact manifolds.

\subsubsection{First definition}

Our first definition of contact surgery uses convex gluing instructions as defined in Section \ref{Sec:GluingInstructions}.

Let $\Mxi$ be any $(2n+1)$-dimensional contact manifold containing a Legendrian sphere $L$.  The Weinstein neighborhood theorem for Legendrian submanifolds asserts that for any contact form $\alpha$ for $\Mxi$, we can find a ribbon $\Sigma=\disk^{*}L$ for which $\alpha|_{\disk^{*}L}=-\lambda_{can}$.  Consider a Liouville embedding $I:(\disk^{*}S^{n},-\lambda_{can})\rightarrow \Mxi$ whose image is the ribbon $\Sigma$ of $L$.

\begin{defn}\label{Def:Surgery}
To perform \emph{contact $(1/k)$-surgery} on $L$ with parameter $I$, remove a standard neighborhood $\mathcal{N}(\Sigma)$ of $\Sigma$ from $\Mxi$ and then reattach $\partial \mathcal{N}(\Sigma)$ to $\partial (M\setminus \Int(\mathcal{N}(\Sigma)))$ using the convex gluing instructions $(\tau^{-k},\Id_{\Sigma})$.
\end{defn}

\subsubsection{Second definition}

Our second definition of contact surgery uses Liouville connect sums between a given contact manifold and model contact manifolds provided by the following.

\begin{defn}\label{Def:TstarSnOB}
For each natural number $n>0$ define $\Mxink{n}{k}$ to be the $(2n+1)$-dimensional contact manifold determined by the open book for the pair $((\disk^{*}S^{n},-\lambda_{std}),\tau_{n}^{k})$.
\end{defn}

\begin{ex}\label{Ex:OpenBooks}
The smooth manifold underlying $\Mxink{n}{-1}$ is $S^{2n+1}$, although its contact structure is not $\xi_{std}$.  As shown in \cite{BK:Stabilize}, $\Mxink{n}{-1}$ is not symplectically fillable for any $n$.  This fact is well known in the case $n=1$, as $\Mxink{n}{-1}$ is overtwisted.  For a more explicit description of $\Mxink{n}{-1}$, see \cite[Example 5]{NP}. Theorem \ref{Thm:NegativeMonodromy} similarly addresses fillability of the $\Mxink{n}{k}$.

As pointed out in the discussion following the statement of Theorem \ref{Thm:WeakMonoids}, $\Mxink{n}{0}$ can be realized as the boundary of the Liouville domain $(\disk^{2}\times\disk^{*}S^{n},\lambda_{std}-\lambda_{can})$. By Example \ref{Ex:Arnold} $\Mxink{n}{1}$ is diffeomorphic to the standard contact sphere $(S^{2n+1},\xi_{std})$.

We claim that the contact manifold $\Mxink{n}{2}$ coincides with the canonical contact structure on the unit cotangent bundle $S^{*}S^{n+1}$.  By Example \ref{Ex:WeinsteinSphere}, we see that $(S^{*}S^{n+1},\xi_{can})$ can be realized by performing a Weinstein handle attachment along a Legendrian sphere -- the standard Legendrian unknot -- in $(S^{2n+1},\xi_{std})$. According to Example \ref{Ex:Arnold}, this Legendrian sphere can be realized as the zero section of $\disk^{*}S^{n}$ which we identify with one of the pages of the open book used to describe $\Mxink{n}{1}=\Sstd$.  According to Example \ref{Ex:LiouvilleWeinstein}, the surgered contact manifold $(S^{*}S^{n+1},\xi_{can})$ can be described by performing a Liouville connect sum on two disjoint copied of $\Mxink{n}{1}$, by identifying a page of one copy with a page of the other copy.  According to the proof of Theorem \ref{Thm:Monoids}, the resulting contact manifold is $\Mxink{n}{2}$.  Thus $\Mxink{n}{2}=(S^{*}S^{n+1},\xi_{can})$.
\end{ex}

Again, let $\Mxi$ be any $(2n+1)$-dimensional contact manifold containing a Legendrian sphere $L$ and consider a Liouville embedding $I:(\disk^{*}S^{n},-\lambda_{can})\rightarrow \Mxi$ whose image is the ribbon $\Sigma$ of $L$.  We have a fixed identification of $\disk^{*}S^{n}$ with a page of the open book $\Mxink{n}{k}$ for each $k$ as this manifold is defined constructively.

\begin{defn}\label{Def:SurgeryDef}
Define the contact manifold $\Mxi_{(L,I,k)}$ as the Liouville connect sum of 
\begin{equation*}
\Mxi\sqcup \Mxink{n}{-k}
\end{equation*}
using the Liouville embedding $I$.  We say that $\Mxi_{(L,I,k)}$ is obtained by \emph{contact (1/k)-surgery} on L with parameter $I$.
\end{defn}

This definition, together with Theorem \ref{Thm:Cobordism} allows us to associate a Weinstein cobordism to a contact $(1/k)$-surgery.  In Figure \ref{Fig:PlusOneCobordism} we provide a Kirby diagram for one such cobordism.

\begin{figure}[h]
	\begin{overpic}[scale=.7]{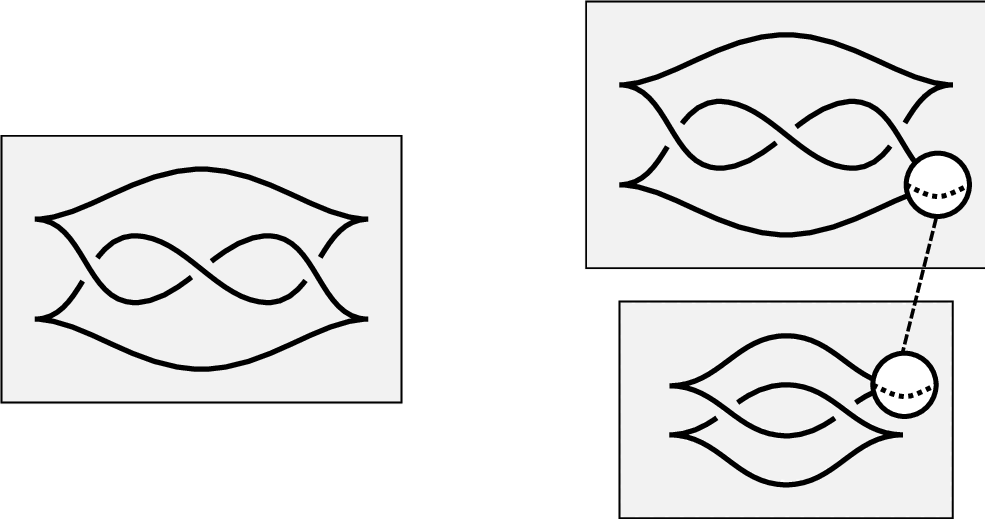}
        \put(48,25){$=$}
        \put(2,32){$1$}
        \put(62,46){}
        \put(67,4){$\frac{1}{2}$}
    \end{overpic}
	\caption{On the right we have a Weinstein cobordism whose concave boundary is a disjoint union of $\Sthree$ with the overtwisted 3-sphere $\Mxink{1}{-1}$.  The overtwisted sphere is given as the result of a $(\half)$-surgery on the standard Legendrian unknot in $\Sthree$.  The convex end of this cobordism is equivalent to the contact manifold on the left, described by a $(+1)$-surgery on a right-handed Legendrian trefoil in $\Sthree$.  The cobordism is decomposed into two Weinstein handle attachments; a $4$-dimensional $1$-handle attachment labeled by the 3-disks attached by a dotted line, and a $4$-dimensional $2$-handle determined by the unlabeled Legendrian knot which passes twice through the $1$-handle. This handle decomposition is given by applying the reasoning of Section \ref{Sec:Kirby} to the decomposition of $(\disk^{*}S^{1},-\lambda_{can})$ into a $2$-dimensional $0$-handle together with a single $2$-dimensional $1$-handle.}
    \label{Fig:PlusOneCobordism}
\end{figure}

\begin{prop}
Definitions \ref{Def:Surgery} and \ref{Def:SurgeryDef} are equivalent.
\end{prop}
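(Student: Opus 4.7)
The plan is to use the Heegaard-type decomposition of $\Mxi_{n,-k}$ provided by Lemma \ref{Lemma:Heegaard} to re-express the Liouville connect sum of Definition \ref{Def:SurgeryDef} as the convex regluing of Definition \ref{Def:Surgery}. Fix the ribbon $\Sigma = i(\disk^{*}S^{n})\subset\Mxi$ and the associated standard neighborhood $\mathcal{N}(\Sigma)$. Since $\Mxi_{n,-k}$ is, by definition, the contact open book $\Mxi_{((\disk^{*}S^{n},-\lambda_{can}),\tau_{n}^{-k})}$, Lemma \ref{Lemma:Heegaard} (applied with $\Phi = \tau_{n}^{-k}$ and $\Psi = \mathrm{id}_{\Sigma}$) exhibits it as the convex gluing of two standard neighborhoods $N_{1}, N_{2}$ of a copy of $\Sigma$ along $\partial N_{1} \cong \partial N_{2}$ by the instructions $(\tau_{n}^{-k},\mathrm{id}_{\Sigma})$.

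Next, I would unwind the Liouville connect sum in Definition \ref{Def:SurgeryDef}. Performing that connect sum using the embedding $I$ on the $\Mxi$ side and the tautological Liouville embedding of $\Sigma$ as a page of $\Mxi_{n,-k}$ on the other side removes $\mathcal{N}(\Sigma)\subset\Mxi$ and one of the two blocks, say $N_{1}$, from $\Mxi_{n,-k}$, and then identifies the remaining convex boundary components via the map $\widehat{\Upsilon}$ of Section \ref{Sec:ConvexGluing}. What remains of $\Mxi_{n,-k}$ is then exactly $N_{2}$, but now carrying a \emph{prescribed} rule for attaching its boundary: one side of $\partial N_{2}$ is glued to $\partial(M\setminus\mathrm{Int}(\mathcal{N}(\Sigma)))$ via $\widehat{\Upsilon}$, while the other side had been glued to $\partial N_{1}$ via $(\tau_{n}^{-k},\mathrm{id}_{\Sigma})$ and hence, after the identification $\partial N_{1} \leftrightarrow \partial \mathcal{N}(\Sigma)$ implicit in $\widehat{\Upsilon}$, is glued to the \emph{other} side of $\partial(M\setminus\mathrm{Int}(\mathcal{N}(\Sigma)))$ by the composition $\widehat{\Upsilon}\circ(\tau_{n}^{-k},\mathrm{id}_{\Sigma})$.

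The key step is to observe that deleting $\mathcal{N}(\Sigma)\cup N_{1}\cup N_{2}$ leaves a product collar that can be absorbed, so the whole connect-summed manifold is diffeomorphic to $M$ with $\mathcal{N}(\Sigma)$ removed and reglued, with the total gluing recipe read off as $(\tau_{n}^{-k},\mathrm{id}_{\Sigma})$ (the two $\widehat{\Upsilon}$-factors coming from the two sides of $N_{2}$ cancel because $\widehat{\Upsilon}$ is an involution-type map $(\theta,x)\mapsto(-\theta,x)$ intertwining the two positive/negative convex regions). By Proposition \ref{Prop:GluingInvariance} the result depends only on the isotopy class $[\tau_{n}^{-k}]\in\mathrm{Symp}((\Sigma,d\beta),\partial\Sigma)$, so on the contact structure this is precisely the manifold described by Definition \ref{Def:Surgery}.

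The main obstacle is the careful bookkeeping of which convex region on each boundary gets identified with which, and verifying that the composition of $\widehat{\Upsilon}$ with the Heegaard gluing instructions $(\tau_{n}^{-k},\mathrm{id}_{\Sigma})$ does indeed collapse to the single convex gluing instruction $(\tau_{n}^{-k},\mathrm{id}_{\Sigma})$ on $\partial\mathcal{N}(\Sigma)$. This is essentially the same cancellation argument used in the proof of Proposition \ref{Prop:OBSum} and Lemma \ref{Lemma:Fibered}, where inserting or deleting a standard neighborhood glued by the identity is transparent; once the parallel with those proofs is made explicit, Proposition \ref{Prop:GluingInvariance} finishes the equivalence.
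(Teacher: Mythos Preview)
Your proposal is correct and follows essentially the same approach as the paper: decompose $\Mxi_{n,-k}$ via Lemma \ref{Lemma:Heegaard} into two standard neighborhoods $N_{1},N_{2}$ glued by $(\tau_{n}^{-k},\mathrm{id}_{\Sigma})$, then observe that the Liouville connect sum of Definition \ref{Def:SurgeryDef} amounts to deleting one of these blocks together with $\mathcal{N}(\Sigma)\subset\Mxi$ and regluing, which is precisely Definition \ref{Def:Surgery}. The paper's proof is terser---it simply asserts the last step is ``clearly equivalent''---whereas you spell out the bookkeeping with $\widehat{\Upsilon}$ and invoke Proposition \ref{Prop:GluingInvariance} explicitly; this extra care is harmless and arguably an improvement.
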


\begin{proof}
As in Lemma \ref{Lemma:Heegaard}, we can present $\Mxink{n}{k}$ as two copies $N_{1}$ and $N_{2}$ of a standard neighborhood of $(\disk^{*}S^{n},-\lambda_{can})$ whose boundaries are identified via the convex gluing instructions $(\tau_{n}^{k},\Id_{\disk^{*}S^{n}})$.  Hence we can perform contact $(1/k)$-surgery as described in Definition \ref{Def:SurgeryDef} by removing $N_{2}$ from $\Mxink{n}{k}$, removing $\mathcal{N}(\Sigma)$ from $\Mxi$ and performing a convex gluing.  This is clearly equivalent to Definition \ref{Def:Surgery}.
\end{proof}

\subsubsection{Dependence on the parameter $I$}\label{Sec:ParametrizationDependenceOfTwists}

A priori the contact manifold $\Mxi_{(L,I,k)}$ depends on the parameterization $I$ -- not just the image $L$ of $I$.  For example, by combining Examples \ref{Ex:LiouvilleWeinstein} and \ref{Ex:WeinsteinSphere}, we see that different parameterization of a Legendrian sphere in $\Sstd$ can produce contact manifolds which may be inequivalent -- unit cotangent bundles of possibly exotic homotopy spheres. See \cite{Abouzaid:Exotic, EKS:Exotic} for related results and discussion pertaining to cotangent bundles of spheres. There are however certain cases in which we can guarantee that $\Mxi_{(L,I,k)}$ is independent of $I$.

\begin{prop}
Suppose that $H:\disk^{*}S^{n}\times[0,1]\rightarrow M$ is an isotopy of Legendrian spheres in $\Mxi$. That is, for each $t\in[0,1]$, $H(\ast,t):S^{n}\rightarrow M$ yields an embedded Legendrian sphere.  Writing $H(\ast,0)=I$, $H(\ast,1)=I'$, $I(S^{n})=L$, and $I'(S^{n})=L'$ we have that $\Mxi_{(L,I,k)}$ is contact-diffeomorphic to $\Mxi_{(L',I',k)}$.  Moreover, if $\Diff^{+}(S^{n})$ is path connected, then $\Mxi_{(L,I,k)}$ depends only on the submanifold $L$ and is independent of the chosen parameterization $I$.
\end{prop}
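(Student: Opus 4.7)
The plan is to reduce everything to the invariance of the Liouville connect sum under Liouville homotopy of the embedded hypersurfaces, a fact implicit in Proposition \ref{Prop:GluingInvariance} combined with the standard-neighborhood construction of Section \ref{Sec:StandardNeighborhood}. First I would invoke the Legendrian isotopy extension theorem, a standard consequence of the contact isotopy extension lemma, to extend $H$ to an ambient contact isotopy $\Phi_t : M \to M$ with $\Phi_0 = \mbox{id}_M$ and $\Phi_t \circ I|_{S^n} = H(\ast,t)$; in particular $\Phi_1 \circ I|_{S^n} = I'|_{S^n}$. Since $\Phi_1$ is a contactomorphism of $\Mxi$, replacing $I$ by $\Phi_1 \circ I$ does not alter the contact-diffeomorphism type of the surgered manifold, so I may henceforth assume $I(S^n) = I'(S^n) = L$ with the two parametrizations of the zero section matching.

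Next I would apply the Weinstein neighborhood theorem for Legendrian submanifolds: any two contact germs along $L$ are conjugate by a contactomorphism fixing $L$ pointwise. This produces, on some sufficiently small disk subbundle $\disk^{*}_{\epsilon}S^n \subset \disk^{*}S^n$, a contact isotopy $\Psi_t$ of a neighborhood of $L$ in $M$ -- fixing $L$ pointwise and extended by the identity to all of $M$ -- for which $\Psi_1 \circ I|_{\disk^{*}_{\epsilon}S^n} = I'|_{\disk^{*}_{\epsilon}S^n}$. Rescaling via the contracting flow of the Liouville vector field $X_{-\lambda_{can}}$ on $\disk^{*}S^n$, and then applying the Moser-type arguments in the proofs of Lemmas \ref{Lemma:df} and \ref{Lemma:Killingdf} to convert agreement of contact structures into agreement of the primitive $1$-forms, one obtains a smooth path of Liouville embeddings $I_t : (\disk^{*}S^n, -\lambda_{can}) \to \Mxi$ with $I_0 = I$ and $I_1 = I'$ -- a Liouville homotopy in the sense of Section \ref{Sec:Preliminaries}.

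Given such a Liouville homotopy, the Liouville connect sum used to define $\Mxi_{(L,I,k)}$ is unchanged up to contact diffeomorphism: the normalization of contact forms in Section \ref{Sec:ThetaInvariant} and the gluing map $\widehat{\Upsilon}$ depend only on the Liouville-isotopy class of the embedding of a standard neighborhood, by the same argument used in Proposition \ref{Prop:GluingInvariance}. This yields $\Mxi_{(L,I,k)} \cong \Mxi_{(L',I',k)}$. For the second statement, if $L = L'$ and $\text{Diff}^+(S^n)$ is path connected, then $I'|_{S^n} \circ (I|_{S^n})^{-1} : L \to L$ is an orientation-preserving self-diffeomorphism of $L$ isotopic to the identity, and pulling this isotopy back through $I|_{S^n}$ produces a Legendrian isotopy $H$ between the two parametrizations to which the first statement applies.

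The main obstacle will be the middle step: upgrading the Weinstein-neighborhood equivalence -- which matches only contact structures, and only on a shrunken disk subbundle -- to an honest Liouville homotopy of embeddings on the full $\disk^{*}S^n$. This requires simultaneously running a parametric Moser argument, so that the pulled-back forms $I_t^{*}\alpha$ remain Liouville throughout the path rather than merely representing the same isotopy class of contact structures, and carefully bookkeeping the rescaling by the Liouville flow, where the contractibility of the fibers of $\disk^{*}S^n$ under $X_{-\lambda_{can}}$ is essential; once this is established, the remaining statements follow formally from Proposition \ref{Prop:GluingInvariance}.
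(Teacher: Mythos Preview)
Your approach is correct and follows the same skeleton as the paper's proof, which is extremely terse: the paper simply invokes the contact isotopy extension theorem (citing \cite[Theorem 2.12]{Etnyre:KnotNotes}) for the first statement and refers to \cite[Lemma 6.2]{Seidel2} for the second, without further elaboration.

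What you have added is an explicit treatment of the step the paper leaves implicit: that two Liouville embeddings of $(\disk^{*}S^{n},-\lambda_{can})$ which agree on the zero section yield contact-diffeomorphic surgeries. Your proposed route through the Weinstein neighborhood theorem, Liouville rescaling, and a parametric Moser argument is sound, and your worry about this ``main obstacle'' is somewhat overstated. Once the zero-section parametrizations agree, both embeddings land in a single model neighborhood $[-\epsilon,\epsilon]\times\disk^{*}S^{n}$ with contact form $dz-\lambda_{can}$; inside this explicit model the space of Liouville embeddings extending the given zero-section map is visibly connected (shrink via the radial Liouville flow, then apply the standard Moser argument), and Gray stability in the spirit of Proposition~\ref{Prop:GluingInvariance} finishes the job. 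One small caution: Proposition~\ref{Prop:GluingInvariance} as stated concerns independence of the \emph{correction isotopies} in a convex gluing, not directly invariance under Liouville homotopy of the embedded hypersurface, so you should phrase your appeal to it as ``by the same Gray-stability argument'' rather than as a direct citation.

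Your treatment of the second statement---that connectedness of $\text{Diff}^{+}(S^{n})$ lets you connect any two parametrizations of a fixed $L$ by a Legendrian isotopy of reparametrizations, reducing to the first statement---is exactly the content of Seidel's cited lemma.
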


The first statement follows from the fact that we can write $H(\ast,t)=\phi_{t}\circ H(\ast,0)$ for an isotopy $\phi_{t}$ of $M$ which preserves $\xi$.  See \cite[Theorem 2.12]{Etnyre:KnotNotes}.  The existence of such an isotopy of $\Mxi$ then provides a homotopy of Liouville embeddings $(\disk^{*}S^{n}, -\lambda_{std})\rightarrow \Mxi$ by extending the original embedding using a Weinstein neighborhood theorem and applying the $\phi_{t}$. The second statement is essentially \cite[Lemma 6.2]{Seidel2} which states that isotopic parameterizations of the zero-section of $\disk^{\ast}S^{n}$ determine Dehn twists which are isotopic through a family of compactly supported symplectomorphisms.  The hypothesis of the second statement is known to hold true for $n=1,2,3,4,5,11,60$ and is known to not hold true for any other values of $n\leq 63$ \cite{Milnor:DiffTop}.

\subsection{Basic properties}

Now we outline some basic properties of contact $(1/k)$-surgery, showing that many of the results of Theorem \ref{Thm:SurgeryOverview} continue to hold in high dimensions.  In order to state our results, we must first define the \emph{standard Legendrian sphere} and \emph{Legendrian push-offs}.

\begin{defn}\label{Def:StandardLegendrian}
The \emph{standard Legendrian sphere in $(S^{2n+1},\xi_{std})$}, denoted $L_{std}$, is given by $S^{2n+1}\cap \Span(x_{1},\dots,x_{n+1})$ where we consider $\Sstd=\partial(\disk^{2n+2},\lambda_{std})$.  Let $\Mxi$ be a $(2n+1)$-dimensional contact manifold and identify $\Mxi$ with the contact connect sum of $\Mxi$ and $(S^{2n+1},\xi_{std})$, where the connect sum is performed outside of a tubular neighborhood of $L_{std}\subset S^{2n+1}$.  In this way, we view $L_{std}$ as a Legendrian sphere in $\Mxi=\Mxi\# (S^{2n+1},\xi_{std})$.  We say that a Legendrian sphere $L$ in $\Mxi$ is a \emph{standard Legendrian sphere} if it is Legendrian isotopic to $L_{std}\subset \Mxi$.
\end{defn}

According to the above definition, a standard Legendrian sphere in a contact 3-manifold is a Legendrian unknot with Thurston-Bennequin number equal to $-1$.  The sphere $L_{std}$ has a canonical parametrization given by its identification with the unit $n$-sphere in $\Span(x_{1},\dots,x_{n+1})$.

\begin{lemma}\label{Lemma:StandardConnectSum}
Performing contact $(1/k)$ surgery on a standard Legendrian sphere in some $(2n+1)$-dimensional contact manifold $\Mxi$ produces a contact connected sum of $\Mxi$ with $\Mxink{n}{1 - k}$.
\end{lemma}

\begin{proof}
Applying Definition \ref{Def:StandardLegendrian}, we may be view the surgery locus as living inside of $(S^{2n+1},\xi_{std})$. It therefore suffices to prove that contact $(1/k)$ surgery on the standard unknot in $(S^{2n+1}, \xi_{std})$ is contactomorphic to $\Mxink{n}{1 - k}$. The standard Legendrian sphere $L$ in $(S^{2n+1},\xi_{std})$ may be viewed as lying in a page of the open book decomposition which identifies $(S^{2n+1},\xi_{std})$ with $\Mxink{n}{1}$. As in this case, Definition \ref{Def:SurgeryDef} coincides exactly with the proof of Proposition \ref{Prop:OBSum}, and we see that the contact manifold obtained by performing $(1/k)$ surgery along $L$ will be supported by an open book decomposition with page $\disk^{\ast}S^{n}$ and monodromy is the product of the monodromies $\tau_{n}$ and $\tau_{n}^{-k}$.
\end{proof} 

\begin{defn}
Let $L\subset\Mxi$ be a Legendrian submanifold and identify a tubular neighborhood $N(L)$ of $L$ with $N(L)=[-\epsilon,\epsilon]\times \disk^{*}L$.  We say that a Legendrian submanifold of $(M\setminus N(L),\xi)$ is a \emph{push-off} of $L$ if it is Legendrian isotopic to $\{\epsilon\}\times L\subset M\setminus \Int(N(L))$.
\end{defn}

This notion of push-off clearly extends the usual definition of a push-off of a Legendrian knot in a contact 3-manifold.  A parametrization $I:S^{n}\rightarrow L$ of a Legendrian sphere in $\Mxi$ gives rise to a canonical parametrization of a push-off by $(\epsilon,I):S^{n}\rightarrow N(L)$.

\begin{thm}\label{Thm:SurgeryProperties}
Let $\Mxi$ be a connected $(2n+1)$-dimensional contact manifold.
\be
\item Performing a contact $(-1)$-surgery along any Legendrian sphere $L\subset\Mxi$ gives the same contact manifold as the one obtained by performing a $(2n+2)$-dimensional Weinstein $(n+1)$-handle attachment along $L$.
\item For any Legendrian sphere $L\subset\Mxi$, the contact manifold described by performing a contact $(1/p)$-surgery on $L$ with parameter, followed by a $(1/q)$-surgery on a push-off of $L$ (with its natural parameterization) is equivalent to the contact manifold described by performing a $(1/(p+q))$-surgery on $L\subset\Mxi$.
\item Performing a contact $(+1)$-surgery on a standard Legendrian sphere in $\Mxi$ with its natural parametrization produces the contact connect sum of $\Mxi$ with $(S^{n}\times S^{n+1},\xi_{std}):=\partial(\disk^{*}S^{n}\times\disk^{2},-\lambda_{can}+\lambda_{std})$.
\item Performing a contact $(\half)$-surgery on a standard Legendrian sphere in $\Mxi$ yields $M$ equipped with an algebraically overtwisted \cite{BN:Overtwisted} (and so not symplectically fillable) contact structure.
\ee
\end{thm}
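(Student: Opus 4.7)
The plan is to interpret each statement about contact $(1/k)$-surgery as a statement about the Liouville connect sum via Definition \ref{Def:SurgeryDef}, which identifies contact $(1/k)$-surgery on a Legendrian sphere $L\subset\Mxi$ with the Liouville connect sum $\Mxi\#_{(\disk^{*}S^{n},-\lambda_{can})}\Mxi_{n,-k}$.  For item $(1)$, Example \ref{Ex:Arnold} shows that each page of the Milnor open book on $\Mxi_{n,1}=(S^{2n+1},\xi_{std})$ is a cotangent disk bundle $\disk^{*}S^{n}$ whose zero section is a standard Legendrian sphere; so a contact $(-1)$-surgery is the Liouville connect sum of $\Mxi$ with $\Sstd$ along $\disk^{*}S^{n}$-ribbons, and Example \ref{Ex:LiouvilleWeinstein} identifies this connect sum with Weinstein $(n+1)$-handle attachment along $L$.

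For item $(2)$, I would first observe that in the surgered manifold $\Mxi_{(L,I,p)}$, a push-off of $L$ can be identified, after gluing, with the zero section of a second page of the glued-in $\Mxi_{n,-p}$-piece, disjoint from the page used for the connect sum.  The subsequent $(1/q)$-surgery on the push-off is then another Liouville connect sum, and by disjointness the two connect sums can be rearranged: first perform the internal Liouville connect sum on $\Mxi_{n,-p}\sqcup\Mxi_{n,-q}$ along matching pages, which by Proposition \ref{Prop:OBSum} produces $\Mxi_{n,-(p+q)}$, then Liouville connect sum with $\Mxi$; this is precisely $\Mxi_{(L,I,p+q)}$.  For item $(3)$, since a standard Legendrian sphere $L_{std}$ lies in a Darboux ball by Definition \ref{Def:StandardLegendrian}, the $(+1)$-surgery only modifies the $\Sstd$ summand of $\Mxi=\Mxi\#\Sstd$.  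Writing $\Sstd=\Mxi_{n,1}$, Proposition \ref{Prop:OBSum} gives $\Mxi_{n,1}\#_{\Sdom}\Mxi_{n,-1}=\Mxi_{n,0}$, and $\Mxi_{n,0}$ is by construction the contact manifold supported by the open book with page $\disk^{*}S^{n}$ and trivial monodromy, which coincides with $\partial(\disk^{*}S^{n}\times\disk^{2},-\lambda_{can}+\lambda_{std})=(S^{n}\times S^{n+1},\xi_{std})$.

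Item $(4)$ is where the main obstacle lies.  Combining $(2)$ and $(3)$, $\Mxi_{(L_{std},I,2)}=\Mxi\#\Mxi_{n,-1}$, which is diffeomorphic to $M$ since $\Mxi_{n,-1}$ is a contact structure on $S^{2n+1}$ (Example \ref{Ex:OpenBooks}).  It therefore remains to show that this contact connect sum is algebraically overtwisted.  Bourgeois-van Koert \cite{BK:Stabilize} produce a distinguished chain $x$ in the contact DGA of $\Mxi_{n,-1}$ satisfying $\partial x=1$, using a specific choice of contact form supported by the stabilized open book.  I would propagate this to the connect sum by rescaling the contact form on the $\Mxi_{n,-1}$-side of $\Mxi\#\Mxi_{n,-1}$ so that the Reeb orbits appearing in $x$ and in $\partial x$ have action strictly smaller than any Reeb orbit of the $\Mxi$-side and any orbit that could pass through the Weinstein $1$-handle realizing the connect sum, and then applying a standard SFT neck-stretching argument to confine the holomorphic curves counted by $\partial x$ to a copy of the $\Mxi_{n,-1}$-side.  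This confinement step, controlling the holomorphic curves in the connect-sum region purely through action estimates, is the principal technical difficulty; once it is in hand, the identity $1=\partial x$ holds in the contact DGA of $\Mxi\#\Mxi_{n,-1}$, giving algebraic overtwistedness.  The non-fillability conclusion then follows from the fact, recalled after Corollary \ref{Cor:HCMonoids}, that any symplectically fillable contact manifold has non-vanishing contact homology.
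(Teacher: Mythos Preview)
Your arguments for items (1)--(3) are correct and match the paper's proof essentially line for line: the paper also reduces (1) to Examples \ref{Ex:LiouvilleWeinstein} and \ref{Ex:Arnold}, reduces (2) to the observation that a push-off of $L$ isotopes onto the zero-section of a page of the glued-in $\Mxi_{n,-p}$ so that a further $(1/q)$-surgery composes monodromies (equivalently, your appeal to Proposition \ref{Prop:OBSum}), and reduces (3) to identifying the local model with $\Mxi_{n,0}=\partial(\disk^{*}S^{n}\times\disk^{2})$.

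For item (4) you reach the same intermediate conclusion as the paper, namely that the surgered manifold is the contact connect sum $\Mxi\#\Mxi_{n,-1}$, but you then misjudge what remains.  You treat \cite{BK:Stabilize} as supplying only a chain $x$ with $\partial x=1$ in $CC_{*}$ of $\Mxi_{n,-1}$, and you propose a bespoke neck-stretching/action-filtration argument to transport this identity across the connect sum, flagging this as the ``principal technical difficulty''.  The paper, by contrast, simply invokes \cite{BK:Stabilize} and stops.  The point is that the main theorem of \cite{BK:Stabilize} already treats arbitrary \emph{left-handed stabilizations}: it shows that the contact homology of any negatively stabilized open book vanishes.  Since $\Mxi\#\Mxi_{n,-1}$ is exactly the negative stabilization of any supporting open book for $\Mxi$, the algebraic overtwistedness of the connect sum is the content of that reference, not something you have to manufacture.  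Your neck-stretching sketch is not wrong in spirit---it is essentially how such results are proved---but it is redundant here, and the difficulty you single out has already been absorbed into the citation.
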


\begin{proof}
The first assertion follows from a combination of Examples \ref{Ex:LiouvilleWeinstein} and \ref{Ex:Arnold}.

For the second: Suppose that we perform a $(1/p)$-surgery on $L\subset\Mxi$ using a Liouville embedding
\begin{equation*}
I:(\disk^{*}S^{n},-\lambda_{can})\rightarrow \Mxi.    
\end{equation*}
Write $\mathcal{N}(\Sigma)$ for a standard neighborhood of the image $\Sigma$ of $I$.  Legendrian isotop the push-off of $L$ into the interior of $\mathcal{N}(\Sigma)$ -- now considered as a subset of $\Mxi_{(L,I,p)}$ -- in the obvious fashion so that it is identified with the zero-section of $\disk^{*}S^{n}$ which we considered to be a page of $\Mxink{n}{-p}$ by alternately thinking of $\mathcal{N}(\Sigma)$ as a subset of $\Mxink{n}{-p}$.  Performing $(1/q)$-surgery on this sphere in $\Mxink{n}{p}$ amounts to adding $-k$ Dehn twists to the monodromy of the open book determining $\Mxink{n}{-p}$ and so results in $\Mxink{n}{-p-q}$.  Therefore the end result is a Liouville connect sum of $\Mxi$ and $\Mxink{n}{-p-q}$ along $\Sigma$ using the parametrization $I$.

The third and fourth statements follow immediately from Lemma \ref{Lemma:StandardConnectSum} together with the descriptions of $\Mxink{n}{0}$ and $\Mxink{n}{-1}$ appearing in Example \ref{Ex:OpenBooks}.
\end{proof}

\section{Applications to the study of Dehn twists}\label{Sec:TwistApplications}

For our final application of the Liouville connect sum we use contact $(1/k)$-surgery as a tool to study generalized Dehn twists.  We begin with a continuation of the previous section, proving Theorem \ref{Thm:NegativeMonodromy}.

Throughout, we will make use of the contact manifolds $\Mxink{n}{k}$ as described in Definition \ref{Def:TstarSnOB}.

\subsection{Negative monodromy and overtwistedness}

Now we provide an alternate proof and generalization of Theorem \ref{Thm:SurgeryProperties}(4). The following is a restatement of Theorem \ref{Thm:NegativeMonodromy} in the notation of the present section.

\begin{thm}
Let $k > 0$ be a natural number and let $\Mxink{n}{-k}$ be as in Definition \ref{Def:TstarSnOB}. Then $\Mxink{n}{-k}$ does not admit a symplectic filling with $(W, \omega)$ with $\omega|_{\pi_{2}(W)} = 0$. In particular, it does not admit an exact symplectic filling.
\end{thm}

Our proof of Theorem \ref{Thm:NegativeMonodromy} is a simple application of the following theorem of Eliashberg- Floer-Gromov-McDuff \cite{Eliashberg:Plumbing,McDuff}.

\begin{thm}\label{Thm:StandardFilling}
Let $n\geq 1$ be a positive integer and suppose that $(W,\omega)$ is a symplectic filling of $\Sstd$ for which $\omega$ integrates to zero over every embedded $2$-sphere in $W$.  Then $W$ is diffeomorphic to $\disk^{2n+2}$.
\end{thm}

Clearly exact symplectic manifolds contain no such symplectically embedded spheres.

\begin{proof}[Proof of Theorem \ref{Thm:NegativeMonodromy}]
Applying a Liouville connect sum to pages of the provided open books for 
\begin{equation*}
\Mxink{n}{k+1},\quad \Mxink{n}{-k}
\end{equation*}
produces an exact symplectic cobordism whose convex boundary is $\Mxink{n}{1} = \Sstd$ and whose concave boundary is $\Mxink{n}{k+1} \sqcup \Mxink{n}{-k}$. Denote the smooth manifold with boundary underlying this cobordism as $Y$.

We now describe a symplectic filling $(W, \omega_{W})$ of $\Mxink{n}{k+1}$. For $\epsilon \in S^{1} \subset \mathbb{C}$ let $\disk^{n+1}_{\epsilon}$ be the rotation of the Lagrangian disk $\disk^{n+1} \times \{0\} \subset \mathbb{R}^{n+1}\times\mathbb{R}^{n+1}\subset \mathbb{C}^{n+1}$ by $\epsilon$. Note that the boundary of each $\disk^{n+1}_{\epsilon}$ is a Legendrian sphere realized as the zero-section of a page $T^{*}S^{n}$ of the standard open book decomposition of $\Sstd$ described in Example \ref{Ex:Arnold} and that for $\epsilon_{1} \neq \epsilon_{2}$, $\disk^{n+1}_{\epsilon_{1}}$ and $\disk^{n+1}_{\epsilon_{1}}$ have a single intersection at $0\in\mathbb{C}^{n+1}$ which is transverse. We obtain $(W, \omega_{W})$ by performing a Weinstein handle attachment to $\partial \disk^{n+1}_{\epsilon_j}$ for $\epsilon_j=\zeta^{j}$, $j=0,\dots,k$ where $\zeta$ is a primitive $(k+1)$-th root of unity. Denote by $L_j$ the Lagrangian spheres in $(X, \omega_{X})$ given by the union of the $\disk^{n+1}_{\epsilon_{j}}$ with the core sphere of its associated Weinstein handle. Then each pair of distinct $L_j$ share a single transverse intersection. As $k+1 \geq 2$, we have at least two such Lagrangian spheres $L_1, L_2$.\footnote{We could just as easily described such a pair of such Lagrangian spheres via matching paths for a Lefschetz fibration description of $(W, \omega_{W})$ over the disk with generic fiber $T^{*}S^{n}$ and $k+2$ singular fibers, but want to avoid additional machinery and definitions. See for example \cite{AS04}. Likewise, we could see that $\Mxink{n}{k+1}$ is the boundary of a Brieskorn variety \cite{KK16} and appeal to classic homological computations to achieve the end result.}

To complete the proof, let $(W, \omega_{W})$ be an exact symplectic filling of $\Mxink{n}{-k}$ whose existence would contradict the statement of the theorem. Glue the cobordisms together to construct a manifold $Z = W \cup X \cup Y$ by identifying the concave boundary components of $Y$ with the convex boundaries of $X$ and $W$ in the obvious fashion. We thus have an exact symplectic filling of $\partial Z = \Sstd$. By considering $L_1, L_2$ as submanifolds of $Z$, we would see that its middle dimensional homology $H_{n+1}(Z)$ has non-zero intersection form which contradicts the fact that $Z$ must be diffeomorphic to a disk by Theorem \ref{Thm:StandardFilling}.
\end{proof}

\begin{rmk}
Here is an alternate proof of Theorem \ref{Thm:NegativeMonodromy} in the case $k > 1$, utilizing \cite{BK:Stabilize} which asserts that $\Mxink{n}{-1}$ has vanishing contact homology for all $n$. This implies that $\Mxink{n}{-1}$ is not exactly fillable by Theorem \ref{Thm:HCSummary}.

As in the proof above, we use the Liouville connect sum to construct a cobordism $W$ whose convex boundary is $\Mxink{n}{-1}$ and whose concave boundary is $\Mxink{n}{k+1} \sqcup \Mxink{n}{-k}$. The contact homology of the convex end of $W$ is $HC_{\ast}\Mxink{n}{k-1} \otimes HC_{\ast}\Mxink{n}{-k}$ by Theorem \ref{Thm:HCSummary}. By the Weinstein fillability of $\Mxink{n}{k-1}$, its contact homology is non-zero and we have that the contact homology of the concave boundary of $W$ is zero if and only if $HC_{\ast}\Mxink{n}{-k}$ is zero.

By the facts that $HC\Mxink{n}{-1} = 0$ and the cobordism $W$ induces an algebra homomorphism between contact homologies, $HC_{\ast}\Mxink{n}{-k}$ must also be zero. Hence $\Mxink{n}{-k}$ is not Liouville fillable.
\end{rmk}

Following classical results in $3$-dimensional contact topology and the above theorem, we ask how contact $(1/k)$-surgery could be used to produce more examples of non-symplectically fillable contact manifolds in arbitrary dimension or even be used to characterize overtwistedness.

In \cite[\S 4.1]{EES} a notion of ``stabilized Legendrian sphere'' is described, in analogy with the usual notion of a stabilized Legendrian knot in a contact 3-manifold.  Certain stabilized spheres, called \emph{loose Legendrian spheres} are classified (up to Legendrian isotopy) by homotopy theoretic data \cite{Murphy}.  Based on Theorem \ref{Thm:SurgeryOverview}(5) the fact that these spheres have trivial holomorphic curve invariants -- see \cite[Proposition 4.8]{EES} and \cite[\S 8]{Murphy} -- we present the following as a conjectured analogue of Theorem \ref{Thm:SurgeryOverview}(5).

\begin{conj}\label{Conj:Loose}
Let $L\subset \Mxi$ be a loose Legendrian sphere in a contact manifold of dimension greater than three.  A contact manifold obtained by performing a contact $(+1)$-surgery along $L$ is ``overtwisted''.
\end{conj}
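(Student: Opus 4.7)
The plan is to reduce the conjecture to the known non-fillability of $\Mxi_{n,-1}$ from \cite{BK:Stabilize}, by exploiting Murphy's $h$-principle for loose Legendrians \cite{Murphy} to modify the Liouville embedding defining the surgery. By Definition \ref{Def:SurgeryDef}, $\Mxi_{(L,I,1)}$ is the Liouville connect sum of $\Mxi \sqcup \Mxi_{n,-1}$ along the Liouville hypersurface $(\disk^{*}S^{n},-\lambda_{can})$, with the embedding on the $\Mxi$-side parametrized via the ribbon of $L$ and the embedding on the $\Mxi_{n,-1}$-side equal to the inclusion of a page of its defining open book. Since $L$ is loose, Murphy's classification provides a ``loose chart'' neighborhood containing a specific Legendrian stabilization model, which is the higher-dimensional analogue of a Legendrian zig-zag in dimension three.

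The idea is to use the loose chart to deform the Liouville embedding $I$ of $\disk^{*}S^{n}$ into an embedding of a standard Darboux ball $(\disk^{2n},\lambda_{std})$, through a family of Liouville embeddings of varying domain. More concretely, the stabilization model in the loose chart provides an exact Lagrangian ``cap'' with boundary $L$, and by combining this cap with the neighborhood theorems of Section \ref{Sec:HighCodimension} one expects to construct an isotopy of Liouville hypersurfaces from the ribbon of $L$ to a Darboux ball disjoint from the rest of $\Mxi$. Once this step is carried out, the Liouville connect sum becomes a connect sum along Darboux balls; by Example \ref{Ex:WeinsteinSum} (applied with $L$ a point), this is exactly the contact connect sum $\Mxi \# \Mxi_{n,-1}$. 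Since $\Mxi_{n,-1}$ is not symplectically fillable by \cite{BK:Stabilize}, and non-fillability is preserved under connect-sum decomposition, $\Mxi_{(L,I,1)}$ inherits non-fillability. To upgrade to algebraic overtwistedness, one appeals to Proposition \ref{Prop:Kunneth} together with the cobordism from Theorem \ref{Thm:Cobordism} and Theorem \ref{Thm:TightCobordism}, where the vanishing of Legendrian contact homology of a loose $L$ implies that the unit in $HC_{*}$ becomes exact after the surgery.

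The hard part will be producing the isotopy of Liouville hypersurfaces from $\disk^{*}S^{n}$ to a Darboux ball inside the loose chart. This is precisely where Murphy's $h$-principle is essential: one must interpret the zig-zag structure as a Lagrangian disk along which the Legendrian sphere can be ``undone'' at the level of Liouville hypersurfaces, while remaining inside a tubular neighborhood in $\Mxi$. A secondary obstacle is the absence, at the time of writing, of a universally agreed-upon definition of ``overtwisted'' in dimensions greater than three, so one must decide whether the target invariant is non-fillability, algebraic overtwistedness, or the existence of a plastikstufe, and the cobordism-theoretic arguments needed to establish each differ in difficulty. Given the parallel between $\Mxi_{1,-1}$ being an overtwisted three-sphere and the expectation that $\Mxi_{n,-1}$ plays the role of a high-dimensional overtwisted model, the most natural target is algebraic overtwistedness, and the cleanest route is the cobordism-plus-K\"unneth argument sketched above.
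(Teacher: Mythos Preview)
This statement is presented in the paper as a \emph{conjecture}, not a theorem; the paper offers no proof and explicitly leaves open even what ``overtwisted'' should mean in high dimensions. So there is no paper's proof to compare your proposal against.

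Your proposal, however, contains a genuine and fatal gap. You propose to isotope the Liouville hypersurface $\disk^{*}S^{n}$ (the ribbon of $L$) through Liouville hypersurfaces to a Darboux ball $\disk^{2n}$. This is impossible for purely topological reasons: for $n\geq 1$ the manifold $\disk^{*}S^{n}$ retracts onto $S^{n}$ and is not even homotopy equivalent to $\disk^{2n}$, so no isotopy of embedded hypersurfaces (Liouville or otherwise) can interpolate between them. The ``exact Lagrangian cap'' you invoke from the loose chart does not help here: a cap lives in the symplectization, not as a hypersurface in $\Mxi$, and in any case it cannot change the diffeomorphism type of the ribbon. Consequently the reduction to a contact connect sum $\Mxi\#\Mxi_{n,-1}$ along Darboux balls does not go through.

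More broadly, the difficulty of the conjecture is precisely that contact $(+1)$-surgery along $L$ depends on the Legendrian isotopy class of $L$, not merely on its formal data, while Murphy's $h$-principle controls only the latter. Your argument implicitly assumes that looseness lets you replace $L$ by something Legendrian-isotopic to a local model on which the surgery is already understood, but the $h$-principle gives no such statement: it classifies loose Legendrians up to ambient contact isotopy, and a standard Legendrian unknot (whose $(+1)$-surgery yields $\Mxi\#\Mxi_{n,0}$, not $\Mxi\#\Mxi_{n,-1}$) is typically \emph{not} loose. Any genuine attack must find a mechanism---a bordered Legendrian or a plastikstufe constructed from the loose chart inside the surgered manifold, say---rather than an isotopy of the surgery data itself.
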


Here ``overtwistedness'' of a contact manifold $\Mxi$ of dimension greater than three can be taken to mean -- or at least imply -- any of the following conditions:
\be
\item $\Mxi$ does not admit a weak symplectic filling.
\item Every contact form on $\Mxi$ has a contractible Reeb orbit.
\item The contact homology of $\Mxi$ is zero for any choice of coefficient system.
\item $\Mxi$ contains a plastikstufe as described in \cite{Niederkruger:PS}.
\item The contact structure $\xi$ is determined by qualitative and homotopical data as in \cite{Eliashberg:OT}.
\ee

\subsection{Squares of smooth Dehn twists}\label{Sec:TwistSquared}

In this section, we continue our study of symplectic Dehn twists, proving Theorem \ref{Thm:SquareClassification}.

\begin{lemma}\label{Lemma:NonZeroSquare}
Suppose that $n\neq 2,6$.  Then $\tau_{n}^{2}$ is not isotopic to the identity mapping in $\Diff^{+}(\disk^{*}S^{n},\partial\disk^{*}S^{n})$.
\end{lemma}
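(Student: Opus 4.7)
The plan is to extract a homotopy-theoretic obstruction to $\tau_n^2$ being boundary-relative isotopic to $\mathrm{id}_{\disk^*S^n}$ from the explicit cogeodesic-flow construction of $\tau_n$. The key observation is that, because $f(0)=\pi$ and $f(r)=2\pi$ for $r\geq\epsilon$, and because the round-metric cogeodesic flow $\phi_t$ on $S^*S^n$ has period $2\pi$, $\tau_n^2$ acts as the identity on both the zero section $S^n$ and the boundary $\partial\disk^*S^n$, while on each intermediate slice $\{\|v\|=r\}$ it acts as $\phi_{2f(r)}$. As $r$ varies over $[0,\epsilon]$, the parameter $2f(r)$ sweeps from $2\pi$ to $4\pi$, so $\tau_n^2$ encodes \emph{twice} the basic geodesic-flow loop in $\mathrm{Diff}(S^*S^n)$, assembled smoothly across the zero section.

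First I would show that any boundary-relative isotopy from $\tau_n^2$ to the identity gives rise to a null-homotopy of this loop in $\mathrm{Diff}(S^*S^n)$ satisfying a smoothness matching condition at the zero section. Using the identification $S^*S^n\cong V_2(\mathbb{R}^{n+1})$ with the Stiefel manifold of orthonormal $2$-frames and the realization of $\phi_t$ as the right $SO(2)$-action by frame rotation, I would repackage this obstruction as a class in a homotopy group associated to the fibration $SO(n)\to SO(n+1)\to S^n$. A careful bookkeeping identifies this class with $2\cdot e(TS^n)$, where $e(TS^n)\in\pi_{n-1}(SO(n))$ is the clutching element of the tangent bundle of $S^n$.

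The remaining step is purely homotopy-theoretic: one must show $2\cdot e(TS^n)\neq 0$ for $n\neq 2,6$, with the trivial case $n=1$ dispatched separately (there $\tau_1$ is the classical annular Dehn twist, and $\tau_1^2$ acts nontrivially on $\pi_1$ of the complement of the core circle, hence is not boundary-isotopic to the identity). This is a classical question about doubled clutching data. By Adams's theorem on the Hopf invariant one, candidates for vanishing are restricted to $n\in\{1,2,4,8\}$; combining with the James--Whitehead computation of the relevant homotopy groups of Stiefel manifolds and the known values of $e(TS^n)$ in them, one sees that $2\cdot e(TS^n)$ vanishes precisely when $n\in\{2,6\}$.

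The main obstacle is the reduction in the first step. Since $\pi_1(SO(n+1))=\mathbb{Z}/2$ for $n\geq 2$, the loop $r\mapsto\phi_{2f(r)}$ is already null-homotopic inside $SO(n+1)$, and so nullhomotopy as a loop in $\mathrm{Diff}(S^*S^n)$ is not, by itself, the right obstruction — what matters is whether the nullhomotopy is compatible with smoothly capping off at the zero section. Controlling this extension problem is the higher-dimensional analogue of Seidel's argument in \cite[Lemma~6.3]{Seidel2}, which exploits the almost complex structure on $S^2$ to produce the positive-direction isotopy when $n=2$; the almost complex structure on $S^6$ plays the same role for $n=6$, and the non-existence of such a structure for other even $n$ is ultimately what forces $2\cdot e(TS^n)\neq 0$ in those dimensions.
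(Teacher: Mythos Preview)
Your proposal attempts a direct homotopy-theoretic computation of the obstruction, but it has genuine gaps that you yourself partially acknowledge. The most concrete problem is in your final step: you assert that Adams restricts the candidates for vanishing of $2\cdot e(TS^n)$ to $n\in\{1,2,4,8\}$, and then conclude that vanishing occurs precisely for $n\in\{2,6\}$. Since $6\notin\{1,2,4,8\}$, this is internally inconsistent; the indices are off somewhere, and it is not clear that the class you want to compute is really $2\cdot e(TS^n)\in\pi_{n-1}(SO(n))$ rather than something living one dimension up. More seriously, you never actually carry out the reduction in the first paragraph: you explain why a naive nullhomotopy argument fails (the loop in $SO(n+1)$ is already null), and you gesture at a refined obstruction involving smooth capping at the zero section, but you do not establish that this refined obstruction is well-defined, nor that it equals the class you name. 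As written, the argument is a plausible sketch of a strategy, not a proof.

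The paper sidesteps all of this with a much cleaner global argument. If $\tau_n^2$ were isotopic to the identity in $\text{Diff}^{+}(\disk^{*}S^{n},\partial\disk^{*}S^{n})$, then the open-book manifolds $M_{(\disk^*S^n,\tau_n^0)}$ and $M_{(\disk^*S^n,\tau_n^2)}$ would be diffeomorphic. But Example~\ref{Ex:OpenBooks} identifies these as $S^n\times S^{n+1}$ and $S^{*}S^{n+1}$ respectively, and one then shows these are not even homotopy equivalent for $n\neq 2,6$: for $n$ odd a Gysin-sequence computation gives torsion in $H_n(S^*S^{n+1};\mathbb{Z})$, while for $n$ even the combination of James--Whitehead \cite[Theorem~1.12]{James} and Adams \cite[Theorem~1(b)]{Adams} shows $S^*S^{n+1}\simeq S^n\times S^{n+1}$ only when $n+1\in\{1,3,7\}$. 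This packages the obstruction you are after into a single diffeomorphism-invariant of a closed manifold, avoiding the delicate extension problem entirely.
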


\begin{proof}
If $\tau^{2}_{n}$ is isotopic to the identity in $\Diff^{+}(\disk^{*}S^{n},\partial\disk^{*}S^{n})$, then the smooth manifolds underlying $\Mxink{n}{0}$ and $\Mxink{n}{2}$ are diffeomorphic.  Therefore, according to Example \ref{Ex:OpenBooks} it suffices to show that the unit cotangent bundle of $S^{n+1}$ is not diffeomorphic to $S^{n}\times S^{n+1}$ for $n\ne 0,2,6$.  We observe that these spaces are not homotopy equivalent.

If $n$ is odd, then $H^{n}(S^{*}S^{n+1};\mathbb{Z})=\mathbb{Z}/2\mathbb{Z}$ -- as can be computed using a cohomological Gysin sequence from the fact that $\chi(S^{n+1})=2$ -- while $H^{\ast}(S^{n}\times S^{n+1},\mathbb{Z})$ has no torsion.  In the event that $n$ is even, then $S^{*}S^{n+1}$ is homotopy equivalent to $S^{n}\times S^{n+1}$ if and only if $n+1=1,3,7$ as can be seen by combining results of Adams \cite[Theorem 1(b)]{Adams} and James-Whitehead \cite[Theorem 1.12]{James}.  Specifically, \cite[Theorem 1 (b)]{Adams} asserts that there is a map $S^{2n-1}\rightarrow S^{n}$ with Hopf invariant equal to one if and only if $n=1,2,4,8$ while \cite[Theorem 1.12]{James} asserts that $S^{*}S^{n+1}$ is homotopy equivalent to $S^{n}\times S^{n+1}$ if and only if there is an element in $\pi_{2n-1}(S^{n})$ with Hopf invariant equal to one.
\end{proof}

\begin{lemma}\label{Lemma:SquareToZero}
For $n=2,6$ the square of the generalized Dehn twist $\tau^{2}_{n}$ is isotopic to the identity mapping in $\Diff^{+}(\disk^{*}S^{n},\partial\disk^{*}S^{n})$.
\end{lemma}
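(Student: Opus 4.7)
The plan is to adapt Seidel's argument from \cite[Lemma 6.3]{Seidel2}. The first step is to re-express $\tau_{n}^{2}$ in terms of the normalized geodesic flow on $T^{*}S^{n}$. Let $\sigma_{t}$ denote the Hamiltonian flow of $(u,v)\mapsto |v|$ on $T^{*}S^{n}\setminus 0$; it is periodic with period $2\pi$, and $\sigma_{\pi}$ extends continuously across the zero section as the antipodal map $(u,0)\mapsto (-u,0)$. Comparing with the explicit formula for $\tau_{n}$ given in Section \ref{Sec:DehnTwist}, one sees that $\tau_{n}$ is nothing more than $\sigma_{\pi}$ smoothed near the zero section by replacing the constant angle $\pi$ with the cutoff $f(|v|)$. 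Consequently $\tau_{n}^{2}$ is boundary-relative smoothly isotopic to a smoothing of $\sigma_{2\pi}$, which on $T^{*}S^{n}\setminus 0$ is already the identity, so that all of the content of the isotopy question is concentrated in a neighborhood of the zero section.

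Second, I would realize this smoothed $\sigma_{2\pi}$ as coming from a loop of self-diffeomorphisms of $S^{n+1}$. Embed $S^{n}$ as the equator of $S^{n+1}\subset \mathbb{R}^{n+2}$ and identify a tubular neighborhood $N(S^{n})\subset S^{n+1}$ with $\disk^{*}S^{n}$ using the trivialization of the normal bundle inherited from $\mathbb{R}^{n+2}$. Rotation of the $2$-plane orthogonal to $S^{n}$ produces an $S^{1}$-action on $S^{n+1}$ fixing $S^{n}$ pointwise; one full turn yields a diffeomorphism of $N(S^{n})$ supported in its interior which, by a direct computation matching the Hamiltonians, agrees with the smoothing of $\sigma_{2\pi}$ above up to isotopy rel boundary. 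Hence it suffices to contract this $S^{1}$-rotation loop through diffeomorphisms of $S^{n+1}$ whose supports remain inside $N(S^{n})$.

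The third and final step uses the parallelizability of $S^{n+1}$ for $n=2,6$. By Adams' Hopf-invariant-one theorem, together with the classical constructions via the quaternions and octonions, $S^{3}$ and $S^{7}$ are precisely the spheres of dimension greater than one carrying an H-space multiplication. Left-translation by any element of such an H-space is smoothly isotopic to the identity of $S^{n+1}$, which furnishes an explicit null-homotopy in $\text{Diff}(S^{n+1})$ of the $S^{1}$-rotation loop based at the identity. Combining this null-homotopy with a bump-function cutoff confining each intermediate diffeomorphism to $N(S^{n})$ yields the desired isotopy from $\tau_{n}^{2}$ to $\text{id}$ inside $\text{Diff}^{+}(\disk^{*}S^{n},\partial\disk^{*}S^{n})$. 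The main technical obstacle will be arranging for the three cutoff procedures involved (the function $f$ in the definition of $\tau_{n}$, the smoothing of the rotation loop near the zero section, and the support-confinement of the H-space null-homotopy) to interpolate compatibly without introducing new critical loci; this is a routine but delicate application of standard straightening arguments, carried out explicitly in \cite[\S 6]{Seidel2}.
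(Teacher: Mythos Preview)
Your second step contains a dimension error that breaks the argument. The equatorial $S^{n}\subset S^{n+1}$ has a \emph{one}-dimensional normal bundle, so its tubular neighbourhood $N(S^{n})\subset S^{n+1}$ is $(n+1)$-dimensional, whereas $\disk^{*}S^{n}$ is $2n$-dimensional. These agree only for $n=1$, so for $n=2,6$ there is no identification of $N(S^{n})$ with $\disk^{*}S^{n}$, and the $S^{1}$-rotation of the orthogonal $2$-plane in $\mathbb{R}^{n+2}$ cannot be matched with the smoothed $\sigma_{2\pi}$ on the cotangent disk bundle. Consequently the bridge you build in step two between $\tau_{n}^{2}$ and a loop in $\text{Diff}(S^{n+1})$ does not exist, and step three, even if one grants that the rotation loop is contractible in $\text{Diff}(S^{n+1})$, has nothing to act on.

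The paper's proof stays inside $\disk^{*}S^{n}$ and exploits a structure on $S^{n}$ itself rather than on $S^{n+1}$: an almost complex structure $J$ on $S^{n}$, which exists precisely for $n=2,6$. For each $u\in S^{n}$ the operator $J_{u}$ on $T_{u}S^{n}$ gives an element $\mathfrak{j}_{u}\in\mathfrak{so}(n+1)$ generating a circle action fixing $u$, while the geodesic-flow rotation in the $(u,v)$-plane gives another element $\mathfrak{v}_{u}$. One interpolates linearly between $\mathfrak{v}_{u}$ and $\mathfrak{j}_{u}$ inside $\mathfrak{so}(n+1)$ to isotope $\tau_{n}^{2}$ to the fibrewise rotation $(u,v)\mapsto(u,e^{2f(|v|)\mathfrak{j}_{u}}v)$, and the latter is then shrunk to the identity by scaling the angle. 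The special features of $n=2,6$ enter through the existence of $J$, which is of course related to the parallelisability of $S^{3}$ and $S^{7}$ you invoke, but the argument never leaves the cotangent bundle and never needs to confine an ambient isotopy back into a neighbourhood.
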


\begin{proof}
Our proof is lifted from \cite[Lemma 6.3]{Seidel2} where the case $n=2$ is established.  The mechanism underlying the proof is the existence of an almost complex structure on $S^{2}$.  Our only contribution is the observation that $S^{6}$ also admits an almost complex structure which preserves the standard Riemannian metric as is determined by the cross product on the imaginary octonians. Thus we suppose that $S^{n}$ is a sphere equipped with an almost complex structure $J$ compatible with the standard round metric.

Let $u\in S^{n}$.  Then $J_{u}:T_{u}S^{n}\rightarrow T_{u}S^{n}$ determines an element $\mathfrak{j}_{u}$ of the Lie algebra $\mathfrak{so}(n+1)$ of $SO(n+1)$ as follows.  Using the standard metric on $TS^{n}$ provided by the natural inclusion of $S^{n}$ into $\mathbb{R}^{n+1}$, identify the $(n-1)$-sphere of unit length vectors in $T_{u}S^{n}$ with the $(n-1)$-sphere of points in $S^{n}$ which are orthogonal to $u$ when considered as vectors in $\mathbb{R}^{n+1}$.  In this way we can see that $J_{u}$ generates a circle subgroup of the subgroup of transformations in $SO(n+1)$ which fix the point $u$.  Indeed, for each $\theta\in S^{1}=[0,2\pi]/\sim$ we can consider that map $e^{\theta\cdot J_{u}}:T_{u}S^{n}\rightarrow T_{u}S^{n}$ as a map $S^{n}\rightarrow S^{n}$ fixing $u$.  Denote by $\mathfrak{j}_{u}\in\mathfrak{so}(n+1)$ the infinitesimal generator of this action.

Similarly, if $v\in T^{*}_{u}S^{n}$ is a non-zero cotangent vector then there is an associated vector $\mathfrak{v}_{u}\in \mathfrak{so}(n+1)$.  Denote by $v^{*}$ the associated dual vector in $T_{u}S^{n}$, which we will consider as a vector in $\mathbb{R}^{n+1}$.  Define $\mathfrak{v}_{u}$ to be the infinitesimal generator of the $SO(n+1)$-circle action on $S^{n}$ which rotates the oriented plane $\Span(u,\frac{1}{\| v^{*} \|}v^{*})$ counterclockwise and fixes the orthogonal complement of this plane.

In the notation of Section \ref{Sec:DehnTwist}, we can use the above definitions to express the Dehn twist $\tau_{n}$ as
\begin{equation*}
\tau_{n}(u,v)=\bigr{(}e^{f(\| v \|)\cdot \mathfrak{v}_{u}}u,e^{f(\| v \|)\cdot \mathfrak{v}_{u}}v\bigr{)}
\end{equation*}
for each pair $(u,v)$ satisfying $v\ne 0$.  Here $f$ is the function described in Section \ref{Sec:DehnTwist}.  For points of the form $(u,0)\in \disk^{*}S^{n}$, $\tau_{n}(u,0)=(-u,0)$.  Using the above formula we can write
\begin{equation*}
\tau_{n}^{2}(u,v)=\bigr{(}e^{2f(\| v \|)\cdot \mathfrak{v}_{u}}u,e^{2f(\| v \|)\cdot \mathfrak{v}_{u}}v\bigr{)}
\end{equation*}
for pairs $(u,v)\in\disk^{*}S^{n}$ satisfying $v\ne 0$, and $\tau_{n}^{2}(u,0)=(u,0)$ for each $(u,0)\in S^{n}\subset\disk^{*}S^{n}$.

Consider the $[0,1]$-family of diffeomorphisms of $\disk^{*}S^{n}$ given by the formula
\begin{equation*}
\Phi_{t}(u,v)=\bigr{(}e^{2f(\| v \|)\cdot \big{(}(1-t)\mathfrak{j}_{u}+t\mathfrak{v}_{u}\big{)}}u,e^{2f(\| v \|)\cdot \big{(}(1-t)\mathfrak{j}_{u}+t\mathfrak{v}_{u}\big{)}}v\bigr{)}
\end{equation*}
for $v\ne0$ and $\Phi_{t}(u,0)=(u,0)$ for all $t\in[0,1]$.  Note that $\Phi_{1}=\tau^{2}_{n}$, so that $\Phi_{t}$ provides an isotopy from $\tau_{n}^{2}$ to the diffeomorphism
\begin{equation*}
\Phi_{0}(u,v)=(u,e^{2f(\| v \|)\mathfrak{j}_{u}}v)
\end{equation*}
in such a way that $\Phi_{t}$ restricts to the identity mapping along the zero-section and boundary of $\disk^{*}S^{n}$ for all $t\in[0,1]$.  To complete the proof, consider the isotopy
\begin{equation*}
\Psi_{t}(u,v)=(u,e^{t2f(\| v \|)\mathfrak{j}_{u}}v)
\end{equation*}
which interpolates between $\Phi_{0}$ and the identity mapping in $\Diff^{+}(\disk^{*}S^{n},\partial\disk^{*}S^{n})$.
\end{proof}

The above lemmas combine to prove Theorem \ref{Thm:SquareClassification}.

\subsection{Squares of symplectic Dehn twists and exotic contact spheres}

With Theorem \ref{Thm:SquareClassification} established, we study the contact manifolds $\Mxink{n}{2k+1}$ to complete the proof of Theorem \ref{Thm:TauSquared}. Throughout this section, unless stated otherwise, we use $n$ to denote either $2$ or $6$.

\begin{thm}\label{Thm:FiveSphere}
For $k$ a non-zero integer and $n = 2, 6$, the smooth manifold underlying $\Mxink{n}{2k + 1}$ is $S^{2n+1}$.  However, this contact manifold is not contact-diffeomorphic to $(S^{2n+1},\xi_{std})$.
\end{thm}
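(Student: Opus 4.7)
The smooth statement is immediate from Theorem~\ref{Thm:SquareClassification}. By Definition~\ref{Def:Surgery}, contact $(1/2m)$-surgery on $L$ removes a standard neighborhood of the ribbon $\Sigma \cong \disk^{*}S^{n}$ and reglues via the convex instructions $(\tau_{n}^{-2m}, \mbox{id}_{\Sigma})$. The smooth type of the result depends only on the smooth isotopy class of $\tau_{n}^{-2m}$; since Theorem~\ref{Thm:SquareClassification} gives $\tau_{n}^{2}$ smoothly isotopic to $\mbox{id}_{\Sigma}$ for $n \in \{2,6\}$, the factor $\tau_{n}^{-2m} = (\tau_{n}^{-2})^{m}$ is smoothly isotopic to $\mbox{id}_{\Sigma}$, the surgery is smoothly trivial, and $M_{+} := (S^{2n+1},\xi_{std})_{L,\Phi,2m}$ is smoothly $S^{2n+1}$.

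For the contact statement, the plan is to suppose for contradiction that $M_{+} \cong (S^{2n+1},\xi_{std})$ and combine Theorem~\ref{Thm:Cobordism} with the Eliashberg-Floer-Gromov-McDuff theorem (EFGM) to produce an exact filling of $(S^{2n+1},\xi_{std})$ whose underlying manifold is not $\disk^{2n+2}$. By Definition~\ref{Def:SurgeryDef}, $M_{+}$ is the Liouville connect sum of $(S^{2n+1},\xi_{std}) \sqcup \Mxi_{n,-2m}$ along embeddings of the Stein Liouville domain $(\disk^{*}S^{n},-\lambda_{can})$ (Stein by Example~\ref{Ex:WeinsteinSphere}), so Theorem~\ref{Thm:Cobordism} supplies a connected Stein cobordism $(W,\lambda)$ with negative boundary $(S^{2n+1},\xi_{std}) \sqcup \Mxi_{n,-2m}$ and positive boundary $M_{+}$; connectedness comes from the fact that the handle $H_{\Sigma}$ of Section~\ref{Sec:Cobordism} joins the two concave components. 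First I cap off the $(S^{2n+1},\xi_{std})$ concave component by attaching the standard Stein ball $(\disk^{2n+2},\lambda_{std})$, a valid concave-convex Stein gluing, producing a connected Stein cobordism $W_{1}$ from $\Mxi_{n,-2m}$ to $M_{+}$. Tracing the Weinstein handle decomposition of Section~\ref{Sec:Stein}, $W_{1}$ is built from $[1/2,1]\times \Mxi_{n,-2m}$ by the attachment of a single Weinstein $(n+1)$-handle along an isotropic sphere inherited from $L$ and the ribbon structure.

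Using the assumption $M_{+} \cong (S^{2n+1},\xi_{std})$, I then cap off the convex end of $W_{1}$ by a second standard Stein ball, producing an exact filling $V$ of $\Mxi_{n,-2m}$ whose underlying manifold $W_{1} \cup \disk^{2n+2}$ deformation-retracts onto $\Mxi_{n,-2m}$ with an attached $(n+1)$-cell. Since $\Mxi_{n,-2m}$ carries non-trivial higher homology (for instance $H_{2}(\Mxi_{2,-2m})=\mathbb{Z}$, computed from its smooth identification with $\partial(\disk^{2} \times \disk^{*}S^{n})$), one has $V \not\cong \disk^{2n+2}$. Gluing $V$ to the $\Mxi_{n,-2m}$ concave component of the original $W$ (a valid concave-convex gluing) and capping the remaining $(S^{2n+1},\xi_{std})$ concave component by a third Stein ball yields an exact symplectic filling $F$ of $M_{+} \cong (S^{2n+1},\xi_{std})$ whose underlying $(2n+2)$-manifold retains the non-trivial $(n+1)$-cell from $V$. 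Since $F \not\cong \disk^{2n+2}$, this contradicts EFGM.

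The main obstacle is the convex-convex gluing used to construct $V$: two exact (or Stein) symplectic manifolds do not admit a direct gluing along a common convex boundary yielding a smooth global Liouville vector field, because the Liouville vector fields from the two sides are anti-parallel at the interface. The plan is to work in the exact (not necessarily Stein) category, using the vanishing $H^{1}(S^{2n+1})=0$ to bring the two Liouville primitives on $M_{+}$ into agreement modulo an exact $1$-form, and then smoothing via a careful collar interpolation to obtain a globally exact symplectic structure on $V$. Once this is in place, the homological tracking of the $(n+1)$-cell through the subsequent concave-convex gluings and the final invocation of EFGM are routine.
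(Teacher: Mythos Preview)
Your smooth argument is fine and matches the paper. The contact argument, however, has a genuine gap that your proposed fix does not close.

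The step ``cap off the convex end of $W_{1}$ by a second standard Stein ball'' is a convex--convex gluing, as you note. Your suggested repair via $H^{1}(S^{2n+1})=0$ addresses only the cohomological discrepancy between the two Liouville primitives on the interface; it does not address the geometric obstruction. On each side the Liouville vector field points \emph{out} of the hypersurface, so after identifying the two copies of $M_{+}$ the two symplectic forms have opposite signs relative to any consistent orientation of the glued manifold. No collar interpolation produces a nondegenerate closed $2$-form on the union: you would be trying to make $d(t\alpha)$ symplectic on a neighborhood where $t$ increases toward the hypersurface from both sides. Even granting this step, the resulting $V$ would have $\Mxi_{n,-2m}$ as its \emph{concave} boundary (it was concave in $W_{1}$ and capping the convex end does not change that), so $V$ is a cap, not a filling; your subsequent ``concave--convex'' gluing of $V$ to the original $W$ along $\Mxi_{n,-2m}$ is in fact concave--concave and equally illegal.

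The paper avoids all of this by never needing a convex--convex or concave--concave gluing. It splits into cases on the sign of $m$. For $m<0$, contact $(1/2m)$-surgery is $-2m$ Weinstein $(n{+}1)$-handle attachments to $(\disk^{2n+2},\lambda_{std})$, giving a Stein filling of $M_{+}$ with Euler characteristic $1-2m\neq 1$; if $M_{+}$ were standard this would violate Eliashberg--Floer--Gromov--McDuff. For $m>0$, one cancels the $(1/2m)$-surgery by a $(1/(-2m))$-surgery on a push-off (Theorem~\ref{Thm:SurgeryProperties}(2)), which is a sequence of Weinstein handle attachments and hence yields a Stein cobordism from $M_{+}$ to $(S^{2n+1},\xi_{std})$. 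If $M_{+}$ were standard, filling its concave end with $(\disk^{2n+2},\lambda_{std})$ (a legal concave--convex gluing) gives a Stein filling of $(S^{2n+1},\xi_{std})$ with Euler characteristic $1+2m\neq 1$, again contradicting EFGM. Only standard concave--convex concatenations are used.
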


As in Theorem \ref{Thm:NegativeMonodromy}, this is another simple application of Theorem \ref{Thm:StandardFilling}.

\begin{proof}
For the first statement, the monodromy of the open book decomposition underlying $\Mxink{n}{2k+1}$ is smoothly isotopic to $\tau_{n}$. Therefore $M_{n, 2k+1}$ is diffeomorphic to the manifold determined by the open book with page $\disk^{*}S^{n}$ and monodromy $\tau_{n}$, which is exactly $S^{2n+1}$.

For $k < 0$, the fact that $\Mxink{n}{2k+1}$ is not contact-diffeomorphic to $\Sstd$ follows from the fact that $\Sstd$ is exactly fillable while $\Mxink{n}{2k+1}$ is not by Theorem \ref{Thm:NegativeMonodromy}.

The case $k > 0$ follows a similar argument. By the proof of Theorem \ref{Thm:NegativeMonodromy}, we see that $\Mxink{n}{2k+1}$ admits a Weinstein filling of whose Euler characteristic is $1+2m\ne 1$. However, by Theorem \ref{Thm:StandardFilling}, any exact filling of $\Sstd$ must have Euler characteristic equal to $1$.
\end{proof}

\begin{proof}[Proof of Theorem \ref{Thm:TauSquared}]
If $\tau^{2}_{n}\in \Symp((\disk^{*}S^{n},-d\lambda_{can}),\partial \disk^{*}S^{n})$ was isotopic to the identity mapping, then for $k \neq 0$, the contact manifold $\Mxink{n}{2k+1}$ would be contact-diffeomorphic to $(S^{2n+1},\xi_{std})$ by Theorem \ref{Thm:GirCor} contradicting Theorem \ref{Thm:FiveSphere}. 
\end{proof}


\begin{thebibliography}{}

\bibitem[Ab12]{Abouzaid:Exotic}
M. Abouzaid, \textit{Framed bordism and Lagrangian embeddings of exotic spheres}, Ann. of Math., Vol. 175 (2012), 71–185.

\bibitem[Ad58]{Adams}
J. F. Adams, \textit{On the nonexistence of elements of Hopf invariant one},  Bull. Amer. Math. Soc. Vol. 64 (1958), 279-282.

\bibitem[Ar95]{Arnold}
V. I. Arnol'd, \textit{Some remarks on symplectic monodromy of Milnor fibrations}, The Floer memorial volume, 99-103, Progr. Math., 133, Birkh{\"a}user, Basel, 1995.

\bibitem[Av11]{Avdek}
R. Avdek, \textit{Contact surgery and supporting open books}, Algebr. Geom. Topol. Volume 13, Number 3 (2013), 1613-1660.

\bibitem[AS04]{AS04}
D. Aroux and I. Smith, \textit{Lefschetz pencils, branched covers and symplectic invariants}, Lecture Notes in Mathematics 1938, Springer-Verlag (2004).

\bibitem[BCV09]{BCV:Ribbons}
S. Baader, K. Cieliebak, and T. Vogel, \textit{Legendrian ribbons in overtwisted contact structures}, J. Knot Theory Ramifications, Vol. 18 (2009), 523-529.

\bibitem[BEV10]{BEV:Monoids}
K. L. Baker, J. B. Etnyre, and J. van Horn-Morris, \textit{Cabling, contact structures and mapping class monoids}, J. Differential Geom. Volume 90, Number 1 (2012), 1-80.

\bibitem[Ba08]{Baldwin:Comultiplication}
J. A. Baldwin, \textit{Comultiplicativity of the Ozsv\'{a}th-Szab\'{o} contact invariant}, Math. Res. Lett. Vol. 15 (2008), 273-287.

\bibitem[Ba10]{Baldwin:Monoids}
J. A. Baldwin, \textit{Contact monoids and Stein Cobordisms}, Math. Res. Lett. Vol. 19 (2012), 31-40.

\bibitem[BH15]{BaoHonda:ContactHomology}
E. Bao and K. Honda, \textit{Semi-global Kuranishi structures and the definition of contact homology}, preprint (2015), arXiv:1512.00580v4.

\bibitem[Be82]{Bennequin}
D. Bennequin, \textit{Entrelacements et \'{e}quations de Pfaff}, Third Schnepfenried geometry conference, Vol. 1 (Schnepfenried, 1982), 87-161.

\bibitem[BEM14]{BEM14}
S. Borman, Y. Eliashberg, E. Murphy \textit{Existence and classification of overtwisted
contact structures in all dimensions}, Acta Math., Vol. 215 (2015), 281-361.

\bibitem[Bo09]{Bourgeois:Notes}
F. Bourgeois, \textit{A survey of contact homology}, New perspectives and challenges in symplectic field theory, 45-71, CRM Proc. Lecture Notes Vol. 49, Amer. Math. Soc., Providence, RI (2009).

\bibitem[BN10]{BN:Overtwisted}
F. Bourgeois and K. Niederkr\"{u}ger, \textit{Towards a good definition of algebraically overtwisted}, Expo. Math. 28 (2010), 85-100.

\bibitem[BvK10]{BK:Stabilize}
F. Bourgeois and O. van Koert, \textit{Contact homology of left-handed stabilizations and plumbing of open books.}, Commun. Contemp. Math., Vol. 12 (2010), 223-263.

\bibitem[C02]{Cieliebak}
K. Cieliebak, \textit{Subcritical Stein manifolds are split}, preprint (2002), 	arXiv:math/0204351v1.

\bibitem[CE12]{SteinToWeinstein}
K. Cieliebak and Y. Eliashberg, \textit{From Stein to Weinstein and back}, volume 59 of American
Mathematical Society Colloquium Publications. American Mathematical Society, Providence, RI,
(2012).

\bibitem[CMP19]{CMP19}
R. Casals, E. Murphy, F. Presas, \textit{Geometric criteria for overtwistedness}, J. Amer. Math. Soc. 32 (2019), 563-604.

\bibitem[DG01]{DG:Surgery}
F. Ding and H. Geiges, \textit{Symplectic fillability of tight contact structures on torus bundles}, Algebr. Geom. Topol., Vol. 1 (2001), 153-172.

\bibitem[DG04]{DG:Exotic}
F. Ding and H. Geiges, \textit{$E_{8}$-plumbings and exotic contact structures on spheres}, Int. Math. Res. Not. (2004), 3825-3837.

\bibitem[EES05]{EES}
T. Ekholm, J. B. Etnyre, and M. G. Sullivan, \textit{Non-isotopic Legendrian Submanifolds in $\mathbb{R}^{2n+1}$}, J. Differential Geom., Vol. 71 (2005), 85-128.

\bibitem[EKS16]{EKS:Exotic}
T. Ekholm, T. Kragh, I. Smith., \textit{Lagrangian exotic spheres}, J. Topol. Anal., Vol. 8 (2016) 375–397.

\bibitem[El89]{Eliashberg:OT}
Y. Eliashberg, \textit{Classification of overtwisted contact structures on 3-manifolds}, Invent. Math., Vol. 98 (1989), 623-637.

\bibitem[El90]{Eliashberg:Stein}
Y. Eliashberg, \textit{Topological characterization of Stein manifolds of dimension $>2$}, Internat. J. Math., Vol. 1 (1990), 29-46.

\bibitem[El91]{Eliashberg:Plumbing}
Y. Eliashberg, \textit{On symplectic manifolds with some contact properties}, J. Differential Geom., Vol. 33 (1991), 233-238.

\bibitem[El92]{Eliashberg:Ineq}
Y. Eliashberg, \textit{Contact $3$-manifolds twenty years since J. Martinet's work}, Ann. Inst. Fourier (Grenoble), Vol. 42 (1992), 165-192.

\bibitem[El96]{Eliashberg:Torus}
Y. Eliashberg, \textit{Unique holomorphically fillable contact structure on the 3-torus}, Int. Math. Res. Not. (1996), 77-82.

\bibitem[EG89]{EG:Convex}
Y. Eliashberg and M. Gromov, \textit{Convex symplectic manifolds}, Several complex variables and complex geometry, Part 2 (Santa Cruz, CA, 1989), Proc. Sympos. Pure Math., 52, Part 2, Amer. Math. Soc., Providence, RI (1991), 135-162.

\bibitem[EGH00]{SFTIntro}
Y. Eliashberg, A Givental, H. Hofer, \textit{Introduction to symplectic field theory}, Geom. Funct. Anal. 2000, Special Volume, Part II, 560-673.

\bibitem[Et05]{Etnyre:KnotNotes}
J. B. Etnyre, \textit{Legendrian and transversal knots}, Handbook of knot theory, 105-185, Elsevier B. V., Amsterdam, 2005.

\bibitem[Et06]{Etnyre:OBIntro}
J. B. Etnyre, \textit{Lectures on open book decompositions and contact structures}, Floer homology, gauge theory, and low dimensional topology, Clay Math. Proc. Series 5 (2006), Amer. Math. Soc.,  Providence, RI, 103-141.

\bibitem[Ga81]{Gabai:Murasugi}
D. Gabai, \textit{The Murasugi sum is a natural geometric operation}, Low-dimensional topology (San Francisco, Calif., 1981), 131-143.

\bibitem[Ge94]{Geiges:Disconnected}
H. Geiges, \textit{Symplectic manifolds with disconnected boundary of contact type}, Internat. Math. Res. Notices, Issue 1 (1994), 23-30.

\bibitem[Ge97]{Geiges:Fiber}
H. Geiges, \textit{Constructions of contact manifolds}, Math. Proc. Cambridge Philos. Soc., Vol.  121 (1997), 455-464.

\bibitem[Gh06]{Ghiggini:HFFillability}
P. Ghiggini, \textit{Ozsváth-Szabó invariants and fillability of contact structures}, Math. Z., Volume 253 (2006), 159–175.

\bibitem[Giron20]{Gironella:Branches}
F. Gironella, \textit{On some examples and constructions of contact manifolds}, Math. Annalen, Vol. 376 (2020), 957–1008.

\bibitem[Girou91]{Giroux:Convex}
E. Giroux, \textit{Convexit\'{e} en topologie de contact}, Comment. Math. Helv., Vol. 66 (1991), 637-677.

\bibitem[Girou94]{Giroux:T3}
E. Giroux, \textit{Une structure de contact, m\^{e}me tendue, est plus ou moins tordue}, Ann. Sci. \'{E}cole Norm. Sup. (4) 27 (1994), 697-705.

\bibitem[Girou02]{Giroux:ContactOB}
E. Giroux, \textit{G\'{e}om\'{e}trie de contact: de la dimension trois vers les dimensions sup\'{e}rieures}, Proceedings of the International Congress of Mathematicians, Vol. II (2002), Higher Ed. Press, Beijing, 405-414.

\bibitem[Girou17]{Giroux:IdealLiouville}
E. Giroux, \textit{Ideal Liouville Domains - a cool gadget}, J. Symplectic Geom., Volume 18 (2017), 769 – 790.

\bibitem[Gom98]{Gompf:Handles}
R. E. Gompf, \textit{Handlebody construction of Stein surfaces}, Ann. of Math. (2), Vol. 148 (1998), 619-693.

\bibitem[Gom09]{Gompf:Stein}
R. E. Gompf, \textit{Constructing Stein manifolds after Eliashberg}, New perspectives and challenges in symplectic field theory, 229-249, CRM Proc. Lecture Notes, 49, Amer. Math. Soc., Providence, RI (2009).

\bibitem[Gon87]{Gonzalo}
J. Gonzalo, \textit{Branched covers and contact structures}, Proc. Amer. Math. Soc., Vol. 101 (1987), 347-352.

\bibitem[HPK09]{HKP:Branched}
S. Harvey, K. Kawamuro, and O. Plamenevskaya, \textit{On transverse knots and branched covers}, Int. Math. Res. Not. (2009), 512-546.

\bibitem[Hon00a]{Honda:TightClassI}
K. Honda, \textit{On the classification of tight contact structures I}, Geom. Topol., Vol. 4 (2000), 309-386.

\bibitem[Hon00b]{Honda:TightClassII}
K. Honda, \textit{On the classification of tight contact structures II}, J. Differential Geom., Vol.  55 (2000), 83-143.

\bibitem[Hon02]{Honda:Gluing}
K. Honda, \textit{Gluing tight contact structures}. Duke Math. J., Vol. 115, No. 3 (2002), 435-478.

\bibitem[HH18]{HH:Bypass}
K. Honda and Y. Huang, \textit{Bypass attachments in higher-dimensional contact topology}, 		arXiv:1803.09142, 2018.

\bibitem[HH19]{HH:Convex}
K. Honda and Y. Huang, \textit{Convex hypersurface theory in contact topology}, 	arXiv:1907.06025, 2019.

\bibitem[HKM03]{HKM:Fibered}
K. Honda, W. H. Kazez, G. Mati\'{c}, \textit{Tight contact structures on fibered hyperbolic 3-manifolds}, J. Differential Geom., Vol. 64 (2003), 305-358.

\bibitem[Hu13]{Huang:OT}
Y. Huang, \textit{A proof of the classification theorem of overtwisted contact structures via convex surface theory}, J. Symplectic Geom., Vol. 11 (2013), 563-601.

\bibitem[JW54]{James}
I. M. James and J. H. C. Whitehead, \textit{Homotopy theory of sphere bundles over spheres (I)}, Proc. London Math. Soc. (1954), 196-218.

\bibitem[vKN05]{KN05}
O. van Koert and K. Niederkr\"{u}ger, \textit{Open book decompositions for contact structures on Brieskorn manifolds}, Proc. Amer. Math. Soc., Vol. 133 (2005), 3679-3686.

\bibitem[KvK16]{KK16}
J. Kwon and O. van Koert, \textit{Brieskorn manifolds in contact topology}, Bull. London Math. Soc. 48 (2016), 173-241.

\bibitem[MNW13]{MNW12}
P. Massot, K. Niederkr\"{u}ger, and C. Wendl, \textit{Weak and strong fillability of higher dimensional contact manifolds}, Invent. math. (2013), 192-287.

\bibitem[Mc91]{McDuff}
D. McDuff, \textit{Symplectic manifolds with contact type boundaries}, Invent. Math. 103 (1991), 651-671.

\bibitem[MS99]{MS:SymplecticIntro}
D. McDuff and D. Salamon, \textit{Introduction to symplectic topology}, Second edition. Oxford Mathematical Monographs. The Clarendon Press, Oxford University Press, New York (1998).

\bibitem[Mi11]{Milnor:DiffTop}
J. W. Milnor, \textit{Differential topology forty-six years later}, Notices Amer. Math. Soc., Vol. 58 (2011), 804-809.

\bibitem[MM91]{MM:FiberedBraids}
J. M. Montesinos-Amilibia and H. R. Morton, \textit{Fibred links from closed braids}, Proc. London Math. Soc., Vol. 62 (1991), 167-201.

\bibitem[Mu12]{Murphy}
E. Murphy, \textit{Loose Legendrian Embeddings in High Dimensional Contact Manifolds}, preprint (2012), arXiv:1201.2245v2

\bibitem[N06]{Niederkruger:PS}
K. Niederkr\"{u}ger, \textit{The plastikstufe - a generalization of the overtwisted disk to higher dimensions}, Algebr. Geom. Topol., Vol. 6 (2006), 2473-2508.

\bibitem[NP10]{NP}
K. Niederkr\"{u}ger and F. Presas, \textit{Some remarks on the size of tubular neighborhoods in contact topology and fillability}, Geom. $\&$ Topo., Vol. 14 (2010), 719-754.

\bibitem[OS04]{OzbSt:SteinSurgery}
B. Ozbagci and A. I. Stipsicz, \textit{Surgery on contact 3-manifolds and Stein surfaces}, Bolyai Soc. Math. Studies 13, Springer-Verlag, Berlin (2004).

\bibitem[P19]{Pardon:ContactHomology}
J. Pardon, \textit{Contact homology and virtual fundamental cycles}, J. Amer. Math. Soc., Vol. 32 (2019), 825-919.

\bibitem[R90]{Rolfsen}
D. Rolfsen, \textit{Knots and links} (Corrected reprint of the 1976 original), Mathematics Lecture Series, Vol. 7, Publish or Perish Inc., Houston, TX (1990).

\bibitem[S97]{Seidel1}
P. Seidel, \textit{Floer homology and the symplectis isotopy problem}, Ph.D thesis, University at Oxford (1997).

\bibitem[S99]{Seidel2}
P. Seidel, \textit{Lagrangian two-spheres can be symplectically knotted}, J. Differential Geom., Vol. 52 (1999), 145-171.

\bibitem[TW75]{TW:OB}
W. P. Thurston and H. I. Winkelnkemper, \textit{On the existence of contact forms}, Proc. Amer. Math. Soc., Vol. 52 (1975), 345-347.

\bibitem[U99]{U99}
I. Ustilovsky, \textit{Infinitely many contact structures on $S^{2m+1}$}, Int. Math. Res. Not. (1999), 781-791.

\bibitem[VHM07]{VHM:Thesis}
J. Van Horn-Morris, \textit{Constructions of open book decompositions}, Ph.D thesis, University of Texas at Austin (2007).

\bibitem[Wa15]{Wand:Tightness}
A. Wand, \textit{Tightness is preserved by Legendrian surgery}, Ann. Math. (2) 182 (2015),
723–738.

\bibitem[Wei79]{Weinstein:Conjecture}
A. Weinstein, \textit{On the hypotheses of Rabinowitz' periodic orbit theorems}, J. Differential Equations, Vol. 33 (1979), 353-358.

\bibitem[Wei91]{Weinstein:Handles}
A. Weinstein, \textit{Contact surgery and symplectic handlebodies}, Hokkaido Math. J., Vol. 20 (1991), 241-251.

\bibitem[Wen13]{Wendl:Cobordism}
C. Wendl, \textit{A Hierarchy of Local Symplectic Filling Obstructions for Contact 3-Manifolds}, Duke Math. J. Volume 162, Number 12 (2013), 2197-2283.

\bibitem[Y06]{Yau}
M. L. Yau, \textit{Vanishing of the contact homology of overtwisted contact 3-manifolds}, Bull. Inst. Math. Acad. Sin. (N.S.) Vol. 1 (2006), 211-229.

\end{thebibliography}
\end{document}